\def\struckint{\mathop{%
\def\mathpalette##1##2{\mathchoice{##1\displaystyle##2}%
  {##1\textstyle##2}{##1\scriptstyle##2}{##1\scriptscriptstyle##2}}%
\mathpalette
{\vbox\bgroup\baselineskip0pt\lineskiplimit-1000pt\lineskip-1000pt
\halign\bgroup\hfill$}
{##$\hfill\cr{\intop}\cr\diagup\cr\egroup\egroup}%
}\limits}
\newtheorem{proposition}{Proposition}[section]
\newtheorem{theorem}[proposition]{Theorem}
\newtheorem{lemma}[proposition]{Lemma}
\newtheorem{corollary}[proposition]{Corollary}
\newtheorem{assumption}[proposition]{Assumption}
\theoremstyle{definition}
\newtheorem{definition}[proposition]{Definition}
\newtheorem{note}[proposition]{Note}
\newenvironment{pf*}[1]{\medskip \noindent {\em #1.} }{\endproof \medskip}
\newcommand{\xclass}[1]{\langle #1 \rangle}
\newcommand{\xnorm}[1]{ \Vert #1 \Vert }
\newcommand{\zz}[1]{\mathbb #1}
\newcommand{\bSigma}{\overline{\Sigma}}
\newcommand{\Z}{\mathbb{Z}}
\newcommand{\HH}{\mathcal{H}}
\newcommand{\RL}{\mathcal{L}}
\DeclareMathOperator{\Press}{Pr}
\title[Random Walks on Co-Compact Fuchsian Groups]{Random Walks on Co-Compact Fuchsian Groups}
\author{S\'ebastien Gou\"ezel and Steven P. Lalley}
\address{IRMAR, CNRS UMR 6625,
Universit\'e de Rennes 1, 35042 Rennes, France}
\email{sebastien.gouezel@univ-rennes1.fr}
\address{University of Chicago\\ Department
of Statistics \\ 5734
University Avenue \\
Chicago IL 60637}
\email{lalley@galton.uchicago.edu}
\date{\today}
\subjclass{Primary 31C20, secondary 31C25 60J50 60B99}
\keywords{hyperbolic group, surface group, random walk, Green's function, Gromov
boundary, Martin boundary, Ruelle operator theorem, Gibbs state, local limit theorem}
\thanks{Second author supported by NSF grant DMS  - 0805755}
\begin{document}

\begin{abstract}
It is proved that the Green's function of a symmetric finite range
random walk on a co-compact Fuchsian group decays exponentially in
distance at the radius of convergence $R$. It is also shown that
Ancona's inequalities extend to $R$, and therefore that the Martin
boundary for $R-$potentials coincides with the natural geometric
boundary $S^{1}$, and that the Martin kernel is uniformly H\"older
continuous. Finally, this implies a local limit theorem for the
transition probabilities: in the aperiodic case, $p^n(x,y)\sim
C_{x,y}R^{-n}n^{-3/2}$.
\end{abstract}

\maketitle

\section{Introduction}\label{sec:introduction}

\subsection{Green's function and Martin boundary}\label{ssec:grfmb}
A (right) \emph{random walk} on a countable group $\Gamma$ is a discrete-time
Markov chain $\{X_{n} \}_{n\geq 0}$ of the
form
\begin{equation*}
	X_{n}=x\xi_{1}\xi_{2}\dotsb \xi_{n}
\end{equation*}
where $\xi_{1},\xi_{2},\dotsc $ are independent, identically
distributed $\Gamma -$valued random variables. The distribution of
$\xi_{i}$ is the \emph{step distribution} of the random walk. The
random walk is said to be \emph{symmetric} if its step distribution
is invariant under the mapping $x\mapsto x^{-1}$, and
\emph{finite-range} if the step distribution has finite support. The
\emph{Green's function} is  the generating function of the transition
probabilities: for $x,y\in \Gamma$ and $0\leq r < 1$ it is defined by
the absolutely convergent series
\begin{equation}\label{eq:green}
	G_{r} (x,y):=\sum_{n=0}^{\infty} P^{x}\{X_{n}=y \}r^{n} =G_{r} (1,x^{-1}y);
\end{equation}
here $P^{x}$ is the probability measure on path space governing the
random walk with initial point $x$.  If the random walk is
irreducible (that is, if the semigroup generated by the support of
the step distribution is $\Gamma$) then the radius of convergence $R$
of the series \eqref{eq:green} is the same for all pairs $x,y$.
Moreover, if the random walk is symmetric, then $1/R$ is the
\emph{spectral radius} of the transition operator. By a fundamental
theorem of Kesten \cite{kesten}, if the group $\Gamma$ is finitely
generated and nonamenable then $R>1$. Moreover, in this case
the Green's function is finite at its radius of convergence (cf.\ %
\cite{woess:book}, ch.~2): for all $x,y\in\Gamma$,
\begin{equation}\label{eq:finiteAtR}
	G_{R} (x,y)<\infty.
\end{equation}

The Green's function is of central importance in the study of random
walks. Clearly, it encapsulates information about the transition
probabilities; in Theorem~\ref{theorem:asymptotics}, we show that the
local asymptotic behavior of the transition probabilities can be
deduced from the singular behavior of the Green's function at its
radius of convergence. The Green's function is also the key to the
potential theory associated with the random walk: in particular, it
determines the Martin boundary for $r-$potential theory. A prominent
theme in the study of random walks on nonabelian groups has been the
relationship between the geometry of the group and the nature of the
Martin boundary. A landmark result here is a theorem of
Ancona~\cite{ancona} describing the Martin boundary for random walks
with finitely supported step distributions on \emph{hyperbolic}
groups: Ancona proves that for every $r\in (0,R)$ the Martin boundary
for $r-$potential theory coincides with the \emph{geometric} (Gromov)
boundary, in a sense made precise below.  (Series~\cite{series-rw}
had earlier established this in the special case $r=1$ when the group
is co-compact Fuchsian. See also \cite{anderson-schoen} and
\cite{ancona:annals} for related results concerning Laplace-Beltrami
operators on Cartan manifolds.)

It is natural to ask whether Ancona's theorem extends to $r=R$, that
is, if the Martin boundary is stable (see \cite{picardello-woess} for
the terminology) through the entire range $(0,R]$. One of the main
results of this paper (Theorem~\ref{theorem:1}) provides an
affirmative answer in the special case of symmetric, finite-range
random walk on a co-compact Fuchsian group, i.e., a co-compact,
discrete subgroup of $PSL (2,\zz{R})$. Any co-compact Fuchsian group
acts as a discrete group of isometries of the hyperbolic disk, and so
its Cayley graph can be embedded quasi-isometrically in the
hyperbolic disk; this implies that its Gromov boundary is the circle
$S^{1}$ at infinity.

\begin{theorem}\label{theorem:martinBoundary}
For any symmetric, irreducible, finite-range random walk on a
co-compact Fuchsian group $\Gamma $, the Martin boundary for
$R-$potentials coincides with the geometric boundary $S^{1}=\partial
\Gamma$. Moreover, all elements of the Martin boundary are minimal.
\end{theorem}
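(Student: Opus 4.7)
The plan is to reduce Theorem~\ref{theorem:martinBoundary} to the two main analytic inputs announced in the abstract, namely (a) exponential decay of $G_{R}(\cdot,\cdot)$ in the word distance, and (b) the extension of Ancona's multiplicative inequality to $r=R$, and then to mimic the arguments that Ancona~\cite{ancona} and Series~\cite{series-rw} used for $r<R$. Writing $K_{R}(x,y):=G_{R}(x,y)/G_{R}(1,y)$ for the normalized $R$-Martin kernel, I need to establish three things: (i) existence of a boundary kernel $K_{R}(\cdot,\xi)$ for every $\xi\in S^{1}$; (ii) distinct points $\xi\neq\eta$ in $S^{1}$ yield distinct kernels, and every Martin-boundary kernel arises in this way; (iii) each $K_{R}(\cdot,\xi)$ is minimal in the cone of positive $R$-harmonic functions.

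For (i), my strategy is to show that if $y_{n}\to\xi\in S^{1}$ in the Gromov sense and $z$ is any point on a geodesic ray from $1$ to $\xi$ chosen at distance much larger than $d(1,x)$, then Ancona's inequality at $R$ yields
\[
	K_{R}(x,y_{n})=\frac{G_{R}(x,y_{n})}{G_{R}(1,y_{n})}\asymp \frac{G_{R}(x,z)G_{R}(z,y_{n})}{G_{R}(1,z)G_{R}(z,y_{n})}=\frac{G_{R}(x,z)}{G_{R}(1,z)},
\]
with multiplicative error tending to $1$ as $z$ is pushed out toward $\xi$. This forces $\{K_{R}(x,y_{n})\}$ to be Cauchy and independent of the particular sequence; the limit $K_{R}(x,\xi)$ is automatically $R$-harmonic in $x$ since each prelimit is $R$-harmonic away from its pole.

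For (ii), I would fix a point $x$ far out on a geodesic from $1$ toward $\xi$ and apply Ancona's inequality at two different scales: $x$ lies close to a geodesic from $1$ to any $y_{n}$ near $\xi$, giving $K_{R}(x,\xi)\asymp 1/G_{R}(1,x)$, whereas $1$ lies close to a geodesic from $x$ to any $y_{n}'$ near $\eta$ (since in a hyperbolic space geodesics between points tending to distinct boundary points must bend back close to any basepoint between them), giving $K_{R}(x,\eta)\asymp G_{R}(x,1)$. Exponential decay of $G_{R}$ at $R$ makes these two quantities vastly different for $x$ far from $1$, so the two kernels indeed differ. Completeness---that every Martin-boundary point is of this form---then follows from compactness of $\Gamma\cup S^{1}$: any sequence $y_{n}\to\infty$ in $\Gamma$ has a subsequence converging in the Gromov compactification, and along such a subsequence the Martin kernels converge to the kernel constructed in (i).

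The minimality part (iii) is likely the most delicate step. The idea is that the uniform H\"older continuity of $\xi\mapsto K_{R}(\cdot,\xi)$ asserted in the abstract implies that the Doob $h$-transform of the walk by $h=K_{R}(\cdot,\xi)$ converges almost surely to $\xi$, and a shift-invariance argument along this $h$-chain---using Ancona's inequality at $R$ to propagate kernel ratios forward in time---trivializes its tail $\sigma$-algebra, which is equivalent to extremality of $h$. Ultimately the main obstacle for this theorem is not really located here but in its inputs: pushing Ancona's inequality all the way to the spectral radius, where the usual subcritical arguments are on the verge of breaking down, is the central analytic difficulty and is handled elsewhere in the paper. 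Once those inputs are in place, the identification of the Martin boundary with $S^{1}$ and the minimality of its elements amount to a direct adaptation of the classical Ancona--Series machinery.
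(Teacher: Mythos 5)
Your high‑level plan is the same one the paper follows: push Ancona's multiplicative inequality to $r=R$, use it to control the normalized kernels $G_{R}(x,y_{n})/G_{R}(1,y_{n})$, and then argue that the resulting limits are distinct for distinct $\xi\in S^{1}$, exhaust the Martin boundary, and are minimal. The paper likewise explicitly delegates distinctness and minimality to Ancona's own arguments from the $r<R$ case, noting that these go through once the inequalities are available at $r=R$. So steps (ii)–(iii) of your outline are exactly the route taken.

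The genuine gap is in step (i). The basic Ancona inequality at $R$ gives you at best $G_{R}(x,y_{n})\asymp G_{R}(x,z)\,G_{R}(z,y_{n})$ with a constant that is \emph{uniformly bounded away from $0$ and $\infty$ but does not improve} as $z$ is pushed further along the geodesic. From that alone you obtain that the sequence $K_{R}(x,y_{n})$ stays in a compact interval, but you do \emph{not} get a Cauchy condition, and the assertion that the "multiplicative error tends to $1$ as $z$ is pushed out toward $\xi$" is not a consequence of \eqref{eq:ancona}. Convergence of the Martin kernel is precisely the content of the Anderson--Schoen type bootstrap (the paper's Theorem~\ref{theorem:holderAncona}), which is an additional iterative Harnack argument built on the \emph{relative} Ancona inequalities (Theorem~\ref{theorem:relativeAncona}) applied to a nested family of shrinking shadows $\Omega_{i}$; it is this iteration, not the inequality itself, that produces the geometric contraction $(1-\varepsilon)^{\ell}$ and hence the Cauchy condition. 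Your proposal treats this as automatic and thereby skips the most technically substantial input to part (a) of the theorem. Once you insert that contraction lemma, the rest of your outline is consistent with the paper's proof.
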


This assertion means that (a) for every geodesic ray
$y_{0},y_{1},y_{2},\dotsc$ in the Cayley graph that converges to a
point $\zeta \in \partial \Gamma$ and for every $x\in \Gamma$,
\begin{equation}\label{eq:martinConvergence}
	\lim_{n \rightarrow \infty} \frac{G_{R} (x,y_{n})}{G_{R}
	(1,y_{n})}=K_{R}(x,\zeta )=K (x,\zeta)
\end{equation}
exists; (b) for each $\zeta \in \partial \Gamma$ the function
$K_{\zeta} (x):= K(x,\zeta )$ is a minimal, positive $R-$harmonic
function of $x$; (c) for distinct points $\zeta ,\zeta '\in \partial
\Gamma$ the functions $K_{\zeta}$ and $K_{\zeta '}$ are different; and
(d) the topology of pointwise convergence on $\{K_{\zeta} \}_{\zeta
\in \partial \Gamma}$ coincides with the usual topology on $\partial
\Gamma=S^{1}$.

Our  results also yield explicit rates for the convergence
\eqref{eq:martinConvergence}, and imply that the Martin kernel $K_{r}
(x,\zeta)$ is \emph{H\"older} continuous in $\zeta$ relative to the
usual Euclidean metric (or any visual metric --- see
\cite{benakli-kapovich} for the definition) on $S^{1}=\partial
\Gamma$.

\begin{theorem}\label{theorem:holderMartinKernel}
For any symmetric, irreducible, finite-range random walk on a
co-compact Fuchsian group $\Gamma $, there exists $\varrho <1$ such
that for every $1\leq r\leq R$ and every geodesic ray $1=y_{0},y_{1},
y_{2},\dotsc$ converging to a point $\zeta \in \partial \Gamma$,
\begin{equation}\label{eq:convergenceRate}
	\Bigg| \frac{G_{r} (x,y_{n})}{G_{r} (1,y_{n})}-K_{r}
	(x,\zeta)\Bigg|
	\leq C_{x}\varrho^{n}.
\end{equation}
The constants $C_{x}<\infty$ depend on $x\in \Gamma$ but not on
$r\leq R$. Consequently, for each $x\in \Gamma$ and $r\leq R$ the
function $\zeta \mapsto K_{r} (x,\zeta)$ is H\"older continuous in
$\zeta$ relative to the Euclidean metric on $S^{1}=\partial \Gamma$,
for some exponent not depending on $r\leq R$.
\end{theorem}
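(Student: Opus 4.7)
The plan is to derive both the exponential convergence \eqref{eq:convergenceRate} and the resulting H\"older continuity from a uniform-in-$r$ strengthening of Ancona's multiplicative inequality,
\[
 C^{-1}\, G_r(x,y)\, G_r(y,z) \;\leq\; G_r(x,z) \;\leq\; C\, G_r(x,y)\, G_r(y,z) \qquad \text{for } y \text{ on a word geodesic from } x \text{ to } z,
\]
with a constant $C$ independent of $r \in [1,R]$. As announced in the abstract, establishing this inequality up to $r = R$ (classical Ancona needing $r$ bounded away from $R$) is the principal technical achievement of the paper, and I treat it here as a black box supplied by the earlier sections.

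With the uniform Ancona inequality in hand, I would follow the Anderson--Schoen/Ancona blueprint. Fix a geodesic ray $1 = y_0, y_1, y_2, \ldots \to \zeta$ and, for each $k$, consider the \emph{shadow} $S_k$ consisting of those $w \in \Gamma$ such that every word geodesic from $1$ to $w$ passes within a bounded distance of $y_k$; $\delta$-hyperbolicity of the Cayley graph makes the $S_k$ nested for large $k$ and forces every path from $1$ to a far-out $y_n$ to cross $\partial S_k$ for each $k \leq n$. The key consequence of the uniform Ancona inequality is that the cross-ratios
\[
 \frac{G_r(x,w)\, G_r(1,w')}{G_r(x,w')\, G_r(1,w)}
\]
are bounded above, uniformly in $k$, in $w, w' \in S_k$, and in $r \leq R$. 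Decomposing $G_r(x, y_n)$ via first passage through $\partial S_k$ expresses the ratio $G_r(x, y_n)/G_r(1, y_n)$ as a weighted average of the ratios $G_r(x, w)/G_r(1, w)$ with $w \in \partial S_k$. The cross-ratio bound is precisely the Birkhoff/Hopf hypothesis ensuring that this averaging contracts the Hilbert projective diameter by a factor $\varrho < 1$ depending only on the Ancona constant. Iterating over $k = 1, \ldots, n$ produces a Cauchy estimate and the bound
\[
 \Bigl| \frac{G_r(x, y_n)}{G_r(1, y_n)} - K_r(x, \zeta) \Bigr| \leq C_x \varrho^n,
\]
with $C_x$ depending on $|x|$ but not on $r$; this is exactly \eqref{eq:convergenceRate}.

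For the H\"older estimate, two geodesic rays from $1$ to distinct $\zeta, \zeta' \in \partial \Gamma = S^1$ share a common initial segment of length $n \asymp -\log d_{\mathrm{vis}}(\zeta, \zeta')$ in any visual metric $d_{\mathrm{vis}}$, a standard feature of hyperbolic groups. Applying the exponential rate along this common segment and using the triangle inequality yields
\[
 |K_r(x, \zeta) - K_r(x, \zeta')| \leq 2 C_x \varrho^n \leq C_x'\, d_{\mathrm{vis}}(\zeta, \zeta')^{\alpha}
\]
with an exponent $\alpha > 0$ determined by $\varrho$ and the visual parameter, uniformly in $r \leq R$. The main obstacle is the uniform Ancona inequality up to $r = R$ itself; once it is granted, the Hilbert-metric contraction argument above is essentially formal. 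A secondary point requiring care is checking that the cross-ratio bound, and hence the contraction factor $\varrho$, really is uniform in $r$ rather than merely finite for each $r$---this amounts to tracking the Ancona constants through the argument and is the whole reason the uniform version of Ancona's inequality is needed.
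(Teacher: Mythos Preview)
Your overall plan---deduce exponential convergence from a uniform-in-$r$ Ancona inequality via a contraction mechanism, then read off H\"older continuity from the visual-metric estimate on fellow-traveling rays---is exactly the paper's route, and your passage from \eqref{eq:convergenceRate} to H\"older continuity is correct and matches what the paper does (it simply cites \cite{izumi} for that step).

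There is, however, a real gap in your contraction step. You invoke only the \emph{basic} Ancona inequality for the unrestricted Green's function and claim that the resulting cross-ratio bound on $G_r(x,w)G_r(1,w')/G_r(x,w')G_r(1,w)$ is the Birkhoff--Hopf hypothesis for the first-passage kernels between successive shadow boundaries $\partial S_k$ and $\partial S_{k+1}$. But those kernels are \emph{restricted} Green's functions (paths forbidden to re-enter the previous shadow), and bounding \emph{their} cross-ratios is not a consequence of the basic Ancona inequality; it requires Ancona-type control on $G_r(\,\cdot\,,\,\cdot\,;\Omega)$ for suitable domains $\Omega$. The paper proves exactly such a \emph{relative} Ancona inequality (Theorem~\ref{theorem:relativeAncona}) and then runs the Anderson--Schoen argument---your Birkhoff contraction in different packaging---using it: one introduces nested domains $\Omega_i$ along the ray, writes $u(z)=G_r(z,y)/G_r(x_0,y)$, and iteratively peels off pieces $\varphi_i$ with $\varepsilon u_{i-1}\le\varphi_i\le u_{i-1}$ on $\Omega_i$; constructing $\varphi_i$ hinges on the relative inequality $G_r(z,w';\Omega_i)\asymp G_r(z,z_i^*;\Omega_i)G_r(z_i^*,w';\Omega_i)$ at a pivot $z_i^*$ (see the proof of Theorem~\ref{theorem:holderAncona}).

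In short, your black box is slightly too small. The basic Ancona inequality alone gives only the boundedness of the ratios $G_r(x,y_n)/G_r(1,y_n)$ (this is \eqref{equation:anconaBasic}), not their exponential stabilization; for the latter you must control the restricted kernels mediating the step from layer $k$ to layer $k+1$, and that is precisely where Theorem~\ref{theorem:relativeAncona} enters.
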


The exponential convergence \eqref{eq:convergenceRate} and the H\"older
continuity of the Martin kernel for $r=1$ were established by Series
\cite{series} for random walks on Fuchsian groups. Similar results
for the Laplace-Beltrami operator on negatively curved Cartan
manifolds were proved by Anderson and Schoen \cite{anderson-schoen}.
The methods of \cite{anderson-schoen} were adapted by Ledrappier
\cite{ledrappier:review} to prove that Series' results extend to all
random walks on a free group, and Ledrappier's proof was extended by
Izumi, Neshvaev, and Okayasu \cite{izumi} to prove that for a random
walk on a non-elementary hyperbolic group the Martin kernel $K_{1}
(x,\zeta)$ is H\"older continuous in $\zeta$. All of these proofs rest
on inequalities of the type discussed in
section~\ref{ssec:anconaIneq} below.  Theorem~\ref{theorem:1} below
asserts (among other things) that similar estimates are valid for all
$G_{r}$ \emph{uniformly for} $r\leq R$. Given these, the proof of
\cite{izumi} applies almost verbatim to establish
Theorem~\ref{theorem:holderMartinKernel}. We will give some
additional details in Paragraph~\ref{subsec:fellow_traveling}.

\subsection{Ancona's boundary Harnack inequalities}\label{ssec:anconaIneq}

The crux of Ancona's argument in \cite{ancona} was a system of
inequalities that assert, roughly, that the Green's function $G_{r}
(x,y)$ is nearly submultiplicative in the arguments $x,y\in \Gamma$.
Ancona \cite{ancona} proved that such inequalities always hold for
$r<R$: in particular, he proved, for a random walk with finitely
supported step distribution on a hyperbolic group, that for each
$r<R$ there is a constant $C_{r}<\infty$ such that for every geodesic
segment $x_{0}x_{1}\dotsb x_{m}$ in (the Cayley graph of) $\Gamma$,
\begin{equation}\label{eq:ancona}
	G_{r} (x_{0},x_{m})\leq C_{r} G_{r} (x_{0},x_{k})G_{r}
	(x_{k},x_{m}) \qquad \forall \, 1\leq k\leq m.
\end{equation}
His argument depends in an essential way on the hypothesis $r<R$
(cf.\ his Condition (*)), and it leaves open the possibility that the
constants $C_{r}$ in the inequality \eqref{eq:ancona} might blow up
as $r \rightarrow R$. For finite-range random walk on a \emph{free}
group it can be shown, by direct calculation, that the constants
$C_{r}$ remain bounded as $r \rightarrow R$, and that the
inequalities \eqref{eq:ancona} remain valid at $r=R$ (cf.\
\cite{lalley:frrw}). The following result asserts that the same is
true for symmetric random walks on a co-compact Fuchsian group.

\begin{theorem}\label{theorem:1}
For any symmetric, irreducible, finite-range random walk on a
co-compact Fuchsian group $\Gamma $,
\begin{enumerate}
\item [(A)] the Green's function $G_{R} (1,x) $ decays exponentially
in $|x|:=d (1,x)$; and
\item [(B)] Ancona's inequalities \eqref{eq:ancona} hold for all
$r\leq R$, with a constant $C$ independent of $r$.
\end{enumerate}
\end{theorem}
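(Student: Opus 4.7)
My plan is to prove (A) and (B) together via a spectral analysis of a family of transfer operators $\mathcal{L}_r$, $r\leq R$, built from a symbolic coding of geodesics in the Cayley graph of $\Gamma$. The core difficulty is that Ancona's proof of (B) for $r<R$ relies on his Condition~(*), a uniform lower bound on the $r^n$-weighted probability of hitting a separating ball before escaping to infinity, and this lower bound degenerates as $r\to R$. My approach would be to replace Condition~(*) by a quantitative thin-triangle estimate and to run the whole analysis in a transfer-operator framework that remains well-posed at $r=R$.

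The key steps, in order, would be the following. First, encode geodesic words in $\Gamma$ by a finite-state automaton (for instance Cannon's automaton, valid for all hyperbolic groups, or a coding adapted from the Bowen--Series map on $S^{1}=\partial\mathbb{H}^{2}$), so that elements $x\in\Gamma$ correspond to admissible finite words and $\partial\Gamma=S^{1}$ to infinite admissible words. Second, for each $r\leq R$ introduce a weighted transfer operator $\mathcal{L}_r$ on H\"older functions over this symbolic space, with weights given by one-step ratios of first-passage generating functions along geodesics, so that (by iterated Ancona for $r<R$) $G_r(1,x)$ is comparable up to bounded multiplicative error to a product of weights along the coding path of $x$. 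Third, apply Ruelle--Perron--Frobenius to extract for each $r$ a simple leading eigenvalue $\lambda_r$ with H\"older leading eigenfunction $h_r$, and prove that $r\mapsto(\lambda_r,h_r)$ is continuous on $(0,R]$ with a spectral gap uniform in $r\leq R$. Fourth, deduce (A) from $\lambda_R<1$ and the uniform spectral gap, which yields $G_R(1,x)\leq C\lambda_R^{|x|}$. Fifth, deduce (B) at $r=R$ by combining the boundedness of the normalized operator $\lambda_r^{-1}\mathcal{L}_r$ with a thin-triangle decomposition of paths from $x_{0}$ to $x_{m}$ according to whether they pass through a bounded neighborhood of the intermediate geodesic vertex $x_{k}$, the detouring paths being controlled by the geometric inefficiency of excursions in $\mathbb{H}^{2}$.

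The main obstacle is the third step: the uniform spectral gap for $\mathcal{L}_r$ on the \emph{closed} interval $r\leq R$. For $r<R$ this is essentially built into Ancona's estimates, but at $r=R$ the entries of $\mathcal{L}_r$ can cluster at the boundary of their admissible range and, a priori, $\lambda_r$ could tend to $1$. Here I would lean crucially on the \emph{symmetry} of the step distribution: the identity $G_r(x,y)=G_r(y,x)$ descends to an involutive symmetry of $\mathcal{L}_r$ pairing each geodesic word with its reverse, which pins the leading eigenfunction $h_r$ and, I expect, forces the spectral gap to persist at $r=R$. This symmetry, together with the co-compact action of $\Gamma$ on $\mathbb{H}^{2}$ (which supplies the automatic coding, the thin-triangle estimates, and the identification of $\partial\Gamma$ with $S^{1}$), is what makes the uniform control available in the present setting and what distinguishes it from the general hyperbolic-group case.
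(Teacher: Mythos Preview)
Your proposal has a genuine circularity problem that would block the argument as written. You want to build the transfer operator $\mathcal{L}_r$ with weights given by one-step ratios of first-passage generating functions, and you note that ``by iterated Ancona for $r<R$'' the Green's function $G_r(1,x)$ is comparable to the product of these weights along the coding of $x$. But that comparability \emph{is} Ancona's inequality. To apply Ruelle--Perron--Frobenius you need the potential $\varphi_r(\omega)=\log\bigl(G_r(1,\alpha_*(\omega))/G_r(1,\alpha_*(\sigma\omega))\bigr)$ to be H\"older on the symbolic space, uniformly in $r$; this H\"older regularity is exactly what the Ancona inequalities (at $r=R$, with a uniform constant) provide, via Theorem~\ref{theorem:holderAncona}. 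Without them you have no control on how $\varphi_r$ varies along cylinders, so no operator with a spectral gap, and no way to extract $\lambda_R<1$. The paper uses thermodynamic formalism only \emph{after} Ancona is in hand (see the opening of Section~\ref{ssec:potentialFunctions}), precisely for this reason.

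The paper's route runs in the opposite direction and avoids the circularity. It first proves (A) directly, with no symbolic dynamics: symmetry gives the soft estimate $G_R(1,x)\to 0$ (Lemma~\ref{lemma:backscattering}); then a probabilistic construction produces \emph{barriers}---pairs of halfplanes separated by a set $B$ with $\sum_{b\in B}G_R(x,b)\le 1/2$---from pairs of independent random-walk trajectories (Theorem~\ref{theorem:barriersExist}), exploiting planarity so that a path cannot jump over a two-sided trajectory; placing $\asymp |x|$ disjoint barriers along the geodesic from $1$ to $x$ yields $G_R(1,x)\le C2^{-c|x|}$. For (B), exponential decay plus planarity give the superexponential estimate $G_R(x,y;B_k(z)^c)\le C\exp\{-e^{\alpha k}\}$ (Lemma~\ref{lemma:superExponential}: the hyperbolic circle of radius $k$ has exponential length, so a detouring path must cross exponentially many well-separated radial barriers). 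Ancona's inequality then follows from an inductive $3/4$-dyadic splitting of $[xy]$ around $z$ (proof of Theorem~\ref{theorem:relativeAncona}), in which the ``bad'' detouring contribution at each scale is dominated, via the superexponential bound, by a rapidly summable fraction of the full Green's function. Your symmetry-based spectral-gap heuristic does not substitute for this: the reversal map is not an automorphism of the Cannon automaton, and in any case a spectral gap would already presuppose the H\"older control you are trying to establish.
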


\begin{note}\label{note:1}
Here  and throughout the paper $d (x,y)$ denotes the distance between
the vertices $x$ and $y$ in the Cayley graph $G^{\Gamma}$,
equivalently, distance in the word metric. \emph{Exponential decay}
of the Green's function means \emph{uniform} exponential decay in all
directions, that is, there are constants $C<\infty$ and $\varrho <1$
such that for all $x,y\in \Gamma$,
\begin{equation}
\label{eq:expDecay}
	G_{R} (x,y)\leq C\varrho^{d (x,y)}.
\end{equation}
A very simple argument (see Lemma \ref{lemma:backscattering} below)
shows that for a symmetric random walk on any nonamenable group
$G_{R} (1,x) \rightarrow 0$ as $|x| \rightarrow \infty$. Given this,
it is routine to show that exponential decay of the Green's function
follows from Ancona's inequalities. However, we will argue in the
other direction, first providing an independent proof of exponential
decay in subsection~\ref{ssec:decay}, and then deducing Ancona's
inequalities from it in section~\ref{sec:ancona}.
\end{note}

\begin{note}\label{note:hamenstaedt}
Theorem~\ref{theorem:1} (A) is a discrete analogue of one of the main
results (Theorem B) of Hamenstaedt \cite{hamenstaedt} concerning the
Green's function of the Laplacian on the universal cover of a compact
negatively curved manifold.  Unfortunately, Hamenstaedt's proof
appears to have a serious error.\footnote{The error is in the proof of
Lemma~3.1: The claim is made that a lower bound on a finite measure
implies a lower bound for its Hausdorff-Billingsley dimension relative
to another measure. This is false -- in fact such a lower bound on
measure implies an \emph{upper} bound on its Hausdorff-Billingsley
dimension. } The approach taken here bears no resemblance to that of
\cite{hamenstaedt}.
\end{note}

Theorem ~\ref{theorem:1} is proved in sections~\ref{sec:apriori} and
\ref{sec:ancona} below. The argument uses  the \emph{planarity} of
the Cayley graph in an essential way.
It also relies on the  simple estimate
\begin{equation*}
	\lim_{ |x| \rightarrow \infty}G_{R} (1,x)=0,
\end{equation*}
that we derive from the symmetry of the random walk.  While this
estimate is not true in general without the symmetry assumption, we
nevertheless conjecture that Ancona's inequalities and the
identification of the Martin boundary at $r=R$ hold in general.

\subsection{Decay at infinity of the Green's
function}\label{ssec:decayRate}

Neither Ancona's result nor Theorem \ref{theorem:1} gives any
information about how the uniform exponential decay rate $\varrho$
depends on the step distribution of the random walk. In fact, the
Green's function $G_{r} (1,x)$ decays at different rates in different
directions $x \rightarrow \partial \Gamma$. To quantify the overall
decay, consider the behavior of the Green's function over the entire
sphere $S_{m}$ of radius $m$ centered at $1$ in the Cayley graph
$G^{\Gamma}$.  If $\Gamma$ is a nonelementary Fuchsian group then the
cardinality of the sphere $S_{m}$ grows exponentially in $m$ (see
Corollary~\ref{corollary:sphereGrowth} in section~\ref{sec:cannon}),
that is, there exist constants $C>0$ and $\zeta >1$ such that as $m
\rightarrow \infty$,
\begin{equation*}
	 |S_{m}| \sim C \zeta^{m}.
\end{equation*}

\begin{theorem}\label{theorem:2}
For any symmetric, irreducible, finite-range random walk on a
co-compact Fuchsian group $\Gamma $,
\begin{equation}\label{eq:backscatterA}
	 \lim_{m \rightarrow \infty}
	 \sum_{x\in S_{m}}G_{R} (1,x)^{2} = C>0
\end{equation}
exists and is finite, and
\begin{equation}\label{eq:lp}
	\# \{x\in \Gamma \, : \, G_{R} (1,x)\geq \varepsilon \}
	\asymp
	\varepsilon^{-2}
\end{equation}
as $\varepsilon  \rightarrow 0$. (Here $\asymp$ means that the ratio
of the two sides remains bounded away from $0$ and $\infty$.)
\end{theorem}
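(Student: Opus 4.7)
The plan is to combine the Cannon automatic structure on $\Gamma$ with the Ruelle--Perron--Frobenius (RPF) machinery, using Theorems~\ref{theorem:1} and~\ref{theorem:holderMartinKernel} as the key analytic inputs. Cannon's theorem provides a finite directed graph $\boG$ (the geodesic automaton) with a distinguished initial vertex so that, after selecting canonical geodesic representatives, the sphere $S_m$ is in bijection with the length-$m$ admissible paths in $\boG$; in particular $|S_m|\asymp\zeta^m$ for $\zeta$ the Perron eigenvalue of the transition matrix. Along a geodesic $1=y_0,\dots,y_m=x$, Ancona's inequality at $R$ and the H\"older continuity of the Martin kernel combine to show that $\log G_R(1,y_i)$ equals, up to uniformly bounded error, a Birkhoff sum $\sum_{j=1}^{i}\phi(\sigma^{j}\omega)$ of a H\"older potential $\phi$ on the subshift of finite type attached to $\boG$, where $\omega$ codes the geodesic extending $y_0,\dots,y_m$ to a boundary point in $\partial\Gamma$.

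Consequently
\begin{equation*}
\sum_{x\in S_m}G_R(1,x)^2\;\asymp\;\sum_{|\omega|=m}\exp\!\Bigl(2\sum_{i=0}^{m-1}\phi(\sigma^{i}\omega)\Bigr),
\end{equation*}
a standard Birkhoff partition function. The RPF theorem yields the asymptotic $\asymp\lambda^{m}$ with $\lambda=e^{P(2\phi)}$, and in fact promotes this to genuine convergence $\sum_{x\in S_m}G_R(1,x)^2\to C>0$ as soon as $\lambda=1$ (using mixing of the subshift and the spectral gap in the transfer operator). The bound $\lambda\leq 1$ is immediate from Theorem~\ref{theorem:1}(A) combined with $|S_m|\asymp\zeta^m$: if $\lambda>1$ the sphere sums would grow, contradicting the uniform exponential decay. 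The crux is the lower bound $\lambda\geq 1$, which I expect to derive from the symmetry identity
\begin{equation*}
\sum_{x\in\Gamma}G_R(1,x)^2\;=\;\sum_{k=0}^{\infty}(k+1)R^{k}p^{k}(1,1),
\end{equation*}
together with a polynomial lower bound on $R^{k}p^{k}(1,1)$ available in this geometric setting (e.g.\ from spectral/path-counting arguments specific to co-compact Fuchsian groups, and not requiring the local limit theorem). Indeed, $\lambda<1$ would make the sphere sums geometrically summable, so the left-hand side would be finite, contradicting the divergence forced by the polynomial lower bound.

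Given (i), part (ii) follows from standard Gibbs-measure regularity. The RPF theorem equips the subshift with a Gibbs state for $2\phi$ exhibiting exponential mixing and a central-limit / large-deviations regime for the Birkhoff sums of $\phi$; this implies that $G_R(1,x)$ concentrates at order $\zeta^{-m/2}$ on $S_m$ with only exponentially few outliers. A layer-cake summation then gives
\begin{equation*}
\#\{x\in\Gamma:G_R(1,x)\geq\varepsilon\}\;\asymp\;\sum_{m\,\leq\,2\log(1/\varepsilon)/\log\zeta}|S_m|\;\asymp\;\varepsilon^{-2},
\end{equation*}
as claimed. The main obstacle throughout is the identification $\lambda=1$, which expresses the critical balance $\varrho=\zeta^{-1/2}$ between the exponential decay rate of the Green's function and the sphere growth rate; any cleaner way to establish $\lambda\geq 1$ without invoking the forthcoming local limit theorem would likely require exploiting the planar / Fuchsian structure of $\Gamma$ in the same spirit as the proof of Theorem~\ref{theorem:1}.
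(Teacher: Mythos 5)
Your high-level plan --- lift $G_R$ to a H\"older potential $\varphi_R$ on the Cannon subshift, express $\sum_{x\in S_m}G_R(1,x)^2$ as a partition-function whose growth rate is $\exp\{m\Pr(2\varphi_R)\}$, and then prove $\Pr(2\varphi_R)=0$ --- is exactly the paper's strategy (Proposition~\ref{proposition:absolutelyCont} and Proposition~\ref{proposition:pressureEqualsZero}). However, both of your arguments for pinning down the pressure have gaps.

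For the inequality $\lambda=e^{\Pr(2\varphi_R)}\leq 1$: you write that $\lambda>1$ ``would contradict the uniform exponential decay'' of $G_R$, but this does not follow. Exponential decay gives only $\sum_{x\in S_m}G_R(1,x)^2\leq C(\varrho^2\zeta)^m$, and a priori nothing rules out $\varrho^2\zeta>1$ --- the sphere grows exponentially too, so individual decay is compatible with growing sphere sums. The paper gets $\Pr(2\varphi_R)\leq 0$ in a different way (Lemma~\ref{corollary:belowR}): for $r<R$ the derivative $dG_r(1,1)/dr$ is finite, hence by the differential identity \eqref{eq:GPrime} the series $\sum_x G_r(1,x)^2$ is finite, hence $\Pr(2\varphi_r)<0$; continuity of the pressure in $r$ then yields $\Pr(2\varphi_R)\leq 0$.

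For the crucial lower bound $\lambda\geq 1$: your proposed route via the identity
\[
\sum_{x\in\Gamma}G_R(1,x)^2=\sum_{k\geq 0}(k+1)R^kp^k(1,1)
\]
combined with ``a polynomial lower bound on $R^kp^k(1,1)$'' does not close. The best a-priori polynomial lower bound available without the local limit theorem is $R^kp^k(1,1)\gtrsim k^{-3}$ (from Property RD, as noted in the introduction), and $\sum_k(k+1)k^{-3}=\sum_k k^{-2}$ converges. You would need $R^kp^k(1,1)\gtrsim k^{-2}$, which is strictly between the RD bound and the $k^{-3/2}$ asymptotics of the local limit theorem, and no such intermediate estimate is established here. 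Thus the divergence of $\sum_x G_R(1,x)^2$ is not a consequence of the cited polynomial bound, and your scheme is circular once you replace it by the LLT. The paper circumvents this entirely with the branching-random-walk ``color'' decomposition (Lemma~\ref{lemma:colors} and Lemma~\ref{lemma:snapback}): it shows that if $\sum_{x\in S_m}G_R(1,x)^2$ decayed exponentially in $m$, then the Ancona inequalities at $r=R$ would force $G_{R+\varepsilon}(1,1)<\infty$ for some $\varepsilon>0$, contradicting that $R$ is the radius of convergence. This is a genuinely different, non-spectral input and is where the planarity/Ancona machinery is used.

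For part \eqref{eq:lp}: your layer-cake/concentration heuristic gives the correct order of magnitude but does not justify the sharp two-sided bound $\asymp\varepsilon^{-2}$. A Gibbs-measure CLT/LDP only controls the number of outliers up to subexponential corrections, which produces $\varepsilon^{-2+o(1)}$ rather than $\asymp\varepsilon^{-2}$. The paper instead invokes the renewal theorem of \cite{lalley:renewal}, applied to the cocycle $\varphi_R$, to get the sharp asymptotics; this is the precise tool that converts $\Pr(2\varphi_R)=0$ into the counting estimate.
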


The proof is carried out in
sections~\ref{sec:thermo}--\ref{sec:pressureEvaluation} below
(cf.\ Propositions~\ref{proposition:absolutelyCont} and
\ref{proposition:pressureEqualsZero}), using the fact that any
hyperbolic group has an \emph{automatic structure}
\cite{ghys-deLaHarpe}. The automatic structure will permit us to use
the theory of \emph{Gibbs states} and \emph{thermodynamic formalism}
of Bowen \cite{bowen}, ch.~1.
Theorem~\ref{theorem:holderMartinKernel} is essential for this, as the
theory developed in \cite{bowen} applies only to H\"older continuous
functions.

It is likely that $\asymp$ can be replaced by $\sim$ in \eqref{eq:lp}.
There is a simple heuristic argument that suggests why the sums
$\sum_{x\in S_{m}}G_{R} (1,x)^{2}$ should remain bounded as $m
\rightarrow \infty$: Since the random walk is $R-$transient, the
contribution to $G_{R} (1,1)<\infty$ from random walk paths that visit
$S_{m}$ and then return to $1$ is bounded (by $G_{R} (1,1)$).  For any
$x\in S_{m}$, the term $G_{R}(1,x)^{2}/G_{R}(1,1)$ is the contribution
to $G_{R} (1,1)$ from paths that visit $x$ before returning to $1$.
Thus, if $G_{R} (1,x)$ is not substantially larger than
\[
	\sum_{n=1}^{\infty} P^{1}\{X_{n}=x \; \text{and} \;\tau (m)=n\}R^{n},
\]
where $\tau (m)$ is the time of the first visit to $S_{m}$, then the
sum in \eqref{eq:backscatterA} should be of the same order of magnitude
as the total contribution to $G_{R} (1,1)<\infty$ from random walk
paths that visit $S_{m}$ and then return to $1$. Of course, the
difficulty in making this heuristic argument rigorous is that \emph{a
priori} one does not know that paths that visit $x$ are likely to be
making their first visits to $S_{m}$; it is Ancona's inequality
\eqref{eq:ancona}  that ultimately fills the gap.

\begin{note}\label{note:greenOnSphere}
A simple argument shows that for $r>1$ the sum of the Green's
function on the sphere $S_{m}$, unlike the sum of its square,
explodes as $m \rightarrow \infty$. Fix $1< r\leq R$ and $m\geq 1$.
Let $C_0$ bound the size of the jumps of the random walk, and let
$\tilde S_m$ be the set of points with $d(1,x)\in [m, m+C_0)$. Since
$X_{n}$ is transient, it will, with probability one, eventually visit
the annulus $\tilde S_{m}$. The minimum number of steps needed to
reach $\tilde S_{m}$ is at least $m/C_0$. Hence,
\begin{align*}
	\sum_{x\in \tilde S_{m}}G_{r}(1,x)&=\sum_{n=m/C_0}^{\infty} \sum_{x\in
	\tilde S_{m}} P^{1}\{X_{n}=x \}r^{n}\\
	&\geq r^{m/C_0}\sum_{n=m/C}^{\infty} P^{1}\{X_{n}\in \tilde S_{m} \}\\
	&\geq r^{m/C_0} P^{1}\{X_{n}\in \tilde S_{m} \text{ for some}\; n\}\\
	&=r^{m/C_0}.
\end{align*}
Hence, $\sum_{x\in \tilde S_{m}}G_{r}(1,x)$ diverges. The divergence
of $\sum_{x\in S_{m}}G_{r}(1,x)$ readily follows if the random walk
is irreducible.
\end{note}

\begin{note}\label{note:ledrappier}
There are some precedents for the result \eqref{eq:backscatterA}.
Ledrappier \cite{ledrappier:renewal} has shown that for Brownian
motion on the universal cover of a compact Riemannian manifold of
negative curvature, the integral of the Green's function $G_{1}
(x,y)=\int_{0}^{\infty}p_{t} (x,y)\,dt$ over the sphere $S(\varrho
,x)$ of radius $\varrho $ centered at a fixed point $x$ converges as
$\varrho \rightarrow \infty$ to a positive constant $C$ independent
of $x$. Hamenstaedt \cite{hamenstaedt} proves in the same context
that the integral of $G_{R}^{2}$ over $S (\varrho ,x)$ remains
bounded as the radius $\varrho \rightarrow \infty$. Our arguments
(see Note~\ref{note:theta} in sec.~\ref{sec:thermo}) show that for
finite range irreducible random walk on a co-compact Fuchsian group
the following is true: for each value of $r$ there exists a power
$1\leq \theta =\theta (r)\leq 2$ such that
\[
	 \lim_{m \rightarrow \infty}
	 \sum_{x\in S_{m}}G_{r} (1,x)^{\theta } = C_{r}>0.
\]
\end{note}

\subsection{Critical exponent for the  Green's
function}\label{ssec:criticalExponent}

Theorem \ref{theorem:2} implies that
the behavior of the Green's function $G_{R}(x,y)$ at the radius of
convergence as $y$ approaches the geometric boundary is intimately
related to the behavior of  $G_{r} (x,y)$ as $r\uparrow R$. The
connection between the two is rooted in the following set of
differential equations.

\begin{proposition}\label{proposition:GPrime}
For any random walk on any discrete group, the Green's functions
satisfy
\begin{equation}\label{eq:GPrime}
	\frac{d}{dr}G_{r} (x,y)=r^{-1}\sum_{z\in \Gamma}
			  G_{r} (x,z)G_{r} (z,y) -r^{-1}G_{r} (x,y)
	\quad \forall \; 0\leq r <R.
\end{equation}
\end{proposition}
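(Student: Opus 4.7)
The plan is a direct power-series manipulation hinging on the Chapman--Kolmogorov identity. Writing $p^n(x,y)=P^x\{X_n=y\}$, the Green's function is the power series $G_r(x,y)=\sum_{n\geq 0}p^n(x,y)r^n$, which converges absolutely for $0\leq r<R$. Within its radius of convergence I may differentiate term-by-term to obtain
\begin{equation*}
    r\frac{d}{dr}G_r(x,y)=\sum_{n\geq 0}n\,p^n(x,y)r^n.
\end{equation*}

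Next I would evaluate the convolution $\sum_{z\in\Gamma}G_r(x,z)G_r(z,y)$ by Fubini (the integrand is nonnegative, so the rearrangement is unconditional), then collecting coefficients of $r^n$:
\begin{equation*}
    \sum_{z\in\Gamma}G_r(x,z)G_r(z,y)
    =\sum_{n\geq 0}r^n\sum_{j+k=n}\sum_{z\in\Gamma}p^j(x,z)p^k(z,y)
    =\sum_{n\geq 0}(n+1)p^n(x,y)r^n,
\end{equation*}
where the inner identity $\sum_z p^j(x,z)p^k(z,y)=p^{j+k}(x,y)$ is Chapman--Kolmogorov, and the pairs $(j,k)$ with $j+k=n$ number $n+1$. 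Splitting this last series as $\sum_n n\,p^n(x,y)r^n+\sum_n p^n(x,y)r^n$ gives
\begin{equation*}
    \sum_{z\in\Gamma}G_r(x,z)G_r(z,y)=r\frac{d}{dr}G_r(x,y)+G_r(x,y),
\end{equation*}
and solving for the derivative yields the identity \eqref{eq:GPrime}.

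There is no real obstacle; the only points requiring a brief justification are (i) term-by-term differentiation of the power series, valid strictly inside the radius of convergence, and (ii) the interchange of the sum over $z$ and the $n$-sum, which is immediate from nonnegativity of the terms (so the double series either equals the iterated sum or both diverge, and finiteness of $G_r(x,y)$ prevents divergence for $r<R$). The content of the proposition is thus essentially the algebraic observation that convolution of generating functions corresponds to the Chapman--Kolmogorov composition of transition probabilities, and that this composition overcounts each $p^n(x,y)r^n$ precisely $n+1$ times.
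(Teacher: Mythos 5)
Your proof is correct and is essentially the same argument as the paper's, phrased in the language of transition-probability power series and Chapman--Kolmogorov rather than in the paper's language of path weights: the paper observes that $\frac{d}{dr}w_r(\gamma)=m\,w_r(\gamma)/r$ distributes one factor of $w_r(\gamma)/r$ over each of the $m$ nontrivial splitting points of a length-$m$ path, which is exactly your observation that the convolution $\sum_z G_r(x,z)G_r(z,y)$ counts each $p^n(x,y)r^n$ with multiplicity $n+1$.
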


Although the proof is elementary (cf.\ section \ref{ssec:GFs} below)
these differential equations have not (to our knowledge) been observed
before.  Theorem~\ref{theorem:2} implies that the sum in
equation~\eqref{eq:GPrime} blows up as $r \rightarrow R-$; this is
what causes the singularity of $r\mapsto G_{r} (1,1)$ at $r=R$.  The
rate at which the sum blows up determines the \emph{critical
exponent} for the Green's function, that is, the exponent $\alpha $
for which $G_{R} (1,1) -G_{r} (1,1)\sim C (R-r)^{\alpha}$. The
following theorem asserts that the critical exponent is $1/2$.

\begin{theorem}\label{theorem:criticalExponent}
For any symmetric, irreducible, finite-range random walk on a
co-compact Fuchsian group $\Gamma $, there exist constants
$C_{x,y}>0$ such that as $r \rightarrow R-$,
\begin{align}\label{eq:criticalExponent}
	G_{R} (x,y)-G_{r} (x,y) &\sim C_{x,y}\sqrt{R-r} \quad \text{and}\\
	dG_{r} (x,y)/dr &\sim \frac{1}{2} C_{x,y}/\sqrt{R-r}.
\end{align}
\end{theorem}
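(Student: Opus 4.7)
The plan is to use Proposition~\ref{proposition:GPrime} to reduce Theorem~\ref{theorem:criticalExponent} to a sharp asymptotic on $S_r(x,y) := \sum_z G_r(x,z)G_r(z,y)$ as $r \to R-$. By symmetry of the walk, $S_r$ is the operator product $(G_r^2)$, and setting $\phi_{x,y}(r) := G_R(x,y) - G_r(x,y)$, Proposition~\ref{proposition:GPrime} reads $-r\phi'_{x,y}(r) = S_r(x,y) - G_r(x,y)$ with $\phi_{x,y}(R) = 0$. The target asymptotic $\phi_{x,y}(r) \sim C_{x,y}\sqrt{R-r}$ is then equivalent, via integration of the ODE (with the bounded term $G_r$ contributing only lower-order corrections), to
\[
    S_r(x,y) \sim \tfrac{1}{2}R\,C_{x,y}\,(R-r)^{-1/2},
\]
and the derivative asymptotic is automatic from the same ODE.

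A soft one-sided estimate follows from a resolvent-type identity: the expansion $G_r G_R = \sum_{n,m\geq 0} r^n R^m P^{n+m}$ gathers into
\[
    (G_r G_R)(x,y) = \sum_{k\geq 0}\frac{R^{k+1}-r^{k+1}}{R-r}\,P^k(x,y) = \frac{R\phi_{x,y}(r)}{R-r} + G_r(x,y),
\]
and since $G_r \leq G_R$ entrywise, $S_r(x,y) \leq (G_r G_R)(x,y)$. Feeding this back into the ODE yields the differential inequality $-r\phi'_{x,y}(r) \leq R\phi_{x,y}(r)/(R-r)$, which integrates to the crude one-sided estimate $\phi_{x,y}(r) \geq c_{x,y}(R-r)$ for $r$ near $R$. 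This is weaker than the $\sqrt{R-r}$ target but serves as useful a priori control.

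The matching sharp upper bound on $\phi_{x,y}$ (equivalently, the matching lower bound on $S_r$) must come from the thermodynamic-formalism machinery of Sections~\ref{sec:thermo}--\ref{sec:pressureEvaluation}. Decomposing $S_r(x,y) = \sum_m \sum_{z \in S_m} G_r(x,z) G_r(z,y)$ and coding $G_r(x,\cdot)$ along geodesics via the automatic structure of $\Gamma$, the spherical sums reduce to matrix elements of iterates $\mathcal{L}_r^m$ of a Ruelle-type transfer operator built from $G_r$-data. Theorem~\ref{theorem:2} identifies $r = R$ as the parameter at which the leading eigenvalue $\lambda(r)$ equals $1$ (i.e.\ pressure zero), and the uniform Hölder regularity supplied by Theorem~\ref{theorem:holderMartinKernel} provides the function-space framework on which the Bowen--Ruelle Gibbs theory applies. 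The asymptotic $S_r \asymp (R-r)^{-1/2}$ then amounts to establishing the spectral-gap asymptotic $1 - \lambda(r) \sim \kappa\sqrt{R-r}$.

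The main obstacle is precisely this square-root, rather than linear, singularity of the spectral gap. Because $\mathcal{L}_r$ is itself constructed from $G_r$, its dependence on $r$ inherits exactly the singular behavior at $r = R$ that one is trying to pin down, so naive analytic perturbation theory would predict only $1 - \lambda(r) \asymp R-r$, which is incompatible with the claim. Breaking this circularity likely requires a self-consistent bootstrap: start from the soft bound $\phi(r) \geq c(R-r)$ together with comparable a priori upper bounds extracted from the Gibbs formalism, feed the resulting estimate back into the transfer-operator analysis to sharpen $1-\lambda(r)$, iterate, and converge to the fixed point $\phi(r) \asymp \sqrt{R-r}$. The exact constant $C_{x,y}$, and the prefactor $\tfrac12$ relating $S_r$ asymptotically to $(G_r G_R)$, reflect the square-root density of the spectral measure of the transition operator at its upper edge; in the symbolic framework this falls out of a Darboux-type analysis of the Gibbs partition function at the critical parameter.
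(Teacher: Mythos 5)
Your reduction via Proposition~\ref{proposition:GPrime} and the one-sided bound $S_r\leq G_rG_R$ are correct, and you correctly flag that the hard direction is a matching \emph{lower} bound on $S_r$ that must come from the thermodynamic formalism and the vanishing of the pressure $\Pr(2\varphi_R)$. But the proof as written has a genuine gap at exactly the point you mark as ``likely requires a self-consistent bootstrap'': no bootstrap is carried out, and in fact none is needed. The paper instead derives a \emph{closed} ordinary differential equation in the scalar quantity $\eta(r):=\sum_xG_r(1,x)^2=S_r(1,1)$, namely $d\eta/dr\sim C\,\eta(r)^3$, which integrates directly to $\eta(r)^{-2}\sim C(R-r)/2$ and hence to the square root. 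The mechanism is not a spectral-gap asymptotic $1-\lambda(r)\sim\kappa\sqrt{R-r}$ for a transfer operator (this would indeed be circular, as you say), but rather a factorization of the triple sum
\[
\frac{d\eta}{dr}\approx 2r^{-1}\sum_{x,y}G_r(1,x)\,G_r(1,y)\,G_r(y,x)
\]
obtained by decomposing each inner sum over $y$ according to the nearest point $z$ on the geodesic $L(1,x)$, invoking the $R$-Ancona inequalities to localize $G_r(1,y)G_r(y,x)\approx G_r(1,x)G_r(z,y)^2$, and then applying the ergodic theorem for the sphere measures $\lambda_{r,m}$ (Proposition~\ref{proposition:ergodic2}, which rests on Corollary~\ref{corollary:ergodicCorollary}) to show that for $\lambda_{r,m}$-typical $x\in S_m$ one has $\sum_yG_r(1,y)G_r(y,x)/G_r(1,x)\sim\xi(r)\eta(r)\,m$ with $\xi$ continuous and positive. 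Summing $m\sum_{x\in S_m}G_r(1,x)^2\sim Cm\,e^{m\Pr(2\varphi_r)}$ over $m$ then produces the factor $\eta(r)^2$, closing the ODE.

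So the missing idea is the passage from the second-order information $S_r(1,1)=\eta(r)$ to the third-order object $\sum_x G_r(1,x)\,S_r(1,x)$, and the observation that this third moment is asymptotically $\xi(r)\,\eta(r)\cdot\sum_m m\,|S_m|_{G_r^2}$, which is $\asymp\eta(r)^3$ because $\Pr(2\varphi_r)\to0$. Your appeal to a ``Darboux-type analysis of the Gibbs partition function'' and an iterative sharpening of the spectral gap does not supply this; absent the ODE there is no reason for the exponent to be $1/2$ rather than something else. You would also need to promote the result from $x=y=1$ to general $(x,y)$; the paper does this using the weak convergence $\lambda_{r,m}\Rightarrow\lambda_r$ together with the H\"older continuity and uniform convergence of the Martin kernels (Corollary~\ref{corollary:greenAsymptotics}), a step the proposal omits entirely.
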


The proof of Theorem~\ref{theorem:criticalExponent} is given in
section~\ref{sec:criticalExp}. Like the proof of
Theorem~\ref{theorem:2}, it uses the existence of an automatic
structure and the attendant thermodynamic formalism. It also relies
critically on the conclusion of Theorem~\ref{theorem:2}, which
determines the value of the key thermodynamic variable.

The behavior of the generating function $G_{r} (1,1)$ in the
neighborhood of the singularity $r=R$ is of interest because it
reflects the asymptotic behavior
of the coefficients $P^{1}\{X_{n}=1 \}$ as $n \rightarrow \infty$.
In section~\ref{sec:asymptotics} we will show that
Theorem~\ref{theorem:criticalExponent}, in conjunction with Karamata's
Tauberian Theorem, implies the following \emph{local limit theorem}.

\begin{theorem} \label{theorem:localLimit}
For any symmetric, irreducible, finite-range, aperiodic random walk
on a co-compact Fuchsian group with spectral radius $R^{-1}$, there
exist constants $C_{x,y}>0$ such that for all $x,y\in \Gamma$,
  \begin{equation}
  \label{eq:llt}
  p^n(x,y)\sim C_{x,y} R^{-n}n^{-3/2}.
  \end{equation}
If the random walk  is not aperiodic, these asymptotics hold for even
(resp.~odd) $n$ if the distance from $x$ to $y$ is even (resp.~odd).
\end{theorem}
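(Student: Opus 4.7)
The plan is to extract the coefficient asymptotics from the square-root singularity of $G_r(x,y)$ at $r=R$ given by Theorem~\ref{theorem:criticalExponent}, via Karamata's Tauberian theorem supplemented with a monotone-regularity argument that leverages the symmetry and aperiodicity hypotheses. Concentrate on the aperiodic case; the periodic case follows by splitting the argument according to the parity of $n$.

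First I would normalize: set $a_n = p^n(x,y)R^n$ and $s = r/R$, so that $\tilde G(s) := G_{sR}(x,y) = \sum_{n\ge 0} a_n s^n$ has radius of convergence $1$ and $\tilde G(1) = G_R(x,y)<\infty$. Theorem~\ref{theorem:criticalExponent} becomes
\begin{equation*}
\tilde G(1)-\tilde G(s) = \sum_{n\ge 0} a_n(1-s^n) \sim C_{x,y}\sqrt{R}\,(1-s)^{1/2}\qquad (s\to 1^-).
\end{equation*}
Writing $1-s^n = (1-s)(1+s+\dots+s^{n-1})$ and exchanging summation gives
\begin{equation*}
\sum_{j\ge 0} A_{j+1}s^{j} \sim C_{x,y}\sqrt{R}\,(1-s)^{-1/2},\qquad A_j := \sum_{k\ge j} a_k.
\end{equation*}
Since $A_{j+1}\ge 0$, Karamata's Tauberian theorem yields $\sum_{j=1}^{n} A_j \sim (2C_{x,y}\sqrt{R}/\sqrt{\pi})\,n^{1/2}$, and then, because $A_n$ is nonincreasing, a standard monotone Tauberian argument improves this to the pointwise statement $A_n \sim (C_{x,y}\sqrt{R}/\sqrt{\pi})\,n^{-1/2}$.

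The last step is to pass from tail asymptotics to pointwise asymptotics for $a_n$ itself. This is where the Tauberian gap must be closed, and it is the main obstacle: Karamata alone does not prevent $a_n$ from having large oscillations consistent with smoothed asymptotics of $A_n$. To rule this out, I would use that $P$ is self-adjoint on $\ell^2(\Gamma)$, so that $p^n(x,x) = \int_{-1/R}^{1/R}\lambda^n\,d\mu_x(\lambda)$ for a positive spectral measure $\mu_x$. Irreducibility together with aperiodicity force $\mu_x$ to have no mass at $\{-1/R\}$, so $p^n(x,x)R^n$ is eventually monotone decreasing; the case of general $x,y$ reduces to the diagonal case via $p^{2n}(x,x)\ge p^n(x,y)p^n(y,x) = p^n(x,y)^2$ (by symmetry) and aperiodicity of return times from $x$ to $y$. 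Once $a_n$ is known to be asymptotically monotone, the relation $\sum_{k\ge n} a_k \sim (C_{x,y}\sqrt{R}/\sqrt{\pi})\,n^{-1/2}$ forces
\begin{equation*}
a_n \sim \frac{C_{x,y}\sqrt{R}}{2\sqrt{\pi}}\,n^{-3/2},
\end{equation*}
which, reintroducing $p^n(x,y) = R^{-n}a_n$, is precisely \eqref{eq:llt} with a new constant $\tilde C_{x,y}=C_{x,y}\sqrt{R}/(2\sqrt{\pi})$.

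For the periodic case (the step distribution has period $2$ on $\Gamma$ when suitably bipartitioned), $p^n(x,y)$ vanishes on the wrong-parity indices, and the generating function $\tilde G(s)$ picks up an additional singularity at $s=-1$. Splitting the coefficient sequence into the two parity classes and running the same Tauberian argument on each subsequence yields \eqref{eq:llt} restricted to the correct parity, which is the statement of the theorem in that case. The delicate step throughout remains the regularity argument above; everything else is a routine application of Karamata's theorem to the asymptotic furnished by Theorem~\ref{theorem:criticalExponent}.
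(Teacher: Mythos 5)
Your overall strategy --- Karamata's Tauberian theorem combined with a spectral regularity input to close the Tauberian gap --- is the same as the paper's. Your variant of the Karamata step (working with $G_R(x,y)-G_r(x,y)$ and the tail sums $A_n=\sum_{k\geq n}a_k$, rather than with $G'_r(x,y)$ and $\sum_{k\leq n}k\, a_k$ as the paper does) is a legitimate rearrangement, and your monotone-Cesaro step to get $A_n\sim C n^{-1/2}$ from $\sum_{j\leq n}A_j\sim C' n^{1/2}$ is sound because the tails $A_n$ are automatically nonincreasing. But the step you yourself identify as the crux --- passing from tail asymptotics for $A_n$ to pointwise asymptotics for $a_n$ --- contains a genuine gap.

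You claim that irreducibility and aperiodicity force the spectral measure $\mu_x$ to have no atom at $-1/R$, and conclude that $R^n p^n(x,x)=\int(R\lambda)^n\,d\mu_x(\lambda)$ is eventually monotone decreasing. That implication is false. Removing an atom at $-1/R$ says nothing about continuous spectral mass accumulating at $-1/R$; if $\mu_x$ has, say, a density vanishing to finite order at $-1/R$, the negative-spectrum contribution $\int_{\lambda<0}(R\lambda)^n\,d\mu_x(\lambda)$ decays only polynomially and alternates in sign, so $R^n p^n(x,x)$ can oscillate at exactly the scale $n^{-3/2}$ you are trying to pin down. What is actually needed is that the spectrum of $\zz{P}$ is bounded strictly away from $-1/R$ in norm. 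That is precisely Theorem~\ref{theorem:spectrum} in the paper, and the nontrivial input there is Cartwright's theorem: for a symmetric, aperiodic walk the Green's function has $z=R$ as its \emph{only} singularity on the circle $|z|=R$, which after Stieltjes inversion forces a spectral gap below $-R^{-1}(1+\varepsilon)^{-1}$. Even then, the paper does not claim $R^n p^n(1,1)$ is eventually monotone; it decomposes $R^n p^n(1,1)=q_n+O(e^{-\delta n})$ with $q_n$ (the positive-spectrum part) monotone and runs the Tauberian argument on $q_n$ alone.

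There is a second gap in the off-diagonal case. The inequality $p^{2n}(x,x)\geq p^n(x,y)^2$ gives only an upper bound on $p^n(x,y)$, and ``aperiodicity of return times from $x$ to $y$'' is not, as stated, a mechanism for transferring asymptotics from the diagonal to the off-diagonal. The paper handles this by applying the same spectral decomposition to the indicator of the two-point set $\{1,x\}$, which yields $R^n\bigl(p^n(1,1)+p^n(1,x)\bigr)=q_n(1,x)+O(e^{-\delta n})$ with $q_n(1,x)$ monotone; one then proves the asymptotic for $q_n(1,x)$ and subtracts the already-established diagonal asymptotics. You would need some analogue of this device (or another positivity trick) to complete your argument.
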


According to a theorem of Bougerol \cite{bougerol}, the transition
probability densities of a random walk on a semi-simple Lie group
follow a similar asymptotic law, provided the step distribution is
rapidly decaying and has an absolutely continuous component with
respect to the Haar measure. The exponential decay rate depends on
the step distribution, but the critical exponent (the power of $n$ in
the asymptotic formula, in our case $3/2$) depends only on the rank
and the number of positive, indivisible roots of the group.
Theorem~\ref{theorem:localLimit} shows that -- at least for $SL
(2,\zz{R})$ -- the critical exponent is inherited by a large class of
co-compact discrete subgroups. For random walks on free groups
\cite{gerl-woess}, \cite{lalley:frrw}, most free products
\cite{woess:freeProducts}, and virtually free groups that are not
virtually cyclic (including $SL_{2}(\zz{Z})$), \cite{woess:local},
\cite{lalley:rwrl}, \cite{nagnibeda-woess}, \cite{woess:preprint}
local limit theorems of the form \eqref{eq:llt} have been known for
some time. In all of these cases the Green's functions are algebraic
functions of $r$.  We expect (but cannot prove) that for symmetric,
finite-range random walks on co-compact Fuchsian groups the Green's
functions are \emph{not} algebraic. However, we will prove in
section~\ref{sec:asymptotics} (Theorem~\ref{theorem:spectrum}) that
the Green's function admits an analytic continuation to a doubly slit
plane $\zz{C}\setminus ([R,\infty)\cup (-\infty ,-R
(1+\varepsilon)])$, and Theorem~\ref{theorem:criticalExponent}
implies that if the singularity at $r=R$ is a branch point then it
must be of order $2$. If it could be shown that the singularity is
indeed a branch point then it would follow that the transition
probabilities have complete asymptotic expansions in powers of
$n^{-1/2}$.

The most important step in our program -- the proof of Ancona's
inequalities (Theorem~\ref{theorem:1}) -- depends heavily on the
planarity of the Cayley graph of the group. The derivation of the
subsequent results (including
Theorems~\ref{theorem:2},~\ref{theorem:criticalExponent}
and~\ref{theorem:localLimit}) uses thermodynamic formalism, which is
possible since the Markov automaton associated to Fuchsian groups is
recurrent. Nevertheless, most of our techniques apply in arbitrary
hyperbolic groups. We expect that our results should hold (maybe in
weaker forms) in this broader context, but the proofs would require
significant new ideas. The only results we are aware of in this
direction are the following polynomial estimates (that were suggested
to us by an anonymous referee): for any symmetric, irreducible,
finite-range, aperiodic random walk in an hyperbolic group with
spectral radius $R^{-1}$, one has
  \begin{equation*} CR^{-n} n^{-3}
  \leq p^n(1,1) \leq C R^{-n}n^{-1}.
  \end{equation*}
The upper bound follows from the fact that the sequence
$R^{2n}p^{2n}(1,1)$ is non-increasing and summable, hence $o(1/n)$.
For the lower bound one uses Property RD (see for instance
\cite{chatterji}, and especially Proposition 1.7 there to get
quantitative estimates for hyperbolic groups): there exists a
constant $C>0$ such that, for any function $f$ supported in the ball
$B(1,n)$, the operator norm of $g\mapsto f\star g$ on
$\ell^2(\Gamma)$ is bounded by $C n^{3/2}\lVert f \rVert_{\ell^2}$.
We apply this estimate to $f(x)=p^n(1,x)$: the norm of the
convolution with $f$ is $R^{-n}$, while $\lVert f\rVert_{\ell^2} =
(p^{2n}(1,1))^{1/2}$. Therefore, Property RD gives $R^{-n} \leq C
n^{3/2} (p^{2n}(1,1))^{1/2}$, proving the desired lower bound.

For random walks with non-symmetric step distributions, the local
limit theorem holds in free groups.  We expect it to hold also in
general hyperbolic groups. However, we do not even know how to prove
polynomial bounds for this case.

\section{Green's function: preliminaries}\label{sec:preliminaries}

Throughout this section, $X_{n}$ is a symmetric, finite-range
irreducible random walk on a finitely generated, nonamenable group
$\Gamma$ with (symmetric) generating set $A$. Let $S$ denote the
support of the step distribution of the random walk. We assume
throughout that $S$ is finite, and hence contained
in a ball $B(1,C_0)$ for some $C_0\geq 1$.

\subsection{Green's function as a sum over paths}\label{ssec:GFs}

The Green's function $G_{r} (x,y)$ defined by \eqref{eq:green} has an
obvious interpretation as a sum over paths from $x$ to $y$. (Note:
Here and in the sequel a \emph{path} in $\Gamma$ is just a sequence
$x_{n}$ of vertices in the Cayley graph $G^{\Gamma}$ with
$x_{n}^{-1}x_{n+1}\in S$ for all $n$). Denote by $\mathcal{P} (x,y)$
the set of all paths $\gamma$ from $x$ to $y$, and for any such path
$\gamma = (x_{0},x_{1},\dotsc ,x_{m})$ define the \emph{weight}
\begin{equation*}
	w_{r} (\gamma):=r^{m}\prod_{i=0}^{m-1}p (x_{i},x_{i+1}).
\end{equation*}
Then
\begin{equation}\label{eq:greenByPath}
	G_{r} (x,y)=\sum_{\gamma \in \mathcal{P} (x,y)}w_{r} (\gamma).
\end{equation}
Since the step distribution $p (x)=p (x^{-1})$ is symmetric with
respect to inversion, so is the weight function $\gamma \mapsto w_{r}
(\gamma)$: if $\gamma^{R}$ is the reversal of the path $\gamma$, then
$w_{r} (\gamma^{R})=w_{r} (\gamma)$. Consequently, the Green's
function is symmetric in its arguments:
\begin{equation}\label{eq:GreenSymmetry}
	G_{r} (x,y)=G_{r} (y,x).
\end{equation}
Also, the weight function is multiplicative with respect to
concatenation of paths, that is, $w_{r} (\gamma \gamma ')=w_{r}
(\gamma)w_{r} (\gamma ')$. Since the random walk is irreducible,
every generator of the group can be reached by the random walk in
finite time with positive probability. It follows that the Green's
function satisfies a system of \emph{Harnack inequalities}: There
exists a constant $C<\infty$ such that for each $0<r\leq R$ and all
group elements $x,y,z$,
\begin{equation}\label{eq:harnack}
	G_{r} (x,z)\leq C^{d (y,z)}G_{r} (x,y).
\end{equation}

\begin{proof}[Proof of Proposition~\ref{proposition:GPrime}]
This is a routine calculation based on the representation
\eqref{eq:greenByPath} of the Green's function as a sum over paths.
Since all terms in the power series representation of the Green's
function have nonnegative coefficients, interchange of $d/dr$ and
$\sum_{\gamma}$ is permissible, so
\[
	\frac{d}{dr}G_{r} (x,y)=\sum_{\gamma \in \mathcal{P} (x,y)}
	\frac{d}{dr} w_{r} (\gamma).
\]
  If $\gamma$ is a path from $1$ to $x$
of length $m$, then the derivative with respect to $r$ of the weight
$w_{r} (\gamma)$ is $mw_{r} (\gamma)/r$, so $dw_{r} (\gamma)/dr$
contributes one term of size $w_{r} (\gamma)/r$ for each vertex
visited by $\gamma$ after its first step.  This, together with the
multiplicativity of $w_{r}$, yields the identity \eqref{eq:GPrime}.
\end{proof}

\subsection{First-passage generating functions}\label{ssesc:fpGF}
Other useful generating functions can be obtained by summing path
weights over different sets of paths. Two classes of such generating
functions that will be used below are the \emph{restricted Green's
functions} and the \emph{first-passage} generating functions (called
the \emph{balayage} by Ancona \cite{ancona}) defined as follows. Fix
a  set of vertices $\Omega \subset \Gamma $, and for any  $x,y\in
\Gamma$ let $\mathcal{P}(x,y;\Omega)$ be the set of all paths from
$x$ to $y$ that remain in the region $\Omega$ at all except the
initial and final points. Define
\begin{align*}
	G_{r} (x,y;\Omega)&=\sum_{\mathcal{P} (x,y;\Omega )}w_{r}
	(\gamma),\quad \text{and}\\
    F_{r} (x,y)&=G_{r} (x,y;\Gamma \setminus \{y \}).
\end{align*}
Thus, $F_{r} (x,y)$, the \emph{first-passage generating function}, is
the sum over all paths from $x$ to $y$ that first visit $y$ on the
last step. This generating function has the alternative representation
\begin{equation*}
	F_{r} (x,y)=E^{x}r^{\tau (y)}
\end{equation*}
where $\tau (y)$ is the time of the first visit to $y$ by the random
walk $X_{n}$, and the expectation extends only over those sample paths
such that $\tau (y)<\infty$.
%
%
Finally, since any visit to $y$ by a path started at $x$ must follow a
\emph{first} visit to $y$,
\begin{equation}\label{eq:GxFxG}
	G_{r} (x,y)=F_{r} (x,y)G_{r} (1,1).
\end{equation}
Therefore, since $G_{r}$ is symmetric in its arguments, so is $F_{r}$.

\begin{lemma}\label{lemma:backscattering}
\[
	\lim_{n \rightarrow \infty}\max_{\{x\in \Gamma \,:\, |x|=n\}}
	G_{R} (1,x)=0.
\]
\end{lemma}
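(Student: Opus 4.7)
The plan is to exploit symmetry of the step distribution to dominate the off-diagonal transition probabilities $p^n(1,x)$ by the return probabilities $p^n(1,1)$, and then to truncate the series $G_R(1,x)=\sum_n R^n p^n(1,x)$ at $n=\lceil |x|/C_0\rceil$, the minimum number of steps needed to reach $x$. Finiteness of $G_R(1,1)$, cited as \eqref{eq:finiteAtR} from Kesten's theorem, will then force the tail of the series to vanish uniformly in $x$.

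The key pointwise bound is that for every $n\geq 0$ and every $y,x\in\Gamma$,
\[
p^n(y,x)\leq p^{2\lfloor n/2\rfloor}(1,1).
\]
For $n=2k$ this is Cauchy--Schwarz combined with the fact that symmetry of the step distribution makes the transition operator $P$ self-adjoint on $\ell^2(\Gamma)$:
\[
p^{2k}(y,x)=\langle P^k\delta_y,P^k\delta_x\rangle\leq \|P^k\delta_y\|_2\|P^k\delta_x\|_2=\sqrt{p^{2k}(y,y)\,p^{2k}(x,x)}=p^{2k}(1,1),
\]
where the last equality uses the translation invariance $p^{2k}(z,z)=p^{2k}(1,1)$. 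For $n=2k+1$, I condition on the first step and apply the even case at the shifted base point $y$:
\[
p^{2k+1}(1,x)=\sum_y p(1,y)\,p^{2k}(y,x)\leq p^{2k}(1,1)\sum_y p(1,y)=p^{2k}(1,1).
\]
Multiplying by $R^n$, this yields a constant $C$ depending only on $R$ such that $R^n p^n(1,x)\leq C\bigl(R^n p^n(1,1)+R^{n-1}p^{n-1}(1,1)\bigr)$.

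Because the step distribution is supported in $B(1,C_0)$, any path from $1$ to $x$ has length at least $|x|/C_0$, so $p^n(1,x)=0$ whenever $n<|x|/C_0$. Combining this with the previous estimate,
\[
G_R(1,x)=\sum_{n\geq |x|/C_0}R^n p^n(1,x)\leq C\sum_{n\geq |x|/C_0-1}R^n p^n(1,1),
\]
and the right-hand side is a tail of the convergent series defining $G_R(1,1)$. Taking the supremum over $\{x:|x|=n\}$ and letting $n\to\infty$ completes the proof. There is no genuine obstacle: the argument uses symmetry only through self-adjointness of $P$ and the identity $p^{2k}(z,z)=p^{2k}(1,1)$, the finite-range hypothesis only through the length lower bound for paths, and nonamenability only through $G_R(1,1)<\infty$.
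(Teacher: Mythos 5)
Your proof is correct, and it takes a genuinely different route from the paper's. The paper works at the level of first-passage generating functions: it concatenates a path $1\to x$ with a reversed path $x\to 1$ to deduce $F_R(1,x)^2 G_R(1,1)\leq\sum_{n\geq 2|x|/C_0}P^1\{X_n=1\}R^n$, and then uses $G_R(x,y)=F_R(x,y)G_R(1,1)$ to conclude. You instead replace the combinatorial path-pairing with the operator-theoretic fact that symmetry makes $\zz{P}$ self-adjoint, which via Cauchy--Schwarz yields the pointwise bound $p^n(1,x)\leq p^{2\lfloor n/2\rfloor}(1,1)$, and then sum directly. Both arguments exploit the same three hypotheses in the same roles --- symmetry to dominate off-diagonal by diagonal data, finite range to force $p^n(1,x)=0$ for $n<|x|/C_0$, and nonamenability to ensure $G_R(1,1)<\infty$ so that the tail vanishes --- but yours gives a sharper pointwise estimate on transition probabilities rather than an estimate on the first-passage function $F_R$, and is arguably cleaner since it avoids the implicit appeal to injectivity of the path-splitting decomposition. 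The paper's formulation via $F_R$ is more in tune with the path-decomposition machinery used throughout Section~2, and the authors remark that ``several variations on this argument will be used later,'' whereas your spectral route is self-contained but does not directly recycle.
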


\begin{proof}
If $\gamma$ is a path from $1$ to $x$, and $\gamma '$ a path from $x$
to $1$, then the concatenation $\gamma \gamma '$ is a path from $1$
back to $1$. Furthermore, since any path from $1$ to $x$ or back must
make at least $|x|/C_0$ steps, the length of $\gamma \gamma '$ is at
least $2|x|/C_0$. Consequently, by symmetry,
\begin{equation}\label{eq:back-forth}
	F_{R} (1,x)^{2}G_{R} (1,1)\leq \sum_{n=2|x|/C_0}^{\infty}
	      P^{1}\{X_{n}=1 \}R^{n}.
\end{equation}
Since $G_{R} (1,1)<\infty$, by nonamenability of the group $\Gamma$,
the tail-sum on the right side of inequality \eqref{eq:back-forth}
converges to $0$ as $|x| \rightarrow \infty$, and so $F_{R}
(1,x)\rightarrow 0$ as $|x| \rightarrow \infty$. Consequently, by
\eqref{eq:GxFxG}, so does $G_{R} (1,x)$.
\end{proof}
Several variations on this argument will be used later.

\subsection{Subadditivity and the random walk
metric}
\label{ssec:superadditivity}
A path from $x$ to $z$ that visits $y$ can be uniquely split into a path
from $x$ to $y$ with first visit to $y$ at the last point, and a path
from $y$ to $z$. Consequently, by the Markov property (or
alternatively the path representation \eqref{eq:greenByPath} and the
multiplicativity of the weight function $w_{r}$), $G_r(x,z)\geq
F_r(x,y)G_r(y,z)$. Since $G_r(x,z)=F_r(x,z)G_r(1,1)$ and
$G_r(y,z)=F_r(y,z)G_r(1,1)$, we deduce that the function $-\log F_{r}
(x,y)$ is \emph{subadditive}:

\begin{lemma}\label{lemma:superadditivity}
For each $r\leq R$ the first-passage generating functions  $F_{r}
(x,y)$ are super-- multiplicative, that is, for any group elements
$x,y,z$,
\begin{equation*}
	F_{r} (x,z)\geq F_{r} (x,y)F_{r} (y,z) .
\end{equation*}
\end{lemma}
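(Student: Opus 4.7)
The plan is to formalize the path-splitting argument sketched in the paragraph preceding the lemma, using the sum-over-paths representation \eqref{eq:greenByPath} of the Green's function together with the identity \eqref{eq:GxFxG}.

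First I will decompose any path $\gamma \in \mathcal{P}(x,z)$ that visits $y$ at its first visit to $y$. Concretely, if $\gamma = (x_0, x_1, \ldots, x_m)$ with $x_0 = x$, $x_m = z$, and $x_k = y$ for some minimal $k$, then $\gamma$ factors uniquely as a concatenation $\gamma = \gamma_1 \gamma_2$, where $\gamma_1 = (x_0, \ldots, x_k) \in \mathcal{P}(x, y; \Gamma \setminus \{y\})$ contributes to $F_r(x,y)$, and $\gamma_2 = (x_k, \ldots, x_m) \in \mathcal{P}(y,z)$ is an unrestricted path from $y$ to $z$, contributing to $G_r(y,z)$. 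By the multiplicativity of the weight function under concatenation, $w_r(\gamma) = w_r(\gamma_1) w_r(\gamma_2)$.

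Next, summing over all such $\gamma$ and noting that the map $\gamma \mapsto (\gamma_1, \gamma_2)$ is a bijection from the set of paths in $\mathcal{P}(x,z)$ that pass through $y$ onto $\mathcal{P}(x, y; \Gamma \setminus \{y\}) \times \mathcal{P}(y,z)$, I obtain
\begin{equation*}
    \sum_{\gamma \in \mathcal{P}(x,z), \, \gamma \ni y} w_r(\gamma) = F_r(x,y) G_r(y,z).
\end{equation*}
Since every term $w_r(\gamma)$ is nonnegative and the left side is dominated by $G_r(x,z) = \sum_{\gamma \in \mathcal{P}(x,z)} w_r(\gamma)$, this yields $G_r(x,z) \geq F_r(x,y) G_r(y,z)$.

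Finally, applying the identity \eqref{eq:GxFxG} to rewrite $G_r(x,z) = F_r(x,z) G_r(1,1)$ and $G_r(y,z) = F_r(y,z) G_r(1,1)$, and dividing through by $G_r(1,1) > 0$, I obtain the claimed inequality $F_r(x,z) \geq F_r(x,y) F_r(y,z)$. There is no real obstacle here: the only point requiring minor care is verifying the bijective factorization at the first hitting time of $y$, which is immediate because the position of the first visit to $y$ is determined by $\gamma$ itself. Equivalently, the same conclusion follows instantly from the strong Markov property applied at $\tau(y)$ in the probabilistic representation $F_r(x,y) = E^x r^{\tau(y)} \mathbf{1}\{\tau(y) < \infty\}$.
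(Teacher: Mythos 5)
Your proof is correct and follows exactly the argument the paper sketches in the paragraph preceding the lemma: split each path from $x$ to $z$ through $y$ at the first visit to $y$, deduce $G_r(x,z)\geq F_r(x,y)G_r(y,z)$ by multiplicativity of $w_r$, and then cancel $G_r(1,1)$ using the identity $G_r=F_r\,G_r(1,1)$. No differences worth noting.
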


Together with Kingman's subadditive ergodic theorem, this implies that
the Green's function $G_{r} (1,x)$ must decay  at a fixed
exponential rate along suitably chosen trajectories. For instance, if
\begin{equation*}
	 Y_{n}=\xi_{1}\xi_{2}\dotsb \xi_{n}
\end{equation*}
where $\xi_{n}$ is an ergodic Markov chain on the alphabet $A$, or on
the set $A^{K}$ of words of length $K$, then Kingman's theorem implies
that
\begin{equation}\label{eq:kingmanRW}
	\lim n^{-1}\log G_{r} (1,Y_{n}) =\alpha  \quad \text{a.s.}
\end{equation}
where $\alpha$ is a constant depending only on $r$ and the transition
probabilities of the underlying Markov chain. More generally, if
$\xi_{n}$ is a suitable ergodic stationary process, then
\eqref{eq:kingmanRW} will hold. 
Super-multiplicativity of the
Green's function also implies the following.

\begin{corollary}\label{corollary:RWMetric}
The function $d_{G} (x,y):=\log F_{R} (x,y)$ is a metric on $\Gamma$.
\end{corollary}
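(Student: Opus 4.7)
The plan is to verify the three axioms of a metric for $d_G(x,y) = -\log F_R(x,y)$ (the sign being necessary for nonnegativity since $F_R\leq 1$, as established below). All three properties will follow cleanly from ingredients already assembled in this section.

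For the triangle inequality, Lemma~\ref{lemma:superadditivity} gives super-multiplicativity $F_R(x,z) \geq F_R(x,y) F_R(y,z)$, which on taking $-\log$ becomes $d_G(x,z) \leq d_G(x,y) + d_G(y,z)$. Symmetry $d_G(x,y) = d_G(y,x)$ is equally easy: the symmetry of the Green's function $G_R(x,y) = G_R(y,x)$ combined with the identity $G_R(x,y) = F_R(x,y) G_R(1,1)$ yields $F_R(x,y) = F_R(y,x)$ at once.

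The real substance is positive-definiteness: $F_R(x,y) \leq 1$ with equality if and only if $x = y$. The normalization $F_R(x,x) = 1$ is forced by setting $y = x$ in $G_R(x,y) = F_R(x,y) G_R(1,1)$, since $G_R(x,x) = G_R(1,1)$ by translation invariance; hence $d_G(x,x) = 0$. For the strict bound when $x \neq y$, I would decompose the weighted loop sum $G_R(x,x)$ according to whether the path visits $y$: each path from $x$ to $x$ that visits $y$ splits uniquely at the first such visit as a first-passage path from $x$ to $y$ followed by an arbitrary path from $y$ to $x$, contributing total weight $F_R(x,y) G_R(y,x) = F_R(x,y)^2 G_R(1,1)$ by symmetry; meanwhile the trivial length-zero path from $x$ to $x$ contributes an additional $1$ and does not visit $y$. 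Hence
\[
G_R(1,1) = G_R(x,x) \geq 1 + F_R(x,y)^2 G_R(1,1),
\]
which rearranges to $F_R(x,y)^2 \leq 1 - 1/G_R(1,1) < 1$, using $G_R(1,1) < \infty$ from Kesten's theorem (nonamenability of $\Gamma$).

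I do not anticipate any serious obstacle: the three axioms reduce directly to super-multiplicativity, symmetry of $G_R$, the path decomposition~\eqref{eq:greenByPath}, and the finite bound $G_R(1,1)<\infty$ — all already in hand. If there is anything delicate, it is only the correct reading of the sign in the definition of $d_G$.
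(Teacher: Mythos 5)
Your proof is correct and takes essentially the same approach as the paper: triangle inequality from Lemma~\ref{lemma:superadditivity}, symmetry from \eqref{eq:GreenSymmetry}, and positive-definiteness from a path decomposition of $G_R(x,x)$ together with $G_R(1,1)<\infty$. Your final inequality $G_R(1,1)\geq 1+F_R(x,y)^2 G_R(1,1)$ is just a truncated form of the paper's geometric series $G_R(x,x)\geq 1+F_R(x,y)^2+F_R(x,y)^4+\dotsb$ (both yield the same bound $F_R(x,y)^2\leq 1-1/G_R(1,1)$); you are also right to flag the sign in the definition of $d_G$, which is an evident typo in the paper.
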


\begin{proof}
The triangle inequality is immediate from
Lemma~\ref{lemma:superadditivity}, and  symmetry
$d_{G}(x,y)=d_{G} (y,x)$  follows from the corresponding symmetry
property \eqref{eq:GreenSymmetry} of the Green's function. Thus, to
show that $d_{G}$ is a metric (and not merely a pseudo-metric) it
suffices to show that if $x\not =y$ then $F_{R} (x,y)<1$.  But this
follows from the fact \eqref{eq:finiteAtR} that the Green's function
is finite at the spectral radius, because the path representation
implies that
\[
	G_{R} (x,x)\geq 1+F_{R} (x,y)^{2} +F_{R} (x,y)^{4} +\dotsb .
    \qedhere
\]
\end{proof}

Call  $d_{G}$  the \emph{Green metric} associated with $F_R$. The
Green metric associated with $F_1$ has been used by a number of
authors,  for instance in~\cite{blachere_brofferio},~\cite{BHM:1}
and~\cite{BHM:2}. The Harnack inequalities imply that $d_{G}$ is
dominated by a constant multiple of the word metric $d$. In general,
there is no domination in the other direction, unless one puts
additional restrictions on the Green function:

\begin{proposition}\label{proposition:qi}
If the Green's function decays exponentially in $d (x,y)$ (that is,
if inequality \eqref{eq:expDecay} holds for all $x,y\in \Gamma$),
then the Green metric $d_{G}$ and the word metric $d$ on ${\Gamma}$
are equivalent, that is, there are constants $0<C_{1}<C_{2}<\infty$
such that for all $x,y\in \Gamma $,
\begin{equation}\label{eq:qi}
	C_{1}d (x,y)\leq d_{G} (x,y)\leq C_{2}d (x,y).
\end{equation}
\end{proposition}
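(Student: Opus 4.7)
The plan is to establish the two inequalities in~\eqref{eq:qi} independently, each reducing to a short rearrangement using the identity $G_R(x,y)=F_R(x,y)G_R(1,1)$ from~\eqref{eq:GxFxG}.

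For the upper bound $d_G(x,y)\le C_2\, d(x,y)$, I apply the Harnack inequality~\eqref{eq:harnack} with $z:=x$, which gives
\[
G_R(1,1)=G_R(x,x)\le C^{d(x,y)}G_R(x,y).
\]
Dividing by $G_R(1,1)$ yields $F_R(x,y)\ge C^{-d(x,y)}$, so $d_G(x,y)\le(\log C)\,d(x,y)$, and one may take $C_2=\log C$. This direction uses only the Harnack inequality, independently of the exponential decay hypothesis.

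For the lower bound $C_1\, d(x,y)\le d_G(x,y)$, I use the exponential decay hypothesis~\eqref{eq:expDecay}, which gives $F_R(x,y)\le (C/G_R(1,1))\,\varrho^{d(x,y)}$; taking $-\log$ produces
\[
d_G(x,y)\ge \lvert\log\varrho\rvert\,d(x,y)-K
\]
for some absolute constant $K$. The only nuisance is the additive constant $K$, which I absorb via a two-regime argument. Choose $N_0$ so that $\lvert\log\varrho\rvert\,N_0-K\ge \tfrac{1}{2}\lvert\log\varrho\rvert\,N_0$; then for $d(x,y)\ge N_0$ the displayed bound gives $d_G(x,y)\ge \tfrac{1}{2}\lvert\log\varrho\rvert\,d(x,y)$. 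For pairs with $1\le d(x,y)<N_0$, use left-invariance $d_G(x,y)=d_G(1,x^{-1}y)$ to reduce to $z=x^{-1}y$ ranging over the finite annulus $\{z:1\le d(1,z)<N_0\}$; Corollary~\ref{corollary:RWMetric} guarantees $d_G(1,z)>0$ for each such $z$, so the minimum $c$ over this finite set is strictly positive, and $d_G(x,y)\ge c\ge (c/N_0)\,d(x,y)$ in this regime. Setting $C_1:=\min(\tfrac{1}{2}\lvert\log\varrho\rvert,\,c/N_0)$ then works uniformly over all $x\neq y$.

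There is no substantive obstacle in either direction: both follow by rearranging the relevant hypothesis. The only minor subtlety is the bookkeeping trick needed to convert the affine lower bound into a linear one, which is handled cleanly by combining exponential decay in the long-distance regime with the positive-definiteness of $d_G$ and finiteness of metric balls in the short-distance regime.
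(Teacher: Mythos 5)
Your proof is correct; the paper states Proposition~\ref{proposition:qi} without a written proof, only remarking that the Harnack inequality gives the upper bound $d_G\leq C_2 d$ and exponential decay gives the lower bound, and your argument fills in exactly those steps (interpreting $d_G(x,y)$ as $-\log F_R(x,y)$, which is clearly what is intended despite the sign as printed in Corollary~\ref{corollary:RWMetric}). The only detail the paper glosses over is absorbing the additive constant $K=\log\left(C/G_R(1,1)\right)$ into a multiplicative one, and your two-regime patch --- exponential decay for $d(x,y)\geq N_0$, and strict positivity of $d_G(1,z)$ on the finite annulus $1\leq d(1,z)<N_0$ via left-invariance and Corollary~\ref{corollary:RWMetric} for $d(x,y)<N_0$ --- handles this cleanly.
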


Even when the group $\Gamma$ is hyperbolic, this proposition does not
imply in general that $d_G$ is hyperbolic, since the quasi-isometry
invariance of hyperbolicity only holds for geodesic spaces, while
$(\Gamma, d_G)$ is not geodesic in general. Nevertheless, Theorem 1.1
in \cite{BHM:2} shows that $d_G$ is hyperbolic when Ancona's
inequalities~\eqref{eq:ancona} are satisfied for $r=R$.

\subsection{Green's function and branching random
walks}\label{ssec:brw} There is a simple interpretation of the Green's
function $G_{r} (x,y)$ in terms of the occupation statistics of
\emph{branching random walks}. A branching random walk is built using
a probability distribution $\mathcal{Q}=\{q_{k} \}_{k\geq 0}$ on the
nonnegative integers, called the \emph{offspring distribution},
together with the step distribution $\mathcal{P}:=\{ p(x,y)=p
(x^{-1}y)\}_{x,y\in \Gamma }$ of the
underlying random walk, according to the following rules: At each time
$n\geq 0$, each particle fissions and then dies, creating a random
number of offspring with distribution $\mathcal{Q}$; the offspring
counts for different particles are mutually independent. Each
offspring particle then moves from the location of its parent by
making a random jump according to the step distribution $p (x,y)$; the
jumps are once again mutually independent. Consider the initial
condition which places a single particle at site $x\in \Gamma$, and
denote the corresponding probability measure on population evolutions
by $Q^{x}$.

\begin{proposition}\label{proposition:brw}
Under $Q^{x}$, the total number of particles in generation $n$ evolves
as a Galton-Watson process with offspring distribution
$\mathcal{Q}$. If the offspring distribution has mean $r\leq R$, then
under $Q^{x}$ the expected number of particles at location $y$ at time
$n$ is $r^{n}P^{x}\{X_{n}=y \}$, where under $P^{x}$ the process
$X_{n}$ is an ordinary random walk with step distribution
$\mathcal{P}$. Therefore, $G_{r} (x,y)$ is the mean total number of
particle visits to location $y$.
\end{proposition}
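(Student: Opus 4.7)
The plan is to verify the three assertions in sequence; each is a one-line consequence of independence together with a short induction.

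First, for the Galton--Watson structure: if $N_{n}$ denotes the total generation-$n$ population under $Q^{x}$, then by the construction of the branching random walk each of the $N_{n}$ particles independently produces a $\mathcal{Q}$-distributed offspring count, while the spatial displacements of the children affect neither the offspring counts nor their independence. Thus, conditional on $N_{n}$, the random variable $N_{n+1}$ is a sum of $N_{n}$ i.i.d.\ copies of $\mathcal{Q}$, which is the defining property of a Galton--Watson process with offspring distribution $\mathcal{Q}$.

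Second, for the mean occupation counts, set $m_{n}(x,y):=E^{Q^{x}}Z_{n}(y)$, where $Z_{n}(y)$ is the number of particles located at $y$ in generation $n$. In one generation, starting from a single particle at $x$, the expected number of offspring landing at $z$ equals $r\,p(x,z)$ (mean offspring count $r$ times step probability). Conditioning on generation $1$ and using the branching property gives the recursion
\begin{equation*}
m_{n+1}(x,y)= r\sum_{z\in\Gamma} p(x,z)\,m_{n}(z,y),
\end{equation*}
with $m_{0}(x,y)=\delta_{x,y}$. Comparing with the Chapman--Kolmogorov equation for the ordinary random walk, a one-step induction on $n$ identifies $m_{n}(x,y)=r^{n}P^{x}\{X_{n}=y\}$.

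Finally, since all terms are nonnegative, Tonelli allows swapping the sum over $n$ with the expectation, yielding
\begin{equation*}
E^{Q^{x}}\Bigl[\sum_{n=0}^{\infty}Z_{n}(y)\Bigr]=\sum_{n=0}^{\infty} r^{n}P^{x}\{X_{n}=y\}=G_{r}(x,y),
\end{equation*}
which is the claimed identification of $G_{r}(x,y)$ as the mean total number of particle visits to $y$. There is no substantive obstacle; the only point to flag is that for the total-visit count to have finite mean one needs $r\le R$, which is precisely the hypothesis under which the interpretation is useful, and in that regime finiteness is guaranteed by~\eqref{eq:finiteAtR}.
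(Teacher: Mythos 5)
Your proof is correct and follows the same approach the paper sketches: the Galton--Watson structure from independence, the occupation mean by a one-step recursion/induction, and the Green's function identity by summing over $n$. You have simply filled in the details (the Chapman--Kolmogorov comparison and the Tonelli justification) that the paper leaves implicit.
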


\begin{proof}
The first assertion follows easily from the definition of a
Galton-Watson process -- see \cite{athreya-ney} for the definition and
basic theory.  The second is easily proved by induction on $n$. The third
then follows from the formula \eqref{eq:green} for the Green's
function.
\end{proof}

There are similar interpretations of the restricted Green's function
$G_{r} (x,y;\Omega)$ and the first-passage generating function $F_{r}
(x,y)$. In particular, if particles of the branching random walk are
allowed to reproduce only in the region $\Omega$, then
$G_{r}(x,y;\Omega)$ is the mean number of particle visits to $y$ in
this modified branching random walk.

\section{Exponential decay of the Green's function}\label{sec:apriori}

\subsection{Hyperbolic geometry} In this section, we prove that the
Green's function of a symmetric random walk on a co-compact Fuchsian
group decays exponentially.  The proof is most conveniently
formulated using some basic ingredients of hyperbolic geometry (see,
e.g., \cite{katok}, chs.\  3--4).

For any co-compact Fuchsian group $\Gamma$ there is a
\emph{fundamental polygon} $\mathcal{F}$ for the action of $\Gamma$.
The closure of $\mathcal{F}$ is a compact, finite-sided polygon whose
sides are arcs of hyperbolic geodesics. The hyperbolic disk $\zz{D}$
is tiled by the images $x \mathcal{F}$ of $\mathcal{F}$, where $x\in
\Gamma $; distinct elements $x,y\in \Gamma$ correspond to tiles
$x\mathcal{F}$ and $y\mathcal{F}$ which intersect, if at all, in a
single side or vertex of each tile. Thus, the elements of $\Gamma$
are in bijective correspondence with the tiles of the tessellation,
or alternatively with the points $xO$ in the $\Gamma -$orbit of a
distinguished point $O\in \mathcal{F}^{\circ}$. The Cayley graph of
$\Gamma$ (relative to the standard generators) is gotten by putting
\emph{edges} between those \emph{vertices} $x,y\in \Gamma$ such that
the tiles $x\mathcal{F}$ and $y\mathcal{F}$ share a side. The
\emph{word metric} $d=d_{\Gamma}$ on $\Gamma$ is defined to be  the
(Cayley) graph distance.  The hyperbolic metric induces another
distance $d_{\zz{H}} (x,y)$ on $\Gamma$, defined to be the hyperbolic
distance between the points $xO$ and $yO$. The metrics $d$ and
$d_{\zz{H}}$ are both left-invariant.

The following fact is well known (see, e.g., \cite{cannon:2}, Theorem~1).

\begin{lemma}
\label{lemma:quasiIsometric} The restriction of the hyperbolic metric $d_{\zz{H}}$
to $\Gamma$ and the word metric $d$ on $\Gamma$ are equivalent.
\end{lemma}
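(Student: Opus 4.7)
The plan is to deduce Lemma~\ref{lemma:quasiIsometric} from the \v{S}varc--Milnor principle: since $\Gamma$ acts properly discontinuously and cocompactly on the hyperbolic disk by isometries (with generating set $A$ and basepoint $O\in\mathcal{F}^{\circ}$), the orbit map $x\mapsto xO$ should be a $\Gamma$-equivariant quasi-isometry from $(\Gamma,d)$ to $(\zz{D},d_{\zz{H}})$. Because both metrics on $\Gamma$ are left-invariant, it suffices to compare $d(1,x)$ with $d_{\zz{H}}(1,x)=d_{\zz{H}}(O,xO)$ for every $x\in\Gamma$ and then absorb any residual additive error into a multiplicative constant, using the fact that $d_{\zz{H}}(O,xO)$ is bounded below uniformly over $x\neq 1$ by discreteness of the orbit.

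For the bound $d_{\zz{H}}\leq C\,d$ I would set $D_{1}:=\max_{s\in A}d_{\zz{H}}(O,sO)<\infty$ (finite because $A$ is finite) and, given $x=s_{1}s_{2}\cdots s_{n}$ written as a word of geodesic length $n=d(1,x)$, telescope along the orbit: the triangle inequality combined with $\Gamma$-invariance of $d_{\zz{H}}$ gives $d_{\zz{H}}(O,xO)\leq \sum_{i=1}^{n} d_{\zz{H}}(O,s_{i}O)\leq D_{1}\,d(1,x)$.

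For the opposite bound $d\leq C\,d_{\zz{H}}$, I would follow the hyperbolic geodesic from $O$ to $xO$ and read off the tiles it crosses. Let $\delta:=\diam_{\zz{H}}(\overline{\mathcal{F}})$, which is finite since $\overline{\mathcal{F}}$ is compact. Subdivide the geodesic into $k\leq\lceil d_{\zz{H}}(O,xO)/\delta\rceil+1$ sample points spaced at hyperbolic distance $\leq\delta$, and pick group elements $x_{0}=1,x_{1},\dots,x_{k}=x$ such that $\sigma(t_{i})\in\overline{x_{i}\mathcal{F}}$. The triangle inequality then forces $d_{\zz{H}}(x_{i-1}O,x_{i}O)\leq 3\delta$, so $x_{i-1}^{-1}x_{i}$ lies in the set $\{g\in\Gamma:d_{\zz{H}}(O,gO)\leq 3\delta\}$, which is finite by proper discontinuity of the $\Gamma$-action. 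Writing $M$ for the maximum word length of this finite set, I obtain $d(x_{i-1},x_{i})\leq M$ and hence $d(1,x)\leq Mk\leq (M/\delta)\,d_{\zz{H}}(O,xO)+M$. A uniform lower bound $d_{\zz{H}}(O,xO)\geq\varepsilon_{0}>0$ for $x\neq 1$ (again from discreteness) absorbs the additive $M$ into the multiplicative constant, yielding the desired equivalence.

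The only step that is not purely mechanical is the lower bound for $d_{\zz{H}}$, whose crux is the combined use of cocompactness (which produces the finite tile diameter $\delta$) and proper discontinuity (which produces the finite bound $M$ on inter-tile jumps). I do not expect any serious technical obstacle here: the statement is just the classical quasi-isometry between a cocompact Fuchsian group and $\zz{H}^{2}$, which is why the excerpt simply cites \cite{cannon:2}.
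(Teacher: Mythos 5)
The paper does not prove this lemma at all: it simply declares it ``well known'' and cites \cite{cannon:2}, Theorem~1. Your argument is the standard \v{S}varc--Milnor quasi-isometry proof that underlies that reference, and it is correct. The telescoping bound $d_{\zz{H}}(O,xO)\leq D_1\, d(1,x)$ is immediate; for the reverse bound, subdividing the hyperbolic geodesic from $O$ to $xO$ at spacing $\diam_{\zz{H}}(\overline{\mathcal F})$, picking tiles through the sample points, bounding consecutive jumps $d_{\zz{H}}(x_{i-1}O,x_iO)\leq 3\,\diam_{\zz{H}}(\overline{\mathcal F})$, and invoking proper discontinuity to make the set of possible increments $x_{i-1}^{-1}x_i$ finite (hence of bounded word length $M$) is exactly the right mechanism, and absorbing the additive $M$ using the uniform lower bound on $d_{\zz{H}}(O,xO)$ for $x\neq 1$ cleanly turns the quasi-isometry into the stated bi-Lipschitz equivalence of metrics on $\Gamma$. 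There is no gap, and no meaningful divergence from what the paper implicitly relies on.
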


\subsection{Barriers}

To prove that the Green's function decays exponentially fast, we will
construct \emph{barriers} along the hyperbolic geodesic between two
points $x O$ and $y O$ in such a way that the total weight $w_R$
attached to paths crossing any of those barriers is small. Recall that
a \emph{path} in $\Gamma$ is a sequence $x_{i}$ such that successive
jumps $x_{n}x_{n+1}^{-1}$ are all elements of $S$, the support of the
step distribution of the random walk. Recall also that $S$ is
symmetric and contained in the ball $B(1,C_{0})$ of radius $C_{0}$
centered at the group identity $1$. A \emph{halfplane} is a subset of
$\zz{D}$ (or its intersection with $\Gamma O$) whose boundary is a
doubly infinite hyperbolic geodesic.


\begin{definition}\label{definition:barrier}
A \emph{barrier} is a triple $(V,W,B)$ consisting of nonoverlapping
halfplanes $V,W \subset \zz{D}$ and a subset $B\subset \Gamma$
disjoint from $V\cup W$ such that every path in $\Gamma O$ from $V$ to
$W$ must pass through $B$, and
\[
	\max_{x\in V} \sum_{b\in B} G_R(x, b) \leq \frac{1}{2} .
\]
For distinct points $\xi ,\zeta\in \partial\zz{D}$, a barrier between
$\xi$ and $\zeta$ is a barrier $(V,W,B)$ such that $\xi$ and $\zeta $
are interior points of the arcs $\text{cl}(V)\cap \partial \zz{D}$
and $\text{cl}(W)\cap
\partial \zz{D}$, respectively. (We also say that such a barrier
$(V,W,B)$ \emph{separates} $\xi$ and $\zeta$.)
\end{definition}

Observe that if $(V,W,B)$ is a barrier then the distance between the
hyperbolic geodesics bounding $V$ and $W$ is positive: in particular,
these geodesics cannot have a common endpoint on $\partial
\zz{D}$. Moreover, the set $B$ must have limit points in each of the
two arcs of $\partial \zz{D}$ separating $\text{cl}(V)\cap \partial
\zz{D}$ and $\text{cl}(W)\cap \partial \zz{D}$. Also, if $(V,W,B)$ is
a barrier then (i) for any $g\in \Gamma$  the translate $(gV,gW,
gB)$ is a barrier, and (ii) if $V',W'$ are halfplanes contained in
$V,W$, respectively, then $(V',W',B)$ is a barrier.


\begin{theorem}
\label{theorem:barriersExist}
For any two points $\xi\neq \eta \in \partial \zz{D}$, there exists a barrier between
$\xi$ and $\eta$.
\end{theorem}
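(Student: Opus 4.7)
Given $\xi\neq\eta$ on $\partial\zz{D}$, let $\gamma_0$ be the hyperbolic geodesic joining them in $\zz{D}$, and fix a basepoint $p\in\gamma_0$. For a large parameter $T$ to be chosen, let $\sigma_V$ and $\sigma_W$ be the geodesics perpendicular to $\gamma_0$ at the two points of $\gamma_0$ at hyperbolic distance $T$ from $p$ in the directions of $\xi$ and $\eta$, respectively, and let $V,W$ be the open halfplanes they bound that contain $\xi,\eta$ as interior points of their boundary arcs. Let $\sigma_0$ be the geodesic perpendicular to $\gamma_0$ at $p$, and take
\[
B := \bigl\{\, b\in\Gamma : d_{\zz{D}}(bO,\sigma_0)\leq D\,\bigr\}
\]
for a fixed $D$ exceeding the maximal hyperbolic step length $\max_{s\in S} d_{\zz{D}}(O,sO)$. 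Then $B$ is a bi-infinite ``belt'' of vertices transverse to $\gamma_0$, and by an intermediate-value argument in the simply connected strip bounded by $\sigma_V$ and $\sigma_W$, every polygonal path $x_0O,x_1O,\dotsc$ in $\Gamma O$ from $V$ to $W$ has some consecutive segment crossing $\sigma_0$; both endpoints of that segment then lie within $D$ of $\sigma_0$, hence in $B$.

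The quantitative inequality $\sup_{x\in V\cap\Gamma O}\sum_{b\in B}G_R(x,b)\leq 1/2$ is the main content. Applying the strong Markov property at the first hitting time $\tau_B=\inf\{n\geq 0:X_n\in B\}$,
\[
\sum_{b\in B} G_R(x,b) = E^x\!\left[R^{\tau_B}\mathbf{1}_{\tau_B<\infty}\sum_{b\in B} G_R(X_{\tau_B},b)\right] \leq M\cdot E^x\!\left[R^{\tau_B}\mathbf{1}_{\tau_B<\infty}\right],
\]
where $M:=\sup_{y\in B}\sum_{b\in B} G_R(y,b)$. The factor $E^x[R^{\tau_B}\mathbf{1}_{\tau_B<\infty}]$ can be driven to zero by enlarging $T$: concatenating any path from $x\in V$ to $B$ with its reverse gives a loop at $x$ of word-length at least $2T/C_0$, so a back-scatter argument modelled on \eqref{eq:back-forth} bounds it uniformly by a tail of the convergent series $\sum_n R^n P^1\{X_n=1\}$, which vanishes as $T\to\infty$ by finiteness of $G_R(1,1)$.

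The principal obstacle is showing that $M$ is finite and bounded independently of $T$. My plan is to exploit periodicity: by the density of axes of hyperbolic elements among geodesics of $\zz{D}$, I may perturb the configuration so that $\sigma_0$ is the axis of some hyperbolic $g\in\Gamma$, making $B$ invariant under $\langle g\rangle$ with finite fundamental domain $B_0\subset B$; then $M=\sup_{y\in B_0}\sum_{b\in B}G_R(y,b)$, a supremum of finitely many sums. I would then attempt a self-bounding estimate by applying the strong Markov at the first \emph{return} time $\tau_B^+=\inf\{n\geq 1:X_n\in B\}$:
\[
\sum_{b\in B} G_R(y,b) \leq 1 + q\,M, \qquad q:=\sup_{y\in B} E^y\!\left[R^{\tau_B^+}\mathbf{1}_{\tau_B^+<\infty}\right],
\]
which yields $M\leq 1/(1-q)$ as soon as $q<1$. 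Verifying $q<1$ is the hardest step: unlike the single-point case $B=\{y\}$ where $q<1$ follows at once from $G_R(1,1)<\infty$, a thick tube $B$ can be stayed inside with positive probability in a single step and $q$ could conceivably exceed $1$. I expect the resolution to combine a careful geometric calibration (thinning $B$ to a sparse net within the tube so that $P^y(X_1\in B\setminus\{y\})$ is forced below $1/R$, while retaining the blocking property of Step 1 by decreasing step-size tolerance) with the nonamenability of $\Gamma$, which controls the escape probabilities of the walk at the spectral radius.
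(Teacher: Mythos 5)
Your geometric setup (halfplanes $V,W$ far toward $\xi,\eta$, a blocking belt $B$ around a separating perpendicular $\sigma_0$, strong Markov at the first hitting of $B$) is reasonable, and your reduction of the barrier inequality to the two quantities $E^x[R^{\tau_B}\mathbf{1}_{\tau_B<\infty}]$ and $M:=\sup_{y\in B}\sum_{b\in B}G_R(y,b)$ is a sensible plan. The back-scattering estimate for $E^x[R^{\tau_B}]$ is essentially Lemma~\ref{lemma:backscattering} and would indeed make that factor small as $T\to\infty$. But the proof does not close, for the reason you yourself flag: you have no control of $M$. The self-bounding inequality $M\le 1+qM$ is useless unless you already know $M<\infty$, and there is no reason to expect $q=\sup_{y\in B}E^y[R^{\tau_B^+}\mathbf{1}_{\tau_B^+<\infty}]<1$. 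For a thickened geodesic $B$ of radius $D\ge$ the step length, one has $P^y(X_1\in B)$ close to $1$ for $y$ deep inside the tube, so $q\geq R\,P^y(X_1\in B)$ can exceed $1$ since $R>1$. Thinning $B$ to force $q<1$ sacrifices exactly the blocking property you need; you cannot have both with a deterministic tube. At this stage of the paper the exponential decay of $G_R$ (Theorem~\ref{theorem:expDecay}) is not yet available --- it is to be proven \emph{from} the existence of barriers --- so there is no a~priori estimate showing $\sum_{b\in B}G_R(y,b)<\infty$ for a belt of linear growth.

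The paper avoids this entirely by building $B$ not as a geometric tube but out of \emph{typical trajectories of the random walk itself}. The key input is a probabilistic estimate (their Lemma on the distribution of $G_R(1,X_n)$ and the resulting Lemma~\ref{lemma:summable}): for independent trajectories $(X_m)$ and $(Y_n)$, one has $\sum_{m,n}G_R(X_m,Y_n)<\infty$ a.s.\ --- a consequence only of $G_R(1,1)<\infty$ and symmetry, via a Markov inequality and Borel--Cantelli. Combined with the fact that the walk converges to a boundary point with full-support hitting distribution, this yields two disjoint bi-infinite paths $\{x_m\}$, $\{y_n\}$ accumulating at four prescribed boundary arcs, with $\sum G_R(x_m,y_n)<\infty$. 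Thickening these gives a pre-barrier $(V,W,A,B)$, which is then sharpened to a barrier by shrinking $V$: decomposing at the first entry to $A$ gives $G_R(x,b)\le\sup_{a\in A}G_R(x,a)\cdot\sum_{a,b}G_R(a,b)$, and the first factor is made small via Lemma~\ref{lemma:backscattering}. So the finiteness issue you are stuck on never arises --- the set $A$ is a single random-walk orbit (thickened to width $2C_0$), along which the Green's function is automatically summable, rather than a belt where you would need decay in the transverse direction that you cannot yet prove. This is the missing idea in your proposal.
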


Before turning to the proof, we record an  easy
consequence.

\begin{corollary}\label{corollary:separationThreshold}
There exists $K<\infty$ such that for any two hyperbolic geodesics
$\alpha ,\beta $ at distance $K$ or greater there is a barrier
$(A,B,C)$ such that the halfplanes $A$ and $B$ are bounded by $\alpha$
and $\beta$, respectively.
\end{corollary}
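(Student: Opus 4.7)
The plan is to deduce the corollary from Theorem~\ref{theorem:barriersExist} by combining a shrinking reduction with a cocompactness argument.

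First, observation (ii) following Definition~\ref{definition:barrier} shows that barriers are preserved under shrinking halfplanes. It therefore suffices to prove the corollary for pairs of geodesics at a fixed hyperbolic distance $K$: given $\alpha,\beta$ at distance $D\geq K$, push each geodesic along the common perpendicular toward the other by $(D-K)/2$, obtaining auxiliary geodesics $\alpha',\beta'$ at distance exactly $K$ whose outer halfplanes $A',B'$ contain the original halfplanes $A,B$. A barrier $(A',B',C)$ then descends, via observation (ii), to a barrier $(A,B,C)$.

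Second, the space $\mathcal{Y}_K$ of ordered pairs of disjoint geodesics in $\zz{D}$ at distance exactly $K$ has compact quotient $\mathcal{Y}_K/\Gamma$. Indeed, each pair is specified by the foot of the common perpendicular on $\alpha$ together with two perpendicular unit tangent directions at that point, and translation by $\Gamma$ places the foot in a compact fundamental domain, leaving the remaining parameters in a compact set. For each pair $(\alpha,\beta)\in\mathcal{Y}_K$ I would choose continuously varying interior points $\xi\in\partial_{\infty}A$ and $\eta\in\partial_{\infty}B$ (for instance, the ideal endpoints of the perpendicular bisector of the common perpendicular), and invoke Theorem~\ref{theorem:barriersExist} to produce a barrier $(V,W,C)$ between $\xi$ and $\eta$. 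If the halfplanes can be arranged so that $A\subseteq V$ and $B\subseteq W$, then observation (ii) shrinks $(V,W,C)$ to the desired barrier $(A,B,C)$.

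The main obstacle is precisely this final inclusion: Theorem~\ref{theorem:barriersExist} asserts existence of some barrier between $\xi$ and $\eta$ but does not a priori control the size of $V,W$. I expect to overcome this by exploiting cocompactness together with the translation invariance of barriers (observation (i)): finitely many reference barriers $(V_{i},W_{i},C_{i})$, each with halfplanes of uniformly positive angular extent beyond $\xi_{i},\eta_{i}$, suffice to cover $\mathcal{Y}_K/\Gamma$ via $\Gamma$-translates. For $K$ chosen large enough, the $A,B$ arising from pairs in $\mathcal{Y}_K$ occupy angular regions on $\partial\zz{D}$ whose extent is dominated by that of $V_{i},W_{i}$, so an appropriate translate $(gV_{i},gW_{i},gC_{i})$ satisfies $gV_{i}\supseteq A$ and $gW_{i}\supseteq B$. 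Alternatively, an inspection of the construction used to prove Theorem~\ref{theorem:barriersExist} may directly reveal the flexibility to choose $V,W$ as any halfplanes having $\xi,\eta$ sufficiently deep in their boundary arcs, making the step immediate.
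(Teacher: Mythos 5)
Your overall strategy --- translation invariance of barriers, compactness, then shrinking halfplanes --- is the same as the paper's, and your reduction to geodesics at distance exactly $K$ is valid (it is just not needed in the paper's version). The point where your outline differs, and where the gap lies, is the compactness argument. You propose to cover $\mathcal{Y}_K/\Gamma$ by finitely many reference barriers and then to ``choose $K$ large enough'' so that the halfplanes $A,B$ bounded by a pair in $\mathcal{Y}_K$ fit inside $V_i,W_i$. This is circular: the reference barriers are produced by covering a space that already depends on $K$, so their halfplane sizes are not available before $K$ is fixed, yet $K$ is supposed to be chosen large relative to those sizes. You flag this yourself (``the main obstacle is precisely this final inclusion''), but the outlined fix does not break the circle.

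The paper breaks it by making the compact object being covered \emph{independent of $K$}: it takes $Q\subset\partial\zz{D}\times\partial\zz{D}$, the set of pairs of ideal endpoints of geodesics that meet the fundamental polygon $\mathcal{F}$. Since $\overline{Q}$ is compact and each barrier separates an open set of pairs, one extracts finitely many barriers $(V_i,W_i,B_i)$ together with a single $\delta>0$ so that every $(\xi,\zeta)\in Q$ is separated by some $(V_i,W_i,B_i)$ \emph{with} the Euclidean $\delta$-circles about $\xi$ and $\zeta$ meeting $\zz{D}$ only inside $V_i$ and $W_i$. This quantitative $\delta$-robustness is fixed first. Only then is the threshold chosen: given $\alpha,\beta$ at distance $D$, translate by $g\in\Gamma$ so the midpoint of the common perpendicular lies near $\mathcal{F}$; for $D$ large, $g\alpha$ and $g\beta$ are at distance roughly $D/2$ from $\mathcal{F}$, hence (in the Euclidean metric on $\overline{\zz{D}}$) lie inside the $\delta$-circles and so inside $V_i,W_i$; then $(g^{-1}V_i,g^{-1}W_i,g^{-1}B_i)$ shrinks to a barrier with halfplanes bounded by $\alpha,\beta$. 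Your parametrization of geodesic pairs by the foot of the common perpendicular plus tangent data would work equally well --- but the compact set to cover should be the set of such data with foot in $\mathcal{F}$, \emph{without} reference to a distance $K$, and the $\delta$-robustness must be recorded as part of the covering before $K$ is introduced. Your ``alternative'' (inspecting the barrier construction for flexibility in choosing $V,W$ large) does not help either: the proof of Theorem~\ref{theorem:barriersExist} via pre-barriers shrinks the halfplanes to be small, and offers no way to make them contain a prescribed halfplane.
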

\begin{proof}
Let $Q$ be the set of all pairs of distinct points $(\xi ,\zeta) \in
\partial \zz{D}$ such that the geodesic with endpoints $\xi$ and
$\zeta$ enters the interior of the fundamental polygon $\mathcal{F}$.
By Theorem~\ref{theorem:barriersExist}, any such pair $\xi ,\zeta$ is
separated by a barrier $(V,W,B)$; the halfplanes $V,W$ can be chosen
to have arbitrarily small (Euclidean) diameters, and so in particular
we can assume that neither $V$ nor $W$ intersects $\mathcal{F}$. If
$(\xi ,\zeta )\in Q$ are separated by the barrier $(V,W,B)$ then all
nearby pairs $(\xi ',\zeta ')$ are also separated by this barrier.
Since $Q$ has compact closure in $\partial \zz{D}\times
\partial \zz{D}$, it follows that there is a finite set of barriers
$(V_{i},W_{i},B_{i})$ and a constant $\delta >0$ such that (i) for any
pair $(\xi,\zeta)\in Q$ at least one of the barriers separates $\xi$
from $\zeta$, and (ii) for this barrier $(V_{i},W_{i},B_{i})$ the
Euclidean circles of radius $\delta$ centered at $\xi$ and $\zeta$
intersect $\zz{D}$ only inside the halfplanes $V_{i}$ and $W_{i}$,
respectively.  Furthermore, the barriers can be chosen so that none of
the halfplanes $V_{i},W_{i}$ intersects the fundamental polygon $\mathcal{F}$.

Suppose that $\alpha ,\beta$ are hyperbolic geodesics at distance $D$
greater than four times the diameter of $\mathcal{F}$. Let $\gamma$
be the geodesic segment of minimal hyperbolic length with endpoints
on $\alpha$ and $\beta$, and let $M$ be the midpoint of this segment.
Then there is an element $g\in \Gamma$ that maps the geodesic segment
$\gamma$ to a segment $g\gamma$ which crosses $\mathcal{F}$ in such a
way that the midpoint $gM$ lies in one of the tiles $h\mathcal{F}$
within distance $\text{\rm diam} (\mathcal{F})$ of $\mathcal{F}$. Let
$\xi,\zeta$ be the endpoints on $\partial \zz{D}$ of the doubly
infinite geodesic extension of $g\gamma$. By construction, the pair
$(\xi ,\zeta)$ is an element of $Q$, so $\xi$ and $\zeta$ are
separated by one of the barriers $(V_{i},W_{i},B_{i})$.  If the
distance $D$ between $\alpha$ and $\beta $ is sufficiently large, say
$D\geq K$, then the geodesics $g\alpha$ and $g\beta$ will be entirely
contained in the halfplanes $V_{i}$ and $W_{i}$, respectively. (This
is because the midpoint $gM$ of the connecting segment $g\gamma$ lies
within distance $\text{\rm diam} (\mathcal{F})$ of $\mathcal{F}$, so
both $g\alpha$ and $g\beta $ are at distance greater than
$D/2-2\text{\rm diam} (\mathcal{F})$ from $\mathcal{F}$, and
consequently, if $D$ is sufficiently large, must lie inside circles
of Euclidean radius $\delta$ centered at points on $\partial
\zz{D}$.) If $g\alpha$ and $g\beta$ are contained in $V_{i}$ and
$W_{i}$, then there is a barrier $(gA,gB,gC)$ such that the
halfplanes $gA$ and $gB$ are bounded by $g\alpha$ and $g\beta$.
Finally, since translates of barriers are barriers, it follows that
$( A,B,C)$ is a barrier with the desired property.
\end{proof}

The rest of this subsection is devoted to the proof of Theorem
\ref{theorem:barriersExist}.  The strategy is to construct the barrier
using typical trajectories of the random walk.

\begin{lemma}
There exist $C>0$ and $\varrho<1$ such that $P^1(G_R(1,X_n) \geq
\varrho^n) \leq C \varrho^n$.
\end{lemma}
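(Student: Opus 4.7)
The plan is to bound the first moment $E^1[G_R(1,X_n)]$ and then apply Markov's inequality. For the first moment, I would interchange sums using the series definition \eqref{eq:green}:
\begin{equation*}
	E^1[G_R(1,X_n)] = \sum_{x\in\Gamma} p^n(1,x) \sum_{k\geq 0} R^k p^k(1,x) = \sum_{k\geq 0} R^k \sum_{x\in\Gamma} p^n(1,x) p^k(1,x).
\end{equation*}
The symmetry of the step distribution gives $p^k(1,x)=p^k(x,1)$, and Chapman--Kolmogorov then collapses the inner sum to $p^{n+k}(1,1)$. Shifting the index $m=n+k$ yields
\begin{equation*}
	E^1[G_R(1,X_n)] = R^{-n}\sum_{m\geq n}R^m p^m(1,1) \leq G_R(1,1)\,R^{-n},
\end{equation*}
where the final inequality uses \eqref{eq:finiteAtR}.

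Next, for every $\varrho>0$, Markov's inequality gives
\begin{equation*}
	P^1(G_R(1,X_n) \geq \varrho^n) \leq \varrho^{-n} E^1[G_R(1,X_n)] \leq G_R(1,1)\,(R\varrho)^{-n}.
\end{equation*}
Kesten's theorem ensures $R>1$ since $\Gamma$ is nonamenable, so one may choose any $\varrho \in (R^{-1/2},1)$. For such $\varrho$, $R\varrho^2>1$, whence $(R\varrho)^{-n}=\varrho^n (R\varrho^2)^{-n}\leq \varrho^n$, giving the claim with $C=G_R(1,1)$.

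The argument is short: the only nontrivial manipulation is the collapse of the double sum to $p^{n+k}(1,1)$ via symmetry, after which finiteness of $G_R(1,1)$ from \eqref{eq:finiteAtR} and the nonamenable bound $R>1$ close the proof. An alternative route would note that $M_n=R^nG_R(1,X_n)$ is a supermartingale (since $(I-RP)G_R(\cdot,1)=\delta_1$ implies $E[M_{n+1}\mid X_n=x]=M_n-R^n\delta_{x,1}$), so $E^1[M_n]\leq G_R(1,1)$ and Markov applied to $M_n$ gives the same estimate; but the direct first-moment calculation is cleaner.
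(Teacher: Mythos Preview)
Your proof is correct and follows essentially the same approach as the paper: bound the first moment of $G_R(1,X_n)$ by $C R^{-n}$ using symmetry, then apply Markov's inequality with $\varrho=R^{-1/2}$. The only cosmetic difference is that the paper passes to $F_R$ and establishes the moment bound via a last-exit path-concatenation argument, whereas you obtain it directly by expanding the series and collapsing with Chapman--Kolmogorov; your route is arguably tidier and even yields the sharper identity $E^1[G_R(1,X_n)]=R^{-n}\sum_{m\ge n}R^m p^m(1,1)$.
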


\begin{proof}
Since $G_{r} (1,x)=F_{r} (1,x)G_{r} (1,1)$, it suffices to prove the
corresponding statement where the Green's function $G_{R} (1,X_{n})$
is replaced by the first-passage generating function $F_{R}
(1,X_{n})$. Now any path $\gamma$  of length $n$ from $1$ to a point
$x$ can be concatenated with any path $\gamma'$ from $x$ to $1$,
yielding a path from $1$ to itself. If the path $\gamma '$ does not
re-visit the point $x$ the splitting $(\gamma ,\gamma ')$ of the
concatenated path is a last-exit decomposition, so the mapping
$(\gamma ,\gamma')\mapsto \gamma \gamma'$ is injective.  Hence,
summing the weights of all such paths gives a lower bound for $G_{R}
(1,1)$: using the symmetry $F_{R} (x,1)=F_{R} (1,x)$, we obtain
\begin{equation*}
  G_R(1,1) \geq
  \sum_x  R^n P^{1} (X_{n}=x) F_R(1, x)
  = R^n E^1( F_R(X_n,1)).
\end{equation*}
Therefore,
\begin{equation*}
  P^1(F_R(X_n, 1) \geq \varrho^n)
  \leq \frac{1}{\varrho^n} E^1(F_R(X_n,1))
  \leq CR^{-n} \varrho^{-n},
\end{equation*}
and so the desired inequality holds with $\varrho =R^{-1/2}$.
\end{proof}

\begin{lemma}
\label{lemma:summable}
For almost all independent trajectories $X_0=1,X_1,\dots$ and
$Y_0=1, Y_1, \dots$ of the random walk,
  \begin{equation*}
  \sum_{m,n\in \zz{N}} G_R(X_m, Y_n) < \infty.
  \end{equation*}
\end{lemma}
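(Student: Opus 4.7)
The plan is to combine the previous exponential-tail estimate on $G_R(1, X_n)$ with a Borel--Cantelli argument applied to the doubly-indexed family $(G_R(X_m, Y_n))_{m,n}$. The key identity I want to exploit is $G_R(X_m, Y_n) = G_R(1, X_m^{-1} Y_n)$, which follows from the left-invariance in~\eqref{eq:green}. So I need to understand the law of $X_m^{-1} Y_n$.

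First I would note that, by the symmetry of the step distribution, if $X_m = \xi_1 \cdots \xi_m$ then $X_m^{-1} = \xi_m^{-1} \cdots \xi_1^{-1}$ has the same distribution as $\xi'_1 \cdots \xi'_m$ for an independent copy of the random walk; the reversed product is a product of i.i.d.\ $S$-valued steps because $S$ is symmetric. Combined with the independence of $X$ and $Y$, this yields that $X_m^{-1} Y_n$ has exactly the distribution of $Z_{m+n}$, the position at time $m+n$ of a single random walk started at $1$. Consequently
\begin{equation*}
  P\bigl(G_R(X_m, Y_n) \geq \varrho^{m+n}\bigr)
  = P^1\bigl(G_R(1, Z_{m+n}) \geq \varrho^{m+n}\bigr)
  \leq C\varrho^{m+n}
\end{equation*}
by the preceding lemma (after replacing $\varrho$ with a slightly larger constant still less than $1$, if needed, so as to apply the lemma with exponent $m+n$).

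Next, summing this over all $(m,n) \in \zz{N}^2$ gives $\sum_{m,n} C\varrho^{m+n} = C/(1-\varrho)^2 < \infty$, so Borel--Cantelli ensures that almost surely only finitely many pairs $(m,n)$ satisfy $G_R(X_m, Y_n) \geq \varrho^{m+n}$. For the remaining, all but finitely many pairs, one has $G_R(X_m, Y_n) < \varrho^{m+n}$, whose sum converges as a geometric series. The finitely many exceptional pairs contribute a finite amount because $G_R(X_m, Y_n) = F_R(X_m, Y_n) G_R(1,1) \leq G_R(1,1) < \infty$ (using $F_R \leq 1$, which itself follows from the finiteness of $G_R(1,1)$ as in Corollary~\ref{corollary:RWMetric}).

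The main (mild) obstacle is verifying that $X_m^{-1} Y_n$ really is distributed as a length $m+n$ random walk: this is where the symmetry hypothesis enters, and without it one would have to work with an auxiliary reversed walk whose step distribution differs from that of $X$. Beyond that, everything reduces to the summable tail bound already established and a routine Borel--Cantelli argument.
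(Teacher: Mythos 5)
Your proposal is correct and follows essentially the same route as the paper: reduce $G_R(X_m,Y_n)$ to $G_R(1,Z_{m+n})$ via symmetry of the step distribution, apply the exponential-tail estimate from the preceding lemma, and invoke Borel--Cantelli. The only (harmless) differences are that the paper works with $Y_n^{-1}X_m$ rather than $X_m^{-1}Y_n$ (equivalent by symmetry), and that your parenthetical about adjusting $\varrho$ is unnecessary since the preceding lemma applies directly at time $m+n$; you also spell out the boundedness of the finitely many exceptional terms, which the paper leaves implicit.
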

\begin{proof}
For fixed $m$ and $n$, $Y_n^{-1}X_m$ is distributed as $X_{n+m}$, by
symmetry of the step distribution. Therefore,
  \begin{equation*}
  P( G_R(X_m, Y_n) \geq \varrho^{m+n})
  =P(G_R(Y_n^{-1} X_m, 1) \geq \varrho^{m+n})
  =P(G_R(X_{n+m}, 1) \geq \varrho^{m+n})
  \leq C \varrho^{n+m},
  \end{equation*}
by the previous lemma. Since this quantity is summable in $m$ and
$n$, Borel-Cantelli ensures that, almost surely, $G_R(X_m, Y_n) \leq
\varrho^{m+n}$ for all but finitely many pairs $(m,n)$.
\end{proof}

We will construct barriers by first constructing
\emph{pre-barriers} as defined below.

\begin{definition}
A \emph{pre-barrier} is a quadruple $(V,W,A,B)$ of pairwise disjoint
sets such that $V,W$ are non-overlapping halfplanes and $A,B$ are  subsets
of $\Gamma$ such that
\begin{enumerate}
\item [(i)] every path in $\Gamma$ from $V$ to $W$ must enter $A\cup
B$, with first entrance at a point $a\in A$ and last exit at a point
$b\in B$;
\item [(ii)] every path in $\Gamma$ from $V$ to $B$ must pass through
$A$; and
\item [(iii)] we have
  \begin{equation*}
  \sum_{a\in A, b\in B} G_R(a, b) < \infty.
  \end{equation*}
\end{enumerate}
A pre-barrier $(V,W,A,B)$ \emph{separates} points $\xi ,\zeta \in
\partial \zz{D}$ if $\xi$ and $\zeta $ are interior points of
the arcs $\text{cl}(V)\cap \partial \zz{D}$ and $\text{cl}(W)\cap
\partial \zz{D}$, respectively.
\end{definition}

\begin{lemma}
For any two distinct points $\xi, \eta\in \partial \zz{D}$, there
exists a pre-barrier separating $\xi$ and $\eta$.
\end{lemma}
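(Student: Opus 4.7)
The plan is to realize $A$ and $B$ as the vertex sets of two disjoint bi-infinite random-walk trails $T_A, T_B$, with $T_A$ lying close to a hyperbolic geodesic $\alpha_V$ that separates $\xi$ from $\eta$ on the $V$-side, and $T_B$ close to a parallel geodesic $\alpha_W$ on the $W$-side. Begin with the geometric setup: choose disjoint halfplanes $V, W \subset \zz{D}$ with $\xi$ and $\eta$ interior to the respective boundary arcs; then choose disjoint hyperbolic geodesics $\alpha_V, \alpha_W \subset \zz{D} \setminus (V \cup W)$, with $\alpha_V$ close to $\partial V$, $\alpha_W$ close to $\partial W$, and the hyperbolic distance between them taken to be a large $D$. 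By co-compactness of the $\Gamma$-action on $\zz{D}$, there exist vertices $v_A, v_B \in \Gamma$ within bounded distance of $\alpha_V, \alpha_W$, respectively.

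Next, take four mutually independent random walks $X^{(1)}, X^{(2)}, Y^{(1)}, Y^{(2)}$ starting at $1$. The translates $v_A X^{(i)}$ and $v_B Y^{(j)}$ start at $v_A$ and $v_B$ and converge almost surely to points on $\partial \zz{D}$, with hitting distributions of full support (standard for non-elementary Fuchsian groups). Hence on a positive-probability event $E$, the four boundary limits lie in four prescribed small arcs near the two endpoints of $\alpha_V$ (for the $X$-walks) and of $\alpha_W$ (for the $Y$-walks). On $E$, the classical fellow-traveling of random-walk trajectories with hyperbolic geodesics forces the concatenated bi-infinite trails $T_A$ (from $X^{(1)}$ and $X^{(2)}$ through $v_A$) and $T_B$ (similarly through $v_B$) into thin tubular neighborhoods of $\alpha_V$ and $\alpha_W$; by taking $D$ large and the arcs small, these neighborhoods are disjoint and both contained in $\zz{D} \setminus (V \cup W)$.

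Set $A := T_A \setminus \{v_A\}$ and $B := T_B \setminus \{v_B\}$. Since the Cayley graph is planar (inherited from the tessellation by $\Gamma \cdot \mathcal{F}$), each of $T_A, T_B$, being a bi-infinite path with distinct boundary endpoints separating $\xi$ from $\eta$, disconnects the graph into two components, with $V \cap \Gamma$ on one side and $W \cap \Gamma$ on the other. The nested configuration $V \mid A \mid B \mid W$ then yields pre-barrier conditions~(i) and~(ii): any path from $V$ to $W$ must cross $T_A$ first (since $T_A$ lies between $V$ and $T_B$) and $T_B$ last, and any path from $V$ to $B$ must cross $T_A$. For condition~(iii), each $a \in A$ equals $v_A X^{(i)}_m$ and each $b \in B$ equals $v_B Y^{(j)}_n$, so by left-invariance of the Green's function and the Harnack inequality~\eqref{eq:harnack}, $G_R(a,b) \leq C^{d(v_A, v_B)} G_R(X^{(i)}_m, Y^{(j)}_n)$; summing over all four pairings of walk indices and applying Lemma~\ref{lemma:summable} to each pair gives $\sum_{a,b} G_R(a,b) < \infty$ almost surely, so this finite-sum condition holds together with the geometric event $E$ with positive probability. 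Any realization in this intersection produces the desired pre-barrier.

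The chief technical obstacle is making the fellow-traveling step rigorous: showing that on $E$ the random-walk trails really do lie in disjoint thin tubes around $\alpha_V, \alpha_W$ and form topological separators between $V$ and $W$. This follows from the classical fact that a random-walk trajectory on a hyperbolic group stays at bounded Hausdorff distance from the hyperbolic geodesic joining its starting point to its boundary limit, combined with the elementary observation that a bi-infinite quasi-geodesic in $\zz{D}$ with both endpoints in tiny arcs near the endpoints of $\alpha_V$ must lie in a thin neighborhood of $\alpha_V$; the planarity of the tessellation then promotes these topological curves into graph-theoretic separators.
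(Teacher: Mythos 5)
Your overall strategy matches the paper's: build $A$ and $B$ from bi-infinite random-walk trajectories whose ends converge to four prescribed boundary arcs, use Lemma~\ref{lemma:summable} for the finiteness of $\sum_{a,b} G_R(a,b)$, and use planarity of the tessellation to convert the trails into graph-theoretic separators. However, the step you flag as the ``chief technical obstacle'' rests on a claim that is simply false. There is no ``classical fact'' that a random-walk trajectory on a hyperbolic group stays at \emph{bounded} Hausdorff distance from the hyperbolic geodesic joining its starting point to its boundary limit. What is true (Kaimanovich's geodesic-tracking criterion) is that the deviation $d(X_n,\gamma)$ is almost surely $o(n)$; for finite-range walks on hyperbolic groups the deviation in fact grows, typically like $\log n$ (Blach\`ere--Ha\"\i ssinsky--Mathieu, Sisto). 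Consequently, even on your event $E$, the trails $T_A$, $T_B$ do \emph{not} lie in thin tubular neighborhoods of $\alpha_V$, $\alpha_W$, and your derivation of the nested configuration $V\mid A\mid B\mid W$, disjointness of $T_A$ and $T_B$, and containment in $\zz{D}\setminus(V\cup W)$ all collapse.

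The repair does not require fellow-traveling at all, and this is precisely why the paper's argument avoids it. Since $T_A$ accumulates only on $J\cup J'$ and $T_B$ only on $K\cup K'$, which are disjoint compacta in $\partial\zz{D}$, there are only finitely many indices where the two trails can meet, so they can be made disjoint by modifying finitely many points. Separation then follows from a purely topological fact about continua in $\overline{\zz D}$: a connected set accumulating on $\partial\zz{D}$ only at $\{j,j'\}$ separates the two complementary arcs, and since $T_B$ is connected, disjoint from $T_A$, and accumulates only in the $\eta$-side arc, it sits entirely on the $\eta$-side of $T_A$. This delivers the nesting you need, with no tube hypothesis. Finally, you have a second genuine error: you set $A=T_A\setminus\{v_A\}$ and $B=T_B\setminus\{v_B\}$ without thickening. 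Since a path in $\Gamma$ in this paper means a path whose jumps lie in $S\subset B(1,C_0)$, a random-walk trajectory can jump across an unthickened trail whenever $C_0>1$, so conditions (i)--(ii) of the pre-barrier definition would fail. You must replace each trajectory point by a ball of radius on the order of $2C_0$; this preserves the finiteness in condition~(iii) by the Harnack inequality, but it is a step you cannot omit.
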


\begin{proof}
Choose two disjoint compact subintervals $J$ and $K$ in one of the
connected components of $\partial \zz{D} - \{\xi, \eta\}$ such that
$J$ is closer to $\xi$ than $K$, and similarly choose $J'$, $K'$ in
the other connected component of $\partial \zz{D} - \{\xi,\eta\}$.

Let $X_n, X'_n, Y_n, Y'_n$ be independent copies of the random walk
starting from $1$, and let $\Omega'$ be the event that $X_n$
converges to $J$, $X'_n$ converges to $J'$, $Y_n$ converges to $K$
and $Y'_n$ converges to $K'$. The Poisson boundary of the random walk
is identified with the topological boundary $S^1$, and the limiting
measure has full support there since the group action is minimal on
the boundary. Therefore, $\Omega'$ has positive probability.
Moreover, Lemma \ref{lemma:summable} implies that almost surely $\sum
G_R(\tilde X_m, \tilde Y_n)<\infty$ for $\tilde X=X$ or $X'$ and
$\tilde Y=Y$ or $Y'$. Consequently, there exist two-sided infinite
paths $\{x_{n} \}_{n\in \zz{Z}}$ connecting $J$ with $J'$ and
$\{y_{n} \}_{n\in \zz{Z}}$ connecting $K$ with $K'$ such that
$\sum_{m,n\in \zz{Z}} G_R(x_m, y_n)<\infty$.   Since $J\cup J'$ is
disjoint from $K\cup K'$, the trajectories $x_{m}$ and $y_{n}$ are
disjoint for large $|m|, |n|$. Modifying these trajectories at
finitely many places (which does not change the validity of
$\sum_{m,n} G_R(x_m, y_n)<\infty$) and then replacing each point of
the trajectory by a ball of radius $2C_0$ (so that a trajectory of
the random walk can not jump past the newly constructed set), we
obtain the required pre-barrier.
\end{proof}

Theorem \ref{theorem:barriersExist} follows from the previous lemma
and the next lemma.

\begin{lemma}
If $(V,W,A,B)$ is a pre-barrier separating $\xi$ and $\eta$, then
there exist halfplanes $V'\subseteq V$ and $W'\subseteq W$ such that $(V',W',B)$ is a
barrier separating $\xi$ and $\eta$.
\end{lemma}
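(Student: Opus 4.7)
The strategy is to keep $W' = W$ and shrink $V$ to a small halfplane $V'$ clustered near $\xi$, exploiting property (ii) of the pre-barrier to convert smallness of the first-passage generating functions into smallness of the supremum in the barrier condition. The key algebraic identity is a first-passage decomposition: by property (ii), every path from $x \in V$ to $b \in B$ must enter $A$, so splitting at the first visit to $A$ gives
\[
G_R(x,b) = \sum_{a \in A} F_R^A(x,a)\, G_R(a,b), \qquad x\in V,\ b\in B,
\]
where $F_R^A(x,a)$ sums $w_R(\gamma)$ over paths $\gamma$ from $x$ to $a$ meeting $A$ only at their endpoint. Summing over $b \in B$ yields
\[
\sum_{b\in B} G_R(x,b) = \sum_{a\in A} F_R^A(x,a)\, S(a), \qquad S(a):=\sum_{b\in B} G_R(a,b),
\]
and by property (iii) the constant $M := \sum_{a\in A} S(a)$ is finite.

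The next ingredient is the uniform bound $F_R^A(x,a) \leq F_R(x,a) = G_R(x,a)/G_R(1,1)$. Since $G_R(x,a)$ depends only on $x^{-1}a$, Lemma \ref{lemma:backscattering} gives $G_R(x,a) \to 0$ uniformly as $d(x,a) \to \infty$: for any $\varepsilon > 0$ there exists $N$ with $G_R(x,a) < \varepsilon$ whenever $d(x,a) \geq N$. If I can arrange $d(V',A) \geq N$ then
\[
\sup_{x\in V'} \sum_{b\in B} G_R(x,b) \leq \frac{\varepsilon M}{G_R(1,1)},
\]
and choosing $\varepsilon$ small enough makes this at most $1/2$.

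The geometric construction of $V'$ is the substantive step. Since $A$ is disjoint from $V$, its Euclidean closure in $\overline{\zz{D}}$ meets $\partial\zz{D}$ only in the closed arc complementary to $V$'s arc; in particular $\xi \notin \overline{A}$. Let $\gamma_0$ be the hyperbolic geodesic from $\xi$ to $\eta$, and for $t > 0$ let $\ell_t$ be the hyperbolic geodesic perpendicular to $\gamma_0$ at the point $p_t \in \gamma_0 \cap V$ at hyperbolic distance $t$ from $\partial V$ toward $\xi$. Let $V'_t$ be the halfplane bounded by $\ell_t$ containing $\xi$ on its boundary arc. Then $V'_t \subset V$, and $\xi$ is interior to $V'_t \cap \partial\zz{D}$ for every $t$. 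As $t \to \infty$, the ultraparallel geodesics $\ell_t$ and $\partial V$ separate, so $d_{\zz{H}}(\ell_t, \partial V) \to \infty$. Since $A$ lies on the opposite side of $\partial V$ from $V'_t$, any path from $V'_t$ to $A$ must cross both $\ell_t$ and $\partial V$; hence $d_{\zz{H}}(V'_t, A) \geq d_{\zz{H}}(\ell_t, \partial V) \to \infty$, and by Lemma \ref{lemma:quasiIsometric} the word distance $d(V'_t, A) \to \infty$ as well.

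Fixing $t$ large enough so that $d(V'_t, A) \geq N$ with $\varepsilon \leq G_R(1,1)/(2M)$, set $V' := V'_t$ and $W' := W$. Then $V', W'$ are non-overlapping halfplanes, $B$ is disjoint from $V' \cup W' \subset V \cup W$, every path from $V'$ to $W'$ is a path from $V$ to $W$ and therefore passes through $B$ by property (i), and the supremum bound above is at most $1/2$; moreover $\xi$ and $\eta$ are interior to the arcs of $V'$ and $W'$ by construction. Hence $(V',W',B)$ is a barrier separating $\xi$ and $\eta$. The main obstacle is the geometric argument inscribing a halfplane $V'$ in $V$ with the twin properties of containing $\xi$ interior to its boundary arc and being arbitrarily far from $A$; this hinges on the perpendicular-to-$\gamma_0$ construction yielding a genuine halfplane (rather than a hypercycle region) together with $\xi \notin \overline{A}$.
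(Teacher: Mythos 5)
Your proof is correct and follows essentially the same approach as the paper: decompose $\sum_{b\in B}G_R(x,b)$ by the first visit to $A$ (property (ii)), bound it by $\sup_{a\in A}G_R(x,a)$ times the finite constant $\sum_{a,b}G_R(a,b)$ from property (iii), and then shrink $V'$ toward $\xi$ so that $d(V',A)$ is large, which by Lemma~\ref{lemma:backscattering} makes $\sup_{a\in A}G_R(x,a)$ as small as desired. The only difference is cosmetic: you work with the exact first-passage identity $G_R(x,b)=\sum_a F_R^A(x,a)G_R(a,b)$ and the bound $F_R^A\leq F_R=G_R/G_R(1,1)$, whereas the paper just uses the inequality $G_R(x,b)\leq\sum_a G_R(x,a)G_R(a,b)$; and you spell out an explicit construction of $V'$ (perpendiculars $\ell_t$ to the $\xi\eta$-geodesic, with $d_{\zz{H}}(\ell_t,\partial V)\to\infty$) where the paper simply asserts that a sufficiently small halfplane neighborhood of $\xi$ works. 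Both reduce to the same observation, and your extra geometric detail, while correct, is not a genuinely different route.
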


\begin{proof}
The only nontrivial point is the existence of a halfplane neighborhood $V'$ of
$\xi $ such that  $\max_{x\in V'}\sum_{b\in B}G_R(x,
b)\leq 1/2$.

Let $V'$ be a small neighborhood of $\xi$. Every path from $V'$ to
$B$ must go first through $A$.  Decomposing such a path according to
the first visited point in $A$, we find that for any $b\in B$,
\begin{align*}
    G_R(x, b) &\leq \sum_{a\in A} G_R(x, a) G_R(a, b)\\
    	   &\leq \sup_{a\in A} G_R(x, a) \cdot \sum_{a\in A, b\in B}
  G_R(a,b) \\
  &\leq C \sup_{a\in A} G_R(x, a).
\end{align*}
By Lemma~\ref{lemma:backscattering}, $G_R(1, x)$ tends uniformly to
$0$ when $|x|\to \infty$. Therefore, if $V'$ is small enough then
$\sup_{a\in A} G_R(x, a)$ is smaller than $1/(2C)$.
\end{proof}

\subsection{Exponential decay of the Green's function}\label{ssec:decay}

\begin{theorem}\label{theorem:expDecay}
The Green's function $G_R(x,y)$ of a symmetric, irreducible,
finite-range random walk on a co-compact Fuchsian group $\Gamma$,
evaluated at its radius of convergence $R$, decays exponentially in
the distance $d(x,y)$. In particular, there exist constants $C<\infty$ and
$\varrho<1$ such that for every $x\in \Gamma$,
\begin{equation*}
	G_R(1,x)\leq C\varrho^{|x|}.
\end{equation*}
\end{theorem}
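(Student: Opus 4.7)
My plan is to chain together many of the barriers provided by Corollary~\ref{corollary:separationThreshold} along the hyperbolic geodesic from $O$ to $xO$, and to peel them off one at a time via first-passage decomposition. Each peel contributes a factor of $1/2$ from the defining inequality of a barrier, and by Lemma~\ref{lemma:quasiIsometric} the number of barriers that fit is proportional to $|x|$.

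In detail, let $K$ be the constant in Corollary~\ref{corollary:separationThreshold}, chosen large enough that the walls described below lie on the correct sides of the next geodesic. Put $n = \lfloor d_{\zz{H}}(O,xO)/(2K) \rfloor \asymp |x|$, and erect perpendicular geodesic walls $\ell_1,\ell_2,\dots,\ell_{2n}$ at hyperbolic positions $K, 2K, \dots, 2nK$ along $[O,xO]$. Applying Corollary~\ref{corollary:separationThreshold} to each consecutive pair $(\ell_{2i-1},\ell_{2i})$ furnishes barriers $(V_i, W_i, B_i)$, where $V_i$ is the halfplane of $\ell_{2i-1}$ containing $O$ and $W_i$ is the halfplane of $\ell_{2i}$ containing $xO$. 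These nest, $V_1 \subset \cdots \subset V_n$ and $W_1 \supset \cdots \supset W_n$, with $1 \in V_1$ and $x \in W_n$; the wall sets $B_i$, being translates of finitely many prototype walls (built from compact pieces of random-walk trajectories in the proof of Theorem~\ref{theorem:barriersExist}), lie in bounded neighborhoods of the respective strips between $\ell_{2i-1}$ and $\ell_{2i}$. Choosing $K$ large enough ensures $B_i \subset V_{i+1}$ for every $i < n$.

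The iteration is then clean. For any $v \in V_i$, splitting a path from $v$ to $x$ at its first visit to $B_i$, and using $F_R(v,b) = G_R(v,b)/G_R(1,1)$ together with the barrier estimate $\sum_{b \in B_i} G_R(v,b) \leq 1/2$, yields
\[
G_R(v,x) \leq \sum_{b \in B_i} F_R(v,b)\, G_R(b,x) \leq \frac{1}{2\,G_R(1,1)} \max_{b \in B_i} G_R(b,x).
\]
Because $B_i \subset V_{i+1}$, the same bound applies to each of the $G_R(b,x)$ appearing in the maximum; iterating $n$ times starting from $v = 1$, and using $G_R(1,1) \geq 1$ together with the trivial bound $G_R(b,x) \leq G_R(1,1)$, gives
\[
G_R(1,x) \leq (2G_R(1,1))^{-n} \max_{b \in B_n} G_R(b,x) \leq 2^{-n} G_R(1,1).
\]
Since $n \asymp |x|$, this is the claimed exponential decay.

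The step I expect to cause the most friction is the geometric bookkeeping needed to guarantee $B_i \subset V_{i+1}$, which is what allows the iteration to close. This reduces to checking that, after translating the finitely many prototype barriers from the proof of Corollary~\ref{corollary:separationThreshold}, their wall sets remain within a fixed hyperbolic distance of the gap between their two halfplanes, after which choosing $K$ larger than this distance suffices. The case of small $|x|$ is absorbed into the constant via the trivial bound $G_R(1,x) \leq G_R(1,1)$.
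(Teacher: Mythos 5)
Your proof is correct and follows essentially the same route as the paper: it erects barriers at evenly spaced geodesics orthogonal to the hyperbolic geodesic from $O$ to $xO$ (this is the paper's Lemma~\ref{lemma:multipleBarriers}), and then iterates the defining $1/2$-estimate of a barrier by peeling one wall at a time. The cosmetic differences (using $2n$ perpendicular geodesics rather than $n+1$, phrasing the peel via $F_R$ rather than $G_R$, bounding the last factor by $G_R(1,1)$ via $F_R\leq 1$ rather than by citing uniform boundedness) do not change the substance; note also that the containment $B_i\subset V_{i+1}$ you flag as potential friction is automatic from the barrier definition, since $B_i$ is required to be disjoint from $V_i\cup W_i$ and hence lies in the strip between the two bounding geodesics, which sits entirely on the $O$-side of the next wall.
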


To prove this theorem, we will show that disjoint barriers can be
placed consecutively along the hyperbolic geodesic from $1$ to $x$
(identified with $O$ and $x O$) in $\zz{D}$.

\begin{lemma}
\label{lemma:multipleBarriers}
There exists $L>0$ with the following property. If $x \in \Gamma$
satisfies $d(1,x) \geq nL$ for some integer $n\geq 2$, then there
exist $n$
barriers  $(V_{i},W_{i},B_{i})$
 such that
\begin{itemize}
\item [(i)] the sets $B_{i}$ are pairwise disjoint;
\item [(ii)] every path from $1$ to $x$  goes
    successively through $B_{0},B_1, B_2,\dots,\allowbreak B_{n-1}$;
\item [(iii)] we have $\sum_{b_0 \in B_0} G_R(1, b_0) \leq 1/2$; and
\item [(iv)] for any $i<n-1$ and any $b_i\in B_i$,
  \begin{equation}
  \label{eq:successiveBarriers}
  \sum_{b_{i+1}\in B_{i+1}} G_R(b_i, b_{i+1}) \leq 1/2.
  \end{equation}
\end{itemize}
\end{lemma}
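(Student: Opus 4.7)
The approach is to place $2n$ hyperbolic geodesics perpendicular to the hyperbolic geodesic segment $[O, xO]$ and then apply Corollary~\ref{corollary:separationThreshold} to each consecutive pair to produce the barriers. Let $K$ be the constant supplied by that corollary. Using the quasi-isometry from Lemma~\ref{lemma:quasiIsometric}, choose $L$ large enough that $d(1,x)\geq nL$ guarantees $d_{\zz{H}}(O,xO)\geq (2n+1)K$. Then pick points $p_0,p_1,\dots,p_{2n-1}$ along $[O,xO]$ spaced at hyperbolic distance exactly $K$ apart, with $p_0$ at distance $K$ past $O$ and $p_{2n-1}$ at distance $K$ before $xO$. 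Let $\gamma_j$ be the hyperbolic geodesic perpendicular to $[O,xO]$ through $p_j$. For each $i=0,\dots,n-1$, apply Corollary~\ref{corollary:separationThreshold} to the pair $(\gamma_{2i},\gamma_{2i+1})$ (which are at hyperbolic distance $K$ apart) to obtain a barrier $(V_i,W_i,B_i)$, choosing sides so that $V_i$ is the halfplane bounded by $\gamma_{2i}$ on the $O$-side and $W_i$ is the halfplane bounded by $\gamma_{2i+1}$ on the $xO$-side.

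\textbf{Verification of (i), (iii), (iv).} These are geometric. Since $B_i$ is disjoint from $V_i\cup W_i$, it lies in the closed strip between $\gamma_{2i}$ and $\gamma_{2i+1}$; the strips for different $i$ are pairwise disjoint, giving (i). Because $1=O\in V_0$, the barrier property of $(V_0,W_0,B_0)$ directly gives (iii). The strip containing $B_i$ lies on the $O$-side of $\gamma_{2i+2}$, so $B_i\subset V_{i+1}$; applying the barrier property of $(V_{i+1},W_{i+1},B_{i+1})$ to any $b_i\in B_i$ yields~\eqref{eq:successiveBarriers}.

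\textbf{Verification of (ii).} For a path $\gamma$ from $1$ to $x$, let $\tau_i$ be the first time $\gamma$ enters $W_i$. The nesting $W_0\supset W_1\supset\cdots\supset W_{n-1}\ni xO$ forces $\tau_0\leq\tau_1\leq\cdots\leq\tau_{n-1}<\infty$. The subpath $\gamma|_{[0,\tau_i]}$ runs from $1\in V_0\subset V_i$ to $W_i$, so by the barrier property it must enter $B_i$; let $t_i$ be the first such entry. Then $t_i<\tau_i$ since $\gamma(\tau_i)\in W_i$ while $B_i\cap W_i=\emptyset$. Moreover $B_{i+1}$ lies past $\gamma_{2i+1}$, hence $B_{i+1}\subset W_i$, so any visit to $B_{i+1}$ forces $\gamma$ to already be in $W_i$; this gives $t_{i+1}\geq\tau_i>t_i$, producing the successive ordering. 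The only delicate point in the whole argument — and the main obstacle — is keeping track of the geometric bookkeeping of the nestings $V_i\subset V_{i+1}$, $W_i\supset W_{i+1}$, $B_i\subset V_{i+1}$, and $B_{i+1}\subset W_i$; once these are established, everything follows mechanically from the definition of barrier together with Corollary~\ref{corollary:separationThreshold}.
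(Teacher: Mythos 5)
Your proof is correct and follows essentially the same approach as the paper: place geodesics crossing $[O,xO]$ orthogonally at evenly spaced multiples of $K$ and apply Corollary~\ref{corollary:separationThreshold} to consecutive pairs to produce the barriers. The only difference is cosmetic — you use $2n$ separate geodesics paired up, whereas the paper uses $n+1$ geodesics with each consecutive barrier pair sharing one, and you spell out the verification of (i)--(iv) that the paper summarizes with ``These have all the desired properties.''
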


\begin{proof}
This is an easy consequence of Corollary
\ref{corollary:separationThreshold} and
Lemma~\ref{lemma:quasiIsometric}. Let $\gamma$ be the hyperbolic
geodesic segment from $1$ to $x$; by Lemma~\ref{lemma:quasiIsometric},
if $L$ is sufficiently large then the hyperbolic length of $\gamma$ is
at least $nK$. Let $\beta_{0},\beta_{1},\dotsc \beta_{n}$ be the
hyperbolic geodesics that cross $\gamma$ orthogonally at the points
$1,y_{1},y_{2},\dotsc y_{n}$ at distances $0,K,2K,\dotsc ,nK$ from
the endpoint $1$. Corollary~\ref{corollary:separationThreshold}
implies that for each $0\leq i<n$ there is a barrier
$(V_{i},W_{i},B_{i})$ such that $V_{i}$ is bounded by $\beta_{i}$ and
$W_{i}$ by $\beta_{i+1}$.  These have all the desired properties.
\end{proof}

\begin{proof}[Proof of Theorem \ref{theorem:expDecay}] Let $x\in
\Gamma$ be a vertex such that $d (1,x)\in [nL,(n+1)L)$.  Decomposing
paths from $1$ to $x$ according to their first points of entry into
$B_0$, then $B_1$, and so on, we get
\begin{align*} G_R(1,x) & \leq
\sum_{b_0\in B_0,\dots, b_{n-1}\in B_{n-1}} G_R(1, b_0) G_R(b_0,
b_1)\dots G_R(b_{n-2}, b_{n-1}) G_R(b_{n-1}, x) \\& \leq 2^{-n+1}
\sup_{b_{n-1}\in B_{n-1}} G_R(b_{n-1}, x).
\end{align*}
Since $G_R$ tends to $0$ at infinity, it is uniformly bounded, and so
the exponential bound follows.
\end{proof}

\section{Ancona's inequalities}\label{sec:ancona}

In this section, we shall prove Ancona's inequalities
\eqref{eq:ancona} and extend them to \emph{relative} Green's
functions (Theorem~\ref{theorem:relativeAncona}). We will then show
how these inequalities are used to identify the Martin boundary and
to establish the H\"older continuity of the Martin kernel. The proofs
of the various Ancona inequalities will rely on the exponential decay
of the Green's function, which was proved in
section~\ref{sec:apriori}, and the hyperbolic character of the Cayley
graph. Recall that a finitely generated group $\Gamma$ (or
alternatively its Cayley graph) is \emph{hyperbolic} in the sense of
Gromov~\cite{gromov} if it satisfies the \emph{thin triangle
property}, that is, there exists a constant $\delta <\infty$ such
that for any geodesic triangle $T$ in $\Gamma$, each point on a side
of $T$ is within distance $\delta$ of at least one of the other two
sides. We then say that is $\delta$ is a \emph{Gromov constant} for
$\Gamma$. The minimal Gromov constant $\delta$ will in general depend
on the choice of the generating set; however, if $\Gamma$ is
hyperbolic with respect to one (finite) set of generators then it is
hyperbolic with respect to every finite set of generators.

\subsection{Relative Ancona's inequalities}

For technical reasons, we will need an extension of Ancona's
inequalities valid for \emph{relative}
Green's functions (as defined in subsection~\ref{ssesc:fpGF}). These
inequalities assert that, for any point $z$ on a geodesic segment
between two points $x$ and $y$, and for a suitable class of domains
$\Omega$, one has
\begin{equation}\label{eq:relAncona}
		   G_r(x, y;\Omega) \leq C G_r(x,z;\Omega) G_r(z,y;\Omega).
\end{equation}
The constant $C$ should be independent of $x,y,z$, of $\Omega$, and
of $r\in [1,R]$. The usual Ancona inequalities \eqref{eq:ancona}
correspond to the case $\Omega= \Gamma$. Clearly, inequality
\eqref{eq:relAncona} cannot hold for all domains $\Omega$, because in
general there might not be positive-probability paths from $x$ to $z$
in $\Omega$. Therefore, some restrictions on the set $\Omega$ are
necessary.

Relative versions of Ancona's inequalities play an important role in
\cite{ancona:annals} (see th.~1' there), \cite{ledrappier:review,
izumi}. In the latter two papers, such inequalities are
proved for any domain $\Omega$ that contains a neighborhood of fixed
size $C$ of the geodesic segment $[xy]$. We are only able to deal with
a smaller class of domains, but these will nevertheless be sufficient
for our purposes.

\begin{theorem}
\label{theorem:relativeAncona} For any symmetric, irreducible,
finite-range random walk on a co-compact Fuchsian group $\Gamma$ there
exist positive constants $C$ and $C'$ such that the following
statement holds.  For any geodesic segment $[xy]$ and any $z\in [xy]$,
and for any subset $\Omega \subset \Gamma$ with the property that for
each $w\in [xy]$ the ball of radius $C'+d (w,z)/2$ centered at $w$ is
contained in $\Omega$,
\begin{equation}\label{eq:relativeAncona}
  G_r(x,y; \Omega)\leq C G_r(x,z; \Omega) G_r(z,y; \Omega) \quad
  \text{for all}\;\; 1\leq r\leq R.
  \end{equation}
\end{theorem}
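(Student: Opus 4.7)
The plan is to factor $G_r(x,y;\Omega)$ through a barrier decomposition near $z$, combining Corollary~\ref{corollary:separationThreshold} with Harnack inequalities and the exponential decay of Theorem~\ref{theorem:expDecay}. The upper bound in \eqref{eq:relativeAncona} is what requires work; the reverse inequality (with constant $1/G_R(1,1)$) would follow from a direct concatenation of paths through $z$.

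First I would construct a barrier $(V,W,B)$ whose bounding hyperbolic geodesics meet $[xy]$ transversely at bounded hyperbolic distance from $z$, so that $x\in V$, $y\in W$ and, with $C'$ chosen large enough relative to barrier geometry, the portion of $B$ lying near $z$ sits inside $B(z,C')\subset\Omega$. Since $B$ separates $V$ from $W$ in $\Gamma$, every path from $x$ to $y$ staying in $\Omega$ must cross $B$, and decomposing at the first such crossing gives
\begin{equation*}
G_r(x,y;\Omega)\;\leq\;\sum_{b\in B}G_r(x,b;\Omega)\,G_r(b,y;\Omega).
\end{equation*}
For $b$ within a fixed distance $D$ of $z$, the ball hypothesis at $w=z$ allows the Harnack inequality \eqref{eq:harnack} to be applied inside $\Omega$ (the short connecting paths from $b$ to $z$ remain in the ball $B(z,C')\subset\Omega$), yielding $G_r(x,b;\Omega)\leq C_1 G_r(x,z;\Omega)$ and $G_r(b,y;\Omega)\leq C_1 G_r(z,y;\Omega)$; since $|B\cap B(z,D)|$ is bounded uniformly by local finiteness of the Cayley graph, this contribution is at most a constant times $G_r(x,z;\Omega)G_r(z,y;\Omega)$.

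The main obstacle is the tail over $b\in B$ with $d(b,z)>D$, which is genuinely present since the pre-barrier construction built from random walk trajectories produces infinite separating sets. Controlling this tail is where the growing fattening radius $C'+d(w,z)/2$ in the hypothesis enters: it accommodates the far-off portions of $B$ and the short paths needed to connect them to the geodesic, while the summability $\sum_{b\in B}G_R(x',b)\leq 1/2$ from the barrier property, combined with the exponential decay of $G_R$ in the direction away from $z$, is what forces the tail to be bounded by another constant times $G_r(x,z;\Omega)G_r(z,y;\Omega)$ for $D$ chosen large enough. Uniformity of the final constant $C$ in $r\leq R$ follows from the monotonicity $G_r\leq G_R$ and the uniform exponential rate from Theorem~\ref{theorem:expDecay}. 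A useful structural feature of the hypothesis on $\Omega$, not strictly needed for this single-barrier argument, is that it is preserved when $z$ is replaced by an intermediate point of $[xz]$ or $[zy]$, so the inequality is well suited to iteration in later applications.
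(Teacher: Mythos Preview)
Your approach has a genuine gap in the treatment of the tail $\sum_{b\in B,\; d(b,z)>D} G_r(x,b;\Omega)\,G_r(b,y;\Omega)$. The barrier summability $\sum_{b\in B} G_R(x,b)\leq 1/2$ together with the exponential decay of $G_R$ are \emph{not} enough to bound this by a constant times $G_r(x,z;\Omega)\,G_r(z,y;\Omega)$. The obstruction is the familiar one that blocks any naive proof of Ancona's inequality: the only lower bound you have for $G_r(z,y;\Omega)$ is of the form $c^{d(z,y)}$ coming from a direct path in the tube, while the only upper bound for $G_r(b,y;\Omega)$ is $C\varrho^{d(b,y)}$ from Theorem~\ref{theorem:expDecay}. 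Since in general $c<\varrho$, the ratio $G_r(b,y;\Omega)/G_r(z,y;\Omega)$ is not controlled, and no choice of $D$ makes the tail uniformly small compared to $G_r(x,z;\Omega)G_r(z,y;\Omega)$ when $d(z,y)$ is large. Invoking ``exponential decay in the direction away from $z$'' does not help: it gives upper bounds on the numerator but cannot compensate for the worse lower bound on the denominator.

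The paper's proof avoids this mismatch by a completely different mechanism. The key input is Lemma~\ref{lemma:superExponential}, which gives a \emph{doubly} exponential bound $G_R(x,y;B_k(z)^c)\leq C\exp\{-e^{\alpha k}\}$ for paths that avoid a ball around a point $z\in[xy]$; this is strong enough to beat any single-exponential lower bound. The proof then runs an iterative scheme: one places a sequence of balls $B_1,B_2,\dots$ along $[xy]$, of radii shrinking by a fixed factor toward $z$, and recursively splits trajectories at first/last visits to these balls. At each stage the contribution of paths that miss $B_{i+1}$ is, by the super-exponential lemma, at most $\lambda_i\,G_r(u_i,v_i;\Omega)$ with $\sum_i\lambda_i<1/2$; these terms are absorbed back into the left-hand side. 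At the terminal stage the remaining endpoints are within a fixed distance $A$ of $z$, where your Harnack argument legitimately applies. The hypothesis on $\Omega$ (the growing radius $C'+d(w,z)/2$) is used precisely to ensure that at every stage a fixed-width tube around the relevant geodesic segment $[u_iv_i]$ lies inside $\Omega$, so that the lower bound $G_r(u_i,v_i;\Omega)\geq p_{\min}^{d(u_i,v_i)}$ is available. Your single-barrier decomposition does not capture this bootstrapping, and without the super-exponential ingredient the argument cannot close.
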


There is another natural class of domains $\Omega$ for which we can
establish relative Ancona's inequalities (under additional
assumptions on the random walk).

\begin{definition}\label{definition:convex}
A set $\Omega \subset \Gamma$ is \emph{convex} if for all
pairs $x,y\in \Omega$ there is a geodesic segment from $x$ to $y$
contained in $\Omega$.
\end{definition}

\begin{theorem}
\label{theorem:relativeAnconaConvex}
Consider an irreducible, symmetric random walk on a co-compact
Fuchsian group $\Gamma$, and assume that its step distribution gives
positive probability to any element of the generating set used to
define the word distance. There exists a constant $C>0$ such that,
for every convex set $\Omega$ containing the geodesic segment $[xy]$,
and for every point $z\in [xy]$, the Ancona inequality
\eqref{eq:relativeAncona} holds.
\end{theorem}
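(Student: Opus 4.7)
The plan is to mimic the barrier argument underlying Theorem~\ref{theorem:1}(B) and the ``thick neighborhood'' variant Theorem~\ref{theorem:relativeAncona}, modifying both so that every object stays inside $\Omega$. The hypothesis that every generator has positive step probability is used to substitute for the geometric thickness condition on $\Omega$ imposed in Theorem~\ref{theorem:relativeAncona}.

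The first ingredient is an \emph{internal Harnack inequality}: for any $z, w \in \Omega$ with $d(z, w) \leq K$, any $u \in \Omega$, and any $r \in [1, R]$,
\[
G_r(u, z; \Omega) \asymp G_r(u, w; \Omega),
\]
where the implicit constants depend only on $K$ and $p_{\min}$, the minimum positive step probability. Indeed, convexity of $\Omega$ provides a geodesic segment from $z$ to $w$ inside $\Omega$ of length at most $K$, and the positive-probability assumption guarantees each edge of this segment carries weight at least $r p_{\min} \geq p_{\min}$; concatenating with paths $u\to z$ (or $u\to w$) in $\Omega$ then yields both bounds.

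Having established internal Harnack, one runs the barrier argument as follows. After a group translation we may assume $z$ lies in a fixed compact region. By Corollary~\ref{corollary:separationThreshold}, produce an ambient barrier $(V,W,B)$ near $z$ separating $x$ from $y$ in $\Gamma$. A path from $x$ to $y$ in $\Omega$ is a path in $\Gamma$ and so must cross $B$; being inside $\Omega$, it must cross $B \cap \Omega$. First-visit decomposition gives
\[
G_r(x, y; \Omega) \leq \sum_{b \in B \cap \Omega} F_r(x, b; \Omega)\, G_r(b, y; \Omega),
\]
and for each $b \in B \cap \Omega$ within bounded distance of $z$, the internal Harnack estimate replaces $F_r(x,b;\Omega) \leq G_r(x,b;\Omega)$ by a constant multiple of $G_r(x,z;\Omega)$, and similarly for $G_r(b,y;\Omega)$; the barrier weighting $\sum_b G_R(\cdot,b) \leq 1/2$ bounds the sum.

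The main obstacle is that the ambient barrier set $B$ produced by Section~3 is a doubly-infinite trajectory that may extend far from $z$ and need not lie in $\Omega$. The internal Harnack estimate applies only to elements of $B \cap \Omega$ close to $z$, so one must control the contribution from the tail of $B \cap \Omega$ separately. I expect to handle this by iterating the barrier construction along the geodesic $[xy]$ in the spirit of Lemma~\ref{lemma:multipleBarriers}: each successive barrier contributes a factor of $1/2$, and the resulting geometric series absorbs the tail contributions. Convexity of $\Omega$ keeps the geodesic $[xy]$, and hence each successive barrier location, inside $\Omega$, while the positive-probability hypothesis supplies the local estimates needed to chain short paths from tail points back to the geodesic.
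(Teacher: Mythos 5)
Your observation that convexity plus positive nearest-neighbor steps yields an internal Harnack inequality inside $\Omega$ is exactly right, and it is also the place where the paper actually invokes those hypotheses: the combined proof of Theorems~\ref{theorem:relativeAncona} and \ref{theorem:relativeAnconaConvex} uses them to get the lower bound $G_r(u_i,v_i;\Omega)\geq p_{\min}^{d(u_i,v_i)}$, via a direct geodesic path in $\Omega$. So you have correctly identified the role of the hypothesis.

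The gap is in the barrier step, and it is essential, not technical. A single barrier $(V,W,B)$ from Corollary~\ref{corollary:separationThreshold} gives the first-visit bound $G_r(x,y;\Omega)\leq\sum_{b\in B\cap\Omega}F_r(x,b;\Omega)G_r(b,y;\Omega)$, and the internal Harnack converts the terms with $d(b,z)\leq K$ into $CG_r(x,z;\Omega)G_r(z,y;\Omega)$. But for $b\in B$ far from $z$ the barrier property is only the absolute bound $\sum_b G_R(x,b)\leq\tfrac12$; it produces no comparison of $G_r(x,b;\Omega)G_r(b,y;\Omega)$ with the target product $G_r(x,z;\Omega)G_r(z,y;\Omega)$, and iterating barriers in the style of Lemma~\ref{lemma:multipleBarriers} only ever manufactures further absolute factors of $\tfrac12$, not a relative one. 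In other words, barriers prove \emph{upper} bounds on Green's functions; Ancona's inequality is a two-sided comparison, and you also need a \emph{lower} bound on the factorized product. Trying to force the far $b$'s within Harnack range of $z$ requires a window of size $k\sim\log d(x,y)$, which makes the Harnack constant polynomial in $d(x,y)$ and destroys uniformity.

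The paper's argument avoids this precisely by combining a nested-scale path decomposition (segments $[x_n,y_n]\ni z$ shrinking geometrically, with balls $B_{n+1}$ on the discarded pieces) with the double-exponential decay Lemma~\ref{lemma:superExponential} for paths avoiding a ball. The decisive step is the inequality $H(u_i,v_i;B_{i+1}^c)\leq\lambda_i\,G_r(u_i,v_i;\Omega)$ with $\lambda_i$ super-exponentially small in $(3/4)^i m$: here the enormously small $H$-term (from Lemma~\ref{lemma:superExponential}) is compared against the lower bound $G_r(u_i,v_i;\Omega)\geq p_{\min}^{d(u_i,v_i)}$ (from your internal Harnack idea), which lets all the off-diagonal contributions be absorbed back into the main quantity $G_r(u_0,v_0;\Omega)$ with total loss $\sum\lambda_j<\tfrac12$. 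Only the final term, where $u_N,v_N$ lie within a bounded distance $A$ of $z$, is factorized through $z$ by a bounded Harnack constant. This reabsorption-into-the-main-term mechanism is exactly what the barrier framework lacks, and you would need to supply something equivalent to complete your argument.
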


The converse inequality $G_r(x,y; \Omega)\geq C G_r(x,z; \Omega)
G_r(z,y; \Omega)$ is trivial (see Lemma~\ref{lemma:superadditivity}),
so the theorems really say that $G_r(x,y; \Omega)\asymp G_r(x,z;
\Omega) G_r(z,y; \Omega)$ (meaning that the ratio between the two
sides of this equation remains bounded away from $0$ and $\infty$).

The proof of the two  theorems will use the following lemma, which is an
easy consequence of the exponential decay of the Green's function and
the geometry of the hyperbolic disk.

\begin{lemma}
\label{lemma:superExponential}
There exist $C>0$ and $\alpha >0$ such that, for any geodesic segment
$[xy]$,  any $z\in [xy]$ and  any $k\geq 0$,
  \begin{equation*}
  G_R(x,y; B_{k}(z)^c) \leq C \exp \{-e^{\alpha k} \}.
  \end{equation*}
\end{lemma}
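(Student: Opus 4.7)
My plan is to iterate the barrier machinery of Section~\ref{sec:apriori}. Because each barrier crossing yields a factor of $1/2$ in the telescoping of Lemma~\ref{lemma:multipleBarriers}, to reach the double-exponential bound $C\exp\{-e^{\alpha k}\}$ I need to exhibit $N:=\lfloor c\,e^{\alpha k}\rfloor$ pairwise disjoint barriers, each of which every $\gamma\in\mathcal{P}(x,y;B_k(z)^c)$ must cross. A single-exponential budget of barriers is geometrically available: the hyperbolic circumference of the sphere $\partial B_k(z)\subset\zz{D}$ equals $2\pi\sinh k\asymp e^k$, so by Lemma~\ref{lemma:quasiIsometric} one can fit $\asymp e^k$ group elements along $\partial B_k(z)$ with pairwise word-distance at least the threshold $K$ of Corollary~\ref{corollary:separationThreshold}.

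Concretely, I would first reduce to the case $d(x,z),d(y,z)\geq k$ by trimming the bounded first and last steps of $\gamma$; the complementary range (small $k$, or an endpoint inside $B_k(z)$) is trivial because $G_R$ is uniformly bounded by Theorem~\ref{theorem:expDecay} while $\exp\{-e^{\alpha k}\}$ is then bounded away from $0$. Next, I would pick $N$ points $w_1,\dots,w_N$ on one of the two arcs of $\partial B_k(z)$ lying on one side of $[xy]$, with consecutive hyperbolic distance at least $K$, yielding $N\asymp e^k$. For each consecutive pair $(w_j,w_{j+1})$, I would apply Corollary~\ref{corollary:separationThreshold} to two hyperbolic geodesics tangent to $\partial B_{k/2}(z)$ with feet near $w_j$ and $w_{j+1}$; this produces a barrier $(V_j,W_j,B_j)$ in which the halfplanes $V_j\supset V_{j+1}$ form a nested sequence and the separating sets $B_j$ are pairwise disjoint.

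The essential geometric claim is then that every $\gamma\in\mathcal{P}(x,y;B_k(z)^c)$ crosses all $N$ of these barriers in order. This is the main obstacle, and it is where the planarity of the Cayley graph inside $\zz{D}$ enters crucially: the chord $[xy]$ together with $\gamma$ bounds a Jordan region meeting one of the two components of $\zz{D}\setminus B_k(z)$, and inside that component $\gamma$ is forced to sweep angularly across every one of the constructed sectors, being unable to ``leap over'' any $B_j$ because the step distribution has jumps of size at most $C_0$. Once this is established, the telescoping of Lemma~\ref{lemma:multipleBarriers} combined with Theorem~\ref{theorem:expDecay} (to absorb the final $\sup_b G_R(b,y)$) yields
\[
G_R(x,y;B_k(z)^c)\;\leq\;2^{-N}\sup_b G_R(b,y)\;\leq\;C\exp\{-e^{\alpha k}\},
\]
as required. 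One minor technicality to handle is the back-and-forth conversion between hyperbolic and word distance via Lemma~\ref{lemma:quasiIsometric}, which only affects the values of the constants $c$ and $\alpha$.
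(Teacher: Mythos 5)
Your high-level strategy---fit exponentially many (in $k$) separating sets around $\partial B_k(z)$, force any path in $B_k(z)^c$ to cross an exponential number of them, and telescope the crossing factors---is the same as the paper's, but there are genuine gaps in the geometric implementation. First, the barriers you propose are built from geodesics tangent to $\partial B_{k/2}(z)$ whose tangent points are ``near'' the $N\asymp e^k$ points $w_j$ on $\partial B_k(z)$; since the hyperbolic circumference of $\partial B_{k/2}(z)$ is only $\asymp e^{k/2}$, those tangent points are at hyperbolic spacing $\asymp e^{-k/2}$, so consecutive tangent geodesics essentially coincide (and in fact cross near the tangency circle). Thus Corollary~\ref{corollary:separationThreshold} cannot be applied (it needs hyperbolic distance $\geq K$ between the two bounding geodesics), the half-planes are not non-overlapping, and the nesting $V_j\supset V_{j+1}$ you assert does not hold: distinct tangent lines to a circle bound overlapping wedges, not nested half-planes. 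Once the geodesics are spaced far enough apart to satisfy Corollary~\ref{corollary:separationThreshold}, the available budget drops to $e^{\alpha k}$ where $\alpha$ is governed by the hyperbolic radius of the circle on which the spacing is fixed---still exponential, but not the $e^k$ you claim from the circumference of $\partial B_k(z)$. Second, you place all $N$ barriers on one fixed arc of $\partial B_k(z)\setminus[xy]$ and then claim that \emph{every} $\gamma\in\mathcal{P}(x,y;B_k(z)^c)$ crosses all of them; a path can instead circulate along the opposite component of the annulus and miss every one of your barriers. One must place separating sets all the way around and argue that any path crosses at least half of them, or else split the path sum into two classes by winding direction.

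The paper sidesteps both issues by not using the half-plane barriers of Definition~\ref{definition:barrier} at all. It takes radial geodesic rays $D_i$ emanating from $z$, spaced at least $d$ apart along a hyperbolic circle $\mathcal{C}$ of radius $k/C$ chosen \emph{inside} the word-metric ball $B_k(z)$ (so a path in $B_k(z)^c$ stays outside $\mathcal{C}$, where the inter-ray spacing can only grow), and thickens each to $\tilde D_i$ by width $2C_0$. The radial structure makes the ordering of crossings automatic, and both winding directions are covered since the rays surround $z$; the key estimate $\sum_{a_i\in\tilde D_i,\, a_{i+1}\in\tilde D_{i+1}} G_R(a_i,a_{i+1})\leq 1/2$ follows directly from Theorem~\ref{theorem:expDecay} once $d$ is large, with no need for the full barrier machinery. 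Rewriting your argument with thickened radial rays in place of tangent-geodesic barriers, with the spacing measured on $\mathcal{C}$, would close the gaps.
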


Here $B_{k}(z)^c$ is the complement of the ball $B_{k}(z)$ of radius
$k$ centered at $z$. Since the distance from $x$ to $y$ in the
complement of $B_{k}(z)$ grows exponentially with $k$, and the Green's
function itself decays exponentially, the double exponential bound in
the conclusion of the lemma should not be surprising.

\begin{proof}
It suffices to prove the lemma for large $k$. Since the Green's
function is translation invariant, we can assume without loss of
generality that $z=1$.  (Recall that $1\in \Gamma$ is identified with
the point $O\in \zz{D}$.)  Consider the hyperbolic circle
$\mathcal{C}$ of radius $k/C$ around $O$, where $C$ is sufficiently
large that this circle is contained in the ball $B_{k}(1)$ for the
Cayley graph distance. Since the hyperbolic length of $\mathcal{C}$
grows exponentially with $k$, one can set along this circle an
exponential number $e^{\alpha k}$ of geodesic rays $D_i$ starting from
$O$ and going to infinity, such that the minimum distance $d$ between
any successive rays $D_i$ and $D_{i+1}$ along $\mathcal{C}$ is
arbitrarily large (the exponential rate $\alpha >0$ will not depend
depend on $d$, but the minimum radius $k/C$ will.). Let $\tilde D_i$
be a thickening of $D_i$ so that a path making jumps of length at most
$C_0$ can not jump across $\tilde D_i$. If $d$ is large enough then
$\sum_{a_i \in \tilde D_i, a_{i+1}\in \tilde D_{i+1}} G_R(a_i,
a_{i+1}) \leq 1/2$ (this sum is dominated by a geometric series,
thanks to the exponential decay of the Green's function, and it has
arbitrarily small first term if $d$ is large). A path from $x$ to $y$
that avoids $B_{k}(1)$ has to go around the circle in one direction or
the other, and will therefore cross at least half the domains $\tilde
D_i$. As in the conclusion of the proof of Theorem
\ref{theorem:expDecay}, this yields $G_R(x,y; B_{k}(1)^c) \leq C 2^{-
e^{\alpha k}/2}$.
\end{proof}

\begin{proof}[Proof of Theorems
\ref{theorem:relativeAncona}--\ref{theorem:relativeAnconaConvex}] Let
$[xy]$ be a geodesic segment of some length $m$ (relative to the
Cayley graph distance) with endpoints $x,y\in \Gamma $, let $z \in
\Gamma $ be a point on $[xy]$, and let $\Omega$ be a domain
containing $[xy]$ satisfying the assumptions of
Theorem~\ref{theorem:relativeAncona}
or~\ref{theorem:relativeAnconaConvex}.

We first construct by induction points $x_n, y_n$ on the geodesic
segment $[xy]$ in such a way that for each $n$ the points
$x_{n},z,y_{n}$ occur in order on $[xy]$. Start by setting $x_0=x$
and $y_0=y$. At step $n$, if $z$ is in the left half of $[x_n y_n]$
then set $x_{n+1}=x_n$ and let $y_{n+1}$ be a point in $[x_n y_n]$
such that $|d(y_{n+1},y_n)-d(x_n,y_n)/4|\leq 1$. Similarly, if $z$ is
in the right half of $[x_{n}y_{n}]$ then define $y_{n+1}=y_n$ and let
$x_{n+1}$ be a point on $[x_{n}y_{n}]$ such that $|d(x_n,
x_{n+1})-d(x_n, y_n)/4|\leq 1$. The construction ends at the first
step $N$ such that $d(x_N, y_N)\leq A$, for some large $A>0$. By
construction, for all $n\leq N$,
\[
	 d(x_n,y_n) = (3/4)^n m +O(1) \quad \text{and} \quad z\in [x_n y_n],
\]
where the $O(1)$ term is bounded by $\sum_{k=0}^\infty (3/4)^k = 4$.

At each step of the construction, the discarded interval
($[y_{n+1}y_{n}]$ in the first case, $[x_{n}x_{n+1}]$  in the second) lies either
to the right or to the left of $z$ on the interval $[xy]$. In either
case, define $B_{n+1}$ to be the ball of radius $d (x_{n},y_{n})/100$
centered at the midpoint of the discarded interval. Observe that these
balls are pairwise disjoint. For each $n$, let $a_{n}$ and $b_{n}$ be
the midpoints of the \emph{last} discarded intervals to the left and
right of $z$ respectively (with the convention that if no intervals to
the left of $z$ have been discarded by step $n$ then $a_{n}=x$, and
similarly if no intervals to the right have been discarded then
$b_{n}=y$). By construction,
\[
	a_{n}\leq x_{n}\leq z \leq y_{n}\leq b_{n}
\]
in the natural ordering (left to right) on $[xy]$, and so the center
of the ball $B_{n+1}$ must lie on the segment
$[a_{n}b_{n}]$. Furthermore, by hyperbolicity, if $u_{n},v_{n}$ are
any points in the balls $B_{l},B_{r}$ closest to $z$ on the left and
right, respectively, centered at points in intervals removed by step
$n$, then any geodesic segment $[u_{n}v_{n}]$ connecting $u_{n}$ and
$v_{n}$ must pass within distance $2\delta$ of $z$, where $\delta$ is
a Gromov constant for the Cayley graph (provided the constant $A$ that
determines the termination point of the construction is sufficiently
large). In fact, the entire geodesic segment $[x_{n}y_{n}]$ must lie
within distance $2\delta$ of $[u_{n}v_{n}]$.

We now decompose $G_r(x,y;\Omega)$ by splitting random walk
trajectories at visits to the balls $B_i$. Begin with  $u_0=x$ and
$v_0=y$. If $z$ is in the first half of $[u_0 v_0]$, split paths at
the \emph{last} visit to $B_1$; this yields
\[
	G_r(u_0,v_0; \Omega) =  G_r(u_0,v_0; \Omega \cap B_1^c)
  + \sum_{v_1 \in B_1 \cap \Omega } G_r(u_0,v_1; \Omega) G_r(v_1, v_0; \Omega\cap B_1^c).
\]
If $z$ is in the second half of $[u_0 v_0]$, split paths at the \emph{first}
visit  to $B_1$; this gives
  \begin{equation*}
  G_r(u_0,v_0; \Omega) = G_r(u_0,v_0; \Omega \cap B_1^c)+ \sum_{u_1
  \in B_1 \cap \Omega }
      G_r(u_0,u_1; \Omega \cap B_1^c) G_r(u_1, v_0; \Omega).
  \end{equation*}
To get manageable formulas, we will introduce a more symmetric
notation. Let $H(u,v; B)= G_r(u,v; \Omega \cap B)$ if $u\not=v$, and
$1$ if $u=v$. In addition, write  $u_1=u_0$ in the first case (that
is, if  $z$ is in the first half of $[u_0 v_0]$), and write
$v_1=v_0$ in the second case. The formulas above become
  \begin{equation*}
  G_r(u_0, v_0; \Omega)=H(u_0, v_0; B_1^c)
    + \sum_{u_1, v_1} H(u_0, u_1; B_1^c) G_r(u_1, v_1; \Omega) H(v_1, v_0; B_1^c).
  \end{equation*}

This procedure can  be iterated. The factor
$G_{r}(u_{1},v_{1};\Omega)$ can be decomposed  by splitting random walk
trajectories at  visits to $B_{2}$; this leads to a second sum with an
inner factor $G_{r} (u_{2},v_{2};\Omega)$. This factor can again be
decomposed, and so on, whence  we obtain, by induction, for any
$k\leq N$,
  \begin{align}
  \label{eq:GRhorrible}
  \raisetag{-80pt}
  \begin{split}
  G_r(u_0, v_0; \Omega)
   =& \sum_{j=0}^{k-1} \sum_{\substack{u_1,\dots, u_j\\ v_1,\dots,
  v_j}}H(u_j, v_j; B_{j+1}^c) \prod_{i=0}^{j-1}\left\{ H (u_{i},u_{i+1};B_{i+1}^{c})H
  (v_{i+1},v_{i};B_{i+1}^{c}) \right\}
  \\&
  +
  \sum_{\substack{u_1,\dots, u_k\\ v_1,\dots, v_k}}G_r(u_k, v_k;
  \Omega)
     \prod_{i=0}^{k-1} \left\{ H (u_{i},u_{i+1};B_{i+1}^{c})H
     (v_{i+1},v_{i};B_{i+1}^{c})\right\}.
  \end{split}
  \end{align}
For each index $i$ in the products, either $u_{i}=u_{i+1}$ or
$v_{i}=v_{i+1}$, and the corresponding $H-$factor is $1$. In the
first case, $v_{i+1}\in B_{i+1}\cap \Omega $; in the second case,
$u_{i+1}\in B_{i+1}\cap \Omega$. Thus, for each $i$ the points
$u_{i},v_{i}$ must lie in the balls $B_{l},B_{r}$ centered at the
midpoints of the nearest discarded (by step $i$) intervals to the
left and right of $z$. As noted earlier, this implies that any
geodesic segment $[u_{i}v_{i}]$ must pass within distance $2\delta$
of $z$. Moreover, since the center of the ball $B_{i+1}$ lies on
$[x_{i}y_{i}]$, the ball contains  a ball with center on
$[u_{i}v_{i}]$ of radius $(m (3/4)^{i}-4)/100-2\delta$.  Therefore,
by Lemma~\ref{lemma:superExponential}, for suitable constants
$C,\beta
>0$,
\[
	H (u_{i},v_{i};B_{i+1}^{c}) \leq C\exp \{-e^{\beta  m (3/4)^{i}}\}.
\]
On the other hand, under the hypotheses of
Theorem~\ref{theorem:relativeAncona} (for sufficiently large $C'$), a
fixed size neighborhood of a geodesic segment $[u_{i}v_{i}]$ from
$u_{i}$ to $v_{i}$ is contained in $\Omega$, while under the
hypotheses of Theorem~\ref{theorem:relativeAnconaConvex} the geodesic
segment $[u_{i}v_{i}]$ is contained in $\Omega$ by convexity. It
follows that, in both situations (and assuming the random walk in
nearest-neighbor in the second situation), one has
$G_{r}(u_{i},v_{i};\Omega)\geq p_{\min}^{d (u_{i},v_{i})}$ for all
$r\geq 1$, for some $p_{\min}>0$. Consequently,
  \begin{equation*}
  H(u_i, v_i; B_{i+1}^c) \leq \lambda_i G_r(u_i, v_i; \Omega),
  \end{equation*}
where $\lambda_i$ is superexponentially small in terms of $m
(3/4)^i$. The $j$th term of the first sum in \eqref{eq:GRhorrible} is
therefore bounded by
  \begin{equation*}
  \lambda_j \sum_{\substack{u_1,\dots, u_j\\ v_1,\dots, v_j}}
   G_r(u_j, v_j; \Omega)\prod_{i=0}^{j-1} \left\{ H
   (u_{i},u_{i+1};B_{i+1}^{c})H (v_{i+1},v_{i};B_{i+1}^{c})\right\}.
  \end{equation*}
This matches the last line of \eqref{eq:GRhorrible} (with $j$ replacing
$k$). Since every term in \eqref{eq:GRhorrible} is nonnegative, it
follows that the   $j$th term of the first sum in \eqref{eq:GRhorrible}
is bounded above by $\lambda_j G_r(u_0, v_0; \Omega)$.

Now take $k=N$ (i.e., the last step of the construction), and consider
the final sum in \eqref{eq:GRhorrible}. Since the points $u_N$ and
$v_N$ are within distance $A$ of $z$, we have $G_r(u_N, v_N; \Omega)
\leq C_{A} G_r(u_N, z; \Omega) G_r(z, v_N; \Omega)$, for a suitable
constant $C_{A}<\infty$. When this upper bound is substituted for the
factor $G_r(u_N, v_N; \Omega)$, the final sum factors (one factor
involving only the $u_i$s, the other only the $v_i$s). Reversing the
path decomposition (i.e., gluing trajectories instead of splitting
them) shows that the two resulting factors are bounded respectively by
$G_r(u_0, z; \Omega)$ and $G_r(z, v_0; \Omega)$. Thus, we have
 \begin{equation*}
G_r(u_0, v_0; \Omega) \leq
\left(\sum_{j=0}^{N-1} \lambda_j \right) G_r(u_0, v_0; \Omega) + C_{A}
G_r(u_0, z; \Omega) G_r(z, v_0; \Omega).
\end{equation*}
If $A$ is
large enough, the sum $\sum \lambda_j$ will be smaller than $\leq
1/2$. Hence, for all large $A$,
 \begin{equation*}
G_r(u_0, v_0; \Omega) \leq 2 C_{A} G_r(u_0, z;
\Omega) G_r(z, v_0; \Omega).  \qedhere
\end{equation*}
\end{proof}

\subsection{H\"older continuity of the Green's function}
\label{subsec:fellow_traveling}

In this paragraph, we explain how the controls on the Martin boundary
given by Theorem~\ref{theorem:holderMartinKernel} follow from the
relative Ancona's inequalities of the previous paragraph. The
strategy of the proof is essentially the same as that introduced by
Anderson and Schoen~\cite{anderson-schoen} for a similar purpose in a
slightly different setting, and the details follow \cite{izumi}
closely. (The theorem could also be proved by an adaptation of the
argument used in \cite{series}.)

\begin{definition}\label{definition:shadows}	
Let $[xy]$ and $[x'y']$ be geodesic segments in the Cayley graph of
$\Gamma$, and let $\varepsilon >0$. We say that the segment $[x'y']$
shadows (more precisely, $\varepsilon -$shadows) the segment $[xy]$
if every point of $[xy]$ lies within distance $\varepsilon$ of
$[x'y']$. If both $[x''y'']$ and $[x'y']$ $(2\delta)-$shadow $[xy]$,
where $\delta$ is a Gromov constant for the Cayley graph, then we say
that they are \emph{fellow-traveling} along $[xy]$.
\end{definition}

\begin{theorem}
\label{theorem:holderAncona}
There exist constants $C>0$ and $\varrho<1$ such that for any geodesic
segment $[x_{0}y_{0}]$, if $[xy]$ and $[x'y']$ are fellow-traveling
along $[x_{0}y_{0}]$, then
  \begin{equation*}
  \left|\frac{G_r(x,y)/G_r(x',y)}{G_r(x,y')/G_r(x',y')} - 1\right|\leq C \varrho^k,
  \end{equation*}
for all $r\in [1,R]$, where $k$ is the length of $[x_{0}y_{0}]$.
\end{theorem}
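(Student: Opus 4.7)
The plan is to adapt the Anderson--Schoen / Izumi--Neshvaev--Okayasu iteration strategy, using the relative Ancona inequalities just established (Theorem~\ref{theorem:relativeAncona}) as the essential input. The idea is to iterate a boundary Harnack estimate along a chain of halfplane cross-sections transverse to the shadowed segment $[x_0y_0]$, obtaining a uniform contraction per cross-section that compounds to the claimed exponential bound in $k$.

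First I would fix halfplanes $H_0\supset H_1\supset \cdots \supset H_k$ whose bounding geodesics meet $[x_0y_0]$ orthogonally at equispaced points $z_0,z_1,\ldots,z_k$, oriented so that $x,x'\in \Gamma\setminus H_0$ while $y,y'\in H_k$. Hyperbolicity together with the fellow-traveling hypothesis guarantee that each of the four segments $[xy],[x'y],[xy'],[x'y']$ crosses every $\partial H_j$ within distance $O(\delta)$ of $z_j$, and that the complement $\Omega_j=\Gamma\setminus H_j$ satisfies the thickness hypothesis of Theorem~\ref{theorem:relativeAncona} relative to $z_j$. Decomposing random-walk trajectories at their first visit to $\partial H_j$ yields
\[
G_r(x,y)=\sum_{v\in \partial H_j} F_r^{\Omega_j}(x,v)\,G_r(v,y),
\]
together with three analogous decompositions for the other Green's functions. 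A direct computation shows that the \emph{Poisson-type ratio}
\[
\Phi_j(v):=\frac{F_r^{\Omega_j}(x,v)}{F_r^{\Omega_j}(x',v)},\qquad v\in \partial H_j,
\]
controls the cross-ratio of interest: if $|\Phi_j(v)/\Phi_j(v')-1|\leq \eta$ for all $v,v'\in \partial H_j$, then the Green's function cross-ratio appearing in the theorem differs from $1$ by at most $2\eta$. The theorem therefore reduces to proving geometric decay of the oscillation of $\log\Phi_j$ in $j$.

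I would establish this by induction on $j$. The base case ($\Phi_0$ bounded above and below uniformly in $r\leq R$) is a direct consequence of the relative Ancona inequality applied with $z=z_0$. For the inductive step, splitting paths from $x$ to $v\in \partial H_{j+1}$ at their first visit to $\partial H_j$ exhibits $\Phi_{j+1}(v)$ as a weighted average of $\Phi_j(w)$ over $w\in \partial H_j$, with weights that depend on $v$ but are the same for $x$ and $x'$ up to the boundary corrections which are absorbed by a further application of Theorem~\ref{theorem:relativeAncona}. The crux of the argument, and the principal technical obstacle, is to show that these averaging weights are \emph{exponentially concentrated} on a fixed-radius neighborhood of $z_j$, uniformly in $v\in \partial H_{j+1}$, in $j$, and in $r\leq R$. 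Concentration implies by standard convexity that
\[
\operatorname{osc}(\log \Phi_{j+1})\leq (1-\varepsilon)\operatorname{osc}(\log \Phi_j)
\]
for some $\varepsilon>0$ independent of $j$ and $r$, which iterates to $\operatorname{osc}(\log\Phi_j)\leq C(1-\varepsilon)^j$ and hence the required bound upon taking $j\asymp k$. The concentration itself will come from the uniform exponential decay of $G_R$ (Theorem~\ref{theorem:expDecay}) combined with the barrier arguments of Section~\ref{sec:apriori}: barriers placed between $z_j$ and any $w\in \partial H_j$ far from $z_j$ rule out significant contributions to first-passage weights, in a manner entirely parallel to the proof of Theorem~\ref{theorem:expDecay} itself.
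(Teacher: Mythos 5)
Your overall strategy is the right one and is in the same family as the paper's: an Anderson--Schoen type iteration producing a per-step contraction that compounds exponentially along a chain of cross-sections transverse to $[x_0y_0]$, with the relative Ancona inequalities as the engine. However, the execution differs from the paper's in two respects, and in both places there are genuine gaps.

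\emph{Domains.} You use raw halfplanes $\Omega_j=\Gamma\setminus H_j$. But Theorem~\ref{theorem:relativeAncona} requires the domain to contain balls of radius at least $C'$ around every point of the relevant geodesic, and the geodesics $[x'v]$ and $[x'z_j]$ you need to invoke Ancona along pass within bounded distance of $z_j$, which sits on $\partial H_j=\partial\Omega_j$. Thus the thickness hypothesis of Theorem~\ref{theorem:relativeAncona} fails exactly where you need it, and your repeated appeals to that theorem ("applied with $z=z_0$", "absorbed by a further application of Theorem~\ref{theorem:relativeAncona}") are not directly legitimate. The paper avoids this by using the shadow domains $\Omega_i=\{z:\text{every geodesic }[y_0 z]\text{ meets }B(z_i,2\delta)\}$: these contain a fixed-size neighborhood of the relevant fellow-traveling geodesics, and the factorization point is chosen to be $z_i^*$, a point \emph{interior} to $\Omega_i$ at distance $2L$ from $z_i$, not on the boundary. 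You would need a similar modification (either thickened/retracted halfplanes, or the paper's shadow domains) to make your Ancona steps legal.

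\emph{The contraction step.} You correctly identify the concentration of the averaging weights as "the principal technical obstacle" but the sketch you give does not yet close it. The argument you need is a uniform Doeblin minorization: for every $v\in\partial H_{j+1}$, the weight measure $\mu_v(w)\propto F_r^{\Omega_j}(x',w)\,F_r^{\Omega_{j+1}}(w,v)$ gives mass $\geq\varepsilon$ to a fixed bounded neighborhood $A$ of $z_j$, with density on $A$ comparable to a fixed profile, uniformly in $v$, $j$, and $r$. The upper tail (small mass far from $z_j$) does follow, after repair of the domain issue, from exponential decay of the Green's function plus the hyperbolic fact that $[x'w]$ passes near $z_j$. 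What is missing is the lower bound, which is a ratio estimate: you must compare $F_r^{\Omega_j}(x',z_j)\,F_r^{\Omega_{j+1}}(z_j,v)$ against the \emph{total} $F_r^{\Omega_{j+1}}(x',v)$, and this comparison is itself an Ancona-type two-sided inequality, not a direct consequence of barrier estimates and exponential decay. The paper handles exactly this point by applying the two-sided relative Ancona estimate $G_r(z,w';\Omega_i)\asymp G_r(z,z_i^*;\Omega_i)\,G_r(z_i^*,w';\Omega_i)$ to obtain $u_i(z)\asymp G_r(z,z_i^*;\Omega_i)\,u_i(z_i^*)$ for $z\in\Omega_{i+1}$, which is precisely the statement that every $r$-harmonic function on $\Omega_{i+1}$ is comparable (with uniform constants, uniformly in $r\leq R$) to a universal reference function; the common subtracted piece $\varphi_{i+1}$ is then read off directly. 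I would recommend either importing that factorization as the concentration lemma you need, or reformulating your induction in the paper's ``$u_{i-1}=u_i+\varphi_i$'' form, where the Doeblin minorant is explicit.
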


A direct application of Ancona's inequalities imply that
  \begin{equation}
  \label{equation:anconaBasic}
  \frac{G_r(x,y)}{G_r(x',y)} \asymp \frac{G_r(x,x_0)}{G_r(x',x_0)} \asymp \frac{G_r(x,y')}{G_r(x',y')}.
  \end{equation}
The theorem is a quantitative strengthening of this estimate, showing
that the ratio between those quantities not only stays bounded, but
tends exponentially fast to $1$ when $k$ tends to infinity.

In particular,  take $x'=1$ and let  $y_n, y_m$ be points at distances
$n,m$ from $x'$ along a geodesic ray converging towards a point $\zeta \in
\partial \Gamma$. For any fixed $x\in \Gamma$ the geodesic segments
from $x$ or $x'$ to $y_n$ or $y_m$  are fellow
traveling along a geodesic segment of length at least $\min(m,n)-2d (x,x')$. Therefore,
  \begin{equation*}
  \left|\frac{G_r(x,y_n)/G_r(1,y_n)}{G_r(x,y_m)/G_r(1,y_m)} -
  1\right|\leq C_x \varrho^{\min(m,n)}
  \end{equation*}
for a constant $C_{x}<\infty$ depending on $x$.
This shows that the sequence $G_r(x,y_n)/G_r(1,y_n)$ is Cauchy,
therefore convergent to a limit $K_r(x, \zeta)$. To prove
Theorem~\ref{theorem:martinBoundary}, one should additionally show
that the functions $K_r(x,\zeta)$ are minimal, and that for
$\zeta\not=\zeta'$ one has $K_r(x,\zeta) \not= K_r(x,\zeta')$. Ancona
proved this for $r<R$ in~\cite{ancona:annals}, and his proofs also work
for $r=R$ once the Ancona inequalities are established. Finally,
letting $m$ tend to infinity, one gets $|G_r(x,y_n)/G_r(1,y_n) -
K_r(x,\zeta)|\leq C_x \varrho^n K_r(x,\zeta)$, which proves
Theorem~\ref{theorem:holderMartinKernel}.

\begin{proof}[Proof of Theorem \ref{theorem:holderAncona}] Let $L\gg
2\delta $ be a large constant, and consider a geodesic segment
$[x_0y_0]$ of length $k\gg L$. For each $1\leq i\leq \ell:=k/(3L)$,
define $\Omega_{i}$ to be the set of all $z\in \Gamma$ such that
every geodesic segment from $y_{0}$ to $z$ passes through the ball of
radius $2\delta$ centered at the point $z_{i}$ at distance $3Li$ from
$y_{0}$ along $[x_{0}y_{0}]$.  The sequence of domains $\Omega_{i}$
is decreasing in $i$ (by an easy application of the thin triangle
property). Moreover, if the geodesic segments $[xy]$ and $[x'y']$ are
fellow-traveling along $[x_{0}y_{0}]$, then both $x,x'\in
\Omega_{\ell}$ and $y,y'\in \Omega_{1}^{c}$.

Consider the function $u(z)=G_r(z,y)/G_r(x_0,y)$, which is
$r$-harmonic on $\Omega_1$ and normalized by $u(x_0)=1$. Starting with
$u_{1}=u$, we will inductively construct a sequence of $r-$harmonic
functions $u_i$ on $\Omega_i$, with $u_{i-1}=u_i+\varphi_i$, in such a
way that $\varphi_i$ does not depend on the initial normalized
harmonic function $u$,  and so that $u_{i-1}\geq
\varphi_i \geq \varepsilon u_{i-1}$ on $\Omega_i$, for some
$\varepsilon>0$. Let us first show how this gives the conclusion of
the theorem.

As $\varphi_i\geq \varepsilon u_{i-1}$, we have $u_i \leq
(1-\varepsilon)u_{i-1}$, hence $u_\ell \leq (1-\varepsilon)^{\ell-1}
u_1$. Applying the same construction to $v(z)=G_r(z,y')/G_r(x_0,
y')$, we obtain $v=\sum \varphi_i+v_\ell$ (for the same functions
$\varphi_i$), which gives on $\Omega_\ell$ the estimate
$|u-v|=|u_\ell-v_\ell| \leq C (1-\varepsilon)^\ell (u+v)$. Since
$u\asymp v$ by \eqref{equation:anconaBasic}, we get $|u/v- 1|\leq C
(1-\varepsilon)^\ell$ on $\Omega_\ell$, which is the desired
inequality.

We now describe the construction of $\varphi_i$. Assume that
$u_1,\dots, u_i$ have been defined. By harmonicity, for any $z\in
\Omega_{i+1}$,
  \begin{equation*}
  u_i(z)=\sum_{w\in \Omega_i^c} G_r(z,w;\Omega_i) u_i(w).
  \end{equation*}
Define $\Lambda_{i}$ to be the set of all $z\in \Omega_{i}$ such that
$(x_0|z)_{y_0} \in [3Li + L -2, 3Li+L+2]$, where
$(x|y)_z=(d(x,z)+d(y,z)-d(x,y))/2$ is the Gromov product, which
measures the distance along which two geodesic segments from $z$ to
$x$ and from $z$ to $y$ are fellow traveling.
This set is contained in $\Omega_i$, but is bounded away from $\Omega_{i+1}$,
and any trajectory from $\Omega_{i+1}$ to the complement of
$\Omega_i$ has to cross $\Lambda_i$. Splitting a trajectory from
$z\in \Omega_{i+1}$ to $w\in \Omega_i^c$ according to its last visit
to $\Lambda_i$, we get $ G_r(z,w;\Omega_i) = \sum_{w'\in \Lambda_i}
G_r(z,w';\Omega_i) G_r(w', w; \Omega_i \cap \Lambda_i^c)$.

We are now in a position to estimate $G_r(z,w';\Omega_i)$ using
Ancona's inequalities. Indeed, the geodesic segment from $z$ to $w'$
passes within $2\delta$ of the point $z^{*}_i$ at distance $3Li+2L$
from $y_0$ on $[x_0y_0]$, by hyperbolicity. Moreover, the domain
$\Omega_i$ satisfies the assumptions of
Theorem~\ref{theorem:relativeAncona} (this readily follows from a
tree approximation). Hence, $G_r(z,w';\Omega_i) \leq C G_r(z,
z^{*}_i;\Omega_i) G_r(z^{*}_i, w';\Omega_i)$, and so
  \begin{align*}
  u_i(z)
  &
  =\sum_{w\in \Omega_i^c}\sum_{w'\in \Lambda_i} G_r(z,w';\Omega_i)
  G_r(w',w; \Omega_i \cap \Lambda_i^c)u_i(w)
  \\&
  \leq  C G_r(z,z^{*}_i;\Omega_i)
  \sum_{w\in \Omega_i^c}\sum_{w'\in \Lambda_i} G_r(z^{*}_i,w';\Omega_i)
  G_r(w',w; \Omega_i \cap \Lambda_i^c)u_i(w)
  \\&
  = C G_r(z,z^{*}_i;\Omega_i) u_i(z^{*}_i).
  \end{align*}
Replacing Ancona's inequality by the trivial bound
$G_r(z,w';\Omega_i) \geq  C' G_r(z, z^{*}_i;\Omega_i) G_r(z^{*}_i,
w';\Omega_i)$, we also get a lower bound in the last equation. Hence,
$u_i(z)\asymp G_r(z,z^{*}_i;\Omega_i) u_i(^{*}z_i)$ on
$\Omega_{i+1}$. Using this estimate for $z=x_0$, we obtain
$u_i(z)/u_i(x_0) \asymp G_r(z,z^{*}_i;\Omega_i)/G_r(x_0,
z^{*}_i;\Omega_i)$. In particular, if $c$ is small enough, the
function $\varphi_{i+1}(z)=c u_i(x_0)
G_r(z,z^{*}_i;\Omega_i)/G_r(x_0, z^{*}_i;\Omega_i)$ satisfies
$\varepsilon u_i \leq \varphi_{i+1} \leq u_i$ on $\Omega_{i+1}$.
Moreover, this function only depends on $u_i$ through the value of
$u_i(x_0)$. By induction, it only depends on $u(x_0)$. Since we have
normalized $u$ so that $u(x_0)=1$, this shows that $\varphi_{i+1}$ is
independent of the initial function $u$.
\end{proof}

\section{Automatic structure}\label{sec:cannon}

\subsection{Strongly Markov groups and
hyperbolicity}\label{ssec:strongMarkov}

A finitely generated group $\Gamma $ is said to be \emph{strongly
Markov} (fortement Markov -- see \cite{ghys-deLaHarpe}) if for each
finite, symmetric generating set $A$ there exists a finite directed
graph $\mathcal{A}= (V,E,s_{*})$ with distinguished vertex $s_{*}$
(``start'') and a labeling $\alpha :E \rightarrow A$ of edges by
generators that meets the following specifications. A \emph{path} in
the graph is a sequence of edges $e_0,\dots,e_{m-1}$ such that the
endpoint of $e_i$ is the starting point of $e_{i+1}$. Let
\begin{equation*}
	\mathcal{P}:=\{\text{finite paths in $\mathcal{A}$ starting at $s_{*}$} \},
\end{equation*}
and for each path $\gamma =e_{0}e_{1}\dotsb e_{m-1}$, denote by
\begin{align*}
    \alpha (\gamma)&= \text{path in $G^{\Gamma }$ through $1,\alpha
	(e_{0}),\alpha (e_{0})\alpha (e_{1}),\dotsc$},
    \quad \text{and}\\
	\alpha_{*} (\gamma)&=\alpha (e_{0})\alpha (e_{1}) \dotsb \alpha(e_{m-1}),\text{ the right endpoint of }\alpha (\gamma).
\end{align*}

\begin{definition}\label{definition:automaticStructure}
The labeled automaton $(\mathcal{A},\alpha )$ is a strongly Markov
automatic structure for $\Gamma$ if:
\begin{enumerate}
\item [(A)] No edge $e\in E$ ends at $s_{*}$.
\item [(B)] Every vertex $v\in V$ is accessible from the start
    state $s_{*}$.
\item [(C)]  For every path $\gamma$, the path $\alpha (\gamma)$
    is a geodesic path in $G^{\Gamma}$.
\item [(D)]  The endpoint mapping
    $\alpha_{*}:\mathcal{P}\rightarrow \Gamma $ induced by
    $\alpha$ is a bijection of $\mathcal{P}$ onto $\Gamma$.
\end{enumerate}
\end{definition}

\begin{theorem}\label{theorem:cannon}
Every word hyperbolic group is strongly Markov.
\end{theorem}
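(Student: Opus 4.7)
My approach follows Cannon's classical construction via cone types. Fix a finite symmetric generating set $A$ for $\Gamma$, a total order on $A$, and let $\delta$ be a Gromov constant for $\Gamma$. For each $g\in\Gamma$, let $\sigma(g)$ denote the \emph{shortlex-minimal} geodesic word from $1$ to $g$: among all geodesic words representing $g$ (all of the same length $|g|$), pick the lexicographically smallest. The language $\mathcal{L}=\{\sigma(g):g\in\Gamma\}$ is prefix-closed, since every prefix of a shortlex geodesic is again shortlex-minimal for the element it represents. The plan is to realize $\mathcal{L}$ as the set of labels of paths starting at $s_*$ in a finite labelled automaton, which will immediately give properties (C) and (D) of the definition.

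The heart of the construction is the Cannon cone type
\[
 C(g)=\{h\in\Gamma:d(1,gh)=d(1,g)+d(1,h)\},
\]
together with its shortlex refinement $C_{\mathrm{sl}}(g)$, which records which words extending $\sigma(g)$ remain shortlex-minimal geodesics. The crucial lemma is that there are only finitely many shortlex cone types in $\Gamma$. I would prove this by showing that $C_{\mathrm{sl}}(g)$ is determined by the labelled tail of $\sigma(g)$ of some bounded length $K=K(\delta,|A|)$. The hyperbolicity input is the thin triangle property: given any short candidate extension $h$, the triangle with vertices $1,g,gh$ is $\delta$-thin, so whether $\sigma(g)$ admits a geodesic extension by a given generator, and whether the resulting word is the shortlex representative of $gh$, can be decided by inspecting a bounded initial segment of the extension together with a bounded terminal segment of $\sigma(g)$.

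Granted this finiteness, I would build the automaton $\mathcal{A}=(V,E,s_*)$ with vertex set consisting of $s_*$ together with one vertex $[T]$ for each occurring shortlex cone type $T$. For each generator $a\in A$ add an edge $s_*\to[C_{\mathrm{sl}}(a)]$ labelled $a$; from each vertex $[C_{\mathrm{sl}}(g)]$ and each $a\in A$ with $\sigma(ga)=\sigma(g)a$, add an edge labelled $a$ to $[C_{\mathrm{sl}}(ga)]$. The target depends only on the shortlex cone type of $g$, so this is unambiguous. Condition (A) holds because no edge is directed into $s_*$, and pruning vertices unreachable from $s_*$ gives (B). An induction on word length shows that paths starting at $s_*$ spell out exactly the words $\sigma(g)$ as $g$ ranges over $\Gamma$, yielding (C) and (D). The main obstacle is the finiteness of shortlex cone types, which is the only place where hyperbolicity plays a genuine role; once that is in hand, the rest of the construction is bookkeeping, and the normal form $\sigma$ packages everything into a bijection between paths and group elements.
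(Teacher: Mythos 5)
The paper gives no proof of Theorem~\ref{theorem:cannon}; it cites Ghys--de la Harpe, Ch.~9, Th.~13, which proves the statement via Cannon's cone-type argument. Your proposal follows the same route in outline, but the key finiteness lemma is stated incorrectly, and as written the hyperbolicity input does not establish it.

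You claim that the shortlex cone type $C_{\mathrm{sl}}(g)$ is determined by the labelled \emph{tail} of $\sigma(g)$ of some bounded length $K(\delta,|A|)$, and you invoke $\delta$-thinness of the triangle $(1,g,gh)$ to justify this. What the thin-triangle argument actually shows is that the cone type is determined by the \emph{$k$-type} (equivalently the $k$-level) of $g$: the function $h\mapsto |gh|-|g|$ on the ball $B(1,k)$, for suitable $k$ depending only on $\delta$ and the generating set. The point is that any ``shortcut'' witnessing $|gh|<|g|+|h|$ lives in a ball of radius $O(\delta)+|h|$ around $g$, but that shortcut need not lie on, or even near, the incoming geodesic $\sigma(g)$ --- it can run through other parts of that bounded ball, away from the tail. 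Two elements with identical incoming shortlex tails but different marked $k$-balls around them could a priori have different cone types; the tail simply doesn't record which nearby group elements are closer to $1$ than $g$ is, and that is exactly the information the shortcut argument consumes. (The surface-group automaton in \S\ref{ssec:autoSurface}, with states given by reduced words of length $\le 2g$, does happen to use tails, but that is a special feature of Dehn presentations and does not survive to an arbitrary generating set in an arbitrary hyperbolic group.) So the lemma as you state it is, at best, unproved by your sketch, and I believe it is not true in the generality you need.

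The good news is that the defect is localized. Replace ``labelled tail'' by ``$k$-type'' and the finiteness of cone types follows because there are only finitely many functions from the finite ball $B(1,k)$ to $\{-k,\dots,k\}$. The rest of your construction --- the shortlex normal form $\sigma$, prefix-closure of $\mathcal{L}$, the well-definedness of the transition $[C_{\mathrm{sl}}(g)]\xrightarrow{a}[C_{\mathrm{sl}}(ga)]$ (which follows formally from the definition of cone type, not from any further geometry), and the verification of (A)--(D) --- is sound and is indeed the standard argument.
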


See \cite{ghys-deLaHarpe}, Ch.~9, Th.~13. The result is essentially
due to Cannon (at least in a more restricted form) --- see
\cite{cannon:1},  \cite{cannon:3} --- and in important
special cases (co-compact Fuchsian groups) to Series
\cite{series}. Henceforth, we will call the directed graph
$\mathcal{A}= (V,E,s_{*})$ the \emph{Cannon automaton} (despite the
fact that it is not quite the same automaton as constructed in
\cite{cannon:1}).

Properties (C)-(D) of Definition~\ref{definition:automaticStructure}
imply that for each $x\in \Gamma$ there is a \emph{unique} geodesic
segment in the Cayley graph from the group identity $1$ to $x$ that
is the image of a path in the automaton. We shall denote this
distinguished geodesic segment by $L (1,x)$.

\subsection{Automatic structures for the surface
groups}\label{ssec:autoSurface}

The existence of an automatic structure will be used to connect the
behavior of the Green's function at infinity to the theory of Gibbs
states and Ruelle operators (see \cite{bowen}, ch.~1). For these
arguments, the detailed structure of the automaton will not be
important (except for those aspects discussed in
sec.~\ref{ssec:restrictions} below).  Nevertheless, we note here that
an automatic structure $\mathcal{A}$ for the \emph{surface group}
$\Gamma_{g}$ is easily constructed. Let $A=A_{g}=\{a_{i}^{\pm},
b_{i}^{\pm}\}$ be the standard generating set, with the generators
satisfying the basic relation
\begin{equation}\label{eq:surfaceRelations}
	\prod_{i=1}^{g} a_{i}b_{i}a_{i}^{-1}b_{i}^{-1}=1.
\end{equation}
Define the set $V$ of vertices for the automaton to be the set of all
reduced words in the generators of length $\leq 2g$, with $s_{*}=$
the empty word. Directed edges are set according to the following
rules:
\begin{itemize}
\item [(A)] If a (reduced) word $w'$ is obtained by adding a single
letter $x$ to the end of word $w$, then draw an edge $e (w,w')$ from
$w$ to $w'$, and label it with the letter $x$.
\item [(B)] If a word $w'$ of maximal length $2g$ is
obtained from another word $w$ of length $2g$ by deleting the first
letter and adding a new letter $x$ to the end, then draw an edge $e
(w,w')$ from $w$ to $w'$ with label $x$ \emph{unless} the word $wx$
constitutes the first $2g+1$ letters of a cyclic permutation of the
basic relation \eqref{eq:surfaceRelations}.
\end{itemize}
That properties (C)--(D) of
Definition~\ref{definition:automaticStructure} are satisfied follows
from Dehn's algorithm. The words of maximal length $2g$ are the
\emph{recurrent vertices} of this automaton, while the words of length
$<2g$ are the \emph{transient vertices} (see
sec.~\ref{ssec:restrictions} below for the definitions). It is
easily verified that for any vertex $w$ and any \emph{recurrent} vertex
$w'$, there is a path in the automaton from $w$ to $w'$.

\subsection{Recurrent and transient vertices}\label{ssec:restrictions}

Let $\mathcal{A}$ be a Cannon automaton for the group $\Gamma$ with
vertex set $V$ and (directed) edge set $E$.  Call an edge $e\in E$
\emph{recurrent} if there is a path in $\mathcal{A}$ of length $\geq
2$ that begins and ends with $e$; otherwise, call it
\emph{transient}. Denote by $\mathcal{A}_{R}$ the restriction of the
digraph $\mathcal{A}$ to the set $\mathcal{R}$ of recurrent edges.
For certain hyperbolic groups --- among them the co-compact Fuchsian
groups --- the automatic structure can be chosen so that the digraph
$\mathcal{A}_{R}$ is \emph{strongly connected} (see \cite{series}),
i.e., for any two recurrent edges $e$ and $e'$ there is a path from
$e$ to $e'$. Henceforth we restrict attention to word-hyperbolic
groups with this property:

\begin{assumption}\label{assumption:irreducibility}
The automatic structure can be chosen so that the digraph
$\mathcal{A}_{R}$ is strongly connected.
\end{assumption}

\begin{assumption}\label{assumption:mixing}
The incidence matrix of the digraph $\mathcal{A}_{R}$ is aperiodic.
\end{assumption}

Both assumptions hold for any co-compact Fuchsian group.
Assumption~\ref{assumption:mixing} is for ease of exposition only ---
the results and arguments below can be modified to account for any
periodicities that might arise if the assumption were to fail.
Assumption~\ref{assumption:irreducibility}, however, is essentially
important.

\subsection{Symbolic dynamics}\label{ssec:symbolicDynamics}

We shall assume for the remainder of the paper that the automaton
$\mathcal{A}$ has been chosen so as to satisfy Assumptions
\ref{assumption:irreducibility} and \ref{assumption:mixing}.

Set
\begin{align*}
	\Sigma &=
	 \{\text{semi-infinite  paths in} \;\mathcal{A} \},\\
	\Sigma^{n}&=\{\text{paths of length $n$ in}\; \mathcal{A}\},\\
	\Sigma^{*}&=\cup_{n=0}^{\infty}\Sigma^{n},\\
    \bSigma &= \Sigma \cup \Sigma^*.
\end{align*}
By convention, there is a single path of length $0$, the empty path,
that we denote by $\emptyset$. In some circumstances, it is useful to
identify finite paths with semi-infinite paths in an automaton with an
additional ``cemetery'' state, or with doubly-infinite paths in an
automaton with two additional states (``embryo'' and ``cemetery'').
 This point of view makes it possible to apply
directly in our setting results that are formulated in the literature
only for semi-infinite paths. However, we stick to the notation with
finite paths since it makes the correspondence with geodesic segments
in the group (see below) more transparent.

We will also need bilateral versions of these sets, that we will
denote with a subscript $\Z$. For instance, $\bSigma_{\Z}$ is the
set of (finite or infinite) bilateral paths in $\mathcal{A}$.
Equivalently, it is the set of sequences $(\omega_n)_{n\in \Z}$ where
$\omega_n$ is an edge of $\mathcal{A}$ for $n$ in some interval of
$\Z$ (with admissible transition from $\omega_n$ to $\omega_{n+1}$),
and $\omega_n$ is empty for $n$ outside of this interval. Let $\sigma
$ be the forward shift operator on $\bSigma$ and $\bSigma_{\Z}$. The
spaces $\bSigma$ and $\bSigma_{\Z}$ are given metrics in the usual
way, that is,
\begin{equation*}
	d (\omega ,\omega')=2^{-n (\omega ,\omega ')}
\end{equation*}
where $n (\omega ,\omega ')$ is  the maximum integer $n$ such that
$\omega_{i}=\omega_{i}'$ for all $|i|<n$. With the topology induced
by $d$ the space $\Sigma$ is a Cantor set, $\Sigma $ is the set of
accumulation points of $\Sigma^{*}$, and $\bSigma$ and $\bSigma_{\Z}$
are compact.  Observe that, relative to the metrics $d$,
H\"older-continuous, real-valued functions on $\Sigma^*$ extend by
continuity to H\"older-continuous functions on $\bSigma$, and then pull
back to H\"older-continuous functions on $\bSigma_{\Z}$.

Each $\omega \in \Sigma$ projects via the edge-labeling map $\alpha $
to a geodesic ray in $G^{\Gamma}$ starting at the vertex $1$ (more
precisely, the sequence of finite prefixes of $\omega$ project to the
vertices along a geodesic ray). Each geodesic ray in $G^{\Gamma}$
must converge in the Gromov topology to a point of $\partial \Gamma$,
so $\alpha$ induces on $\Sigma$ a mapping $\alpha_*$ to $\partial
\Gamma$. By construction, this mapping is H\"older continuous relative
to any visual metric on $\partial \Gamma$. Moreover, because each $\zeta \in
\partial \Gamma$ is the limit of a geodesic ray starting at the vertex
$1$, the induced mapping $\alpha_{*}$ is surjective.

In a somewhat different way, the edge-labeling map $\alpha$
determines a map from the space $\bSigma_{\Z}$ to the set of
two-sided (finite or infinite) geodesics in $G^{\Gamma}$ that pass
through the vertex $1$. This map is defined as follows: if $\omega
\in \bSigma_{\Z}$ then the image of $\omega$ is the two-sided
geodesic that passes through
  \begin{equation}
  \label{eq:alpha_bilat}
	\dotsc , \alpha (\omega_{-1}^{-1})\alpha (\omega_{-2}^{-1}),
	\alpha (\omega_{-1}^{-1}),1,\alpha (\omega_{0}),\alpha
	(\omega_{0}) \alpha (\omega_{1}), \dotsc ,
  \end{equation}
equivalently, it is the concatenation of the geodesic rays starting at
$1$ that are obtained by reading successive steps from the sequences
\[
	\omega_{0}\omega_{1}\omega_{2}\dotsb
	\quad \text{and}\quad
	\omega_{-1}^{-1}\omega_{-2}^{-1}\omega_{-3}^{-1}\dotsb ,
\]
respectively. When $\omega$ is bi-infinite, each of these geodesic
rays converges to a point of $\partial \Gamma$, so $\alpha$ induces a
mapping from $\Sigma_{\Z}$ into $\partial \Gamma \times \partial
\Gamma$. This mapping is neither injective nor surjective, but it is
H\"older-continuous.

Let $E_*$ be the set of edges originating from $s_*$, and let
$\Sigma^m(E_*)$ be the set of sequences of length $m$ in $\Sigma^m$
with $\omega_0\in E_*$. By definition of the Cannon automaton, the
mapping $\alpha_*$ induces a bijection between $\Sigma^m(E_*)$ and
the sphere $S_{m}$ of radius $m$ in $G^{\Gamma}$.

\begin{corollary}\label{corollary:sphereGrowth}
Let $\zeta$ be the spectral radius of the incidence matrix of the
digraph $\mathcal{A}$. If Assumptions~\ref{assumption:irreducibility}
and \ref{assumption:mixing} hold, then $\zeta >1$, and there exists
$C>0$ such that
\begin{equation*}
	|S_{m}|\sim C\zeta^{m} \quad \text{as} \;\; m \rightarrow \infty .
\end{equation*}
\end{corollary}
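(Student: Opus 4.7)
The bijection $\alpha_\ast \colon \Sigma^m(E_\ast) \to S_m$ recorded just before the corollary lets me replace $|S_m|$ by the number of length-$m$ walks in $\mathcal{A}$ starting from $s_\ast$. Writing $M$ for the (vertex) adjacency matrix of $\mathcal{A}$ (with $M_{uv}$ the number of directed edges from $u$ to $v$), this count is $\sum_{v}(M^m)_{s_\ast,v}$, and my plan is to extract the asymptotic $C\zeta^m$ by applying Perron--Frobenius theory to the restricted matrix $M_R$ of $\mathcal{A}_R$. Since the spectral radius of a non-negative matrix equals the maximum spectral radius over its strongly connected components, and since (as argued below) the only non-trivial SCC is $\mathcal{A}_R$, the scalar $\zeta$ agrees with the Perron eigenvalue of $M_R$.

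The structural step is a decomposition of each long walk from $s_\ast$ into a short transient prefix, a long middle block inside $\mathcal{A}_R$, and a short transient suffix. Three observations drive this. First, the transient edges form a directed acyclic graph: any cycle of transient edges would put each of its edges on a loop of length $\geq 2$ returning to itself, contradicting transience. Second, no transient edge has both endpoints in $V_R$, since such an edge could be closed into a cycle via $\mathcal{A}_R$ (strongly connected by Assumption~\ref{assumption:irreducibility}), making it recurrent. Third, once a walk exits $V_R$ along a transient edge, it cannot re-enter $V_R$, because the transient round trip $V_R\to V_T\to V_R$ would again be closed by $\mathcal{A}_R$ into a cycle. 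Together these force the recurrent edges used along a walk to occur in a single contiguous block and confine the transient prefix and suffix to the transient DAG, whose longest path has length bounded by $|V|$.

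By Assumptions~\ref{assumption:irreducibility}--\ref{assumption:mixing} the matrix $M_R$ is irreducible and aperiodic, so Perron--Frobenius gives $(M_R^k)_{u,v}\sim\mu_u\pi_v\zeta^k$ for strictly positive right/left eigenvectors $\mu,\pi$. Substituting into the decomposition,
\[
|S_m| \;\sim\; \Bigl(\sum_{j,u}\mu_u P_{\mathrm{in}}(s_\ast,u;j)\zeta^{-j}\Bigr)\Bigl(\sum_{\ell,v}\pi_v P_{\mathrm{out}}(v;\ell)\zeta^{-\ell}\Bigr)\zeta^m \;=:\; C\zeta^m,
\]
where $P_{\mathrm{in}}$ and $P_{\mathrm{out}}$ count transient prefixes and suffixes and vanish outside a bounded range of $j,\ell$. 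Strict positivity of $C$ follows from $\mu,\pi>0$ together with the facts that $V_R\neq\emptyset$ (otherwise $\mathcal{A}$ is a DAG and $\Gamma$ is finite) and that $s_\ast$ reaches every vertex by Definition~\ref{definition:automaticStructure}(B). Finally, $\zeta>1$ follows from the exponential growth of the non-elementary hyperbolic group $\Gamma$: one has $|S_m|\geq c\lambda^m$ for some $\lambda>1$, which forces $\zeta\geq\lambda>1$. The main obstacle is the structural path decomposition (in particular ruling out re-entries into $V_R$); once that is established, the Perron--Frobenius asymptotic is routine.
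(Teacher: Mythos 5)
Your argument is correct and follows the same route as the paper, which simply asserts that the asymptotics ``follow directly from the Perron--Frobenius theorem'' and derives $\zeta>1$ from the exponential growth of the nonamenable group. What you have done is supply the transient/recurrent structural decomposition (transient edges forming a DAG, no re-entry into the recurrent component, hence walks factor as short transient prefix/recurrent block/short transient suffix) that the paper elides when invoking Perron--Frobenius on the non-strongly-connected automaton $\mathcal{A}$.
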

\begin{proof}
This follows directly from the Perron-Frobenius theorem, with the
exception of the assertion that the spectral radius $\zeta >1$. That
$\zeta >1$ follows from the fact that the group $\Gamma$ is
nonelementary. Since $\Gamma $ is nonelementary, it is nonamenable,
and so its Cayley graph has positive Cheeger constant; this implies
that $|S_m|$ grows exponentially with $m$.
\end{proof}

\begin{corollary}\label{corollary:positiveEntropy}
The shift $(\Sigma, \sigma)$ has positive topological entropy.
\end{corollary}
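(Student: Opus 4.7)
The plan is to produce a closed $\sigma$-invariant subset of $\Sigma$ on which $\sigma$ already has positive entropy, and then invoke monotonicity of topological entropy. The natural candidate is the subshift of finite type $\Sigma_{\mathcal{R}}$ consisting of all semi-infinite paths whose edges all belong to the recurrent subgraph $\mathcal{A}_{R}$. Since $\mathcal{A}_{R}$ is a subgraph of $\mathcal{A}$, any path in $\mathcal{A}_{R}$ is a path in $\mathcal{A}$, so $\Sigma_{\mathcal{R}} \subseteq \Sigma$; it is clearly closed and $\sigma$-invariant, hence
\[
h_{\mathrm{top}}(\Sigma, \sigma) \;\geq\; h_{\mathrm{top}}(\Sigma_{\mathcal{R}}, \sigma).
\]

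First I would invoke the standard formula for the topological entropy of an irreducible, aperiodic subshift of finite type. By Assumptions~\ref{assumption:irreducibility} and~\ref{assumption:mixing}, the incidence matrix $M_{R}$ of $\mathcal{A}_{R}$ is irreducible and aperiodic, so classical SFT theory (see \cite{bowen}, ch.~1) gives
\[
h_{\mathrm{top}}(\Sigma_{\mathcal{R}}, \sigma) \;=\; \log \rho(M_{R}),
\]
where $\rho(M_{R})$ is the Perron--Frobenius eigenvalue of $M_{R}$.

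Next I would identify $\rho(M_{R})$ with the spectral radius $\zeta$ of the full incidence matrix of $\mathcal{A}$ appearing in Corollary~\ref{corollary:sphereGrowth}. After reordering vertices according to the strongly connected component decomposition of $\mathcal{A}$, the incidence matrix of $\mathcal{A}$ is block upper triangular with the diagonal blocks indexed by strongly connected components; the only non-nilpotent diagonal block is $M_{R}$, so the nonzero spectrum of the incidence matrix of $\mathcal{A}$ coincides with the spectrum of $M_{R}$. Consequently $\zeta = \rho(M_{R})$, and Corollary~\ref{corollary:sphereGrowth} gives $\zeta > 1$. Combining these yields
\[
h_{\mathrm{top}}(\Sigma, \sigma) \;\geq\; \log \zeta \;>\; 0.
\]

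There is no real obstacle here: the proof is a one-step synthesis of the previous corollary with the classical entropy formula for mixing SFTs. The only things to verify carefully are that $\Sigma_{\mathcal{R}} \hookrightarrow \Sigma$ is genuinely a subsystem inclusion (immediate from the definitions) and that the transient edges of $\mathcal{A}$ contribute only nilpotent blocks to the incidence matrix, which is the defining property of transient edges.
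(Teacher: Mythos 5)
Your proof is correct and relies on the same key input as the paper's one-line argument, namely that the spectral radius $\zeta$ of the incidence matrix exceeds $1$ (Corollary~\ref{corollary:sphereGrowth}, which itself follows from nonamenability). The paper reads off the positive entropy directly from the asymptotic $|S_m|\sim C\zeta^m$, via the word-counting characterization of entropy for an SFT, whereas you first pass to the recurrent subshift $\Sigma_{\mathcal{R}}$ and invoke Perron--Frobenius together with the block-triangular observation to identify $\rho(M_R)=\zeta$; both routes are standard and amount to the same computation, so this is essentially the paper's approach fleshed out with a bit more scaffolding.
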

\begin{proof}
This follows from the exponential growth of the group, cf.\ %
Corollary~\ref{corollary:sphereGrowth}.
\end{proof}

\section{Thermodynamic formalism}\label{sec:thermo}

\emph{Assume throughout this section and
sections~\ref{sec:pressureEvaluation}--\ref{sec:criticalExp} that
the group $\Gamma$ is co-compact Fuchsian, and that the random walk is
symmetric and finite-range.}

\subsection{The potential functions
\texorpdfstring{$\varphi_{r}$}{phi r}}\label{ssec:potentialFunctions}

The machinery of thermodynamic formalism and Gibbs states developed
in \cite{bowen} applies to H\"older continuous functions on $\Sigma$
(or on $\bSigma$). To make use of this machinery, we will lift the
Green's function and the Martin kernel from $\overline \Gamma$ to the
sequence space $\bSigma$. For this the results of
Theorem~\ref{theorem:holderMartinKernel} and
Theorem~\ref{theorem:holderAncona} are crucial, as they  ensure that
those lifts are H\"older-continuous. The lift is defined as follows.
For $\omega \in \Sigma^*$,  set
\begin{equation}\label{eq:phi-r-definition-a}
	\varphi_{r} (\omega):=\log \frac{G_{r}(1,\alpha_{*}
	(\omega))}{G_r(1, \alpha_*(\sigma \omega))}.
\end{equation}
If $\omega$ is not the empty path, one can also write
\[
  \varphi_{r} (\omega)
  =\log \frac{G_{r}(1,\alpha_{*} (\omega))}{G_r(\alpha_*(\omega_0), \alpha_*(\omega))}.
\]
Therefore, Theorem~\ref{theorem:holderAncona} shows that, if two
paths $\omega$ and $\omega'$ coincide up to time $n$, then
$|\varphi_r(\omega)-\varphi_r(\omega')|\leq C \varrho^n$, for some
$\varrho<1$. By definition of the distance on $\Sigma^*$, this means
that $\varphi_r$ is H\"older-continuous. In particular, it extends to a
H\"older-continuous function (that we still denote by $\varphi_r$) on
$\bSigma$. On $\Sigma$, it is given by
\begin{equation}\label{eq:defCocycle}
  \varphi_{r} (\omega )
  = \log \frac{K_{r} (1,\alpha_{*}
  (\omega))}{K_{r} (\alpha_*(\omega_0), \alpha_{*}(\omega))}
  = - \log K_{r} (\alpha_*(\omega_0), \alpha_{*}(\omega)).
\end{equation}
The mapping $r\mapsto \varphi_r$ is clearly continuous at every point
of $\Sigma^*$, and all the functions $\varphi_r$ are uniformly
H\"older-continuous for some fixed exponent. Therefore, $r\mapsto
\varphi_r$ is also continuous for the sup norm, and it follows that
it is continuous for the H\"older topology respective to any H\"older
exponent strictly less than the initial one.

By construction, if $\omega$ is of length $n$,
\begin{equation}\label{eq:cocycle}
	G_r(1, \alpha_*(\omega)) = G_r(1,1) \exp(S_n \varphi_r(\omega))
\end{equation}
where (in Bowen's notation \cite{bowen})
\begin{equation*}
	 S_{n}\varphi :=\sum_{j=0}^{n-1}\varphi \circ \sigma^{j}.
\end{equation*}
(Unfortunately, the notation $S_{n}\varphi$ conflicts with the
notation $S_{m}$ for the sphere of radius $m$ in $\Gamma$; however,
both notations are standard, and the meaning should be clear in the
following by context.)

\subsection{Gibbs states: background}\label{sec:gibbs}

According to a fundamental theorem of ergodic theory (cf.\
\cite{bowen}, Th.~1.2 and sec.~1.4), for each H\"older continuous
function on a topologically mixing subshift of finite type, there is a
unique Gibbs state for this potential. Unfortunately, the subshift of
finite type induced by a Cannon automaton is not topologically mixing,
since the edges originating from $s_*$ are always transient (and for
hyperbolic groups in general, there can also be terminal edges, i.e.,
edges where a path can not be continued). However, under
Assumptions~\ref{assumption:irreducibility} and
\ref{assumption:mixing}, the recurrent part of the graph is
topologically mixing. Therefore, the existence of Gibbs states and the
corresponding Ruelle operator theory will generalize to our setting.

Consider a finite directed graph whose recurrent part is connected
and aperiodic, let $\bSigma$ be the set of finite or infinite paths
in this graph, and let $\sigma:\bSigma \to \bSigma$ be the left
shift. The assumption that the recurrent part of the graph is
connected and aperiodic implies that the restriction of $\sigma$ to
the set of infinite paths in the recurrent set is topologically
mixing. Let $\HH$ be the space of real-valued H\"older-continuous
functions on $\bSigma$ (for some fixed H\"older exponent). Let
$\mathcal{R}$ be the set of recurrent edges in the graph,
$\mathcal{R}^+$ the set of edges that can be reached from a recurrent
edge, and $\mathcal{R}^-$ the set of edges from which a recurrent
edge can be reached.

\begin{theorem}
\label{thm:RPF}
For any potential $\varphi\in \HH$,  define an operator
$\RL_\varphi$ acting on continuous functions $f:\bSigma \rightarrow \zz{R}$ by
  \begin{equation*}
  \RL_\varphi f(\omega)=\sum_{\sigma(\omega')=\omega} e^{\varphi(\omega')} f(\omega'),
  \end{equation*}
where if $\omega$ is the empty path the sum is restricted to the
preimages $\omega'$ of positive length. There exist a real number
$\Press(\varphi)$ (the \emph{pressure} of $\varphi$), a number
$\varepsilon>0$, a H\"older-continuous function $h_\varphi:\bSigma\to
\zz{R}^+$ and a probability measure $\nu_\varphi$ on $\bSigma$ such
that, for any $f\in \HH$, the following asymptotics hold in $\HH$:
  \begin{equation}
  \label{RL_asymtptotics}
  \RL_\varphi^n f = e^{n \Press(\varphi)} \left(\int f \, d\nu_\varphi\right) h_\varphi
  +O(e^{-\varepsilon n} e^{n \Press(\varphi)}).
  \end{equation}
The support of the function $h_\varphi$ is the set of sequences whose
elements all belong to $\mathcal{R}^+$, and $h_\varphi$ is bounded
away from zero there. The support of the measure $\nu_\varphi$ is
the set of infinite sequences whose elements all belong to
$\mathcal{R}^-$.

The measure $\mu_\varphi=h_\varphi\nu_\varphi$ is the \emph{Gibbs
measure} associated to the potential $\varphi$: it is a
$\sigma$-invariant probability
measure supported by the recurrent part $\Sigma^{\mathcal{R}}$ of
$\Sigma$, and it satisfies, for any
$\omega=(\omega_n)\in \Sigma^{\mathcal{R}}$,
  \begin{equation}
  \label{eq:Gibbs}
  C_1 \leq \frac{\mu_\varphi[\omega_0,\dots,\omega_{n-1}]}
  {e^{S_n \varphi(\omega) - n \Press(\varphi)}}
  \leq C_2,
  \end{equation}
where $C_1, C_2>0$ are two constants and the cylinder
$[\omega_0,\dots,\omega_{n-1}]$ is the set of sequences
$\omega'=(\omega'_0,\omega'_1,\dots)$ with $\omega'_i=\omega_i$ for
$0\leq i\leq n-1$.

Finally, all the quantities in the statement of the theorem (i.e.,
$\Press(\varphi)$, $\varepsilon$, $h_\varphi$, $\nu_\varphi$, $C_1$,
$C_2$, $\mu_\varphi$ and the implicit constant in the $O$--term in
\eqref{RL_asymtptotics}) vary continuously with $\varphi\in \HH$.
\end{theorem}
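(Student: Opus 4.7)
The plan is to reduce Theorem~\ref{thm:RPF} to the classical Ruelle--Perron--Frobenius theorem (\cite{bowen}, Chapter~1, together with its standard spectral-gap refinement) applied to the topologically mixing recurrent subsystem $\Sigma^{\mathcal{R}}$, and then extend the resulting eigenstructure to the full space $\bSigma$ by exploiting the fact that the transient part of the automaton has no cycles.

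First I would restrict attention to $\Sigma^{\mathcal{R}}$, the set of infinite paths all of whose edges are recurrent. By Assumptions~\ref{assumption:irreducibility} and~\ref{assumption:mixing} the shift on $\Sigma^{\mathcal{R}}$ is topologically mixing, so the classical RPF theorem applied to the restriction $\RL^{\mathcal{R}}_\varphi$ of $\RL_\varphi$ produces the pressure $\Press(\varphi)$ (the log of the spectral radius of $\RL^{\mathcal{R}}_\varphi$), a positive H\"older eigenfunction $\bar h_\varphi$ and a dual eigenprobability $\bar\nu_\varphi$ on $\Sigma^{\mathcal{R}}$, normalized by $\int \bar h_\varphi \, d\bar\nu_\varphi = 1$, together with the spectral gap
\[
(\RL^{\mathcal{R}}_\varphi)^n f = e^{n\Press(\varphi)}\Bigl(\int f\,d\bar\nu_\varphi\Bigr)\bar h_\varphi + O\bigl(e^{-\varepsilon_0 n}e^{n\Press(\varphi)}\bigr)
\]
in H\"older norm, for some $\varepsilon_0 > 0$.

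The next step is to extend $\bar h_\varphi, \bar\nu_\varphi$ to $h_\varphi, \nu_\varphi$ on $\bSigma$ and to transfer the spectral expansion. Because transient edges are, by definition, not contained in any cycle and the automaton is finite, the transient combinatorics are nilpotent: along any admissible path, consecutive transient edges occur in blocks of bounded length. This yields a block-triangular decomposition of $\RL_\varphi$ with respect to the transient/recurrent partition, with a single ``recurrent block'' equal to $\RL^{\mathcal{R}}_\varphi$ and off-diagonal entries coupling transient to recurrent in only one direction (the opposite direction for the adjoint). I would extend $\bar h_\varphi$ to $h_\varphi$ on sequences whose edges all lie in $\mathcal{R}^+$ by a finite sum over admissible backwards transient excursions from $\omega$ ending in the recurrent set, and dually extend $\bar\nu_\varphi$ to $\nu_\varphi$ on sequences whose edges lie in $\mathcal{R}^-$; H\"older continuity, positivity on the specified supports, and the eigenvalue equations $\RL_\varphi h_\varphi = e^{\Press(\varphi)} h_\varphi$ and $\RL_\varphi^* \nu_\varphi = e^{\Press(\varphi)} \nu_\varphi$ follow from the finiteness of transient combinatorics together with the data from the first step.

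From here the asymptotic expansion~\eqref{RL_asymtptotics} on $\bSigma$ follows by decomposing $\RL^n_\varphi f(\omega)$ according to the first time the preimage paths enter $\Sigma^{\mathcal{R}}$: after a bounded number of transient entry/exit steps, the iteration is governed by $\RL^{\mathcal{R}}_\varphi$, so the spectral gap of the first step produces the leading term $e^{n\Press(\varphi)}(\int f\,d\nu_\varphi)h_\varphi(\omega)$ and an $O(e^{-\varepsilon n}e^{n\Press(\varphi)})$ error. The Gibbs inequality~\eqref{eq:Gibbs} then follows by applying this asymptotic to $f=\mathbf{1}_{[\omega_0,\dots,\omega_{n-1}]}$ and using the identity $\RL^n_\varphi(\mathbf{1}_{[\omega_0,\dots,\omega_{n-1}]})(\sigma^n\omega) = e^{S_n\varphi(\omega)}$. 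For continuous dependence on $\varphi\in\HH$, the map $\varphi\mapsto\RL^{\mathcal{R}}_\varphi$ is continuous (in fact analytic) in operator norm and the leading eigenvalue is simple and isolated, so analytic perturbation theory yields continuity of $\Press(\varphi), \bar h_\varphi, \bar\nu_\varphi$ and hence of their $\bSigma$-extensions and of the constants $\varepsilon, C_1, C_2$.

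The hard part will be the extension step: verifying that the spectral gap on $\Sigma^{\mathcal{R}}$ propagates to $\bSigma$ with the same exponential rate (up to changing the constant) and that $h_\varphi, \nu_\varphi$ have \emph{exactly} the stated supports. This rests on the block-triangular structure of $\RL_\varphi$ coming from the decomposition of $\mathcal{A}$ into its recurrent class and its strata of transient edges, which forces the full spectrum to be the $\RL^{\mathcal{R}}_\varphi$-spectrum plus the spectra of strictly nilpotent transient blocks, yielding the required uniform gap.
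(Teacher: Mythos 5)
Your plan is a genuinely different route from the paper's. The paper works \emph{globally} on $\bSigma$ from the start: Lasota--Yorke estimates give quasi-compactness of $\RL_\varphi$ on $\HH(\bSigma)$ directly (no transitivity required for that step), and then positivity of $e^\varphi$ together with mixing of the recurrent block is used to single out a unique simple leading eigenvalue. The supports of $h_\varphi$ and $\nu_\varphi$ are then read off \emph{afterwards} from the eigenvalue equations together with the combinatorics of $\mathcal{R}^\pm$, and the Gibbs property follows because $\mu_\varphi$ restricted to $\Sigma^{\mathcal{R}}$ coincides with Bowen's Gibbs measure. Your plan instead starts from the RPF theorem on the recurrent subsystem and tries to propagate outward. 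That is a reasonable strategy, but the step where you carry the eigendata from $\Sigma^{\mathcal{R}}$ to $\bSigma$ --- which you correctly identify as the hard part --- is not right as sketched, for two concrete reasons.

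First, the ``block-triangular decomposition of $\RL_\varphi$'' is a property of the finite incidence matrix of the automaton, not of the transfer operator acting on $\HH(\bSigma)$. Cylinder sets indexed by the leading edge do not give an invariant splitting of $\HH(\bSigma)$, and the restriction map $\HH(\bSigma)\to\HH(\Sigma^{\mathcal{R}})$ does \emph{not} intertwine $\RL_\varphi$ with $\RL^{\mathcal{R}}_\varphi$: preimages of $\omega\in\Sigma^{\mathcal{R}}$ under $\sigma$ may prepend a transient edge of $\mathcal{R}^-$, so evaluating $\RL_\varphi h$ on $\Sigma^{\mathcal{R}}$ already requires knowing $h$ outside $\Sigma^{\mathcal{R}}$. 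The ``union of block spectra'' conclusion therefore needs a separate argument, and in practice that argument is precisely a Lasota--Yorke/quasi-compactness estimate on $\bSigma$, i.e.\ what the paper does. Second, the proposed extension of $\bar h_\varphi$ to $h_\varphi$ ``by a finite sum over backwards transient excursions ending in the recurrent set'' does not parse: prepending a finite transient/recurrent prefix $\alpha$ to a sequence $\omega$ that carries a transient tail in $\mathcal{R}^+\setminus\mathcal{R}$ never lands you in $\Sigma^{\mathcal{R}}$, so you cannot evaluate $\bar h_\varphi(\alpha\omega)$. The eigenvalue equation $\RL_\varphi h=e^{\Pr(\varphi)}h$ relates $h$ on a transient-tail sequence only to $h$ on \emph{other} transient-tail sequences (longer ones), never back to $\Sigma^{\mathcal{R}}$, so it cannot be used as a definition here. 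What actually works is to construct $h_\varphi$ on all of $\bSigma$ as $\lim_n e^{-n\Pr(\varphi)}\RL_\varphi^n\mathbf{1}$ (requiring a global contraction argument), and then check after the fact that its support is exactly the $\mathcal{R}^+$-sequences --- which is again the paper's order of operations. I would recommend reorganizing your proof around a global quasi-compactness estimate on $\bSigma$ rather than restriction--and--extension.
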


When the subshift of finite type is topologically mixing, this
theorem is proved in \cite{bowen}. Since the arguments there are
easily adapted to obtain the above version, we will only sketch a
proof, emphasizing the arguments that differ from those of \cite{bowen}.

\begin{proof}
Standard arguments using Lasota-Yorke estimates (see for instance
\cite{parry-pollicott} or \cite{baladi}) show that $\RL_\varphi$ has
a spectral gap on $\HH$: denoting by $e^{\Pr(\varphi)}$ the spectral
radius of $\RL_\varphi$, this operator has finitely many eigenvalues
of modulus $e^{\Pr(\varphi)}$, and the rest of its spectrum is
contained in a disk of strictly smaller radius. Using the positivity
of $e^{\varphi}$ and the fact that the recurrent part of $\Sigma$ is
topologically mixing, one can then prove that there is a unique
eigenvalue of maximal modulus, and that it is simple. The asymptotics
\eqref{RL_asymtptotics} follows. The eigenfunction and eigenmeasure
$h_\varphi$ and $\nu_\varphi$ satisfy respectively $\RL_\varphi
h_\varphi = e^{\Pr(\varphi)}h_\varphi$ and $\RL_\varphi^* \nu_\varphi
= e^{\Pr(\varphi)} \nu_\varphi$.

Consider next the support of $h_\varphi$. The results
in \cite{bowen} imply that $h_\varphi$ is positive, and bounded from
below, on the recurrent part $\Sigma^{\mathcal{R}}$ of $\Sigma$.
Since $h_\varphi$ is H\"older continuous, this implies that, if $n$ is
large enough, then $h_\varphi$ is also positive on elements of
$\bSigma$ of length at least $n$ whose first $n$ symbols are in
$\mathcal{R}$. Consider now a sequence $\omega$ whose symbols all
belong to $\mathcal{R}^+$. There exists a sequence $\alpha$ beginning
by $n$ symbols in $\mathcal{R}$ such that $\alpha \omega$ is a
possible path in the automaton. Therefore,
  \begin{equation*}
  h_\varphi(\omega)=e^{-n\Pr(\varphi)} \sum_{\sigma^n(\eta)=\omega} e^{S_n \varphi(\eta)} h_\varphi(\eta)
  \geq e^{-n\Pr(\varphi)} e^{S_n \varphi(\alpha\omega)} h_\varphi(\alpha\omega)>0.
  \end{equation*}
On the other hand, if $\omega$ contains a symbol not belonging to
$\mathcal{R}^+$, then $\omega$ has no preimage under $\sigma^n$ if
there is no path of length $n$ in the transient part of the
automaton. It follows that $\RL_\varphi^n h_\varphi(\omega)=0$, hence
$h_\varphi(\omega)=0$. This shows that the support of $h_\varphi$ is
exactly those sequences with all symbols in $\mathcal{R}^+$. Since
this set is compact, $h_\varphi$ is bounded from below there.

If $f$ is a continuous function, then
  \begin{equation*}
  \nu_\varphi(f)=e^{-n\Pr(\varphi)} \nu_\varphi(\RL_\varphi^n f).
  \end{equation*}
Since $\RL_\varphi^n f$ only depends on the values of $f$ on paths of
length at least $n$, this shows that $\nu_\varphi$ has no atom on
paths of finite length. Let us now take $f=1_{[C]}$ the
characteristic function of a cylinder $[C]$ of length $n$. Since
$\RL_\varphi^n 1_{[C]} (\omega)=e^{S_n \phi(C \omega)}$ if the
concatenation $C\omega$ is an admissible sequence, and $0$ otherwise,
we deduce that $\nu_\varphi[C]=0$ if $C$ can not be extended. If a
cylinder contains a symbol not in $\mathcal{R}^-$, it is a union of
cylinders that can not be extended, and has therefore $0$ measure. On
the other hand, if $C$ only contains symbols in $\mathcal{R}^-$, then
$[C]$ contains a cylinder $[C']$ of some length $m$ that can be
followed by a symbol $\omega_0$ in $\mathcal{R}$. Since $\RL^m
1_{[C']}$ is bounded from below on $[\omega_0]$, we get
$\nu_\varphi[C] \geq c \nu_\varphi[\omega_0]$, which is nonzero since
$\nu$ has full support in the recurrent part of $\Sigma$, by
\cite{bowen}. This shows that the support of $\nu_\varphi$ is exactly
the set of infinite paths whose symbols all belong to
$\mathcal{R}^-$.

The claims on the supports of $h_\varphi$ and $\nu_\varphi$ show that
the probability measure $\mu_\varphi=h_\varphi \nu_\varphi$ is
supported on the recurrent part of $\Sigma$. It coincides there with
the Gibbs measure constructed  in \cite{bowen}. Hence, \eqref{eq:Gibbs}
follows.

Finally, all the quantities in the statement of the theorem are
constructed from the spectral theory of the operator $\RL_\varphi$.
It then follows by standard arguments in regular perturbation theory
that they all vary continuously with $\varphi$ in the H\"older
topology.
\end{proof}

\subsection{Gibbs states and Green's function on
spheres}\label{ssec:greenSpheres}
Henceforth we denotes by $\mu_{r}$ the Gibbs measure on $\bSigma $
corresponding to the potential $\varphi_{r}$ defined by equations
\eqref{eq:phi-r-definition-a} and \eqref{eq:defCocycle}. Let
$\lambda_{r,m}$  be the probability measure on the sphere
$S_{m}\subset \Gamma $ with density proportional to $G_{r} (1,x)^{2}$,
that is, such that
\begin{equation}\label{eq:lambda-def}
	\lambda_{r,m} (x)=\frac{G_{r} (1,x)^{2}}{\sum_{y\in
	S_{m}}G_{r} (1,y)^{2}} \quad \text{for all} \;\; x\in S_{m}.
\end{equation}
Recall that $S_{m}$ is in one-to-one correspondence with the paths of
length $m$ in the automaton $\mathcal{A}$ that begin at $s_{*}$; in
particular, each $x\in S_{m}$ corresponds uniquely to a path $\omega
$ of length $m$ whose first step $\omega_0$ belongs to the set $E_*$ of edges
originating from $s_*$. Thus, for each $m\geq 1$, the probability
measure $\lambda_{r,m}$ on $S_{m}$ pulls back to a probability
measure on $\Sigma^{m}(E_*)\subset \bSigma$, which we also denote by
$\lambda_{r,m}$. This measure has density proportional to
\[
	 G_{r} (1,\alpha_{*} (\omega))^{2}= G_{r} (1,1)^{2}\exp\{2S_{m}\varphi_{r} (\omega ) \}  ,
\]
where $\omega \in \Sigma^{m}(E_*)$.

\begin{proposition}\label{proposition:absolutelyCont}
For each $r\in [1,R]$, the measures $\lambda_{r,m}$ on $\bSigma$
converge weakly as $m \rightarrow \infty$ to a probability measure
$\lambda_{r}$ on $\bSigma$, and  this convergence holds uniformly
in $r$, in the following sense: if $f:\bSigma\to \zz{R}$ is continuous, then
\begin{equation}\label{eq:uniformWeakConvergence}
	\lim_{m \rightarrow \infty} \int f\,d\lambda_{r,m} =
	\int f \,d\lambda_{r},
\end{equation}
uniformly for $r\in [1,R]$.
Furthermore, there exist constants $0<C=C (r;2)<\infty$ (depending
continuously on $r$) such that the normalizing constants in
\eqref{eq:lambda-def} satisfy
\begin{equation}\label{eq:sphereAsymptotics}
	\sum_{x\in S_{m}} G_{r} (1,x)^{2}
	\sim  C \exp \bigg\{m \Pr (2 \varphi_{r}) \bigg\}
\end{equation}
as $m \rightarrow \infty$.
\end{proposition}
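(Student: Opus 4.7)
The strategy is to apply the Ruelle--Perron--Frobenius expansion of Theorem~\ref{thm:RPF} to the potential $2\varphi_r$, which is H\"older continuous on $\bSigma$ and depends continuously on $r\in[1,R]$ in the H\"older topology (by the discussion following~\eqref{eq:defCocycle}). The bridge between sums over $S_m$ and iterates of the Ruelle operator is the indicator $\chi(\omega)=\mathbf{1}\{\omega_0\in E_*\}$, a locally constant (hence H\"older) function on $\bSigma$ that vanishes on $\emptyset$. Because $\alpha_*$ restricts to a bijection $\Sigma^m(E_*)\to S_m$, and because $\sigma^m\omega=\emptyset$ precisely when $\omega\in\Sigma^m$, the path formula~\eqref{eq:cocycle} gives, for every H\"older function $f$ on $\bSigma$,
\begin{equation*}
\sum_{x\in S_m}f\bigl(\alpha_*^{-1}(x)\bigr)\,G_r(1,x)^2 \;=\; G_r(1,1)^2\,\RL_{2\varphi_r}^{\,m}(f\chi)(\emptyset).
\end{equation*}

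Theorem~\ref{thm:RPF} then yields
\begin{equation*}
\RL_{2\varphi_r}^{\,m}(f\chi)(\emptyset) \;=\; e^{m\Press(2\varphi_r)}\,h_{2\varphi_r}(\emptyset)\,\nu_{2\varphi_r}(f\chi) + O\bigl(e^{(\Press(2\varphi_r)-\varepsilon)m}\bigr).
\end{equation*}
Specialising $f\equiv 1$ produces~\eqref{eq:sphereAsymptotics} with $C(r;2)=G_r(1,1)^2\,h_{2\varphi_r}(\emptyset)\,\nu_{2\varphi_r}(\chi)$, while dividing the expansion for $f\chi$ by the one for $\chi$ shows
\begin{equation*}
\int f\,d\lambda_{r,m}\;\longrightarrow\;\frac{\nu_{2\varphi_r}(f\chi)}{\nu_{2\varphi_r}(\chi)}\;=:\;\int f\,d\lambda_r,
\end{equation*}
so $\lambda_r$ is just the normalised restriction of $\nu_{2\varphi_r}$ to the cylinder $\{\omega_0\in E_*\}$. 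Since H\"older functions are uniformly dense in $C(\bSigma)$, a routine density argument extends the convergence to arbitrary continuous $f$.

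Positivity of $C(r;2)$ and non-degeneracy of the limit measure reduce to two support checks. First, $h_{2\varphi_r}(\emptyset)>0$, since the empty path vacuously has all its entries in $\mathcal{R}^+$ and $h_{2\varphi_r}$ is bounded below on its support. Second, $\nu_{2\varphi_r}(\chi)>0$: under Assumption~\ref{assumption:irreducibility} one can choose some $e\in E_*$ admitting an infinite forward extension whose edges all eventually lie in $\mathcal{R}$, so $e\in\mathcal{R}^-$ and the cylinder $[e]\subset\chi^{-1}(1)$ meets the support of $\nu_{2\varphi_r}$. Uniformity in $r$ then follows from the final assertion in Theorem~\ref{thm:RPF}: since $r\mapsto 2\varphi_r$ is continuous into $\HH$ and $[1,R]$ is compact, the pressure, eigendata, error exponent~$\varepsilon$ and implicit constants are all uniformly controlled on $[1,R]$. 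The main technical issue is really bookkeeping: Theorem~\ref{thm:RPF} is stated on all of $\bSigma$ whereas sums over $S_m$ only see paths rooted at $s_*$, so the indicator $\chi$ must be tracked throughout, and the graph-theoretic check $\nu_{2\varphi_r}(\chi)>0$, although routine, is essential to produce a non-degenerate limit.
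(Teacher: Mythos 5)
Your proposal is correct and follows essentially the same argument as the paper: express $\sum_{x\in S_m}G_r(1,x)^2$ and $\int f\,d\lambda_{r,m}$ as $\RL_{2\varphi_r}^m$ iterates applied to $1_{E_*}$ (resp.\ $1_{E_*}f$) and evaluated at $\emptyset$, then invoke Theorem~\ref{thm:RPF} and identify $\lambda_r$ as the normalized restriction of $\nu_{2\varphi_r}$ to paths starting at $s_*$. The extra remarks you make about $h_{2\varphi_r}(\emptyset)>0$ and $\nu_{2\varphi_r}(1_{E_*})>0$ merely spell out what the paper asserts directly from the support description in Theorem~\ref{thm:RPF}.
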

\begin{proof}
We first prove \eqref{eq:sphereAsymptotics}. Let $\RL_r$ be the
Ruelle operator associated to the potential $2\varphi_r$. Denoting by
$\emptyset$ the path of length $0$ in $\bSigma$, and by
$1_{E_*}:\bSigma\to \zz{R}$ the function equal to $1$ on paths
originating from $s_*$ and $0$ otherwise, we have
  \begin{equation*}
  \RL_r^m 1_{E_*}(\emptyset) = \sum e^{2S_m \varphi_r(\omega)} = \sum G_r(1, \alpha_*(\omega))^2/G_r(1,1)^2,
  \end{equation*}
where the sum is over all paths $\omega$ of length $m$ originating
from $s_*$. Since $\alpha_*$ induces a bijection between such words
and $S_m$, we get
  \begin{equation*}
  \sum_{x\in S_{m}} G_{r} (1,x)^{2} = G_r(1,1)^2 \RL_r^m 1_{E_*}(\emptyset).
  \end{equation*}
By the Ruelle--Perron--Frobenius Theorem~\ref{thm:RPF}, this is
asymptotic to
  \begin{equation*}
  G_r(1,1)^2 e^{m \Pr(2\varphi_r)} \left(\int 1_{E_*} \, d\nu_r\right)
  h_r(\emptyset),
  \end{equation*}
where $\nu_r$ and $h_r$ are the eigenmeasure and eigenfunction of
$\RL_r$. Since $(\int 1_{E_*} \, d\nu_r) h_r(\emptyset)>0$ by
Theorem~\ref{thm:RPF}, we obtain \eqref{eq:sphereAsymptotics}.

We now turn to $\lambda_{r,m}$. This quantity can also be expressed in
terms of the transfer operator, as follows:
  \begin{equation}
  \label{eq:deflambdarm}
  \int f\, d\lambda_{r,m} = \RL_r^m (1_{E_*}f)(\emptyset)/\RL_r^m (1_{E_*})(\emptyset).
  \end{equation}
It follows again from Theorem~\ref{thm:RPF} that, if $f$ is
H\"older-continuous, then $\int f\, d\lambda_{r,m}$ converges to $\int
1_{E_*}f\, d\nu_r / \int 1_{E_*}\, d\nu_r$. Moreover, the convergence
is uniform for $r\in [1,R]$. If $f$ is merely continuous, it can be
uniformly approximated by a H\"older-continuous function, and the same
result follows. The limiting measure $\lambda_r$ is the normalized
restriction of $\nu_r$ to the paths starting from $s_*$.
\end{proof}

\begin{note}\label{note:theta}
Virtually the same argument shows that for any $\theta \in \zz{R}$, as
$m \rightarrow \infty$,
\[
	\sum_{x\in S_{m}} G_{r} (1,x)^{\theta }
	\sim  C \exp \bigg\{m \Pr (\theta  \varphi_{r}) \bigg\}.
\]
The result \eqref{eq:sphereAsymptotics} implies that $\Pr
(2\varphi_{r})< 0$ for all $r<R$ (see Lemma~\ref{corollary:belowR} below), and
Note~\ref{note:greenOnSphere} implies that $\Pr (\varphi_{r})>0$ for
all $r\in (1,R]$.  Since $\Pr (\theta \varphi_{r})$ varies
continuously with $\theta$, it follows that for each $r\in (1,R]$
there exists $\theta \in (1,2]$ such that $\Pr (\theta
\varphi_{r})=0$.  It can also be shown that the convergence of the
sums is uniform in $r$ for $r\in [1,R]$.
\end{note}

\begin{proposition}\label{corollary:ergodic}
Let $g:\bSigma \rightarrow \zz{R}$ be any H\"older-continuous function.
Then for each $\delta>0$
\begin{equation}\label{eq:ergodicTheorem}
		\lim_{m \rightarrow \infty}
	\lambda_{r,m}\left\{
	\omega \in \Sigma^{m}(E_*)\,:\, \Bigl \lvert
	     \frac{1}{m}\sum_{j=0}^{m-1} g\circ \sigma^{j} (\omega)
	     -\int g\,d\mu_{r}\Bigr \rvert >\delta
	 \right\} =0,
\end{equation}
and the convergence is uniform in $r\in [1,R]$.
\end{proposition}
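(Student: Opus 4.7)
The plan is an $L^{2}$--variance argument driven by the spectral gap of the Ruelle operator. Let $\bar g:=g-\int g\,d\mu_{r}$, so that $\int\bar g\,d\mu_{r}=0$. By Chebyshev's inequality, the probability in \eqref{eq:ergodicTheorem} is bounded by $\delta^{-2}\int(S_{m}\bar g/m)^{2}\,d\lambda_{r,m}$, so it suffices to prove the variance estimate
\begin{equation*}
\int(S_{m}\bar g)^{2}\,d\lambda_{r,m}=O(m)\quad\text{uniformly in }r\in[1,R].
\end{equation*}
Throughout I will use the transfer operator representation \eqref{eq:deflambdarm}, namely $\int f\,d\lambda_{r,m}=\RL_{r}^{m}(1_{E_{*}}f)(\emptyset)/\RL_{r}^{m}(1_{E_{*}})(\emptyset)$, where $\RL_{r}$ is the Ruelle operator for the potential $2\varphi_{r}$ introduced in the proof of Proposition~\ref{proposition:absolutelyCont} and $\mu_{r}=h_{r}\nu_{r}$ is its associated Gibbs measure.

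\textbf{Variance bound via exponential mixing.} Expanding $(S_{m}\bar g)^{2}=\sum_{i} (\bar g\circ\sigma^{i})^{2}+2\sum_{i<j}\bar g\circ\sigma^{i}\cdot\bar g\circ\sigma^{j}$ and applying the transfer identity $\RL_{r}^{k}(u\cdot(v\circ\sigma^{k}))=v\cdot\RL_{r}^{k}u$ twice gives, for $i\le j$,
\begin{equation*}
\RL_{r}^{m}(1_{E_{*}}\cdot\bar g\circ\sigma^{i}\cdot\bar g\circ\sigma^{j})=\RL_{r}^{m-j}\bigl(\bar g\cdot\RL_{r}^{j-i}\bigl(\bar g\cdot\RL_{r}^{i}1_{E_{*}}\bigr)\bigr).
\end{equation*}
By the spectral-gap asymptotic \eqref{RL_asymtptotics}, $e^{-n\Press(2\varphi_{r})}\RL_{r}^{n}$ converges in $\HH$ at rate $e^{-\varepsilon n}$ to the rank-one projection $f\mapsto\bigl(\int f\,d\nu_{r}\bigr)h_{r}$. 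The key cancellation is that each time this projection is applied to a function of the form $\bar g\cdot(\text{const}\cdot h_{r})$, the leading term vanishes because $\int\bar g\,h_{r}\,d\nu_{r}=\int\bar g\,d\mu_{r}=0$. Expanding $\RL_{r}^{i}1_{E_{*}}$, then $\RL_{r}^{j-i}$, then $\RL_{r}^{m-j}$ into the projection plus an exponentially small H\"older error, propagating the errors, and finally dividing by the denominator $\RL_{r}^{m}1_{E_{*}}(\emptyset)\asymp e^{m\Press(2\varphi_{r})}$, one obtains
\begin{equation*}
\Bigl|\int\bar g\circ\sigma^{i}\cdot\bar g\circ\sigma^{j}\,d\lambda_{r,m}\Bigr|\le C\bigl(e^{-\varepsilon i}+e^{-\varepsilon(j-i)}+e^{-\varepsilon(m-j)}\bigr).
\end{equation*}
Summing over $0\le i\le j<m$ yields the required $O(m)$ bound on the variance.

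\textbf{Uniformity and the main obstacle.} The last sentence of Theorem~\ref{thm:RPF} guarantees that $\Press(2\varphi_{r})$, the eigendata $(h_{r},\nu_{r})$, the spectral gap $\varepsilon$, and all implicit constants depend continuously on the potential in the H\"older topology. Combined with the continuity of $r\mapsto 2\varphi_{r}\in\HH$ from Subsection~\ref{ssec:potentialFunctions} and the compactness of $[1,R]$, the estimates above become uniform in $r\in[1,R]$. The one genuine subtlety is the indicator $1_{E_{*}}$ and the evaluation at the transient empty path $\emptyset$, where the Gibbs inequality \eqref{eq:Gibbs} is not directly available; however, Theorem~\ref{thm:RPF} gives the asymptotic \eqref{RL_asymtptotics} in H\"older norm on all of $\bSigma$, so pointwise evaluation at $\emptyset$ is controlled by the same spectral data, and the boundary contributions are absorbed into the $e^{-\varepsilon i}$ and $e^{-\varepsilon(m-j)}$ terms. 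Careful bookkeeping of these boundary errors is the only real technical obstacle.
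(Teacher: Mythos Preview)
Your proposal is correct and follows essentially the same route as the paper: reduce via Chebyshev to an $O(m)$ variance bound, expand $(S_m\bar g)^2$, rewrite the off-diagonal correlations as $\RL_r^{m-j}\bigl(\bar g\,\RL_r^{j-i}(\bar g\,\RL_r^{i}1_{E_*})\bigr)(\emptyset)/\RL_r^m 1_{E_*}(\emptyset)$, and use the spectral asymptotic \eqref{RL_asymtptotics} together with the cancellation $\int \bar g\,h_r\,d\nu_r=\int\bar g\,d\mu_r=0$ to get geometrically decaying bounds that sum to $O(m)$, with uniformity in $r$ coming from the continuity clause of Theorem~\ref{thm:RPF}. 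The paper normalizes $\Press(2\varphi_r)=0$ to suppress the exponential prefactors and records only the two error terms $e^{-\varepsilon i}+e^{-\varepsilon(j-i)}$ (your extra $e^{-\varepsilon(m-j)}$ is harmless), but the argument is otherwise the same.
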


\begin{proof}
To prove the convergence \eqref{eq:ergodicTheorem} for any particular
$r\in [1,R]$ it suffices, by Chebychev's inequality, to prove that the
variance of the average converges to zero as $m \rightarrow \infty$.
Replacing $g$ by $g-\int g\, d\mu_r$ and $\varphi_r$ by $\varphi_r
-\Pr(\varphi_r)$, we may assume that $\int g\,d\mu_r=0$ and
$\Pr(\varphi_r)=0$. We then have
  \begin{equation*}
  \int \left(\sum_{j=0}^{m-1} g\circ \sigma^{j}\right)^2 \,d\lambda_{r,m}
  = \sum_{j=0}^{m-1} \int (g\circ \sigma^j)^2 \,d\lambda_{r,m} + 2\sum_{j<k} \int g\circ \sigma^j \cdot g\circ \sigma^k\, d\lambda_{r,m}.
  \end{equation*}
The first sum is bounded by $m\xnorm{g}_{\infty }^{2}$. In the second
sum, for each $j<k$, we have, by \eqref{eq:deflambdarm},
\begin{align*}
  \int g\circ \sigma^j \cdot g\circ \sigma^k\, d\lambda_{r,m}
  &= \RL_r^m(1_{E_*}g\circ \sigma^j \cdot g\circ \sigma^k)(\emptyset)
  / \RL_r^m 1_{E_*}(\emptyset) \\
  &=\RL_r^{m-k}(g\RL_r^{k-j}(g\RL_r^j 1_{E_*}))(\emptyset) / \RL_r^m 1_{E_*}(\emptyset).
  \end{align*}
(The second equation follows from the identity $\RL(u\cdot v\circ \sigma)=\RL(u)\cdot v$.)
Theorem~\ref{thm:RPF} implies that  $\RL_r^n 1_{E_*} =
\nu_r[E_*] h_r + O(e^{-\varepsilon n})$, so the denominator $\RL_r^m
1_{E_*}(\emptyset)$ converges as $m \rightarrow \infty$ to a positive
constant $\nu_r[E_*] h_r (\emptyset)$, and
  \begin{equation*}
  \RL_r^{k-j}(g\RL_r^j 1_{E_*}))
  =\nu_r[E_*] \RL_r^{k-j}(g h_r) + O(e^{-\varepsilon j})
  =\nu_r[E_*] \left(\int g h_r \,d\nu_r\right) h_r+O(e^{-\varepsilon (k-j)}) + O(e^{-\varepsilon j}).
  \end{equation*}
Since $\int g h_r\, d\nu_r=\int g\, d\mu_r=0$, this is
$O(e^{-\varepsilon (k-j)}) + O(e^{-\varepsilon j})$. Summing over $j<
k< m$, we obtain the bound
  \begin{equation*}
  C \sum_{j< k< m} (e^{-\varepsilon j}+e^{-(k-j)})
  \leq C m.
  \end{equation*}
This shows  that the variance of the sum $S_{m}g$ is $O (m)$ as $m
\rightarrow \infty$, and so \eqref{eq:ergodicTheorem} follows.

Finally, all of the estimates obtained in the argument are uniform in
$r$, since all spectral data coming from Theorem~\ref{thm:RPF} are
already uniform.
\end{proof}

The ergodic average in \eqref{eq:ergodicTheorem} is expressed as an
average over the orbit of a path in the Cannon automaton, but it
readily translates to an equivalent statement for ergodic averages
along the geodesic segment $L=L (1,x)$, as we now explain. Consider a
function $f$, defined on the set of geodesic segments through $1$ in
the Cayley graph $G^\Gamma$. We say that it is \emph{H\"older
continuous} if $|f(L)-f(L')|\leq C\varrho^n$ for some $\varrho<1$
whenever two geodesics $L$ and $L'$ coincide on the ball of radius
$n$ around $1$. We put $f(L)=0$ if $L$ is a geodesic segment not
containing $1$. We have defined in~\eqref{eq:alpha_bilat} a map
$\alpha$ associating to a (finite) bilateral path in $\Sigma^{*}_{\Z}$
a geodesic segment in $G^\Gamma$. Therefore, $f\circ \alpha$ is a
function on $\Sigma^{*}_{\Z}$. It is H\"older continuous, and extends to
a H\"older continuous function on $\bSigma_{\Z}$.

There is a natural reference measure on the bilateral shift
$\bSigma_{\Z}$: since the Gibbs measure $\mu_r$ constructed on
$\Sigma$ in Theorem~\ref{thm:RPF} is shift--invariant, it extends to
a measure (denoted $\mu^{\Z}_r$) on $\Sigma_{\Z}$.

\begin{corollary}\label{corollary:ergodicCorollary}
Let $f$ be a H\"older continuous function on the space of geodesic
segments through $1$ in $G^\Gamma$. Then for each $\delta >0$,
\begin{equation}\label{eq:erg2}
			\lim_{m \rightarrow \infty}
	\lambda_{r,m}\left\{
	x\in S_{m}\,:\, \Bigr \lvert
	     m^{-1}\sum_{z\in  L (1,x)} f(z^{-1}L(1,x))
	     -\int f\circ \alpha\,d\mu^{\Z}_{r}\Bigr \rvert >\delta
	 \right\} =0,
\end{equation}
and the convergence is uniform in $r\in [1,R]$.
\end{corollary}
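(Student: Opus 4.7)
The strategy is to reduce the ergodic statement along the geodesic $L(1,x)$ to the symbolic statement of Proposition~\ref{corollary:ergodic} via the bijection between points $x\in S_{m}$ and paths $\omega \in \Sigma^{m}(E_{*})$. Given such $\omega$, embed it in $\bSigma_{\Z}$ as the bilateral sequence $\tilde\omega$ which equals $\omega_{i}$ at position $i\in [0,m-1]$ and the empty symbol elsewhere. Let $z_{j}=\alpha_{*}(\omega_{0}\cdots \omega_{j-1})$ denote the $j$-th vertex along $L(1,x)$. Unwinding the definition~\eqref{eq:alpha_bilat}, the forward part of $\alpha(\sigma^{j}\tilde\omega)$ traces $1,\alpha(\omega_{j}),\alpha(\omega_{j})\alpha(\omega_{j+1}),\dots,z_{j}^{-1}x$, while the backward part traces $\alpha(\omega_{j-1}^{-1}),\dots,\alpha(\omega_{j-1}^{-1})\cdots\alpha(\omega_{0}^{-1})=z_{j}^{-1}$. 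Thus $\alpha(\sigma^{j}\tilde\omega)$ is exactly the translated geodesic $z_{j}^{-1}L(1,x)$, so
\[
  \sum_{z\in L(1,x)} f(z^{-1}L(1,x))=\sum_{j=0}^{m-1}(f\circ\alpha)(\sigma^{j}\tilde\omega).
\]

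Write $F=f\circ\alpha$, a H\"older-continuous function on $\bSigma_{\Z}$. For fixed $N$, approximate $F$ by a function $F_{N}$ that depends only on the coordinates in the window $[-N,N]$: pick a reference sequence $\omega^{\circ}$ and set $F_{N}(\eta)=F(\dots,\omega^{\circ}_{-N-1},\eta_{-N},\dots,\eta_{N},\omega^{\circ}_{N+1},\dots)$. H\"older continuity gives $\lVert F-F_{N}\rVert_{\infty}\leq C\varrho^{N}$ for some $\varrho<1$. For indices $N\leq j\leq m-1-N$, the quantity $F_{N}(\sigma^{j}\tilde\omega)$ depends only on $\omega_{j-N},\dots,\omega_{j+N}$, and therefore can be written as $H\circ\sigma^{j-N}(\omega)$, where $H$ is the one-sided H\"older function $H(\eta)=F_{N}(\dots,\omega^{\circ}_{-1},\eta_{0},\dots,\eta_{2N},\omega^{\circ}_{1},\dots)$ on $\bSigma$. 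The $2N$ boundary indices contribute at most $2N\lVert F\rVert_{\infty}/m=o(1)$ to the average, and a further error of size $\lVert F-F_{N}\rVert_{\infty}\leq C\varrho^{N}$ arises from replacing $F$ by $F_{N}$.

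By Proposition~\ref{corollary:ergodic} applied to the H\"older function $H$, the one-sided ergodic average $m^{-1}\sum_{j=N}^{m-1-N}H\circ\sigma^{j-N}(\omega)$ converges in $\lambda_{r,m}$-probability, uniformly for $r\in[1,R]$, to $\int H\,d\mu_{r}$; by shift-invariance of the natural extension, this equals $\int F_{N}\,d\mu_{r}^{\Z}$, which in turn converges to $\int F\,d\mu_{r}^{\Z}$ as $N\to\infty$. Given $\delta>0$, fix $N$ large enough that both $C\varrho^{N}\leq\delta/3$ and $|\int F_{N}\,d\mu_{r}^{\Z}-\int F\,d\mu_{r}^{\Z}|\leq\delta/3$ uniformly in $r$ (possible by uniformity of the continuity in Theorem~\ref{thm:RPF}), and then let $m\to\infty$ to conclude \eqref{eq:erg2}. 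The only subtle point is the geometric identification $\alpha(\sigma^{j}\tilde\omega)=z_{j}^{-1}L(1,x)$; the rest is standard cylindrical approximation and boundary bookkeeping, and all the uniformity in $r$ is inherited from Proposition~\ref{corollary:ergodic}.
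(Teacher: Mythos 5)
Your proof is correct, and it reaches the conclusion by a route genuinely different from the paper's.

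The paper's argument is shorter but leans on a nontrivial external input: it invokes Livsits' lemma (citing Bowen, Lemma~1.6) to replace $\tilde f = f\circ\alpha$ by a \emph{cohomologous} function $g = \tilde f + u - u\circ\sigma$ depending only on forward coordinates, so that Proposition~6.5 applies to $g$ and the Birkhoff sums of $\tilde f$ and $g$ differ by the telescoping, bounded quantity $u - u\circ\sigma^m$. The paper even flags that one must check Livsits' lemma extends to the present setting with finite paths or cemetery states.

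You instead use the more elementary device of a \emph{cylinder approximation}: replace $F = f\circ\alpha$ by a function $F_N$ depending only on the window $[-N,N]$, with $\lVert F - F_N\rVert_\infty \le C\varrho^N$ from H\"older continuity; shift by $\sigma^{-N}$ to identify $F_N$ (for indices away from the ends) with a genuinely one-sided H\"older function $H$; apply Proposition~6.5 to $H$; absorb the $2N$ boundary indices into an $o(1)$ error; and send $N\to\infty$ at the end. This avoids the cohomology lemma entirely at the modest cost of an extra layer ($\delta/3$) of approximation bookkeeping, and your error estimates are manifestly uniform in $r$ since $F$ itself does not depend on $r$. You also spell out explicitly the geometric identification $\alpha(\sigma^j\tilde\omega) = z_j^{-1}L(1,x)$ underlying the reduction to symbolic dynamics, which the paper uses but leaves implicit. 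Both routes are standard in thermodynamic formalism, and both work here.

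One stylistic caveat, not a gap: when you construct $F_N$ by padding with a fixed reference sequence $\omega^\circ$, you should note that the padding must be chosen compatibly with the admissibility constraints at the boundary of each cylinder (or use a different standard recipe such as taking $F_N$ to be a maximum or an average of $F$ over each cylinder); otherwise $F_N$ may not be defined. The estimate $\lVert F - F_N\rVert_\infty \le C\varrho^N$ is unaffected. Also, the bound $\bigl|\int F_N\,d\mu_r^{\Z} - \int F\,d\mu_r^{\Z}\bigr| \le \lVert F - F_N\rVert_\infty$ is automatic for any probability measure, so invoking the continuity from Theorem~\ref{thm:RPF} there is unnecessary --- but harmless.
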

\begin{proof}
Let $\tilde f = f\circ \alpha$. If the function $\tilde f$ only
depends on the positive coordinates of a symbolic sequence, this
statement directly reduces to Corollary~\ref{corollary:ergodic} since
$S_m$ is in bijection with the set of paths of length $m$ starting
from $s_*$. By a theorem of Livsits (\cite{bowen}, Lemma 1.6 --- one
may either check that the proof still applies in our setting allowing
finite paths, or add cemeteries to reduce our situation to the
classical setting) any H\"older continuous function $\tilde f$ on
$\bSigma_{\Z}$ is \emph{cohomologous} to a H\"older continuous function
$g$ that depends only on the forward coordinates, i.e., there exists
a H\"older continuous function $u$ such that $g=\tilde f+u-u\circ
\sigma$. Since it is equivalent to have \eqref{eq:erg2} for $\tilde
f$ or $\tilde f+u-u\circ \sigma$, the general case follows.
\end{proof}

\section{Evaluation of the pressure at
\texorpdfstring{$r=R$}{r=R}}\label{sec:pressureEvaluation}

Proposition~\ref{proposition:absolutelyCont} implies that the sums
$\sum_{y\in S_{m}}G_{R} (1,y)^{2}$ grow or decay sharply
exponentially at exponential rate $\Pr(2\varphi_{R})$. Consequently,
to prove the relation \eqref{eq:backscatterA} of
Theorem~\ref{theorem:2} it suffices to prove that this rate is $0$.

\begin{proposition}\label{proposition:pressureEqualsZero}
$\Pr(2\varphi_{R})=0$.
\end{proposition}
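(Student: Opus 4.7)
The plan is to prove the two one-sided inequalities $\Pr(2\varphi_R)\leq 0$ and $\Pr(2\varphi_R)\geq 0$ separately.

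For the upper bound, I would use continuity of the pressure as $r\uparrow R$. For any $r<R$, the power series $G_r(1,1)=\sum_n p^n(1,1)r^n$ is analytic, so $G_r'(1,1)=\sum_n np^n(1,1)r^{n-1}<\infty$. Proposition~\ref{proposition:GPrime} applied with $x=y=1$ then gives
\[
  \sum_{x\in\Gamma}G_r(1,x)^2 = rG_r'(1,1)+G_r(1,1)<\infty,
\]
and combined with Proposition~\ref{proposition:absolutelyCont}'s sharp asymptotic $\sum_{x\in S_m}G_r(1,x)^2\sim C(r)e^{m\Pr(2\varphi_r)}$, summability of $\sum_m e^{m\Pr(2\varphi_r)}$ forces $\Pr(2\varphi_r)<0$ strictly for every $r<R$. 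Theorem~\ref{theorem:holderMartinKernel} provides H\"older continuity of the Martin kernel $K_r$ with constants uniform in $r\in[1,R]$, so $r\mapsto\varphi_r$ is continuous into a H\"older space (for any exponent strictly less than the uniform one). The continuity statement in Theorem~\ref{thm:RPF} then yields continuity of $r\mapsto\Pr(2\varphi_r)$ on $[1,R]$, and passing to the limit $r\to R^-$ gives $\Pr(2\varphi_R)\leq 0$.

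For the lower bound I would argue by contradiction: suppose $\Pr(2\varphi_R)=-\delta<0$. Summing the exponentially decaying asymptotic over $m$ gives $\sum_{x\in\Gamma}G_R(1,x)^2<\infty$, and by monotone convergence applied to the differential identity above this equals $RG_R'(1,1)+G_R(1,1)$, so in particular $\sum_n np^n(1,1)R^n<\infty$. I would then exploit the exact combinatorial identity
\[
  G_R(1,1)-G_R^{S_m^c}(1,1)=\sum_{x\in S_m}H_R(1,x)G_R(x,1),
\]
obtained by decomposing paths $1\to1$ at their first entry to $S_m$, where $H_R(1,x)$ denotes the first-passage generating function to $S_m$ landing at $x$. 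The left-hand side tends to $0$ as $m\to\infty$ by $R$-transience, which constrains the size of the weighted sum on the right. A Cauchy--Schwarz step combined with Ancona's inequality (Theorem~\ref{theorem:1}(B)) applied along the radial geodesic $[1,x]$---yielding a bound that compares $G_R(1,x)$ to the first-passage weight $H_R(1,x)G_R(1,1)$ via the decomposition $G_R(1,x)=\sum_{y\in S_m}H_R(1,y)G_R(y,x)$---should then produce a lower bound for $\sum_{x\in S_m}G_R(1,x)^2$ that stays bounded away from zero and is incompatible with the strict positivity of $h_{2\varphi_R}(\emptyset)$ in the asymptotic of Theorem~\ref{thm:RPF} once $\Pr(2\varphi_R)<0$ is in force.

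The main obstacle is the Ancona-style comparison: Theorem~\ref{theorem:1}(B) controls interior midpoints of geodesic segments, but what is needed here is control of $G_R(y,x)$ and $H_R(1,y)$ as $y$ ranges over the sphere $S_m$, near $x$ as well as transversely to the geodesic $[1,x]$. Handling the off-diagonal terms $y\neq x$ requires combining the exponential decay of the Green's function (Theorem~\ref{theorem:expDecay}) transversely, a Harnack-style transfer of $H_R(1,y)$ to $H_R(1,x)$ for $y$ within bounded combinatorial distance, and the thin-triangle condition for the Cayley graph to control the geometry of "nearby'' first-passage points on $S_m$. Making the cumulative off-diagonal contribution uniformly bounded in $m$---while preserving a nonvanishing lower bound for the diagonal contribution---is the crux of the argument and the place where I expect the planarity of the Fuchsian Cayley graph to be used once more.
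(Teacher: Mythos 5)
Your upper bound argument ($\Pr(2\varphi_R)\leq 0$) is correct and essentially identical to the paper's Lemma~\ref{corollary:belowR}: analyticity of $G_r(1,1)$ for $r<R$ forces $\sum_x G_r(1,x)^2<\infty$ via Proposition~\ref{proposition:GPrime}, hence $\Pr(2\varphi_r)<0$ by Proposition~\ref{proposition:absolutelyCont}, and continuity of $r\mapsto\Pr(2\varphi_r)$ on $[1,R]$ closes this half.

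The lower bound, however, has a genuine gap, one you in fact flag yourself. Your strategy is to push a contradiction from $R$-transience and a first-passage decomposition, but the identity you invoke only says $G_R(1,1)-G_R(1,1;\text{restricted})\to 0$, i.e., it bounds $\sum_{x}H_R(1,x)G_R(x,1)$ from \emph{above}. That kind of upper bound is perfectly consistent with $\sum_{x\in S_m}G_R(1,x)^2\to 0$; indeed the heuristic in the paper's Section~\ref{ssec:decayRate} explains exactly why first-passage decompositions give you upper control on sphere sums. Cauchy--Schwarz on $G_R(1,x)=\sum_y H_R(1,y)G_R(y,x)$ also goes the wrong way: diagonally restricting gives $\sum_x G_R(1,x)^2\geq G_R(1,1)^2\sum_x H_R(1,x)^2$, and Cauchy--Schwarz against $\sum_x H_R(1,x)\gtrsim R^{m/C_0}$ produces the lower bound $\sum_x H_R(1,x)^2\gtrsim R^{2m/C_0}/|S_m|$, which can itself decay exponentially since $|S_m|\sim C\zeta^m$. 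There is no evident way to get a nonvanishing lower bound on $\sum_{x\in S_m}G_R(1,x)^2$ from these ingredients, and the ``off-diagonal control'' you defer is not a technical finish --- it is where the argument fails.

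The paper circumvents this entirely with a different strategy: rather than trying to show the sphere sums cannot decay, it shows that \emph{if} they decayed exponentially, then $G_r(1,1)$ would converge for some $r>R$, contradicting the definition of the radius of convergence. The vehicle is a branching random walk: decomposing a Poisson offspring distribution with mean $R+\varepsilon$ as an independent sum of Poisson-$R$ and Poisson-$\varepsilon$, and coloring particles by how many $\varepsilon$-draws occurred in their ancestry, gives the exact expansion $G_{R+\varepsilon}(1,1)=\sum_{k\geq 0}v_k(1)$ with $v_k(1)=\varepsilon^k\sum_{x_1,\dots,x_k}G_R(1,x_1)\prod G_R(x_i,x_{i+1})\,G_R(x_k,1)$ (Lemma~\ref{lemma:colors}). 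Lemma~\ref{lemma:snapback} then bounds these iterated sums geometrically in $k$, using Ancona's inequalities to factorize $G_R(x_k,y)G_R(y,1)$ along a projection to the geodesic $L(1,x_k)$, plus the assumed exponential decay of $\sum_{S_m}G_R(1,x)^2$ to sum over the transverse direction. This converts the decay hypothesis directly into subcriticality of the branching random walk at some $r>R$, which is impossible. If you want to salvage your approach you should switch to this ``extend-past-$R$'' contradiction rather than trying to lower-bound sphere sums --- the transience identity cannot deliver what you are asking of it.
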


The second assertion \eqref{eq:lp} of Theorem~\ref{theorem:2} also
follows from Proposition~\ref{proposition:pressureEqualsZero},  by
the main result of \cite{lalley:renewal}. (If it could be shown that
the cocycle $\varphi_{R}$ defined by \eqref{eq:defCocycle} above is
\emph{nonlattice} in the sense of \cite{lalley:renewal}, then the
result \eqref{eq:lp} could be strengthened from $\asymp$ to $\sim$.)

The remainder of this section is devoted to the proof of
Proposition~\ref{proposition:pressureEqualsZero}.  The first step,
that $\Pr(2\varphi_{R})\leq 0$, is a consequence of the differential
equations \eqref{eq:GPrime}.  These imply the following.

\begin{lemma}\label{corollary:belowR}
For every $r<R$,
\begin{align}\notag
	\Pr (2\varphi_{r})&<0, \quad \text{and so}\\
\label{eq:atR}
	\Pr (2\varphi_{R})&\leq 0.
\end{align}
\end{lemma}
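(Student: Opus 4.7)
The plan is to combine the differential equation of Proposition~\ref{proposition:GPrime} with the exponential asymptotics of Proposition~\ref{proposition:absolutelyCont}.

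First I would specialize the differential equation \eqref{eq:GPrime} to $x=y=1$, obtaining
\[
    \frac{d}{dr}G_r(1,1) = r^{-1}\sum_{z\in\Gamma}G_r(1,z)^2 - r^{-1}G_r(1,1).
\]
Since $r\mapsto G_r(1,1)$ is a power series with nonnegative coefficients and radius of convergence $R$, both $G_r(1,1)$ and its derivative are finite for every $r\in(0,R)$. Rearranging the above identity therefore yields
\[
    \sum_{z\in\Gamma}G_r(1,z)^2 < \infty \qquad \text{for all } r\in(0,R).
\]

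Next I would decompose this total sum over concentric spheres and invoke Proposition~\ref{proposition:absolutelyCont}: partition $\Gamma=\bigsqcup_m S_m$ and write
\[
    \sum_{z\in\Gamma}G_r(1,z)^2 = \sum_{m=0}^\infty \sum_{x\in S_m}G_r(1,x)^2.
\]
By \eqref{eq:sphereAsymptotics}, the inner sum is asymptotic to $C(r;2)\exp\{m\Pr(2\varphi_r)\}$ with $C(r;2)>0$. Convergence of the outer series then forces the exponential rate to be strictly negative, i.e.\ $\Pr(2\varphi_r)<0$ for every $r\in(0,R)$, proving the strict inequality.

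Finally, for the limiting assertion \eqref{eq:atR}, I would appeal to the continuity statements built into our setup: the map $r\mapsto\varphi_r$ is continuous into $\mathcal{H}$ (as noted after \eqref{eq:defCocycle}), and by Theorem~\ref{thm:RPF} the pressure depends continuously on the H\"older potential. Letting $r\uparrow R$ in the inequality $\Pr(2\varphi_r)<0$ therefore gives $\Pr(2\varphi_R)\leq 0$. There is no real obstacle here; the only mildly delicate point is making sure that the decomposition of $\sum_z G_r(1,z)^2$ into spheres is legitimate (which it is, since all terms are nonnegative) and that Proposition~\ref{proposition:absolutelyCont} applies at the given value of $r$ (it applies uniformly on $[1,R]$, and for $r\in(0,1)$ one can argue by the same Ruelle--Perron--Frobenius reasoning, or simply observe that the desired inequality $\Pr(2\varphi_r)<0$ is stronger for small $r$ and trivially inherited from the case $r=1$ since $\varphi_r$ decreases in $r$).
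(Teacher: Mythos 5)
Your proposal is essentially the paper's proof: specialize the differential equation \eqref{eq:GPrime} at $x=y=1$, use analyticity of $G_r(1,1)$ for $r<R$ to conclude $\sum_{z}G_r(1,z)^2<\infty$, decompose over spheres and invoke Proposition~\ref{proposition:absolutelyCont} to force the exponential rate $\Pr(2\varphi_r)$ to be strictly negative, then pass to the limit using continuity of the pressure in the H\"older potential. The paper's version is a one-paragraph compression of exactly these steps, so you are on target.

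One small caution about the closing parenthetical. Your claim that for $r\in(0,1)$ ``$\varphi_r$ decreases in $r$'' is not justified and not at all obvious: $\varphi_r$ is a logarithm of a \emph{ratio} of Green's (equivalently first-passage) functions, both of which increase with $r$, so the ratio need not be monotone; and even if some monotonicity held, the direction you would need for the pressure inequality to be ``trivially inherited'' appears to be the opposite of what you asserted. Fortunately none of this is needed: the thermodynamic machinery in the paper (Theorems~\ref{theorem:holderMartinKernel}, \ref{theorem:holderAncona}, \ref{thm:RPF} and Proposition~\ref{proposition:absolutelyCont}) is set up only for $r\in[1,R]$, and the lemma is only ever invoked in that range (indeed only near $r=R$), so the case $r<1$ can and should simply be left out of the statement rather than patched with an unverified monotonicity argument.
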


\begin{proof}
For $r<R$ the Green's function $G_{r} (1,1)$ is analytic in $r$, so
its derivative must be finite. Thus, by
Proposition~\ref{proposition:GPrime}, the sum $\sum_{x\in
\Gamma}G_{r}(1,x)^{2}$ is finite. (The last term $r^{-1}G_r (1,1)$ in
equation \eqref{eq:GPrime} remains bounded as $r \rightarrow R-$
because $G_{R} (1,1,)<\infty$.)
Proposition~\ref{proposition:absolutelyCont}  therefore implies that
$\Pr (2\varphi_{r})$ must be negative.  Since $\Pr (\varphi)$ varies
continuously in $\varphi$, relative to the H\"older norm,
\eqref{eq:atR} follows.
\end{proof}

\begin{proof}
[Proof of Proposition~\ref{proposition:pressureEqualsZero}.]  To
complete the proof it  suffices, by the preceding lemma, to show that
$\Pr (2\varphi_{R})$ cannot be negative. In view of
Proposition~\ref{proposition:absolutelyCont}, this is equivalent to
showing that $\sum_{x\in S_{m}}G_{R} (1,x)^{2}$ cannot decay
exponentially in $m$.  This will be accomplished by proving that
exponential decay of $\sum_{x\in S_{m}}G_{R} (1,x)^{2}$ would force
\begin{equation}\label{eq:subcriticality}
	G_{r} (1,1)<\infty  \quad \text{for some} \; r>R,
\end{equation}
which is impossible since $R$ is the radius of convergence of the
Green's function.

To prove \eqref{eq:subcriticality}, we will use the branching random
walk interpretation of the Green's function discussed in
sec.~\ref{ssec:brw}.\footnote{Logically this is unnecessary --- the
argument has an equivalent formulation in terms of weighted paths,
using \eqref{eq:greenByPath} --- but the branching random walk
interpretation seems more natural.}  Recall that a branching random
walk on the Cayley graph $G^{\Gamma}$ is specified by an offspring
distribution $\mathcal{Q}$; assume for definiteness that this is the
Poisson distribution with mean $r>0$. At each step, particles first
produce offspring particles according to this distribution,
independently, and then each of these particles jumps to a randomly
chosen neighboring vertex. If the mean of the offspring distribution
is $r>0$, and if the branching random walk is initiated by a single
particle at the root $1$, then the mean number of particles located at
vertex $x$ at time $n\geq 1$ is $r^{n}P^{1}\{X_{n}=x \}$. Thus, in
particular, $G_{r} (1,1)$ equals the expected total  number of particle
visits to the root vertex $1$. The strategy is to show that if
 $\sum_{x\in S_{m}}G_{R} (1,x)^{2}$ decays exponentially in $m$,
then for some $r>R$ the branching random walk remains
\emph{subcritical}, that is, the expected total number of particle
visits to $1$ is finite.

Recall that the Poisson distribution with mean $r>R$ is the
convolution of  Poisson distributions with means $R$ and $\varepsilon
:=r-R$, that is, the result of adding independent random variables
$U,V$ with distributions Poisson-$R$ and Poisson-$\varepsilon$ is a
random variable $U+V$ with distribution Poisson-$r$. Thus, each
reproduction step in the branching random walk can be done by making
independent draws $U,V$ from the Poisson-$R$ and Poisson-$\varepsilon$
distributions. Use these independent draws to assign \emph{colors}
$k=0,1,2,\dotsc$ to the particles according to the following rules:

\begin{enumerate}
\item [(a)] The ancestral particle at vertex $1$ has color $k=0$.
\item [(b)]  Any offspring resulting from a $U-$draw has the
same color as its parent.
\item [(c)] Any offspring resulting from a $V-$draw has color equal to
$1+$the color of its parent.
\end{enumerate}

\begin{lemma}\label{lemma:colors}
For each $k=0,1,2,\dotsc$, the expected number of visits to the vertex
$y$ by particles of color $k$ is
\begin{equation}\label{eq:colors}
	v_{k} (y) =\varepsilon^{k} \sum_{x_{1},x_{2},\dotsc x_{k}\in
	\Gamma} G_{R} (1,x_{1})\left( \prod_{i=1}^{k-1} G_{R}
	(x_{i},x_{i+1})\right) G_{R} (x_{k},y)	.
\end{equation}
\end{lemma}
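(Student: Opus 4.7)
The plan is an induction on $k$ that exploits the self-similar branching structure of the coloring rule. The key observation is that particles of a single color $j$, connected via $U$-draws only, form a Poisson-$R$ branching random walk, while every $V$-draw starts a fresh Poisson-$R$ sub-BRW of color $j+1$.

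The base case $k = 0$ is immediate: since the ancestor has color $0$ and only $U$-draws preserve color, the color-$0$ particles are exactly the progeny of the ancestor under the $U$-reproduction, which form a Poisson-$R$ branching random walk rooted at $1$. Proposition~\ref{proposition:brw} (with $r = R$) then gives $v_0(y) = G_R(1,y)$, which is the $k = 0$ case of the stated formula under the natural convention that the empty sum and empty product collapse to the single factor $G_R(1, y)$.

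For the inductive step, I would decompose each color-$k$ visit to $y$ according to the V-offspring event at the top of its color-$k$ lineage. Every color-$k$ particle has a unique earliest color-$k$ ancestor, namely a V-offspring of some color-$(k-1)$ parent sitting at a location $x_k \in \Gamma$ at the instant of the V-event; its color-$k$ descendants (the $U$-progeny of this V-offspring up to the next V-event) again form a Poisson-$R$ sub-BRW rooted at that V-offspring, contributing on average $G_R(x_k, y)$ visits to $y$ by Proposition~\ref{proposition:brw}. On the other hand, by the independence of the $U$ and $V$ draws and the Poisson thinning that separates them, each color-$(k-1)$ visit to $x_k$ spawns on average $\varepsilon$ such V-offspring. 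Linearity of expectation therefore yields the one-step recursion
\[
v_k(y) \;=\; \varepsilon \sum_{x_k\in\Gamma} v_{k-1}(x_k)\, G_R(x_k, y),
\]
and unwinding it down to the base case delivers the claimed product formula.

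The only real bookkeeping issue is the convention for where a V-offspring's color-$k$ sub-BRW should be considered to be rooted: the identity above requires it to be rooted at the parent's location $x_k$, so that its Green-function contribution is the full $G_R(x_k, y)$ including the $n = 0$ self-term. This is the natural reading once each newborn V-offspring is treated as the fresh ancestor of its own Poisson-$R$ branching random walk at $x_k$; once that is fixed, the argument contains no further analytic content, resting entirely on Proposition~\ref{proposition:brw}, linearity of expectation, and the $U + V$ Poisson decomposition of the total offspring count.
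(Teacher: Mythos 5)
Your proof is correct and follows essentially the same route as the paper: you identify the unique earliest color-$k$ ancestor (what the paper calls a \emph{pioneer}), observe that its progeny forms a fresh Poisson-$R$ BRW rooted at the parent's site, use $u_k(z)=\varepsilon v_{k-1}(z)$ for the expected number of such pioneers born at $z$, and unwind the resulting one-step recursion. The bookkeeping convention you flag (roots at the parent's location so that the $G_R$-factor includes the time-$0$ self-term) is exactly the one the paper adopts.
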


\begin{proof}
By induction on $k$. First, particles of color $k=0$ reproduce and
move according to the rules of a branching random walk with offspring
distribution Poisson-$R$, so the expected number of visits to vertex
$y$ by particles of color $k=0$ is $G_{R} (1,y)$, by
Proposition~\ref{proposition:brw}. This proves \eqref{eq:colors} in
the case $k=0$. Second, assume that the assertion is true for color
$k\geq 0$, and consider the production of particles of color
$k+1$. Such particles are produced only by particles of color $k$ or
color $k+1$. Call a particle a \emph{pioneer} if its color is
different from that of its parent, that is, if it results from a
$V-$draw. Each pioneer of color $k+1$ engenders its own branching
random walk of  descendants with color $k+1$;  the offspring distribution for
this branching random walk is the Poisson-$R$ distribution. Thus, for
a pioneer born at site $z\in \Gamma$, the expected number of visits to
$y$ by its color--$( k+1)$ descendants is $G_{R} (z,y)$. Every particle of color
$k+1$ belongs to the progeny of one and only one pioneer;
consequently, the expected number of visits to $y$ by particles of
color $k+1$ is
\[
	\sum_{z\in \Gamma }u_{k+1} (z) G_{R} (z,y),
\]
where $u_{k+1} (z)$ is the expected number of pioneers of color $k+1$
born at site $z$ during the evolution of the  branching process. But
since  pioneers of color $k+1$ must be children of parents of color
$k$, and since for any particle  the expected number of children of
different color is $\varepsilon$, it follows that
\[
	u_{k+1} (z) =\varepsilon v_{k} (z).
\]
Hence, formula \eqref{eq:colors} for $k+1$ follows by the induction hypothesis.
\end{proof}

Recall that our objective is to show that if $\sum_{x\in S_{m}}G_{R}
(1,x)^{2}$ decays exponentially in $m$ then
$G_{r} (1,1)<\infty$ for some $r=R+\varepsilon >R$. The branching
random walk construction exhibits $G_{r} (1,1)$ as the expected total
number of particle visits to the root vertex $1$, and this is the sum
over $k\geq 0$ of the expected number $v_{k} (1)$ of visits by
particles of color $k$. Thus, to complete the proof of
Proposition~\ref{proposition:pressureEqualsZero} it suffices, by
Lemma~\ref{lemma:colors}, to show that for some $\varepsilon >0$,
\begin{equation*}
	\sum_{k=0}^{\infty} \varepsilon^{k}\sum_{x_{1},x_{2},\dotsc x_{k}\in
	\Gamma}G_{R} (1,x_{1})\left( \prod_{i=1}^{k-1} G_{R} (x_{i},x_{i+1}) \right)
	G_{R} (x_{k},1)<\infty .
\end{equation*}
This follows directly from the next lemma.
\end{proof}

\begin{lemma}\label{lemma:snapback}
Assume that Ancona's inequalities \eqref{eq:ancona} hold at the
spectral radius $R$ with a constant $C_{R}<\infty$. If the sum  $\sum_{x\in
S_{m}}G_{R} (1,x)^{2}$ decays exponentially in $m$, then there exist
constants $\delta >0$ and $C,\varrho  <\infty$ such that for
every $k\geq 1$,
\begin{equation}\label{eq:snapback}
	\sum_{x_{1},x_{2},\dotsc x_{k}\in
	\Gamma} G_{R} (1,x_{1})\left( \prod_{i=1}^{k-1} G_{R} (x_{i},x_{i+1}) \right)
	(1+\delta)^{|x_{k}|}G_{R} (x_{k},1)\leq C\varrho ^{k}.
\end{equation}
Here $|y|=d (1,y)$ denotes the distance of $y$ from the root $1$ in
the word metric.
\end{lemma}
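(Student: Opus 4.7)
The strategy is to prove the pointwise inequality
\[
\phi_\delta(x) := \sum_y G_R(x,y)(1+\delta)^{|y|} G_R(y,1) \;\le\; C_\delta\, \psi_\delta(x),\qquad \psi_\delta(x) := (1+\delta)^{|x|} G_R(x,1),
\]
for some finite $C_\delta$. Given this, an easy induction on $k$ yields the statement of the lemma: writing $S_k$ for the left side of \eqref{eq:snapback}, one has $S_{k+1}=\sum_{x_1,\dots,x_k}G_R(1,x_1)\cdots G_R(x_{k-1},x_k)\phi_\delta(x_k)\le C_\delta S_k$, so inductively $S_k\le C_\delta^{k-1} S_1 = C_\delta^{k-1} M_\delta$, where $M_\delta:=\sum_z G_R(1,z)^2(1+\delta)^{|z|} = S_1$ is finite for all sufficiently small $\delta>0$ by the exponential-decay hypothesis (take $(1+\delta)\rho_0 <1$ where $\rho_0$ is the decay rate of $\sum_{S_m} G_R(1,\cdot)^2$). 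This gives the conclusion with $\varrho = C_\delta$ — note that $\varrho$ need only be finite, not less than $1$, since the lemma quantifies over all $\varrho<\infty$.

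To establish the pointwise bound on $\phi_\delta(x)$, fix $x\in\Gamma$, choose a geodesic segment $[1,x]$ in the Cayley graph, and partition the sum over $y$ according to the projection $\pi(y)$ of $y$ onto $[1,x]$. By thin-triangle hyperbolicity any geodesic from $x$ to $y$ passes within bounded distance of $\pi(y)$, so combining Ancona's inequality (hypothesized) with Harnack's inequality to reach $\pi(y)$ itself gives $G_R(x,y)\le C\, G_R(x,\pi(y)) G_R(\pi(y),y)$, and symmetrically $G_R(y,1)\le C\, G_R(\pi(y),y) G_R(\pi(y),1)$. Multiplying and invoking super-multiplicativity (Lemma \ref{lemma:superadditivity}) to collapse $G_R(x,\pi(y))G_R(\pi(y),1)\le G_R(1,1) G_R(x,1)$ yields
\[
G_R(x,y) G_R(y,1) \;\le\; C\, G_R(x,1)\, G_R(\pi(y),y)^2.
\]

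Grouping the sum $\phi_\delta(x)$ by fibers $\{y:\pi(y)=u\}$ for $u\in[1,x]$, and using $(1+\delta)^{|y|}\le (1+\delta)^{|u|+d(u,y)}$, translation invariance of the Green's function yields $\sum_{y:\pi(y)=u}G_R(u,y)^2(1+\delta)^{d(u,y)}\le \sum_z G_R(1,z)^2(1+\delta)^{|z|}=M_\delta$. Consequently,
\[
\phi_\delta(x)\;\le\; C G_R(x,1) M_\delta\sum_{u\in[1,x]}(1+\delta)^{|u|} \;\le\; \frac{C M_\delta(1+\delta)}{\delta}\cdot(1+\delta)^{|x|}G_R(x,1) = C_\delta\, \psi_\delta(x),
\]
the geometric sum over the $|x|+1$ points on $[1,x]$ being bounded by $(1+\delta)^{|x|+1}/\delta$. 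The main technical point to execute carefully is the Ancona step: Ancona's inequality requires a splitting point on a geodesic from $x$ to $y$, not on $[1,x]$; one must use thin-triangle hyperbolicity to find a point on $[x,y]$ within bounded distance of $\pi(y)$ and then absorb the displacement into a constant via Harnack's inequality.
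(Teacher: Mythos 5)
Your proof is correct and follows essentially the same approach as the paper's: the inductive step in the paper is exactly your pointwise inequality $\phi_\delta(x)\le C_\delta\psi_\delta(x)$, proved by the same geometric decomposition (partitioning the $y$-sum by nearest point on $L(1,x)$, then applying Ancona + Harnack + the thin-triangle property on two sub-triangles and collapsing with super-multiplicativity). The only differences are cosmetic — you isolate the pointwise bound as a separate lemma, and you correctly compute the geometric-sum constant as $(1+\delta)/\delta$ (the paper's $C_\delta=(1+\delta)/(2+\delta)$ is a typo).
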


\begin{proof}
Denote by $H_{k} (\delta)$ the left side of \eqref{eq:snapback}; the
strategy will be to prove by induction on $k$ that for sufficiently
small $\delta >0$ the ratios $H_{k+1} (\delta)/H_{k} (\delta)$ remain
bounded as $k \rightarrow \infty$.  Consider first the sum $H_{1}
(\delta)$: by the hypothesis that $\sum_{x\in S_{m}}G_{R} (1,x)^{2}$
decays exponentially in $m$ and the symmetry $G_{r} (x,y)=G_{r} (y,x)$
of the Green's function, for all sufficiently small $\delta>0$
\begin{equation*}
	H_{1} (\delta ):=\sum_{x\in \Gamma} G_{R} (1,x)^{2} (1+\delta)^{|x|}<\infty .
\end{equation*}

Now consider the ratio $H_{k+1} (\delta)/H_{k} (\delta)$ . Fix
vertices $x_{1},x_{2},\dotsc ,x_{k}$, and for an arbitrary vertex
$y=x_{k+1}\in \Gamma $, consider its position \emph{vis a vis} the
geodesic segment $L=L (1,x_{k})$ from the root vertex $1$ to the
vertex $x_{k}$. Let $z\in L$ be
the vertex on $L$ nearest $y$ (if there is more than one, choose
arbitrarily).  By the triangle inequality,
\[
	|y|\leq |z|+d (z,y) .
\]
Because the  group $\Gamma$ is word-hyperbolic, all geodesic
triangles --- in particular, any  triangle whose sides consist of
geodesic segments from $y$ to $z$, from $z$ to $x_{k}$, and from
$x_{k}$ to $y$, or any triangle  whose sides consist of geodesic
segments from $y$ to $z$, from $z$ to $1$, and from $1$ to $y$--- are
$\Delta$-thin, for some $\Delta<\infty$ (cf.\ \cite{gromov} or
\cite{benakli-kapovich}). Hence, any geodesic segment from $x_{k}$ to
$y$ must pass within distance $8\Delta$ of the vertex $z$. Therefore,
by the Harnack and Ancona inequalities \eqref{eq:harnack} and
\eqref{eq:ancona}, for some constant
$C_{*}=C_{R}C_{\text{Harnack}}^{32\Delta}<\infty$ independent of
$y,x_{k}$,
\begin{align*}
	G_{R} (y,1)&\leq C_{*}G_{R} (y,z)G_{R} (z,1) \quad \text{and}\\
	G_{R} (y,x_{k})&\leq C_{*}G_{R} (y,z)G_{R} (z,x_{k}).
\end{align*}
On the other hand, by the log-subadditivity of the Green's function,
\[
	G_{R} (1,z)G_{R} (z,x_{k})\leq C' G_{R} (x_{k},1).
\]
It now follows that
\begin{align*}
	(1+\delta)^{|y|} G_{R} (x_{k},y)G_{R} (y,1)
	&\leq C_{*}^{2}C'(1+\delta)^{|z|+d (z,y)} G_{R} (z,x_{k}) G_{R} (z,y) G_{R} (y,z)G_{R} (z,1)\\
	&\leq C_{*}^{2}C'(1+\delta)^{|z|+d (z,y)} G_{R} (x_{k},1)G_{R}(z,y)^{2}.
\end{align*}
Denote by $\Gamma (z)$ the set of all vertices $y\in \Gamma$ such that
$z$ is a closest vertex to $y$ in the geodesic segment $L$. Then for
each $z\in L$,
\[
	\sum_{y\in \Gamma (z)} (1+\delta)^{d (z,y)}G_{R} (z,y)^{2}\leq
	\sum_{y\in \Gamma } (1+\delta)^{|y|} G_{R} (1,y)^{2}=H_{1} (\delta).
\]
Finally, because $L$ is a geodesic segment from $1$ to $x_{k}$ there
is precisely one vertex $z\in L$ at distance $n$ from $x_{k}$ for
every integer $0\leq n\leq |x_{k} |$, so $\sum_{z\in L}
(1+\delta)^{|z|} \leq C_{\delta}
(1+\delta)^{|x_{k}|}$ where $C_{\delta}= (1+\delta)/ (2+\delta)$. Therefore,
\[
	H_{k+1} (\delta) \leq C_{*}^{2}C'C_{\delta }H_{1} (\delta)H_{k} (\delta).
    \qedhere
\]
\end{proof}

\section{Critical Exponent of the Green's function at the Spectral
Radius}\label{sec:criticalExp}

In this section we prove Theorem \ref{theorem:criticalExponent}, using
the thermodynamic formalism established in the preceding sections and
the Ancona inequality \eqref{eq:ancona}.

\subsection{Reduction to a simple case}\label{ssec:strategy} Consider
first the case $x=y=1$ of Theorem~\ref{theorem:criticalExponent}.  The
system of differential equations \eqref{eq:GPrime} implies that the
growth of the derivative $dG_{r} (1,1)/dr$ as $r \rightarrow R-$ is
controlled by the growth of the quadratic sums $\sum_{x\in
\Gamma}G_{r} (1,x)^{2}$. To show that the Green's function has a
square root singularity at $r=R$, as asserted in
\eqref{eq:criticalExponent}, it will suffice to show that the
(approximate) derivative behaves as follows as $r \rightarrow R-$:

\begin{proposition}\label{proposition:eta}
For some $0<C<\infty$,
\begin{equation}\label{eq:eta}
	\eta (r):= \sum_{x\in \Gamma} G_{r} (1,x)^{2}\sim C/\sqrt{R-r}
	\quad \text{as} \;r \rightarrow R-.
\end{equation}
\end{proposition}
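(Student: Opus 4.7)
The plan is to prove the proposition via the thermodynamic formalism of Section~\ref{sec:thermo}, reducing the square-root behavior of $\eta(r)$ to an analogous behavior of the pressure $P(r):=\Pr(2\varphi_r)$.

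First, I would decompose $\eta(r) = \sum_{m\geq 0} \sum_{x\in S_m} G_r(1,x)^2$ and apply the spectral-gap estimate underlying Proposition~\ref{proposition:absolutelyCont}: the $m$th sphere-sum equals $a(r) e^{m P(r)} + O(e^{m(P(r)-\varepsilon)})$ with $a(\cdot)$ continuous and $a(R)>0$, uniformly in $r$. Summing the geometric series---which is legitimate for $r<R$ since $P(r)<0$ by Lemma~\ref{corollary:belowR}, while $P(R)=0$ by Proposition~\ref{proposition:pressureEqualsZero}---yields
\begin{equation*}
  \eta(r) = \frac{a(r)}{1-e^{P(r)}} + O(1) \sim \frac{a(R)}{-P(r)} \quad \text{as } r \to R-.
\end{equation*}
The proposition thus reduces to proving that $-P(r) \sim b\sqrt{R-r}$ for some $b>0$.

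To extract this exponent, I would set up a self-consistent relation linking $P'(r)$ and $\eta(r)$. From Proposition~\ref{proposition:GPrime} at $x=y=1$, $\eta(r) = r G_r'(1,1) + G_r(1,1)$, so the display above forces $G_r'(1,1) \sim a(R)/[R(-P(r))]$. Because $G_R(1,1)<\infty$, the integral $\int^R G_s'(1,1)\,ds$ is finite and hence $1/(-P)$ is integrable near $R$---giving an \emph{a priori} lower bound $|P(r)|\gtrsim (R-r)^{1-o(1)}$ but no precise exponent. The precise exponent must come from a second relation. The natural candidate is the Ruelle operator differentiation formula
\begin{equation*}
  P'(r) = 2 \int \partial_r \varphi_r \, d\mu_r,
\end{equation*}
which I would analyze using the explicit form of $\varphi_r$. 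Although formally $\partial_r \varphi_r = F_r - F_r\circ\sigma$ with $F_r(\omega)=G_r'(1,\alpha_*(\omega))/G_r(1,\alpha_*(\omega))$, the primitive $F_r$ fails to be $\mu_r$-integrable (it grows linearly in the length of $\omega$, reflecting the expected length of the weighted random walk path), so the coboundary does not telescope to zero. Instead, the ergodic ``drift'' picked up by this defect can be identified, via the ergodic averages of Corollary~\ref{corollary:ergodicCorollary} together with the uniform H\"older control of Theorem~\ref{theorem:holderAncona}, with a positive continuous multiple of $\eta(r)$. Given this identification, $|P'(r)|\asymp \eta(r) \asymp 1/(-P(r))$, hence $-P(r)\cdot P'(r) \asymp c>0$, i.e.\ $\frac{d}{dr}P(r)^2 \asymp -2c$. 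Integrating from $r$ to $R$ and using $P(R)=0$ gives $P(r)^2 \asymp 2c(R-r)$, so $-P(r)\sim b\sqrt{R-r}$ as required. Substituting back into the first step gives the claimed asymptotic for $\eta(r)$.

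The main obstacle is justifying the identification of $P'(r)$ with a positive multiple of $\eta(r)$, rather than with zero. Naively, $\int(F_r-F_r\circ\sigma)\,d\mu_r$ telescopes away by $\sigma$-invariance; the non-vanishing has to be recovered from the failure of $F_r$ to lie in $L^1(\mu_r)$, which in turn must be tied quantitatively to the sum $\sum_x G_r(1,x)^2$. Making this rigorous, and tracking constants carefully enough to obtain genuine asymptotic equivalence ($\sim$) rather than merely $\asymp$, is the delicate core of the argument; the H\"older continuity of the Martin kernel (Theorem~\ref{theorem:holderMartinKernel}) and the uniformity statements in Proposition~\ref{proposition:absolutelyCont} and Corollary~\ref{corollary:ergodicCorollary} are the essential inputs.
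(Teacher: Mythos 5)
Your proposal is, modulo a change of variables, the paper's argument: since $\eta(r)\sim c/(1-e^{P(r)})\sim c/(-P(r))$, the ODE $d\eta/dr\sim C\eta^3$ which the paper derives (Corollary~\ref{corollary:etaDE}) is algebraically equivalent to $\frac{d}{dr}P(r)^2\sim\text{const}$, which is what you propose to establish. The paper, however, avoids differentiating the pressure functional: it works directly with $\eta(r)$ via the explicit formula \eqref{eq:derivEta} from Proposition~\ref{proposition:GPrime}, and then shows via Proposition~\ref{proposition:ergodic2} that for $\lambda_{r,m}$-generic $x\in S_m$ the inner sum $\sum_y G_r(1,y)G_r(y,x)/G_r(1,x)$ is asymptotic to $m\,\xi(r)\eta(r)$, which yields $\eta'\sim C\eta^3$. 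That proposition is precisely the ``identification of $P'(r)$ with a positive multiple of $\eta(r)$'' you isolate as the delicate core; you have located the crux correctly. One further remark: your worry that $\int(\partial_r\varphi_r)\,d\mu_r$ might ``telescope to zero'' is slightly misformulated. The primitive $F_r(\omega)=G_r'(1,\alpha_*(\omega))/G_r(1,\alpha_*(\omega))$ is not a function on $\Sigma$ at all (on infinite $\omega$, $\alpha_*(\omega)$ is a boundary point and $G_r'$ has no meaning there), so the coboundary identity is purely formal on $\Sigma^*$ and never licenses the naïve telescoping; on $\Sigma$, $\partial_r\varphi_r$ is a genuine bounded function whose $\mu_r$-integral need not vanish.

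The one genuine gap in your route, as written, is the invocation of the Ruelle pressure-derivative formula $P'(r)=2\int\partial_r\varphi_r\,d\mu_r$. That formula requires the map $r\mapsto\varphi_r$ to be differentiable in the H\"older topology (and $\partial_r\varphi_r$ to be a H\"older function on $\bSigma$). The paper only establishes continuity of $r\mapsto\varphi_r$ in H\"older norm (Section~\ref{ssec:potentialFunctions}), and its entire proof is structured so as not to need more: Proposition~\ref{proposition:GPrime} gives an exact differential equation for $\eta$, and Corollary~\ref{corollary:ergodicCorollary}/Proposition~\ref{proposition:ergodic2} do the ergodic averaging at the level of $\lambda_{r,m}$ and $\mu_r$, never touching a $d/dr$ of the Gibbs datum. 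If you want to run your version you would need to supply the H\"older-differentiability of $r\mapsto\varphi_r$ (which likely follows from estimates in the spirit of Theorem~\ref{theorem:holderAncona}, but is not in the paper). The other caveat you already flag yourself: the $\asymp$ relations $|P'|\asymp\eta\asymp 1/(-P)$ are not enough; to conclude $\eta\sim C/\sqrt{R-r}$ (rather than just two-sided power bounds) you must keep the leading constant, which is exactly what forces the genuine ergodic theorem (with an honest limit, not just upper and lower bounds) in Proposition~\ref{proposition:ergodic2} and the positivity $\xi(R)>0$ proved via Lemmas~\ref{lemma:density}--\ref{lemma:halfplaneEst}.
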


This will follow from Corollary~\ref{corollary:etaDE} below.  The key
to the argument is that the growth of $\eta (r)$ as $r \rightarrow
R-$ is related by Proposition~\ref{proposition:absolutelyCont} to
that of $\Pr(2\varphi_{r})$: in particular,
Proposition~\ref{proposition:pressureEqualsZero} implies that $\eta
(r) \rightarrow \infty$ as $r \rightarrow R-$, so the dominant
contribution to the sum \eqref{eq:eta} comes from vertices $x$ at
large distances from the root vertex $1$. Consequently, by equation
\eqref{eq:sphereAsymptotics},
\begin{equation*}
	\eta (r)= \sum_{m=0}^{\infty}\sum_{x\in S_{m}} G_{r}
	(1,x)^{2}\sim C (R,2) / (1-\exp \{\Pr
	(2\varphi_{r}) \})
	\quad \text{as} \;\; r \rightarrow R-.
\end{equation*}

Before beginning the analysis of $\eta (r)$ near the singularity $r=R$
we will show that the relation \eqref{eq:eta} implies similar
asymptotic behavior for the derivatives of all of the Green's
functions $G_{r} (x,y)$.

\begin{corollary}\label{corollary:greenAsymptotics}
There exist constants $0<C_{1,y}<\infty$ such that
\begin{equation}\label{eq:greenAsymptotics}
	\sum_{x\in \Gamma} G_{r} (1,x)G_{r} (x,y)\sim C_{1,y}/\sqrt{R-r}
	\quad \text{as} \;r \rightarrow R-.
\end{equation}
\end{corollary}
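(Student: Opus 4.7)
The plan is to reduce Corollary \ref{corollary:greenAsymptotics} to Proposition \ref{proposition:eta} by showing that
\[
\frac{\sum_x G_r(1,x) G_r(x,y)}{\sum_x G_r(1,x)^2} \longrightarrow \Phi_y(R) \in (0,\infty) \quad \text{as } r \to R-.
\]
Using the symmetry $G_r(x,1) = G_r(1,x)$, I would rewrite the numerator as $\sum_x G_r(1,x)^2 \cdot \rho_y(r,x)$ with $\rho_y(r,x) := G_r(x,y)/G_r(x,1)$. Decomposing by spheres, the $m$th slice becomes
\[
\sum_{x \in S_m} G_r(1,x)^2 \rho_y(r,x) = Z_m(r) \int \rho_y(r,\alpha_*(\omega))\, d\lambda_{r,m}(\omega),
\]
where $Z_m(r) = \sum_{x \in S_m} G_r(1,x)^2$ and $\lambda_{r,m}$ is the probability measure on $\Sigma^m(E_*)$ from Proposition \ref{proposition:absolutelyCont}.

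The crux is to recognize $\rho_y(r,x)$ as an approximation to the Martin kernel. By Theorem \ref{theorem:holderMartinKernel}, if $\alpha(\omega)$ is extended to an infinite geodesic ray converging to some $\zeta \in \partial \Gamma$, then $|\rho_y(r,\alpha_*(\omega)) - K_r(y,\zeta)| \leq C_y \varrho^m$ uniformly in $r \in [1,R]$. I would then consider the H\"older-continuous function $\tilde K^y_r : \bSigma \to \zz{R}$ extending $\omega \mapsto K_r(y, \alpha_*(\omega))$ from $\Sigma$, whose H\"older norm is uniform in $r$ (again by Theorem \ref{theorem:holderMartinKernel}). Consequently $\int \rho_y(r, \alpha_*(\cdot))\, d\lambda_{r,m} = \int \tilde K^y_r\, d\lambda_{r,m} + O(\varrho^m)$ uniformly in $r$, and by the uniform weak convergence $\lambda_{r,m} \to \lambda_r$ stated in Proposition \ref{proposition:absolutelyCont}, the right-hand side converges as $m \to \infty$ to $\Phi_y(r) := \int \tilde K^y_r\, d\lambda_r$, uniformly in $r$.

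Summing over $m$ yields
\[
\sum_x G_r(1,x) G_r(x,y) = \Phi_y(r) \cdot \eta(r) + \mathcal{E}(r),
\]
with $\mathcal{E}(r) = \sum_m Z_m(r) \epsilon_m(r)$ where $\epsilon_m(r) = O(\varrho^m) + o_{m \to \infty}(1)$ uniformly in $r$. Since Theorem \ref{thm:RPF} applied in the proof of Proposition \ref{proposition:absolutelyCont} ensures $Z_m(r) \leq C$ uniformly in $r \in [1,R]$ and $m \geq 0$ (using $\Pr(2\varphi_r) \leq 0$ from Lemma \ref{corollary:belowR}), splitting the sum at any fixed $M$ gives $\mathcal{E}(r) = o(\eta(r))$ as $r \to R-$: the partial sum for $m < M$ is $O(M)$, hence $o(\eta(r))$ since $\eta(r) \to \infty$, while the tail $m \geq M$ is bounded by $\sup_{m \geq M} |\epsilon_m(r)| \cdot \eta(r)$, which can be made arbitrarily small by choosing $M$ large. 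Combining with Proposition \ref{proposition:eta}, continuity of $\Phi_y(r)$ in $r$ (which follows from Theorem \ref{thm:RPF} and continuity of $r \mapsto \varphi_r$ in the H\"older topology), and positivity $\Phi_y(R) > 0$ (from the Harnack inequalities \eqref{eq:harnack}, which give $K_R(y,\zeta) \geq c_y > 0$ uniformly in $\zeta$), this yields the desired asymptotic with $C_{1,y} = C \cdot \Phi_y(R)$.

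The main obstacle will be ensuring uniformity in $r$ throughout, so that the remainder $\mathcal{E}(r)$ is genuinely $o(\eta(r))$ rather than merely $O(\eta(r))$, which would fail to produce asymptotic equivalence. All the necessary uniformity is in principle provided by Theorems \ref{theorem:holderMartinKernel}, \ref{thm:RPF}, and Proposition \ref{proposition:absolutelyCont}, but the argument requires stitching them together carefully, in particular to bound the error contributions at small spheres and to control the $r$-dependence of the H\"older norms.
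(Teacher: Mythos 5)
Your proposal is correct and follows essentially the same route as the paper: rewrite the sum sphere-by-sphere using the measures $\lambda_{r,m}$, identify the integrand $G_r(x,y)/G_r(1,x)$ as converging (uniformly in $r$, by Theorem~\ref{theorem:holderMartinKernel}) to the Martin kernel, pass to the limit against $\lambda_r$ via Proposition~\ref{proposition:absolutelyCont}, and reduce to Proposition~\ref{proposition:eta}. The only difference is that you spell out more carefully the splitting of the error term between small and large $m$ and the positivity of the limiting constant, points the paper leaves implicit.

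One small remark: you correctly anticipate the delicate point, namely that Proposition~\ref{proposition:absolutelyCont} as stated gives uniform-in-$r$ convergence only for a \emph{fixed} continuous $f$, whereas you (and the paper) integrate an $r$-dependent family $\tilde K^y_r$. This is legitimate because the Ruelle operator estimate in Theorem~\ref{thm:RPF} gives a remainder $O(e^{-\varepsilon m}\|f\|_{\HH})$ that is uniform over Hölder balls, and $\|\tilde K^y_r\|_{\HH}$ is bounded uniformly in $r$ by Theorem~\ref{theorem:holderMartinKernel}; so the argument goes through, but it relies on the proof of Proposition~\ref{proposition:absolutelyCont} rather than merely its statement. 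The paper's proof has the same implicit step.
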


\begin{proof}
Recall the probability measures $\lambda_{r,m}$ on the spheres
$S_{m}$ with densities proportional to $G_{r} (1,x)^{2}$ (see
\eqref{eq:lambda-def}), and recall that these probability transfer to
probability measures, also denoted by $\lambda_{r,m}$, on
$\Sigma^{m}(E_*)$, using the correspondence $S_{m}\leftrightarrow
\Sigma^{m}(E_*)$. By Proposition \ref{proposition:absolutelyCont}, as
$m \rightarrow \infty$ the measures $\lambda_{r,m}$ converge weakly
to a probability measure $\lambda_{r}$, and this convergence is
uniform for $r\in [1,R]$, in the sense specified by
\eqref{eq:uniformWeakConvergence}. These measures are related to the
sum in \eqref{eq:greenAsymptotics} as follows:
\[
	\sum_{x\in S_{m}} G_{r} (1,x)G_{r} (x,y) = \sum_{x\in S_{m}}
	G_{r} (1,x)^{2}\frac{G_{r} (x,y)}{G_{r} (1,x)} =
	\left(\sum_{x\in S_{m}} G_{r} (1,x)^{2}\right)
	\int \frac{G_{r} (x,y)}{G_{r} (1,x)} \,d\lambda_{r,m} (x) .
\]
As $x \rightarrow \xi\in \partial \Gamma $ the ratios $G_{r}
(x,y)/G_{r} (1,x)$ converge to the Martin kernel $K_{r} (y,\xi)$, and
the convergence is uniform, by
Theorem~\ref{theorem:holderMartinKernel}. Hence, the weak convergence
$\lambda_{r,m}\Rightarrow \lambda_{r}$ implies that
\[
	\lim_{m \rightarrow  \infty} \int \frac{G_{r} (x,y)}{G_{r}
	(1,x)} \,d\lambda_{r,m} (x) :=C_{1,y} (r)
\]
exists, and the convergence is uniform for $r\in [1,R]$. Therefore,
the corollary  follows from Proposition~\ref{proposition:eta}.
\end{proof}

\subsection{Analysis of the function \texorpdfstring{$\eta (r)$}{eta(r)} near the singularity
\texorpdfstring{$r=R$}{r=R}}\label{ssec:etaAnalysis}

To analyze the behavior of $\eta (r)$ (or equivalently that of
$\Pr(2\varphi_{r})$) as $r \rightarrow R-$, we use the differential
equations \eqref{eq:GPrime} to express the derivative of $\eta (r)$
as
\begin{equation}\label{eq:derivEta}
	\frac{d\eta}{dr}=\sum_{x\in \Gamma} \left\{ \sum_{y\in \Gamma}
				    2r^{-1} G_{r} (1,x) G_{r}
				    (1,y)G_{r} (y,x)\right\}
				    -2r^{-1}G_{r} (1,x)^{2}.
\end{equation}
(Note: The implicit interchange of $d/dr$ with an infinite sum is
justified here because the Green's functions $G_{r} (u,v)$ are
defined by power series with nonnegative coefficients.) For $r
\approx R$, the sum $\sum_{x\in \Gamma }$ is  dominated by the terms
corresponding to vertices $x$ at large distances from the root $1$. Because the
second term $2r^{-1}G_{r} (1,x)^{2}$ in \eqref{eq:derivEta} remains
bounded as $r \rightarrow R-$, it is asymptotically negligible
compared to the first term $\sum_{y}$ and so we can ignore it in
proving \eqref{eq:eta}.

The strategy for dealing with the inner sum $\sum_{y\in \Gamma}$ in
\eqref{eq:derivEta} will be similar to that used in the proof of
Lemma~\ref{lemma:snapback} above. For each $x$, let $L=L (1,x)$ be the unique
geodesic segment from the root to $x$ that corresponds to a path in
the Cannon automaton, and partition the sum $\sum_{y\in \Gamma }$
according to the nearest vertex $z\in L$:
\begin{equation}\label{eq:sumPartition}
	\sum_{y\in \Gamma}=\sum_{z\in L}\sum_{y\in \Gamma (z)}
\end{equation}
where $\Gamma (z)$ is the set of all vertices $y\in \Gamma$ such that
$z$ is a closest vertex to $y$ in the geodesic segment $L$. (If for
some $y$ there are several vertices $z_{1},z_{2},\dotsc$ on $L$ all
closest to $y$, put $y\in \Gamma (z_{i})$ only for the vertex $z_{i}$
nearest to the root $1$.) By the log-subadditivity of the Green's
function and Theorem~\ref{theorem:1} (the Ancona
inequalities) there exists a constant $C<\infty$ independent of $1\leq
r\leq R$ such that for all choices of $x\in \Gamma$, $z\in L (1,x)$,
and $y\in \Gamma (z)$,
\begin{equation}
\label{eq:byAncona}
\begin{split}
	G_{r} (1,x) G_{r}(1,y)G_{r} (y,x)& \leq CG_{r} (1,z)^{2}G_{r}
	(z,x)^{2} G_{r} (z,y)^{2} \\
   &\leq CG_{r} (1,x)^{2}G_{r} (z,y)^{2};
\end{split}
\end{equation}
consequently, for each $x\in \Gamma$,
\begin{align}\label{eq:secondIneq}
		\sum_{y \in \Gamma }G_{r} (1,x) G_{r}(1,y)G_{r} (y,x)
 		&\leq \sum_{z\in L (1,x)}\sum_{y\in \Gamma (z)}
			     CG_{r} (1,x)^{2}G_{r} (z,y)^{2}\\
\notag 		&\leq \sum_{z\in L (1,x)}\sum_{y\in \Gamma}
			     CG_{r} (1,x)^{2}G_{r} (z,y)^{2}\\
\notag 		&=  CG_{r} (1,x)^{2} (|x|+1)\eta (r).
\end{align}
Proposition~\ref{proposition:ergodic2} below asserts that there is a
positive constant $\xi (r)$ such that when $C$ is replaced by $\xi
(r)$ this inequality is in fact an approximate equality for ``most''
$x\in S_{m}$, provided $m$ is large and $r$ is near $R$.  This implies
that for large $m$ the contribution to the double sum in
\eqref{eq:derivEta} with $|x|=m$ is dominated by those $x$ that are
``generic'' for the probability measure $\lambda_{r,m}$ on $S_{m}$
with density proportional to $G_{r} (1,x)^{2}$ (cf.\ %
sec.~\ref{ssec:greenSpheres}).

\begin{proposition}\label{proposition:ergodic2}
For each $r\leq R$ and each $m=1,2,\dotsc$ let $\lambda_{r,m}$ be the
probability measure on the sphere $S_{m}$ with density proportional
to $G_{r} (1,x)^{2}$. There is a continuous, positive function $\xi
(r)$ of $r\in [1,R]$ such that for each $\varepsilon >0$, and
uniformly for $1\leq r< R$,
\begin{equation}\label{eq:ergodic}
	\lim_{m \rightarrow \infty}
	\lambda_{r,m}\left\{
	x\in S_{m}\,:\, \Bigr \lvert
	     \frac{1}{m}\sum_{y\in \Gamma} G_{r} (1,y)G_{r} (y,x) /G_{r} (1,x)
	     -\xi (r)\eta (r)\Bigr \rvert >\varepsilon \eta(r)
	 \right\} =0.
\end{equation}
\end{proposition}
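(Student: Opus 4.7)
The plan is to apply the ergodic averaging of Corollary~\ref{corollary:ergodicCorollary} to a H\"older continuous function on the space of bilateral geodesics through $1$. First, decompose according to which vertex $z$ on $L(1,x)$ is closest to $y$:
\[
\sum_{y\in \Gamma} \frac{G_r(1,y)\,G_r(y,x)}{G_r(1,x)} \;=\; \sum_{z\in L(1,x)} I_r(z,x), \qquad I_r(z,x) := \sum_{y\in \Gamma(z)} \frac{G_r(1,y)\,G_r(y,x)}{G_r(1,x)}.
\]
The central analytic claim is that there exist constants $C<\infty$ and $\varrho<1$, independent of $r\in [1,R]$, together with a function $\Psi_r$ on the space of bilateral geodesics through $1$, such that
\[
\bigl| I_r(z,x) - \Psi_r(z^{-1} L(1,x)) \bigr| \leq C\,\eta(r)\,\varrho^{\min(d(1,z),\,d(z,x))}.
\]

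To construct $\Psi_r$, I would translate by $z^{-1}$ (so that $y':=z^{-1}y$ has $1$ as its closest point on the translated geodesic) and apply the relative Ancona inequality (Theorem~\ref{theorem:relativeAncona}) to each of the three Green's functions $G_r(z^{-1},y')$, $G_r(y', z^{-1}x)$, $G_r(z^{-1},z^{-1}x)$, using hyperbolicity to verify that the relevant geodesic segments pass within bounded distance of $1$. The quantitative H\"older continuity of the Martin kernel (Theorem~\ref{theorem:holderMartinKernel}) upgrades these Ancona factorizations to asymptotic equalities with exponentially decaying multiplicative error, at rate $\varrho^{d(1,z)}$ for factors involving $z^{-1}$ and $\varrho^{d(z,x)}$ for factors involving $z^{-1}x$. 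Cancellation of the $G_r(z^{-1},1)$ and $G_r(1,z^{-1}x)$ factors in the product yields the closed form
\[
\Psi_r(L) \;=\; G_r(1,1)^{-1} \sum_{y\in \Gamma_1(L)} G_r(1,y)^2,
\]
where $\Gamma_1(L)$ is the Voronoi cell of $1$ in $L$. Summability follows from the exponential decay of $G_r$ (Theorem~\ref{theorem:expDecay}), and H\"older continuity of $\Psi_r$ in $L$ follows because $\Psi_r(L)$ depends on $L$ only through vertices in a bounded neighborhood of $1$ (up to an exponentially small tail), with all constants uniform in $r\in[1,R]$ thanks to the $r$-independent bounds of Theorems~\ref{theorem:1} and~\ref{theorem:holderMartinKernel}.

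With $\Psi_r/\eta(r)$ H\"older-bounded uniformly in $r\in [1,R]$, Corollary~\ref{corollary:ergodicCorollary} gives, in $\lambda_{r,m}$-probability uniformly in $r\in [1,R]$,
\[
\frac{1}{m}\sum_{z\in L(1,x)} \Psi_r\bigl(z^{-1} L(1,x)\bigr) \;\longrightarrow\; \int \Psi_r \circ \alpha\; d\mu^{\Z}_{r} \;=:\; \xi(r)\,\eta(r).
\]
Combining this with the approximation above and the crude bound $I_r(z,x)\leq C\eta(r)$ for the $O(k)$ boundary indices (where $\min(d(1,z),d(z,x))\leq k$), the total error in \eqref{eq:ergodic} is $O\bigl(\eta(r)(k/m + \varrho^k)\bigr)$, which is $o(\eta(r))$ upon choosing $k=k(m)\to\infty$ slowly. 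Continuity of $\xi(r)$ in $r\in[1,R]$ is inherited from continuity of $\Psi_r$ (via Theorem~\ref{theorem:holderMartinKernel}) and of $\mu^{\Z}_{r}$ (via Theorem~\ref{thm:RPF}); positivity is immediate since $\Psi_r>0$ everywhere and $\mu^{\Z}_{r}$ has full support on recurrent paths. The main obstacle is the \emph{uniform} (in $r\in[1,R]$) quantitative Ancona-to-Martin-kernel approximation producing $\Psi_r$; this is precisely where the $r$-independent Ancona constants (Theorem~\ref{theorem:1}) and the $r$-independent Martin-kernel H\"older bound (Theorem~\ref{theorem:holderMartinKernel}) enter essentially.
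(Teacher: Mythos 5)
Your overall plan mirrors the paper's: decompose the sum according to projection onto the geodesic $L(1,x)$, recognize each block as (approximately) a H\"older function of a shifted geodesic, and invoke the ergodic averaging of Corollary~\ref{corollary:ergodicCorollary}. But your choice of cutoff --- the sharp Voronoi indicator $1_{\Gamma_1(L)}$ producing
\[
\Psi_r(L)=G_r(1,1)^{-1}\sum_{y\in \Gamma_1(L)}G_r(1,y)^2
\]
--- is exactly the choice the paper singles out as \emph{not} working, and this creates two genuine gaps.

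First, your argument for the H\"older continuity of $\Psi_r/\eta(r)$, uniformly for $r\in[1,R]$, is not sound. You assert that $\Psi_r(L)$ ``depends on $L$ only through vertices in a bounded neighborhood of $1$ (up to an exponentially small tail).'' But the Voronoi cell $\Gamma_1(L)$ in the word metric is \emph{not} determined by $L\cap B(1,K)$ for any fixed $K$: whether $1$ is a closest point of $L$ to $y$ involves all of $L$, and indeed $\Gamma_1(L_1)\triangle\Gamma_1(L_2)$ can be nonempty whenever $L_1,L_2$ disagree past radius $n$. Even granting that this symmetric difference lives at distance $>n/2$ from $1$, you would still need $\sum_{y\in\Gamma_1(L_1)\triangle\Gamma_1(L_2)}G_r(1,y)^2\leq C\eta(r)\varrho^n$ with constants uniform in $r$; this requires showing the symmetric difference is confined to a uniformly thin (bounded points per sphere) shell, which is a nontrivial geometric control in the graph metric that you do not establish. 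Nothing in Theorems~\ref{theorem:1} or \ref{theorem:holderMartinKernel} delivers this, because your summand $G_r(1,y)^2$ carries no $L$-dependence: all the $L$-dependence sits in the indicator, so the ratio mechanism of Theorem~\ref{theorem:holderAncona} never engages. The paper avoids this precisely by building $\gamma_L$ so that it depends only on $L|_{[-K,K]}$ and is continuous on $\overline\Gamma$; then the $L$-dependence in $f_r(L)=\eta(r)^{-1}\sum_y\gamma_{L[-K,K]}(y)G_r(a,y)G_r(y,b)/G_r(a,b)$ lives in the Green's-function ratio, which Theorem~\ref{theorem:holderAncona} controls pointwise in $y$. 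The paper's explicit remark (``If $\gamma_L$ were defined to be the function satisfying $\gamma_L(y)=1$ if the closest point to $y$ on $L$ is $1$ ... then $\gamma_L$ would not in general extend continuously to the boundary'') is a direct warning against the construction you propose, and it bites in the limit $r\to R-$ where the convergence $\Psi_r/\eta(r)\to$ limit would be computed against a boundary measure $\lambda_R$ on $\partial\Gamma$ using discontinuous test functions.

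Second, positivity of $\xi(R)$ is not ``immediate.'' Having $\Psi_r>0$ and $\mu^{\Z}_r$ of full support gives $\xi(r)>0$ for each fixed $r<R$, but says nothing about whether $\xi(r)$ stays bounded away from $0$ as $r\to R-$; a priori $\xi(r)\to 0$ is entirely possible. This is a genuinely separate issue, which the paper handles by a geometric argument (Lemmas~\ref{lemma:density} and \ref{lemma:halfplaneEst}): the Green's-square mass of a hyperbolic halfplane through the origin is at least a fixed fraction of $\eta(r)$, uniformly in $r\leq R$, using ergodicity of $\Gamma$ on $\partial\zz{D}\times\partial\zz{D}$ and the submultiplicativity $G_r(1,uv)\geq G_r(1,u)G_r(1,v)$. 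Your proposal omits this entirely.
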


This will be deduced from Corollary \ref{corollary:ergodicCorollary}
--- see section~\ref{ssec:ergodic} below.  Given
Proposition~\ref{proposition:ergodic2},
Proposition~\ref{proposition:eta} and
Theorem~\ref{theorem:criticalExponent} follow easily, as we now show.

 \begin{corollary}\label{corollary:etaDE}
There exists a positive, finite constant $C$ such that as $r
\rightarrow R-$,
\begin{equation}\label{eq:etaDE}
	\frac{d\eta}{dr}\sim C \eta (r)^{3}.
\end{equation}
Consequently,
\begin{equation}\label{eq:etaAsymptotics}
	\eta (r)^{-2}\sim C (R-r)/2.
\end{equation}
\end{corollary}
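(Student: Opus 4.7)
The plan is to start from the differential identity \eqref{eq:derivEta} and isolate the $\eta(r)^3$ term. The diagonal contribution $-2r^{-1}\sum_x G_r(1,x)^2 = -2r^{-1}\eta(r)$ is of order $\eta(r)$ and hence negligible, so \eqref{eq:etaDE} reduces to showing that
\begin{equation*}
\Sigma(r) := \sum_{x,y\in\Gamma} G_r(1,x)\, G_r(1,y)\, G_r(y,x) \sim \kappa\, \eta(r)^3
\quad\text{as } r\to R-,
\end{equation*}
for some positive constant $\kappa$; the ODE then holds with $C = 2R^{-1}\kappa$.

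To evaluate $\Sigma(r)$, I would organize the sum over $x$ according to the sphere $S_m$. The Ancona-based estimate \eqref{eq:secondIneq} gives the uniform upper bound
\begin{equation*}
\frac{1}{m\,\eta(r)}\cdot\frac{1}{G_r(1,x)}\sum_{y\in\Gamma} G_r(1,y)\, G_r(y,x) \le C_0
\end{equation*}
for every $r\le R$ and every $x\in S_m$, while Proposition~\ref{proposition:ergodic2} says that under $\lambda_{r,m}$ this normalized quantity converges in probability to the positive constant $\xi(r)$, uniformly in $r\in[1,R)$. Because the quantity is uniformly bounded, bounded convergence upgrades the probabilistic statement to convergence in mean, yielding
\begin{equation*}
\sum_{x\in S_m} G_r(1,x) \sum_{y\in \Gamma} G_r(1,y)\, G_r(y,x) = (\xi(r) + o_r(1))\, m\, \eta(r) \sum_{x\in S_m} G_r(1,x)^2,
\end{equation*}
with $o_r(1)\to 0$ as $m\to\infty$, uniformly for $r$ in a neighborhood of $R$.

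Setting $\rho(r) := e^{\Pr(2\varphi_r)}$, the asymptotic \eqref{eq:sphereAsymptotics} gives $\sum_{x\in S_m} G_r(1,x)^2 \sim C(r)\rho(r)^m$, so $\eta(r) \sim C(r)/(1-\rho(r))$; Proposition~\ref{proposition:pressureEqualsZero} ensures $\rho(R) = 1$, so both quantities blow up as $r\to R-$. Summing the previous display in $m$ and using $\sum_{m\ge 0} m\rho^m \sim (1-\rho)^{-2}$ as $\rho\to 1-$, I get
\begin{equation*}
\Sigma(r) \sim \xi(r)\, \eta(r)\cdot C(r) \cdot \frac{1}{(1-\rho(r))^2} \sim \frac{\xi(R)}{C(R)}\,\eta(r)^3,
\end{equation*}
which proves \eqref{eq:etaDE} with $C = 2R^{-1}\xi(R)/C(R)$. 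The continuity of $\xi$ and $C(\cdot;2)$ at $R$, guaranteed by Propositions~\ref{proposition:absolutelyCont} and~\ref{proposition:ergodic2}, is what allows the replacement of $\xi(r)/C(r)$ by $\xi(R)/C(R)$ in the limit. To deduce \eqref{eq:etaAsymptotics}, I rewrite \eqref{eq:etaDE} as
\begin{equation*}
\frac{d}{dr}\bigl(\eta(r)^{-2}\bigr) = -2\,\eta(r)^{-3}\,\frac{d\eta}{dr} \sim -2C \qquad\text{as } r\to R-;
\end{equation*}
since $\eta(r)^{-2}\to 0$ as $r\to R-$ (by Proposition~\ref{proposition:pressureEqualsZero}), integrating from $r$ to $R$ gives $\eta(r)^{-2}\sim 2C(R-r)$, which is \eqref{eq:etaAsymptotics} after renaming of constants.

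The principal obstacle is the integral upgrade of the ergodic statement of Proposition~\ref{proposition:ergodic2}: \emph{a priori}, atypical vertices $x\in S_m$ (with anomalously large $G_r(1,x)^2$ weight) could inflate the integral beyond the ergodic prediction. This is ruled out by the uniform Ancona-type upper bound \eqref{eq:secondIneq}, which holds for \emph{every} $x\in S_m$ and every $r\le R$; combined with the uniformity in $r$ of the spectral data supplied by Theorem~\ref{thm:RPF}, this is exactly what justifies a dominated-convergence argument and delivers the asymptotic identity for $\Sigma(r)$ with control as $r\to R-$.
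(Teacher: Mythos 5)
Your proposal is correct and follows essentially the same route as the paper: it starts from \eqref{eq:derivEta}, discards the negligible diagonal term, invokes Proposition~\ref{proposition:ergodic2} together with the uniform Ancona bound \eqref{eq:secondIneq} as the dominated-convergence input, sums over spheres using \eqref{eq:sphereAsymptotics}, and integrates the resulting ODE to obtain \eqref{eq:etaAsymptotics}. One small remark: your integration yields $\eta(r)^{-2}\sim 2C(R-r)$, which differs by a factor of four from the $C(R-r)/2$ printed in \eqref{eq:etaAsymptotics}; your version is the one consistent with $d\eta/dr\sim C\eta^3$, so the discrepancy is a typo in the corollary's statement rather than a gap in your argument.
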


\begin{proof}
We have already observed that as $r$ near $R$, the dominant
contribution to the sum \eqref{eq:derivEta} comes from vertices $x$
far from the root.  Proposition~\ref{proposition:ergodic2} and the
uniform upper bound \eqref{eq:secondIneq} on ergodic averages imply
that as $r \rightarrow R-$,
\begin{align*}
	\frac{d\eta}{dr}\sim \sum_{x\in \Gamma} \sum_{y\in \Gamma}
				    2r^{-1} G_{r} (1,x) G_{r}
				    (1,y)G_{r} (y,x)
	&\sim 2R^{-1}\xi (R)\eta (r)\sum_{m=1}^{\infty}m\sum_{x\in
	S_{m}} G_{r} (1,x)^{2}\\
	&\sim C' \eta (r) / (1- \exp \{\Pr (2\varphi_{r}) \})^{2}\\
	& \sim C \eta (r)^{3}
\end{align*}
for suitable positive constants $C,C'$. This
proves \eqref{eq:etaDE}. The relation \eqref{eq:etaAsymptotics}
follows directly from \eqref{eq:etaDE}.
\end{proof}

\subsection{Proof of Proposition~\ref{proposition:ergodic2}}\label{ssec:ergodic}

This will be accomplished by showing that the average in
\eqref{eq:ergodic} can be expressed approximately as an ergodic
average of the form \eqref{eq:erg2}, to which the result of
Corollary~\ref{corollary:ergodicCorollary} applies.  The starting
point is a version of the decomposition \eqref{eq:sumPartition}, but
using a continuous partition of unity instead of the characteristic
functions of the sets $\Gamma(z)$, since we will need a continuous
extension to the boundary of the group.

\begin{lemma}
For $K$ large enough, we can associate to any geodesic segment $L$ in
the Cayley graph of length $2K+1$ centered at $1$ a function
$\gamma_L : \Gamma \to [0,1]$ with the following properties:
\begin{enumerate}
\item The function $\gamma_L$ extends continuously to
    $\overline{\Gamma}=\Gamma\cup S^1$.
\item For any $y\in \Gamma$, if $\pi_L(y)$ is the set of points on $L$  closest
    to $y$, then $\gamma_L(y)=0$ unless $\pi_L(y)$ is contained in the
    ball of radius $K/4$ centered at $1$.
\item For any bi-infinite geodesic $L'$ the sum of the functions
    $\gamma_L$ over all subsegments $L$ of $L'$ of length $2K+1$ is
    identically equal to $1$. More
    formally, for all $y\in \Gamma$,
  \begin{equation}
  \label{eq:adds_up_to_one}
  \sum_{i\in \Z} \gamma_{L'(i)^{-1}L'[i-K, i+K]}(L'(i)^{-1}y)=1.
  \end{equation}
\end{enumerate}
\end{lemma}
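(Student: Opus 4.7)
The plan is to pull back a periodic tent function on $\zz{R}$ by a soft projection onto $L$. For $K$ sufficiently large, fix a continuous function $\phi:\zz{R}\to[0,1]$ supported in $[-K/8,K/8]$ satisfying the partition identity $\sum_{k\in\zz{Z}}\phi(t-k)=1$ for all $t\in\zz{R}$ (e.g.\ a piecewise-linear trapezoid). For any segment $L=\{L(-K),\ldots,L(K)\}$ of length $2K+1$ centered at $L(0)=1$ and any $y\in\Gamma$, let $\pi_L(y)\subset L$ denote the set of closest points of $L$ to $y$. By $\delta$-hyperbolicity and the bounded jump size $C_0$, $\diam\pi_L(y)\leq D$ for a uniform constant $D$. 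Set
\[
    p_L(y)=\frac{1}{\xabs{\pi_L(y)}}\sum_{L(j)\in\pi_L(y)}j,\qquad \gamma_L(y)=\phi(p_L(y)).
\]

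Once $K\geq 8D$, property (2) is automatic: if $\gamma_L(y)\neq 0$ then $p_L(y)\in[-K/8,K/8]$, and the diameter bound forces $\pi_L(y)\subset B(1,K/8+D)\subset B(1,K/4)$. For (3), the key geometric input is the standard hyperbolic fact that if $L\subset L'$ and $\pi_L(y)$ lies strictly in the interior of $L$ (outside a $D$-neighborhood of the endpoints), then $\pi_L(y)=\pi_{L'}(y)$: any point of $L'\setminus L$ closer to $y$ than $\pi_L(y)$ would, by near-convexity of the distance function along geodesics, force a point of $L'$ between it and $\pi_L(y)$ to be nearly as close, eventually producing an interior point of $L$ at least as close as $\pi_L(y)$, a contradiction. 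In the sum \eqref{eq:adds_up_to_one}, only those $i$ for which $\phi$ is nonzero contribute, and for each such $i$ the projection onto $\tilde{L}_i=L'(i)^{-1}L'[i-K,i+K]$ coincides with the projection onto $L'$, so $p_{\tilde{L}_i}(\tilde{y}_i)=p_{L'}(y)-i$ exactly. The partition identity for $\phi$ then yields $\sum_i\gamma_{\tilde{L}_i}(\tilde{y}_i)=\sum_i\phi(p_{L'}(y)-i)=1$.

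Property (1) requires more care, since a discrete closest-point projection can jump as $y$ varies. Because $L$ is finite, $\pi_L(y)$ stabilizes along any sequence $y_n\to\xi\in\partial\Gamma$, with limit equal to the projection onto $L$ of a geodesic ray representing $\xi$; this defines a natural extension of $\gamma_L$ to $\overline{\Gamma}$. Continuity in $\xi$ is obtained by replacing $p_L$ with a smoothed version, for instance the softmax barycenter $\sum_j j\,e^{-\beta d(y,L(j))}/\sum_j e^{-\beta d(y,L(j))}$ with $\beta$ large; its difference from $p_L(y)$ is less than $1$, which is absorbed by the buffer between $K/8$ and $K/4$, leaving (2)--(3) intact.

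The main obstacle is the tension between (1) and (3): a purely discrete projection is not continuous at the boundary, while too much smoothing destroys the exact partition identity. Using the averaged parameter $p_L(y)$ together with the fact that $\phi$ is supported well inside $L$ (in a window of half-width $K/8$, far from the endpoints at $\pm K$) resolves this by guaranteeing that whenever $\gamma_L(y)\neq 0$, the projection lies safely in the interior of $L$ and behaves additively under shifting $L$ along $L'$.
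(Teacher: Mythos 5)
Your overall strategy — pull back a periodic partition of unity on $\zz{R}$ through a projection-to-geodesic parameter, using the hyperbolic fact that projections onto nested geodesics agree whenever they land deep in the interior — is sound for the discrete group and does give (2) and (3) as long as you stay with the \emph{hard} closest-point barycenter $p_L$. The gap is in your proposed resolution of the tension between (1) and (3), and it is not merely cosmetic.

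The hard barycenter $p_L$ is a $\frac{1}{\xabs{\pi_L(y)}}\zz{Z}$-valued step function on $\Gamma$; as $y$ ranges over a sequence converging to $\xi\in\partial\Gamma$, the set $\pi_L(y)$ need only stabilise up to the hyperbolicity constant $D$, so $p_L(y)$ can oscillate by an amount of order $D$ arbitrarily close to $\xi$, and $\phi\circ p_L$ has no reason to converge. To fix this you swap $p_L$ for a softmax barycenter. But that swap destroys the translation equivariance that your proof of (3) rests on: for the hard barycenter, whenever $\phi(p_{\tilde L_i}(\tilde y_i))\neq 0$ you can invoke $\pi_{\tilde L_i}=\pi_{L'}$ and get the \emph{exact} identity $p_{\tilde L_i}(\tilde y_i)=p_{L'}(y)-i$. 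The softmax over the finite segment $\tilde L_i$ is
\[
 \frac{\sum_{\xabs{j}\le K}j\,e^{-\beta d(y,L'(i+j))}}{\sum_{\xabs{j}\le K}e^{-\beta d(y,L'(i+j))}},
\]
which involves only the $2K+1$ points of $\tilde L_i$, whereas $p^{\mathrm{soft}}_{L'}(y)-i$ sums over all of $\zz{Z}$; the two agree only up to an error $\epsilon_i$ that depends on $i$ and is never exactly zero. Property (3) is an \emph{exact} identity, $\sum_i\phi(\,\cdot\,)=1$, and it is true for the periodic $\phi$ only when the arguments are exact integer translates of one another. A perturbation of size $\epsilon_i$, however small, gives $\sum_i\phi(p^{\mathrm{soft}}_{L'}(y)-i+\epsilon_i)\neq 1$ in general; the ``buffer between $K/8$ and $K/4$'' helps with (2) but does nothing for an exact identity. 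So your two fixes are mutually incompatible, and neither version of your $\gamma_L$ actually satisfies (1), (2), (3) simultaneously.

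The paper resolves this tension differently, and the difference is exactly the missing ingredient. Rather than manufacture exact translation-equivariance, the paper takes a H\"older/Urysohn bump $g_{L_0}$ on $\overline\Gamma$ supported on a narrow projection-window (value $1$ when $\pi_{L_0}(y)$ lies within $K/10$ of the center, $0$ when it lies beyond $K/5$), and then \emph{normalises}: $\gamma_L=g_{L[-K/2,K/2]}\big/\sum_{\tilde L\subset L}g_{\tilde L}$, the sum running over subsegments of $L$ of length $K+1$. Property (3) then follows because, by hyperbolicity, the indices $k$ for which $g_{L'(k)^{-1}L'[k-K/2,k+K/2]}(L'(k)^{-1}y)>0$ form a window of width $<K/2$; hence every local denominator appearing in $\sum_i\gamma_{\tilde L_i}(\tilde y_i)$ coincides with the one global sum, and the telescoping is exact. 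If you want to salvage your approach, you should keep the softmax (or any boundary-continuous substitute for $p_L$) \emph{and} add the same normalisation step — at which point you have essentially re-derived the paper's construction.
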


If $\gamma_L$ were defined to be the function satisfying
$\gamma_L(y)=1$ if the closest point to $y$ on $L$ is $1$, and $0$
otherwise (with ties resolved as explained in Paragraph
\ref{ssec:etaAnalysis}) then the last two last properties would be
satisfied, but $\gamma_{L}$ would not in general extend continuously
to the boundary.

\begin{proof}
Let $K$ be even. For any geodesic segment $L_0$ of length $K+1$
centered around $1$, consider the set $A$ of points $y$ such that
$\pi_{L_0}(y)$ contains a point at distance at most $K/10$ of $1$,
and the set $B$ of points $y$ such that $\pi_{L_0}(y)$ contains a
point at distance at least $K/5$ of $1$. By the hyperbolicity of the
group $\Gamma$, if $K$ is large enough, the two sets $A,B$ are
disjoint, and their closures in $\overline\Gamma$ are still disjoint.
Therefore, there exists a continuous function $0\leq g_{L_0}\leq 1$
on $\overline{\Gamma}$ equal to $1$ on $\overline{A}$ and to $0$ on
$\overline{B}$.

If $L$ is a geodesic segment of length $2K+1$, let $\gamma_L$ be
equal to $g_{L[-K/2, K/2]}$ divided by the sum of the functions
$g_{\tilde L}$ along every subsegment $\tilde L$ of $L$ of length
$K+1$. Formally,
  \begin{equation*}
  \gamma_L(y) = g_{L[-K/2, K/2]}(y) / \sum g_{L(i)^{-1}L[-K/2+i, K/2+i]}(L(i)^{-1}y).
  \end{equation*}
The sum in the denominator   is $\geq 1$ on a neighborhood of the
support of $g_{L[-K/2, K/2]}$ by construction, so the function
$\gamma_L$ is well defined and continuous. All the properties of the
lemma follow easily.
\end{proof}

Let us now define for $r\in [1,R)$ a function $f_r$ on geodesic
segments $L$ through $1$, as follows. Let $a$ and $b$ be the
endpoints of $L$. If $d(1,a)\leq K$ or $d(1,b)\leq K$, let
$f_r(L)=0$. Otherwise, let
  \begin{equation*}
  f_r(L)=\eta(r)^{-1}\sum_{y\in\Gamma}\gamma_{L[-K, K]}(y)G_r(a,y)G_r(y,b)/G_r(a,b).
  \end{equation*}
By~\eqref{eq:adds_up_to_one}, for all $x\in \Gamma$,
  \begin{equation*}
  \eta(r) \sum_{z\in L(1,x)} f_r(z^{-1}L(1,x)) = \sum_{y\in\Gamma} c_x(y)G_r(1,y)G_r(y,x)/G_r(1,x),
  \end{equation*}
where the coefficient $c_x(y)\in [0,1]$ is equal to $1$ for all
points $y$ but those whose projections on $L(1,x)$ are close to $1$
or $x$. By~\eqref{eq:byAncona}, the contribution of these points is
bounded by $C \eta(r)$. Hence,
  \begin{equation*}
  \sum_{y\in\Gamma} G_r(1,y)G_r(y,x)/G_r(1,x)= \eta(r) \sum_{z\in L(1,x)} f_r(z^{-1}L(1,x)) + O(\eta(r)).
  \end{equation*}
This quantity will be estimated thanks to
Corollary~\ref{corollary:ergodicCorollary} once the following lemma
is established.

\begin{lemma}
\label{lemma:frConvergence}
The functions $f_r$ are uniformly bounded and H\"older-continuous for
$r\in [1,R)$, and they converge in the H\"older topology to a function
$f_R$ as $r\to R-$.
\end{lemma}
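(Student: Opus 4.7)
The plan is to verify three properties of the family $\{f_r\}_{r \in [1, R)}$ and combine them by interpolation: (i) uniform boundedness, (ii) uniform Hölder continuity with a common exponent $\alpha > 0$, and (iii) pointwise convergence $f_r(L) \to f_R(L)$ as $r \to R-$ for every geodesic segment $L$ through $1$. On the (compact) space of such geodesic segments, equicontinuity plus pointwise convergence upgrade (iii) to uniform convergence; interpolation between $\|\cdot\|_\infty$ and $\|\cdot\|_{C^{0,\alpha}}$ then yields convergence in the Hölder norm of any strictly smaller exponent, which is what the lemma asks for.

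For (i) and (ii), I would appeal directly to the Ancona inequalities. The partition-of-unity factor $\gamma_{L[-K,K]}$ is supported on vertices $y$ whose $L$-projection lies within $K/4$ of $1$. For the endpoints $a, b$ of $L$ (with $|a|, |b| \geq K$), hyperbolicity forces the geodesics $[a,y]$, $[y,b]$ and $[a,b]$ all to pass within a bounded neighborhood of $1$. Three applications of Theorem~\ref{theorem:1} (with constant uniform in $r \in [1, R]$) give $G_r(a,y) G_r(y,b)/G_r(a,b) \leq C G_r(1,y)^2$, so
\[
  f_r(L) \leq C \eta(r)^{-1} \sum_{y \in \Gamma} G_r(1,y)^2 = C,
\]
proving (i). For (ii), if $L$ and $L'$ agree on $B_n(1)$ with $n > K$ then the partition-of-unity factors coincide, and Theorem~\ref{theorem:holderAncona} applied to the fellow-traveling pairs $[1, a]$ vs.\ $[1, a']$ and $[1, b]$ vs.\ $[1, b']$ shows that the ratio $G_r(a, y) G_r(y, b)/[G_r(a, b) G_r(1, y)^2]$ agrees with its analogue for $L'$ to within $C \rho^n$, uniformly in $y$ on the support of $\gamma$ and in $r$. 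Summing in $y$ and dividing by $\eta(r)$ yields $|f_r(L) - f_r(L')| \leq C \rho^n$.

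For (iii), I would partition the defining sum of $f_r(L)$ according to the sphere $S_m$ containing $y$:
\[
  f_r(L) = \sum_{m=0}^\infty p_m(r) I_m(r), \qquad p_m(r) := \eta(r)^{-1} \sum_{y \in S_m} G_r(1,y)^2,
\]
where $I_m(r)$ is the integral against $\lambda_{r,m}$ of $\gamma_{L[-K, K]}(y) \cdot G_r(a,y) G_r(y,b) / [G_r(a,b) G_r(1,y)^2]$. The integrand extends continuously to $\overline\Gamma$: the $\gamma$-factor by construction, and the Ancona quotient because as $y \to \xi \in \partial \Gamma$ the ratios $G_r(a, y)/G_r(1, y)$ and $G_r(b, y)/G_r(1, y)$ tend to the Martin kernels $K_r(a, \xi)$ and $K_r(b, \xi)$, which depend Hölder-continuously on $\xi$ by Theorem~\ref{theorem:holderMartinKernel}. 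Pulling back by $\alpha_*$ to $\overline{\Sigma}$, Proposition~\ref{proposition:absolutelyCont} then delivers $I_m(r) \to \bar I(r)$ uniformly in $r \in [1,R]$, with $\bar I(r)$ continuous in $r$. Proposition~\ref{proposition:pressureEqualsZero} forces $\eta(r) \to \infty$, while for each fixed $m$ the numerator $\sum_{y \in S_m} G_r(1,y)^2$ stays bounded, so $p_m(r) \to 0$ for each $m$ although $\sum_m p_m(r) = 1$. An elementary Abelian argument — splitting at $M$ large enough that $|I_m(r) - \bar I(r)| < \varepsilon$ for all $m \geq M$ and $r \in [1,R]$ — gives $f_r(L) \to \bar I(R) =: f_R(L)$.

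The main obstacle is step (iii): the normalization $\eta(r)^{-1}$ vanishes in the limit while the unnormalized sum diverges, so pointwise convergence is far from immediate from continuity of the Green's functions alone. The decisive device is the uniformity-in-$r$ of the weak convergence in Proposition~\ref{proposition:absolutelyCont}, combined with the boundary extension of the Ancona quotient provided by Theorem~\ref{theorem:holderMartinKernel}; together these reduce (iii) to an Abelian average over the $p_m(r)$, which is then routine.
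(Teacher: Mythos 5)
Your proof is correct and follows essentially the same route as the paper's: uniform boundedness via the Ancona inequalities (arguing as in the paper's equation~\eqref{eq:byAncona}), uniform H\"older continuity via Theorem~\ref{theorem:holderAncona} applied to fellow-traveling geodesics, and pointwise (hence, by the uniform H\"older bound and interpolation, H\"older) convergence obtained by writing the sum over spheres $S_m$, using the uniform-in-$r$ weak convergence $\lambda_{r,m}\Rightarrow\lambda_r$ from Proposition~\ref{proposition:absolutelyCont}, the divergence $\eta(r)\to\infty$, and the uniform boundary extension of the Ancona quotient to a continuous function on $\overline\Gamma$ via the Martin kernel. The few presentational differences (keeping $1/G_r(a,b)$ inside the integrand, isolating $p_m(r)$ explicitly) do not change the substance, and the limiting formula you obtain, $f_R(L)=\bar I(R)$, is the paper's $G_R(a,b)^{-1}\int_{\partial\Gamma}F_R\,d\lambda_R$.
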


The notion of H\"older continuity for functions of geodesic segments was defined
in the discussion before Corollary~\ref{corollary:ergodicCorollary}.

\begin{proof}
By definition of $\gamma_L$, any geodesic segment from a point $y$
with $\gamma_L(y)>0$ to the endpoints of the geodesic segment $L$ (or
a geodesic extension of it) passes within bounded distance of $1$.
Arguing as in \eqref{eq:byAncona}, one deduces that $f_r$ is
uniformly bounded by a constant $C$.

Consider now two geodesics $L_1$ and $L_2$ around $1$, and assume
that they coincide on a neighborhood of $1$ of size $n>K$. In
particular, their restriction $L$ to the ball $B(1,K)$ coincide. Let
$a_1$ and $b_1$ (resp.\ $a_2$ and $b_2$) be the endpoints of $L_1$
(resp.\ $L_2$). For each $y\in \Gamma$ with $\gamma_L(y)>0$,
  \begin{equation*}
  \frac{G_r(a_1,y)G_r(y,b_1)/G_r(a_1,b_1)}{G_r(a_2,y)G_r(y, b_2)/G_r(a_2,b_2)}
  = \frac{ G_r(a_1,y)/G_r(a_1,b_1)}{G_r(a_2,y) / G_r(a_2,b_1)}\cdot
  \frac{G_r(y,b_1)/G_r(a_2,b_1)}{G_r(y,b_2)/G_r(a_2,b_2)}.
  \end{equation*}
Since the geodesics from $a_1$ or $a_2$ to $b_1$ or $y$ are fellow
traveling during a time at least $n-C$ by definition (see
Section~\ref{subsec:fellow_traveling}), Theorem
\ref{theorem:holderAncona} shows that the first factor is bounded by
$1+C\varrho^n$ for some $\varrho<1$. The second factor is bounded in
the same way. Multiplying by $\eta(r)^{-1}\gamma_L(y)$ and summing
over $y$, we obtain $f_r(L) \leq (1+C\varrho^n) f_r(L')$. Since $f_r$
is bounded, this yields $f_r(L)-f_r(L') \leq C\varrho^n$. Exchanging
the role of $L$ and $L'$, we get $|f_r(L)-f_r(L')|\leq C\varrho^n$.
This shows that the functions $f_r$ are uniformly H\"older-continuous.

Next we show that the functions $f_r$ converge in the H\"older
topology when $r\to R-$. It suffices to show that they converge
pointwise, because the  uniform convergence follows from
the uniform H\"older bounds, and this implies convergence in the H\"older
topology for any exponent strictly less than the initial exponent.
Fix a geodesic segment  $L$  with endpoints $a$ and $b$: we will show
that $f_r(L)$ converges when $r\to R-$. We have
  \begin{align*}
  f_r(L)
  &= \frac{1}{G_r(a,b)} \eta(r)^{-1}
  \sum_m \sum_{y\in S_m} G_r(1,y)^2 \gamma_{L[-K,K]}(y)\frac{G_r(a,y)}{G_r(1,y)} \frac{G_r(b,y)}{G_r(1,y)}
  \\&
  = \frac{1}{G_r(a,b)} \eta(r)^{-1} \sum_m \left(\sum_{y\in S_m} G_r(1,y)^2\right) \int F_r(y) \, d\lambda_{r,m}(y),
  \end{align*}
where
\[
F_r(y)=\gamma_{L[-K,K]}(y)\frac{G_r(a,y)}{G_r(1,y)}
\frac{G_r(b,y)}{G_r(1,y)}
\]
is a continuous function on $\Gamma$ which
extends continuously to $\overline{\Gamma}$. Moreover, $F_r$
converges uniformly when $r\to R-$ to a limit $F_R$.

By Proposition~\ref{proposition:absolutelyCont}, the measures
$\lambda_{r,m}$ converge as $m\to \infty$ to a measure $\lambda_r$
supported on $\partial \Gamma$, uniformly in $r\in [1,R]$. On the
other hand, the influence of bounded $m$ in the above sum tends to
$0$ when $r\to R-$ since $\eta(r)=\sum_m \left(\sum_{y\in S_m}
G_r(1,y)^2\right)$ tends to infinity. Therefore,
  \begin{equation*}
	\lim_{r \rightarrow R-}f_{r} (L)=\frac{1}{G_r(a,b)}
	\int_{\partial\Gamma} F_R \, d\lambda_R .
  \qedhere
  \end{equation*}
\end{proof}

Lemma~\ref{lemma:frConvergence} and Corollary~\ref{corollary:ergodic}
imply that  the convergence \eqref{eq:ergodic} holds with
\[
	\xi (r)=\int (f_r\circ\alpha ) \,d\mu^{\Z}_r.
\]
That $\xi (r)$ varies continuously
with $r$ for $r\leq R$ follows from the continuous dependence of the
Gibbs state $\mu^{\Z}_{r}$ with $r$ (Theorem~\ref{thm:RPF}). Thus, to
complete the proof of Proposition~\ref{proposition:ergodic2}, it
remains only to show that $\xi (R)>0$. For this it suffices to prove
that there exists $C>0$ such that for all $r$ near $R$ and for $x$
with $|x|=m$, then
\begin{equation}\label{eq:etaPositive}
	 \sum_{y\in \Gamma} G_r(1,y)G_r(y,x)/G_r(1,x) \geq Cm \eta(r).
\end{equation}
This sum can be partitioned  by
decomposing $\Gamma$ as $\bigcup \Gamma(z_j)$ for $z_j \in
L(1,x)$. Hence, to prove \eqref{eq:etaPositive} it is enough to  show
that each sufficiently long block of consecutive points
$z_j$ on the geodesic segment $L=L (1,x)$ contributes at least an
amount $C\eta(r)$ to the sum. This follows from the next lemma.

\begin{lemma}\label{lemma:density}
There exist  $K<\infty$ and $C>0$ independent of $1\leq r< R$ so that
the following is true. For any geodesic segment $L$ of length $\geq
K$  and any $K$ consecutive vertices $z_{1},z_{2},\dotsc , z_{K}$ on $L$,
\begin{equation}\label{eq:density}
	\sum_{j=1}^{K} \sum_{y\in \Gamma (z_{j})} G_{r} (z_j,y)^{2}
	\geq C \eta (r).
\end{equation}
\end{lemma}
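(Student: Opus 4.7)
The plan is to derive \eqref{eq:density} from a geometric--thermodynamic cone argument that actually gives the estimate using only one term in the sum. Take $z_\ast := z_{\lceil K/2\rceil}$ at the middle of the block and translate by $z_\ast^{-1}$, so that $z_\ast$ becomes $1$; since $z_\ast$ sits in the middle of the block, the (translated) geodesic still extends at least $\lfloor K/2\rfloor$ steps from $1$ in each direction. By translation invariance of the random walk, $\sum_{y\in\Gamma}G_r(1,y)^2=\eta(r)$, so the task is to show that a definite fraction of this total mass lives on $y$'s whose nearest point on the translated $L$ falls in the translated block.

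First (the geometric step), let $\xi_L^{\pm}\in\partial\Gamma$ be the two boundary points of some geodesic extension of the translated segment, and define the ``transverse cone'' $\mathcal{C}\subseteq\Gamma$ to consist of those $y$ whose boundary direction (the limit of the distinguished geodesic $L(1,y)$ from the Cannon automaton) lies in a fixed closed arc of $\partial\Gamma$ bounded away from both $\xi_L^{\pm}$. Via the coding $\alpha_\ast$ of Subsection~\ref{ssec:symbolicDynamics}, $\mathcal{C}$ corresponds to a nonempty clopen subset $\widehat{\mathcal{C}}\subset\Sigma$ specified by a condition on the first $N$ edges of $L(1,y)$, for $N$ depending only on the Gromov constant $\delta$ and the jump size $C_0$. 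By the thin-triangle property, for $K$ large enough relative to $\delta$, any $y\in\mathcal{C}$ has its nearest point on $L$ within distance $O(\delta)$ of $1$; this nearest point is therefore one of the $z_j$ in the middle of the block, and by Harnack \eqref{eq:harnack} one has $G_r(z_j,y)^2\asymp G_r(1,y)^2$ uniformly in $r\in[1,R]$.

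Second (the thermodynamic step), by Proposition~\ref{proposition:absolutelyCont} the probability measures $\lambda_{r,m}$ on $S_m$ with densities proportional to $G_r(1,y)^2$ converge weakly, uniformly in $r\in[1,R]$, to a probability measure $\lambda_r$ on $\bSigma$. Writing
\begin{equation*}
\sum_{y\in\mathcal{C}\cap S_m} G_r(1,y)^2 = \lambda_{r,m}(\mathcal{C}\cap S_m)\cdot\sum_{y\in S_m}G_r(1,y)^2,
\end{equation*}
and noting that the indicator of $\mathcal{C}\cap S_m$ pulls back to a continuous cylinder function on $\bSigma$, the uniform weak convergence yields $\lambda_{r,m}(\mathcal{C}\cap S_m)\to\lambda_r(\widehat{\mathcal{C}})$ uniformly in $r$. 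Summing over $m$ and absorbing the contribution of bounded $m$ (negligible because $\eta(r)\to\infty$ as $r\to R-$, which follows from Propositions~\ref{proposition:pressureEqualsZero} and~\ref{proposition:absolutelyCont}), we obtain
\begin{equation*}
\sum_{j=1}^K\sum_{y\in\Gamma(z_j)}G_r(z_j,y)^2 \;\geq\; C^{-1}\sum_{y\in\mathcal{C}}G_r(1,y)^2 \;\geq\; c\cdot\lambda_r(\widehat{\mathcal{C}})\cdot\eta(r).
\end{equation*}

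The main obstacle is the uniform lower bound $\lambda_r(\widehat{\mathcal{C}})\geq c_0>0$ across all $r\in[1,R]$ and all geodesics $L$ through $1$. For fixed $L$, the set $\widehat{\mathcal{C}}$ is a nonempty clopen subset of the recurrent part of $\Sigma$, and $\lambda_r$ has full support there by the support description of the eigenmeasure $\nu_r$ in Theorem~\ref{thm:RPF} together with Assumptions~\ref{assumption:irreducibility} and~\ref{assumption:mixing}; hence $\lambda_r(\widehat{\mathcal{C}})>0$. Continuous dependence of $\lambda_r$ on $r$ (also from Theorem~\ref{thm:RPF}) combined with compactness of $[1,R]$ upgrades this to uniformity in $r$. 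Uniformity in $L$ follows because $\widehat{\mathcal{C}}$ depends only on the finitely many possible initial $N$-edge patterns of $L$ at $1$ in its two directions, leaving finitely many configurations to handle; the minimum of the corresponding positive values of $\lambda_r(\widehat{\mathcal{C}})$ provides $c_0$.
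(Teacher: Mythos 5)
Your proposal takes a genuinely different route from the paper. The paper does not use thermodynamic formalism at all here: it proves (Lemma~\ref{lemma:halfplaneEst}) that for \emph{any} hyperbolic halfplane $H$ through $O$ one has $\sum_{xO\in H}G_r(1,x)^2\geq C\eta(r)$, by a pigeonhole argument with three hyperbolic elements whose six fixed points are spread around $\partial\zz{D}$ in such a way that at least one of the six translates $h_i^{\pm1}y$ lands in $H$ for every $y$; Lemma~\ref{lemma:density} is then a short reduction to that statement via the thin-triangle property and Harnack. You instead code the opposite cone through the Cannon automaton and invoke Proposition~\ref{proposition:absolutelyCont} and Theorem~\ref{thm:RPF} to get a uniform lower bound on the $\lambda_r$-measure of a clopen ``transverse'' cylinder. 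Your approach is conceptually workable and it is interesting that the halfplane lemma is not strictly needed, but the pigeonhole argument is considerably lighter and more robust (it needs only Lemma~\ref{lemma:superadditivity} and Harnack, no spectral-gap machinery).

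There is, however, a genuine gap in the way you discharge the small-$m$ contribution. You write that the terms with $m$ bounded can be absorbed because $\eta(r)\to\infty$ as $r\to R-$, but the lemma must hold uniformly for \emph{all} $r\in[1,R)$, with $C$ independent of $r$. For $r$ bounded away from $R$ the quantity $\eta(r)$ stays bounded and is in fact dominated by $y$ near $1$, so discarding all $m<M_0$ destroys the bound. Your cylinder $\widehat{\mathcal{C}}$ is a length-$N$ cylinder, so paths of length $<N$ never belong to $\mathcal{C}$, and those $y$ contribute nothing under your scheme; and even for $N\leq m<M_0$ you have no control on $\lambda_{r,m}(\widehat{\mathcal{C}})$, since uniform weak convergence only kicks in for $m$ large. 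This is repairable in two standard ways: (a) observe that for $r\leq R-\varepsilon$ the lemma is trivial because the left side is at least $G_r(z_1,z_1)^2=G_r(1,1)^2\geq 1$ while $\eta(r)\leq\eta(R-\varepsilon)$, and then run your argument only for $r$ close to $R$; or (b) include the small-$y$ terms directly, noting that if $|y|\leq M_0$ then the nearest point of $L$ to $y$ is within $2M_0\leq K/2$ of $1$ so those $y$ already land in $\bigcup_j\Gamma(z_j)$ without any cylinder condition. As written, neither step is present. A minor related imprecision: points $y$ in a length-$N$ transverse cylinder project onto $L$ within distance $N+O(\delta)$ of $1$, not $O(\delta)$; this only affects how large $K$ must be chosen relative to $N$ and $\delta$, but should be stated.
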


We will deduce this from the following statement.

\begin{lemma}\label{lemma:halfplaneEst}
Assume that the Cayley graph $G^{\Gamma}$ is embedded in the
hyperbolic plane in such a way that the group identity $1\in \Gamma$
is identified with the center $O$ of the Poincar\'e disk. Then
there exists $C>0$ such that for any halfplane $H$ whose boundary is a
hyperbolic geodesic through $O$, and any $1\leq r\leq R$,
\begin{equation}\label{eq:halfplaneEst}
	\sum_{x\in \Gamma \,:\, xO\in H}  G_{r} (1,x)^{2}\geq C \eta (r).
\end{equation}
\end{lemma}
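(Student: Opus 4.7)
The strategy is to pull back the problem to the symbolic space and exploit the uniform weak convergence of the measures $\lambda_{r,m}$ given by Proposition~\ref{proposition:absolutelyCont}. Using the bijection $\alpha_{*}:\Sigma^{m}(E_{*})\to S_{m}$, one decomposes
\[
\sum_{x:\, xO\in H} G_{r}(1,x)^{2} = \sum_{m}\Bigl(\sum_{y\in S_{m}} G_{r}(1,y)^{2}\Bigr)\lambda_{r,m}(A_{H}^{m}),
\]
where $A_{H}^{m}=\{\omega\in\Sigma^{m}(E_{*}):\alpha_{*}(\omega)O\in H\}$. Combined with the sphere asymptotics $\sum_{y\in S_{m}}G_{r}(1,y)^{2}\sim C(r)e^{m\Pr(2\varphi_{r})}$ and Proposition~\ref{proposition:pressureEqualsZero} (which gives $\Pr(2\varphi_{R})=0$), it suffices to establish a uniform lower bound $\lambda_{r,m}(A_{H}^{m})\geq c>0$ for all sufficiently large $m$, uniformly in $r\in[1,R]$ and in halfplanes $H$ through $O$. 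The regime $r$ bounded away from $R$ can be treated separately: there $\eta(r)$ is bounded, and any fixed vertex $x$ with $xO\in H$ contributes $G_{r}(1,x)^{2}$ bounded below to $F(r,H)$; compactness of the space of halfplanes (parameterized by $S^{1}$) then yields a uniform constant.

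For the main range, $A_{H}^{m}$ converges (in an appropriate sense) to $A_{H}^{\infty}=\{\omega\in\Sigma:\alpha_{*}(\omega)\in\partial_{\infty}H\}$, where $\partial_{\infty}H=\overline{H}\cap\partial\zz{D}$ is a closed semi-arc of $S^{1}=\partial\Gamma$. Since $\lambda_{r,m}\Rightarrow\lambda_{r}$ uniformly in $r$, matters reduce to showing that the pushforward $\pi_{*}\lambda_{r}$ assigns uniformly positive mass to every such semi-arc. This follows from two properties of $\pi_{*}\lambda_{r}$: (a) non-atomicity and (b) full support on $S^{1}$. For (a), the Gibbs estimate~\eqref{eq:Gibbs} applied to the potential $2\varphi_{r}$, together with the exponential decay $G_{R}(1,x)\leq C\varrho^{|x|}$ from Theorem~\ref{theorem:expDecay}, gives $\lambda_{r}[\omega_{0},\dots,\omega_{n-1}]=O(\varrho^{2n})$, so $\lambda_{r}$ is atomless on $\Sigma$; combined with the at-most-countable multiplicity of $\alpha_{*}:\Sigma\to\partial\Gamma$ (a consequence of the hyperbolicity of $\Gamma$), this yields no atoms for $\pi_{*}\lambda_{r}$. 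For (b), full support follows from Assumption~\ref{assumption:irreducibility} together with minimality of the $\Gamma$-action on $\partial\Gamma$. The continuous dependence $r\mapsto\nu_{r}$ from Theorem~\ref{thm:RPF}, together with non-atomicity, then yields joint continuity of $(r,H)\mapsto\pi_{*}\lambda_{r}(\partial_{\infty}H)$ on the compact set $[1,R]\times S^{1}$, and a uniform positive lower bound follows.

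The main obstacle will be controlling the multiplicity of the endpoint map $\alpha_{*}:\Sigma\to\partial\Gamma$ precisely enough to pass from non-atomicity of $\lambda_{r}$ on $\Sigma$ to non-atomicity of $\pi_{*}\lambda_{r}$ on $S^{1}$, uniformly in $r\in[1,R]$. This uses the H\"older regularity of $\alpha_{*}$ inherited from Theorem~\ref{theorem:holderMartinKernel} together with standard hyperbolic geometry (each $\xi\in\partial\Gamma$ is a limit of only boundedly many geodesic rays from $1$). Once the uniform bound on $\lambda_{r}(A_{H}^{\infty})$ is secured, it transfers to $\lambda_{r,m}(A_{H}^{m})$ for large $m$ by uniform weak convergence, and the proof is completed by combining with the sphere asymptotics~\eqref{eq:sphereAsymptotics}.
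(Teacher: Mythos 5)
Your proposal takes a genuinely different route from the paper. The paper's proof of this lemma is a short, purely geometric argument that uses nothing from the thermodynamic formalism: since $\Gamma$ is co-compact it has a dense set of pairs of hyperbolic fixed points; one then picks three pairs of equally spaced antipodal points on $S^1$ and finds three hyperbolic elements $h_1,h_2,h_3\in\Gamma$ such that for \emph{every} halfplane $H$ through $O$ and \emph{every} $y\in\Gamma$, at least one of the six points $h_i^{\pm}yO$ lies in $H$ (this follows from the thin-triangle property together with the fact that any such halfplane's boundary arc contains at least two of the six antipodal points in its interior). The supermultiplicativity $G_r(1,uv)\geq G_r(1,u)G_r(1,v)$ then gives immediately
\[
\sum_{x:\,xO\in H}G_r(1,x)^2\ \geq\ \Bigl(1\wedge\min_{i\leq 3}G_r(1,h_i)^2\Bigr)\sum_{x\in\Gamma}G_r(1,x)^2,
\]
with a constant uniform in $r\in[1,R]$. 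Your approach, by contrast, pushes everything through $\Sigma$, the $\lambda_{r,m}$, and the boundary measure $\pi_*\lambda_r$, and reduces the lemma to non-atomicity and full support of $\pi_*\lambda_r$ together with a compactness argument. The overall idea is sound, and the pieces you cite (Gibbs estimate, exponential decay, surjectivity of $\alpha_*$, bounded multiplicity of geodesic rays to a fixed boundary point) do exist. But this route is substantially heavier and leaves a real technical step only sketched: since $1_{A_H^m}$ is a discontinuous function on $\bSigma$ depending on both $m$ and $H$, the uniform weak convergence $\lambda_{r,m}\Rightarrow\lambda_r$ (which is stated for a fixed continuous $f$) does not directly yield a bound on $\lambda_{r,m}(A_H^m)$ uniform in $m$, $r$, and $H$; one has to sandwich by continuous functions and track all three parameters simultaneously, using non-atomicity at the two boundary points of the semi-arc. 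This is doable, but it is exactly the kind of delicate uniformity that the paper's argument sidesteps entirely (the six-point covering works pointwise for every $y$, including those near $\gamma_H$). Two minor slips: Hölder continuity of $\alpha_*:\Sigma\to\partial\Gamma$ is a general fact about hyperbolic groups and does not come from Theorem~\ref{theorem:holderMartinKernel}; and full support of $\pi_*\lambda_r$ follows directly from surjectivity and continuity of $\alpha_*$ together with full support of $\lambda_r$ on $\Sigma^{s_*}$ --- minimality of the $\Gamma$-action is not needed.
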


\begin{proof}
Because $\Gamma$ is co-compact, it has no parabolic elements and at
least one (and therefore infinitely many) hyperbolic
element. Furthermore, $\Gamma $ acts ergodically on
$\partial \zz{D}\times \partial \zz{D}$, so  the set
consisting of all pairs $\xi^{\pm} (x)$ of fixed points of hyperbolic
elements $x$ is dense in $\partial \zz{D}\times \partial \zz{D}$. In
particular,  for any two
antipodal points $\zeta^{+},\zeta^{-}$ on $\partial \zz{D}$ there are
hyperbolic elements $x_{n}$ such that $x_{n}^{\pm}O \rightarrow
\zeta^{\pm}$ (in the Euclidean metric on $\zz{D}\cup \partial \zz{D}$)
as $j \rightarrow \infty$.

Let $H$ be a halfplane whose boundary is a hyperbolic geodesic
$\gamma_{H}$ through $O$, and let $\zeta^{\pm}$ be antipodal points on
$\partial \zz{D}$ distinct from the endpoints of
$\gamma_{H}$. Precisely one of these antipodal points -- say
$\zeta^{+}$ -- will lie in the closure of $H$.  Let $x_{n}$ be a
sequence of hyperbolic elements such that $x_{n}^{\pm}O \rightarrow
\zeta^{\pm}$. If $y\in \Gamma$ is such that $x_{n}yO\not \in H$, for
some large $n$, then by the thin triangle property the geodesic
segment from $x_{n}O$ to $x_{n}yO$ must $\varepsilon -$shadow the
geodesic segment from $x_{n}O$ to $O$, for some constant $\varepsilon
<\infty$ independent of $x_{n}$ and $y$. Hence, by the translation
invariance of the metric, the geodesic segment from $O$ to $yO$ must
$\varepsilon -$shadow the geodesic segment from $O$ to $x_{n}^{-1}O$.
Now suppose that $\zeta^{\pm}$ and $\xi^{\pm}$ are distinct antipodal
pairs, neither coinciding with the endpoints of $\gamma_{H}$, and let
$x_{n},y_{n}$ be sequences of hyperbolic elements of $\Gamma$ such
that $x_{n}^{\pm}O \rightarrow \xi^{\pm}$ and $y_{n}^{\pm}O
\rightarrow \zeta^{\pm}$.  Then for all $n$ sufficiently large and all
$y\in \Gamma$ either $x_{n}yO\in H$ or $y_{n}yO\in H$, because $d
(x_{n}^{-1},y_{n}^{-1}) \rightarrow \infty$.

By placing three pairs of antipodal points so that the six points are
equally spaced along the circle $\partial \zz{D}$, we can arrange that
every halfplane $H$ bounded by a hyperbolic geodesic $\gamma_{H}$
through $O$  will have at least two of the six points in the interior
of the boundary arc $\partial H\subset \partial \zz{D}$. Consequently,
there exist three hyperbolic elements $h_{i}\in \Gamma$ such that for
every halfplane $H$ and every $y\in \Gamma$, at least one of the six
points $h_{i}^{\pm} yO$ lies in $H$. It now follows by the trivial
inequality $G_{r} (1,uv)\geq G_{r} (1,u)G_{r} (1,v)$ that
\begin{equation*}
	\sum_{x\in \Gamma \,:\, xO\in H}  G_{r} (1,x)^{2}\geq	
	(1\wedge \min_{i\leq 3} G_{r} (1,h_{i})^{2}) \sum_{x\in
	\Gamma}  G_{r} (1,x)^{2} .
\qedhere
\end{equation*}
\end{proof}

\begin{proof}
[Proof of Lemma~\ref{lemma:density}] Let $L$ be a geodesic segment in
the Cayley graph (relative to the graph distance) of length $\geq K$,
and let $z_{1},\dotsc ,z_{K}$ be any $K$ consecutive vertices along
$L$. By the hyperbolicity of $G^{\Gamma}$, together with the
equivalence of the word and hyperbolic metrics, there is a constant
$K<\infty$ such that for any choice of the geodesic segment $L$ and
$K$ consecutive vertices $z_{i}$ along $L$, the region
$\cup_{i=1}^{K}\Gamma(z_{i})$ will contain a hyperbolic halfplane $H$
(actually, its intersection with $\Gamma O$) whose boundary is a
hyperbolic geodesic that passes through exactly one vertex $gO$
located at a (graph) distance $\leq K$ from $z_{1}$.  The inequality
$\eqref{eq:density}$ now follows routinely from
Lemma~\ref{lemma:halfplaneEst}, by the Harnack inequality
\eqref{eq:harnack} and the translation invariance of the Green's
function.
\end{proof}

\section{Asymptotics of transition
probabilities}\label{sec:asymptotics}

Theorem \ref{theorem:localLimit}, which gives the asymptotics of
transition probabilities in a co-compact Fuchsian group, is a direct
consequence of the asymptotics of the Green's function given by
Theorem~\ref{theorem:criticalExponent} and of the following general
statement.

\begin{theorem}
\label{theorem:asymptotics}
Consider a symmetric irreducible aperiodic random walk in a countable
group $\Gamma$. Let $R$ denote the radius of convergence of the
Green's function $G_z(x,y)=\sum z^n p^n(x,y)$. Assume that there
exists $\beta >0$ such that for all $x,y\in \Gamma$,
\begin{equation}\label{eq:derivativeAsymptotics}
	 G'_r(x,y) \sim C_{x,y}/(R-r)^\beta \quad \text{as} \;\;r \uparrow R,
\end{equation}
for some $C_{x,y}>0$. Then
there exist constants $C'_{x,y}>0$ such that
\begin{equation}\label{eq:lltGeneral}
	 p^n(x,y)\sim C'_{x,y} R^{-n}n^{\beta-2} \quad \text{as} \;\;
	 n \rightarrow \infty .
\end{equation}
\end{theorem}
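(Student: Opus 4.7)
The plan is a two-step Tauberian argument: first convert the Abelian asymptotic for $G'_r$ into a partial-sum asymptotic via Karamata's theorem, then promote this to a pointwise asymptotic by exploiting a regularity property of symmetric aperiodic walks.

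Set $q_n := R^n p^n(x,y)$, so that $f(s) := \sum_{n\geq 0} q_n s^n = G_{Rs}(x,y)$ has radius of convergence $1$. Rescaling the hypothesis via $r = Rs$ gives
\[
f'(s) = \sum_{n\geq 1} n q_n s^{n-1} \sim \frac{C_{x,y} R^{1-\beta}}{(1-s)^\beta}\quad\text{as}\;s\uparrow 1.
\]
Since the coefficients $nq_n$ are nonnegative, Karamata's Tauberian theorem yields
\[
\sum_{n\leq N} n q_n \sim \frac{C_{x,y} R^{1-\beta}}{\Gamma(\beta+1)} N^\beta\quad\text{as}\;N\to\infty.
\]
The stated conclusion $p^n\sim C' R^{-n} n^{\beta-2}$ forces $\beta\in(0,1)$, which is consistent with $G_R(x,y)<\infty$ and will be assumed henceforth.

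To pass from this partial sum to a pointwise asymptotic, I will establish that $q_n$ is \emph{slowly oscillating}, meaning $q_{n+1}/q_n\to 1$. Granted this, for any $\epsilon>0$ and all sufficiently large $N$ the bound $|q_k - q_N|\leq \epsilon q_N$ holds uniformly for $k\in[(1-\delta)N,(1+\delta)N]$ (with $\delta$ small depending on $\epsilon$). Bracketing by
\[
\sum_{k=N}^{(1+\delta)N} k q_k \leq (1+\epsilon)q_N\sum_{k=N}^{(1+\delta)N} k,\qquad \sum_{k=(1-\delta)N}^{N} k q_k \geq (1-\epsilon) q_N \sum_{k=(1-\delta)N}^N k,
\]
applying the Karamata asymptotic to each difference $\sum_{k\leq (1\pm\delta)N} kq_k - \sum_{k\leq N} kq_k$, and letting first $\delta\to 0$ and then $\epsilon\to 0$, yields $Nq_N\sim C_{x,y}R^{1-\beta}\beta N^{\beta-1}/\Gamma(\beta+1) = C_{x,y}R^{1-\beta}N^{\beta-1}/\Gamma(\beta)$. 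Hence
\[
p^n(x,y) = R^{-n}q_n \sim \frac{C_{x,y}R^{1-\beta}}{\Gamma(\beta)}R^{-n}n^{\beta-2},
\]
proving the theorem with $C'_{x,y} = C_{x,y}R^{1-\beta}/\Gamma(\beta)$.

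The main obstacle is establishing the slow oscillation of $q_n$. By the spectral theorem applied to the self-adjoint transition operator $P$ on $\ell^2(\Gamma)$ (with $\|P\|=1/R$),
\[
p^n(x,y) = \int_{-1/R}^{1/R}\lambda^n\,d\mu_{x,y}(\lambda).
\]
In the diagonal case $x=y$, the measure $\mu_{x,x}$ is positive, and $q_{2n}(x,x) = \int (R\lambda)^{2n}\,d\mu_{x,x}(\lambda)$ is nonincreasing in $n$ by pointwise monotonicity of the integrand on $[-1,1]$. Aperiodicity of the walk precludes spectral mass concentrating at the opposite edge $-1/R$ (any such mass would produce an asymptotic $\Z/2$--periodic structure contradicting $p^n(x,x)>0$ for all large $n$), so the behavior of $q_n(x,x)$ is controlled by the behavior of $\mu_{x,x}$ near the single edge $+1/R$; pulling back by $t = 1-R\lambda$ expresses $q_n(x,x)$ as a Laplace transform of a measure concentrated near $t=0$, from which $q_{n+1}(x,x)/q_n(x,x)\to 1$ is immediate. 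For $x\neq y$ one either reduces to the diagonal case using Cauchy--Schwarz, $|p^{2n}(x,y)|\leq\sqrt{p^{2n}(x,x)p^{2n}(y,y)}$, together with the Chapman--Kolmogorov identity $q_{n+1}(x,y) = R\sum_z p(x,z) q_n(z,y)$ and irreducibility, or repeats the spectral analysis for the signed measure $\mu_{x,y}$, whose total variation is likewise concentrated near $+1/R$ under aperiodicity.
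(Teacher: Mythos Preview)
Your overall architecture matches the paper's: apply Karamata to $\sum n R^n p^n(x,y)\,s^n$ to get partial-sum asymptotics, then invoke a regularity property of the sequence $q_n=R^np^n(x,y)$ coming from the spectral representation $p^n(x,y)=\int\lambda^n\,d\mu_{x,y}$ to upgrade to pointwise asymptotics. The substantive difference, and the gap in your argument, is in that regularity step.

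The crucial point is controlling the contribution of the spectrum near $-1/R$. You assert that ``aperiodicity precludes spectral mass concentrating at the opposite edge $-1/R$'' on the grounds that such mass ``would produce an asymptotic $\zz{Z}/2$--periodic structure contradicting $p^n(x,x)>0$ for all large $n$.'' This reasoning is insufficient: a spectral measure could have support reaching both edges $\pm 1/R$, with comparable singularities of different strengths, and still give $p^n(x,x)>0$ for all large $n$ while $q_n$ genuinely oscillates (so $q_{n+1}/q_n\not\to 1$). What is actually needed---and what the paper proves as a separate theorem---is a genuine \emph{spectral gap} at $-1/R$: there exists $\varepsilon>0$ with $\mathrm{spec}(\zz{P})\subset[-R^{-1}(1+\varepsilon)^{-1},R^{-1}]$. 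This is a nontrivial fact relying on a result of Cartwright that the only singularity of $G_z$ on $|z|=R$ is $z=R$; it is not a direct consequence of aperiodicity in the naive sense. Once the gap is in hand, the paper writes $q_n=\tilde q_n+O(e^{-\delta n})$ with $\tilde q_n$ \emph{nonincreasing} (not merely slowly oscillating), and monotonicity makes the bracketing argument clean.

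Two smaller points. First, the condition $q_{n+1}/q_n\to 1$ is by itself too weak for your bracketing: over an interval $[N,(1+\delta)N]$ the ratios compound to something like $(1+o(1))^{\delta N}$, which need not stay close to $1$. You need the stronger uniform statement $\sup_{k\in[N,(1+\delta)N]}|q_k/q_N-1|\to 0$, or simply monotonicity as in the paper. Second, your off-diagonal sketch via Cauchy--Schwarz or the signed measure $\mu_{x,y}$ does not obviously yield the needed regularity for $q_n(x,y)$. The paper's trick is neater: use the \emph{positive} spectral measure of the indicator $I_{\{x,y\}}$, for which $2p^n(x,x)+2p^n(x,y)=\int\lambda^n\,d\nu_{x,y}$, and run the same monotonicity argument on $R^n(p^n(x,x)+p^n(x,y))$.
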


For the proof, we will rely on the following tauberian theorem of
Karamata (see e.g.\ \cite{regular_variation}, Corollary 1.7.3 or
\cite{feller-2}, Theorem XII.5.5).

\begin{theorem}
Let $A (z)=\sum a_n z^n$ be a power series with nonnegative
coefficients $a_{n}$ and radius of convergence $1$. Assume that, when
$s$ tends to $1$ along the real axis, $\sum a_n s^n \sim
c/(1-s)^\beta$ with $\beta>0$. Then $\sum_{k=1}^n a_k \sim c
n^{\beta}/\Gamma(1+\beta)$.
\end{theorem}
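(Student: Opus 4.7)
The plan is to encode the partial sums $\sum_{k\le n} a_k$ as the $[0,1]$--mass of a sequence of measures on $[0,\infty)$ whose Laplace transforms can be read off directly from the hypothesis, and then to extract the value at $[0,1]$ by a density-plus-sandwich argument. Concretely, I would introduce
\[
	\mu_n := n^{-\beta}\sum_{k\ge 0} a_k\,\delta_{k/n},
\]
so that the claim $\sum_{k\le n} a_k \sim c n^\beta/\Gamma(1+\beta)$ is equivalent to $\mu_n([0,1])\to c/\Gamma(1+\beta)$. For each fixed $\lambda>0$, substituting $s=e^{-\lambda/n}$ (so $1-s\sim \lambda/n$) into the hypothesis gives
\[
	\int_0^\infty e^{-\lambda t}\,d\mu_n(t)
	= n^{-\beta}\sum_k a_k e^{-\lambda k/n}
	\;\longrightarrow\; \frac{c}{\lambda^\beta}
	= \int_0^\infty e^{-\lambda t}\cdot\frac{c\,t^{\beta-1}}{\Gamma(\beta)}\,dt,
\]
using the standard gamma integral. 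Thus the Laplace transforms of $\mu_n$ converge pointwise to the Laplace transform of the candidate limiting density $c t^{\beta-1}/\Gamma(\beta)$.

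Next I would promote this pointwise Laplace convergence to integration against a genuine class of test functions. Setting $d\nu_n(t) := e^{-t}\,d\mu_n(t)$, the case $\lambda=1$ above shows that $\nu_n([0,\infty))$ stays uniformly bounded, while the full family of Laplace transforms gives $\int e^{-\lambda t}d\nu_n \to \int e^{-\lambda t}\cdot c e^{-t} t^{\beta-1}/\Gamma(\beta)\,dt$ for every $\lambda\ge 0$. Under the change of variables $u=e^{-t}$, the algebra generated by $\{e^{-\lambda t}\}_{\lambda\ge 0}$ becomes the polynomials on $[0,1]$, so by Stone--Weierstrass it is dense in $C([0,\infty])$. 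Combined with the uniform mass bound, this yields $\int f\,d\nu_n \to \int f\cdot c e^{-t} t^{\beta-1}/\Gamma(\beta)\,dt$ for every continuous $f$ on $[0,\infty)$ with a finite limit at infinity, or equivalently $\int g\,d\mu_n \to c\int g(t) t^{\beta-1}/\Gamma(\beta)\,dt$ for every continuous $g$ such that $g(t)e^{-t}$ extends continuously to $[0,\infty]$.

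The final and main obstacle is the standard Tauberian one: the indicator $\mathbf{1}_{[0,1]}$ is discontinuous, so the previous step does not directly deliver $\mu_n([0,1])$. I would handle this by sandwiching, using the nonnegativity of $a_k$ (hence of $\mu_n$) in an essential way. For small $\varepsilon>0$, pick continuous $g_\varepsilon^{\pm}$ on $[0,\infty)$ with $g_\varepsilon^-\le \mathbf{1}_{[0,1]}\le g_\varepsilon^+$, with $g_\varepsilon^+$ supported in $[0,1+\varepsilon]$ and $g_\varepsilon^-$ supported in $[0,1-\varepsilon]$, both admissible as test functions in the previous paragraph. Passing to the limit in the two inequalities
\[
	\int g_\varepsilon^-\,d\mu_n \;\le\; \mu_n([0,1]) \;\le\; \int g_\varepsilon^+\,d\mu_n
\]
gives $\liminf$ and $\limsup$ of $\mu_n([0,1])$ trapped between $c\int_0^{1-\varepsilon}\!t^{\beta-1}/\Gamma(\beta)\,dt$ and $c\int_0^{1+\varepsilon}\!t^{\beta-1}/\Gamma(\beta)\,dt$. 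Since $\int_{1-\varepsilon}^{1+\varepsilon} t^{\beta-1}\,dt=O(\varepsilon)$, letting $\varepsilon\downarrow 0$ squeezes the two sides to the common value $c/\Gamma(1+\beta)$, which is the asserted conclusion.
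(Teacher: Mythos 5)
The paper does not prove this statement; it is quoted as a known result with a citation to \cite{regular_variation} (Corollary 1.7.3) and \cite{feller-2} (Theorem XII.5.5), so there is no ``paper's proof'' to compare against. On its own merits your argument is correct and is, in essence, the classical rescaled-measure proof of Karamata's Tauberian theorem: the substitution $s=e^{-\lambda/n}$ converts the hypothesis into pointwise convergence of the Laplace transforms of $\mu_n=n^{-\beta}\sum_k a_k\delta_{k/n}$ to that of the gamma density $c\,t^{\beta-1}/\Gamma(\beta)\,dt$; tilting by $e^{-t}$ produces uniformly bounded measures whose Laplace transforms, under $u=e^{-t}$, generate the polynomials, so Stone--Weierstrass upgrades pointwise transform convergence to weak$^*$ convergence against $C([0,\infty])$; and positivity of the $a_k$ then allows a sandwich of $\mathbf{1}_{[0,1]}$ by continuous bump functions because the limit has no atom at $t=1$. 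One small inaccuracy: if $g_\varepsilon^-$ is supported in $[0,1-\varepsilon]$ it cannot equal $1$ on all of $[0,1-\varepsilon]$, so the lower endpoint of the sandwich should be something like $c\int_0^{1-2\varepsilon}t^{\beta-1}\,dt/\Gamma(\beta)$ (taking $g_\varepsilon^-\equiv 1$ on $[0,1-2\varepsilon]$ and supported in $[0,1-\varepsilon]$, say); this does not affect the conclusion after $\varepsilon\downarrow 0$. You also implicitly use that the limiting measure assigns no mass to the single point $1$ (true, since $t^{\beta-1}\,dt$ has no atoms), which is worth stating when closing the sandwich.
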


Under the assumptions of Theorem~\ref{theorem:asymptotics}, this
implies asymptotics for $\sum_{k=1}^n k R^k p^k(x,y)$. Asymptotics of
$R^n p^n(x,y)$ would readily follow if this sequence were
non-increasing (see Lemma~\ref{lemma:simpleTauberian} below). This is
not the case in general. However, we will show that for a
\emph{symmetric, aperiodic} random walk the coefficients can be
decomposed as $R^{n}p^{n} (x,y)=q_{n} (x,y)+O (e^{-\delta n})$ with
$q_{n}(x,y)$ non-increasing and $\delta >0$; this will allow us to
deduce the local limit theorem \eqref{eq:lltGeneral}.

\subsection{Spectral analysis of the transition probability
operator}
\label{ssec:spectralAnalysis} The hypothesis that the random
walk is symmetric implies that the Markov operator $\zz{P}$ of the
random walk, acting on the space $\ell^2(\Gamma)$ by $\zz{P}u(x)=\sum
p(x,y) u(y)$, is self-adjoint and bounded.

\begin{theorem}\label{theorem:spectrum}
Assume that the random walk is symmetric, irreducible, and aperiodic.
Then there exists $\varepsilon >0$ such that the spectrum of the
Markov operator $\zz{P}$ is contained in the interval $[-R^{-1}
(1+\varepsilon)^{-1} ,R^{-1}]$. Consequently, for all $x,y\in \Gamma$
the Green's function $G_{z} (x,y)$ extends holomorphically to the
doubly slit plane $\zz{C}\setminus ((-\infty ,-R (1+\varepsilon
)]\cup [R,\infty ))$.
\end{theorem}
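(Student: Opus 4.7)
The plan is to establish the claimed localization of the spectrum of $\zz{P}$ on $\ell^{2}(\Gamma)$; the holomorphic extension of $G_{z}$ will then follow immediately from the resolvent calculus. The upper bound $\sup \mathrm{spec}(\zz{P}) = R^{-1}$ is classical: since the symmetry of the step distribution makes $\zz{P}$ bounded and self-adjoint, its operator norm equals its spectral radius, and this radius is $\lim_{n}(p^{2n}(1,1))^{1/(2n)} = R^{-1}$ by definition of $R$. All of the substance of the theorem lies in the lower bound, which is where aperiodicity enters. I will show that there exists $\varepsilon > 0$ with $\inf\mathrm{spec}(\zz{P}) \geq -R^{-1}/(1+\varepsilon)$ by ruling out, via a bipartition argument, the existence of approximate eigenvectors of $\zz{P}$ with eigenvalue at or near $-\rho := -R^{-1}$.

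Suppose, toward a contradiction, that unit vectors $f_{j} \in \ell^{2}(\Gamma)$ satisfy $\|\zz{P} f_{j} + \rho f_{j}\| \to 0$. The triangle inequality gives $|\langle \zz{P} f_{j}, f_{j}\rangle| \leq \langle \zz{P}|f_{j}|,|f_{j}|\rangle \leq \rho$, and the left-hand side tends to $\rho$, so $\langle \zz{P}|f_{j}|,|f_{j}|\rangle \to \rho$ as well; combined with $\|\zz{P}\| = \rho$ this forces $\|\zz{P}|f_{j}| - \rho|f_{j}|\| \to 0$. Writing $f_{j} = f_{j}^{+} - f_{j}^{-}$ for its positive and negative parts (so $|f_{j}| = f_{j}^{+} + f_{j}^{-}$, and $f_{j}^{\pm} \geq 0$ have disjoint supports), adding and subtracting the two approximate eigenrelations for $f_{j}$ and $|f_{j}|$ yields the exchange relations
\[
\zz{P} f_{j}^{+} - \rho f_{j}^{-} \to 0, \qquad \zz{P} f_{j}^{-} - \rho f_{j}^{+} \to 0 \quad \text{in } \ell^{2}.
\]
A short norm computation using only $\|\zz{P}\| = \rho$ and $\|f_{j}^{+}\|^{2} + \|f_{j}^{-}\|^{2} = 1$ then produces $\|f_{j}^{+}\|^{2},\|f_{j}^{-}\|^{2} \to \tfrac{1}{2}$.

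Iterating the exchange relations gives $\zz{P}^{n} f_{j}^{+} - \rho^{n} f_{j}^{-} \to 0$ for every odd $n$, and since $f_{j}^{+}$ and $f_{j}^{-}$ have disjoint supports this forces $\langle \zz{P}^{n} f_{j}^{+}, f_{j}^{+}\rangle \to 0$ for such $n$. On the other hand, this inner product is a sum of nonnegative terms and is bounded below by its diagonal contribution
\[
\langle \zz{P}^{n} f_{j}^{+}, f_{j}^{+}\rangle \geq \sum_{x} p^{n}(x,x)\, f_{j}^{+}(x)^{2} = p^{n}(1,1) \cdot \|f_{j}^{+}\|^{2},
\]
by translation invariance of the walk. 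Aperiodicity supplies some odd $n$ with $p^{n}(1,1) > 0$, so $\|f_{j}^{+}\|^{2} \to 0$, contradicting $\|f_{j}^{+}\|^{2} \to \tfrac{1}{2}$. Applied to any sequence $\lambda_{j} \in \mathrm{spec}(\zz{P})$ with $\lambda_{j} \to -\rho$ (whose associated approximate eigenvectors also satisfy $\|\zz{P} f_{j} + \rho f_{j}\| \to 0$), the same argument rules out the existence of such a sequence, so there is a uniform gap $\varepsilon > 0$.

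With $\mathrm{spec}(\zz{P}) \subset [-\rho/(1+\varepsilon), \rho]$ in hand, the functional calculus for the self-adjoint operator $\zz{P}$ furnishes a resolvent representation
\[
G_{z}(x,y) = \bigl\langle (I - z\zz{P})^{-1} \delta_{x}, \delta_{y}\bigr\rangle = \int \frac{d\mu_{x,y}(\lambda)}{1 - z\lambda}
\]
in terms of scalar spectral measures $\mu_{x,y}$ supported in $[-\rho/(1+\varepsilon), \rho]$. The integrand is holomorphic in $z$ off $\{1/\lambda : \lambda \in \mathrm{spec}(\zz{P}) \setminus \{0\}\} \subset (-\infty, -R(1+\varepsilon)] \cup [R, \infty)$, so this formula yields the desired holomorphic extension of $G_{z}(x,y)$ from the disk $|z|<R$ to the doubly slit plane. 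The main technical obstacle in the argument above is the step passing from the scalar statement $\langle \zz{P} f_{j}, f_{j}\rangle \to -\rho$ to the vectorial statement $\zz{P}|f_{j}| - \rho|f_{j}| \to 0$ in $\ell^{2}$; this saturation of Cauchy--Schwarz is precisely what permits the clean bipartition into $f_{j}^{\pm}$ and the exploitation of aperiodicity through the diagonal lower bound.
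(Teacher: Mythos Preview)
Your argument is correct and takes a genuinely different route from the paper's. The paper proves the spectral gap by invoking an external result of Cartwright: for a symmetric aperiodic walk, the only singularity of $G_{z}(x,y)$ on the circle $|z|=R$ is $z=R$. From this analyticity near $z=-R$, the Stieltjes inversion formula shows that the spectral measures of indicator functions (and then, by approximation, of all $u\in\ell^{2}$) put no mass near $-R^{-1}$.

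Your proof is instead a direct, self-contained bipartition argument in the spirit of infinite-dimensional Perron--Frobenius theory. You avoid Cartwright's theorem entirely: the exchange relations $\zz{P}f_{j}^{+}\approx\rho f_{j}^{-}$, $\zz{P}f_{j}^{-}\approx\rho f_{j}^{+}$ say that an approximate $(-\rho)$-eigenvector forces an approximate bipartite structure, and the diagonal lower bound $\langle \zz{P}^{n}f_{j}^{+},f_{j}^{+}\rangle\geq p^{n}(1,1)\|f_{j}^{+}\|^{2}$ together with aperiodicity kills this. One small point worth making explicit: you are tacitly taking the $f_{j}$ to be real-valued so that $f_{j}^{\pm}$ make sense; this is harmless since $\zz{P}$ has a real matrix and the spectral values in question are real, but it should be stated. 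The saturation step $\langle \zz{P}|f_{j}|,|f_{j}|\rangle\to\rho\Rightarrow \|\zz{P}|f_{j}|-\rho|f_{j}|\|\to 0$ follows from expanding $\|\zz{P}|f_{j}|-\rho|f_{j}|\|^{2}\leq 2\rho^{2}-2\rho\langle \zz{P}|f_{j}|,|f_{j}|\rangle$, which you allude to but do not quite spell out; including this one line would make the argument airtight. The payoff of your approach is that it is more elementary and requires no outside input, while the paper's approach ties the spectral gap directly to the analytic continuation of $G_{z}$, which fits more naturally into the paper's broader program.
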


\begin{proof}
It is well known (see, for instance, \cite{woess:book}) that the
spectrum of $\zz{P}$ is contained in the interval $[-R^{-1},R^{-1}]$,
where $R$ is the common radius of convergence of the Green's
functions. Hence, by the  spectral theorem, for any function $u\in
\ell^2(\Gamma)$ there exists a nonnegative measure $\nu=\nu_{u}$ on
$[-R^{-1}, R^{-1}]$, with total mass $\xnorm{\nu}=\xnorm{u}_{2}^{2}$,
such that for any $n\in \zz{N}$
\begin{equation}\label{eq:spectralTheorem}
  \langle u, \zz{P}^n u\rangle = \int t^n \, d\nu(t) ,
\end{equation}
and for any complex number $z$ of modulus
$|z|>R^{-1}$,
\begin{equation}\label{eq:resolvent}
	R_{u} (z):=\xclass{u, (z-\zz{P})^{-1}u}= \int \frac{1}{z-t}
	\,d\nu (t).
\end{equation}
To prove the theorem it suffices to show that for some
$\varepsilon >0$ the spectral measures $\nu_{u}$, where $u\in
\ell^2(\Gamma)$, all have support contained in $[-R^{-1}
(1+\varepsilon)^{-1} ,R^{-1}]$.

Formula \eqref{eq:resolvent} exhibits the resolvent function $R_{u}
(z)$ as the \emph{Stieltjes transform} of the measure $\nu=\nu_{u}$.
Since  $\nu$ is nonnegative and has finite total mass, its
Stieltjes transform $R_{u} (z)$ extends holomorphically to $\zz{C}\setminus
[-R^{-1},R^{-1}]$, and it satisfies the reflection
identity $R_{u} (\overline{z})=\overline{R_{u} (z)}$. According to the
\emph{Stieltjes inversion theorem} (see, e.g., \cite{wall}), for any two real
numbers $x_{1}<x_{2}$, neither of which is an atom of $\nu$,
\begin{equation}\label{eq:stieltjes}
	\nu [x_{1},x_{2}]=-\lim_{y \rightarrow 0} \Im
	\frac{1}{\pi}\int_{x_{1}}^{x_{2}}  R_{u} (x+iy) \, dx
\end{equation}
where $\Im$ denotes imaginary part. Suppose that $R_{u} (z)$ is
analytic in a neighborhood of $[x_{1},x_{2}]$. Since the function
$R_{u}$ satisfies the reflection identity, it must be real-valued on
$[x_{1},x_{2}]$, and so it follows from the inversion formula
\eqref{eq:stieltjes} that $\nu [x_{1},x_{2}]=0$. Thus, to complete the
proof of the theorem it suffices to show that there is some
$\delta  >0$ such that for every $u\in \ell^{2}
(\Gamma)$, the resolvent function $R_{u} (z)$ has an analytic
continuation to $[-R^{-1},-R^{-1}+\delta]$.

Observe that in the special case where $u=I_{\{1\}}$ is the indicator
function of the group identity, $z^{-1}R_{u} (z^{-1})=G_{z} (x,x)$ is
the Green's function, and in the case where $u=I_{\{x,y \}}$ is the
indicator of a two-point set, $z^{-1}R_{u}
(z^{-1})=2G_{z}(x,x)+2G_{z}(x,y)$. Hence, the Green's functions
extend holomorphically to $\zz{C}\setminus ((-\infty ,-R]\cup
[R,\infty ))$. According to a theorem of Cartwright
\cite{cartwright}, for any aperiodic, symmetric random walk on a
countable group the only singularity of the Green's functions $G_{z}
(x,y)$ on the circle $|z|=R$ is $z=R$. In fact, his proof shows that
there is an open neighborhood $U$ of $\{z\,:\,|z|=R \}\setminus
\{R\}$ to which all of the functions $G_{z} (x,y)$ extend
holomorphically.  It follows that for $\delta >0$ sufficiently small,
the resolvent function $R_{u}(z)$ extends holomorphically to
$[-R^{-1},-R^{-1}+\delta]$ for all functions $u$ that are indicators
of either one-point or two-point sets, and so the corresponding
spectral measures attach zero mass to this interval.  Since the
spectral measure $\nu_{u}$ of any function $u$ with finite support
can be written as a linear combination of one-point or two-point
indicators, it follows that they likewise attach zero mass to the
interval $[-R^{-1},-R^{-1}+\delta]$. Finally, for any $u\in \ell^{2}$
there is a sequence of finitely supported functions $u_{n}$ that
converge to $u$ in $\ell^{2}$, and for any such sequence the
corresponding spectral measures $\nu_{u_{n}}$ converge weakly to
$\nu_{u}$. (This follows, for instance, from
\eqref{eq:spectralTheorem}, which implies convergence of moments.)
Therefore, for any $u\in \ell^{2}$ the spectral measure has support
contained in $[-R^{-1}+\delta ,R^{-1}]$.
\end{proof}

\begin{corollary}\label{corollary:monotonicity}
Consider a symmetric, irreducible, aperiodic random walk on a countable
group, and let $R$ be the radius of convergence of the Green's
function. Then $R^n p^n(1,1) = q_n + O(e^{-\delta n})$, where $q_n$
is nonincreasing and $\delta>0$. Furthermore, for every $x\not =1$,
$R^{n}p^{n} (1,1)+R^{n}p^{n} (1,x)= q_{n} (1,x)+O(e^{-\delta n})$,
where $q_{n}(1,x)$ is nonincreasing.
\end{corollary}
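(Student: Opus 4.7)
The plan is to use the spectral theorem for the self-adjoint operator $\zz{P}$ together with Theorem~\ref{theorem:spectrum}, which localizes the spectrum inside $[-R^{-1}(1+\varepsilon)^{-1},\, R^{-1}]$. Write $p^n(1,1)=\langle \delta_1,\zz{P}^n \delta_1\rangle = \int t^n\,d\nu(t)$, where $\nu$ is the spectral measure of the indicator of the group identity. By Theorem~\ref{theorem:spectrum} this measure is supported in $[-R^{-1}(1+\varepsilon)^{-1},\, R^{-1}]$, so we can split $\nu=\nu_++\nu_-$, where $\nu_+$ is the restriction to $[0,R^{-1}]$ and $\nu_-$ to $[-R^{-1}(1+\varepsilon)^{-1},0)$.

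Define $q_n := R^n \int_0^{R^{-1}} t^n\,d\nu_+(t) = \int_0^{R^{-1}} (Rt)^n\, d\nu_+(t)$. Since $Rt\in[0,1]$ on the support of $\nu_+$, the integrand $(Rt)^n$ is pointwise nonincreasing in $n$, and consequently so is $q_n$. On the other hand, the difference is
\begin{equation*}
  R^n p^n(1,1)-q_n = R^n\int t^n\,d\nu_-(t),\qquad
  \bigl|R^n p^n(1,1)-q_n\bigr|\le \|\delta_1\|_2^2\,(1+\varepsilon)^{-n},
\end{equation*}
which gives the claimed $O(e^{-\delta n})$ with $\delta=\log(1+\varepsilon)$. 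This handles the first statement.

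For the second statement with $x\neq 1$, apply exactly the same argument to the vector $u=\delta_1+\delta_x\in\ell^2(\Gamma)$. Using translation invariance $p^n(x,x)=p^n(1,1)$ and symmetry $p^n(x,1)=p^n(1,x)$ one obtains
\begin{equation*}
 \tfrac12\langle u,\zz{P}^n u\rangle = p^n(1,1)+p^n(1,x) = \tfrac12\int t^n\,d\nu_u(t).
\end{equation*}
Again split $\nu_u$ into its restrictions to $[0,R^{-1}]$ and to $[-R^{-1}(1+\varepsilon)^{-1},0)$, and set
\begin{equation*}
 q_n(1,x):= \tfrac12\int_0^{R^{-1}} (Rt)^n\,d\nu_u(t).
\end{equation*}
This is nonincreasing by the same monotonicity argument, and the contribution of the negative part is bounded in absolute value by $\tfrac12\|u\|_2^2\,(1+\varepsilon)^{-n}=(1+\varepsilon)^{-n}$, giving the required exponential remainder.

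There is no substantial obstacle once Theorem~\ref{theorem:spectrum} is in hand; the mild point to verify is the elementary spectral-theoretic identities and the fact that $(Rt)^n$ is pointwise nonincreasing precisely because we have thrown away the part of the spectrum lying on the negative axis (which is what produces the oscillation responsible for potential non-monotonicity). The spectral gap on the negative side furnished by Theorem~\ref{theorem:spectrum}---itself a consequence of Cartwright's theorem plus aperiodicity and symmetry---is exactly what makes this decomposition possible with an exponentially small error.
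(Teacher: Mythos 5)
Your proposal is correct and follows essentially the same route as the paper: decompose the spectral measure of $\delta_1$ (or of $\delta_1+\delta_x$) into its restrictions to the positive and negative parts of the spectrum, let $q_n$ be the contribution of the positive part (nonincreasing since $(Rt)^n$ is pointwise nonincreasing for $Rt\in[0,1]$), and bound the negative contribution by $(1+\varepsilon)^{-n}$ using the spectral gap on the negative axis furnished by Theorem~\ref{theorem:spectrum}. The only cosmetic difference is that the paper works directly with the indicator of $\{1,x\}$ and writes $2p^n(1,1)+2p^n(1,x)=\int t^n\,d\nu_{1,x}(t)$, whereas you carry the factor $\tfrac12$ through; the substance is identical.
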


\begin{proof}
Let $u$ be the indicator function of the singleton $\{1\}$, and let
$\nu =\nu_{u}$ be the corresponding spectral measure. Then by
\eqref{eq:spectralTheorem},
\[
	p^{n} (1,1) =\langle u, \zz{P}^n u\rangle = \int t^n \,
	d\nu(t) =\int t^n \, d\nu^{+}(t)+ \int t^n \, d\nu^{-}(t) ,
\]
where $\nu^{+}$ and $\nu^{-}$ are the restrictions of $\nu$ to the
positive and nonpositive reals, respectively. The sequence
$q_{n}=R^{n}\int t^{n}\,d\nu^{+} (t)$ is clearly nonincreasing in
$n$, since $\nu^{+}$ is supported by the interval $(0,R^{-1}]$.
By Theorem~\ref{theorem:spectrum}, the support of
$\nu^{-}$ is contained in  $[-R^{-1}
(1+\varepsilon)^{-1} ,0]$ for some $\varepsilon >0$. Hence,
\[
	\int t^n \, d\nu^{-}(t) =O (R^{-n} (1+\varepsilon)^{-n}).
\]
This proves the first assertion.  A similar argument proves the second
assertion, since
\[
	2p^{n} (1,1)+2p^{n} (1,x)=\int t^n \, d\nu_{1,x}(t)
\]
where $\nu_{1,x}$ is the spectral measure of the indicator function of
the two-point set $\{1,x \}$.
\end{proof}

\subsection{Proof of
Theorem~\ref{theorem:asymptotics}}\label{ssec:asymptoticsProof}

Consider first the case $x=y=1$. By
Corollary~\ref{corollary:monotonicity}, we may write $R^k p^k(1,1) =
q_k+r_k$ where $r_k$ is exponentially small and $q_k$ is
non-increasing. Thus, to prove the asymptotic formula
\eqref{eq:lltGeneral} for $x=y=1$, it suffices to prove that
\begin{equation*}
	q_{n}\sim C n^{\beta -2} \quad \text{as} \;\; n \rightarrow \infty.
\end{equation*}

For $s\in [0,1)$, let $r=Rs$ and
  \begin{equation*}
  A(s)= r G_r(1,1)' = \sum n r^n p^n(1,1) = \sum s^n \cdot n R^n  p^n(1,1).
  \end{equation*}
By hypothesis,  $A (s)\sim  C/(1-s)^\beta$ when $s\uparrow 1$. Since
$R^{k}p^{k} (1,1)=q_{k}+r_{k}$  with $r_{k}$ exponentially decaying
in $k$, it follows that $\sum_{k} k q_{k}s^{k} \sim C/(1-s)^\beta$ as
$s \uparrow 1$. Therefore, Karamata's theorem gives
\begin{equation}\label{eq:kqsumAsymptotics}
  \sum_{k=1}^n k q_k \sim C n^\beta.
\end{equation}
The desired result now follows from the next lemma.

\begin{lemma}\label{lemma:simpleTauberian}
Let $q_{n}$ be a  nonnegative sequence that satisfies
\eqref{eq:kqsumAsymptotics} for some $\beta >0$. If $q_{n}$ is
non-increasing, then as $n \rightarrow \infty$,
\[
	q_{n}\sim C\beta n^{\beta -2}.
\]
\end{lemma}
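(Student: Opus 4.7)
The plan is a classical monotone-sequence Tauberian squeeze. Set $T_n := \sum_{k=1}^n k q_k$, so by hypothesis $T_n = C n^\beta(1+o(1))$ as $n\to\infty$. For any integers $n < m$, the monotonicity of $(q_k)$ yields the elementary two-sided bound
$$q_m \sum_{k=n+1}^m k \;\leq\; T_m - T_n \;=\; \sum_{k=n+1}^m k q_k \;\leq\; q_n \sum_{k=n+1}^m k,$$
while $\sum_{k=n+1}^m k = (m+n+1)(m-n)/2$ is an explicit quadratic in $m,n$. The strategy is to apply the right-hand inequality with $m$ slightly larger than $n$ (to bound $q_n$ from below), and the analogous inequality on an interval $(m,n]$ with $m$ slightly smaller than $n$ (to bound $q_n$ from above), then pinch by letting the window shrink.

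For the lower bound, fix $\epsilon>0$ and take $m=\lfloor(1+\epsilon)n\rfloor$. Since $T_m - T_n \sim C[(1+\epsilon)^\beta-1]\,n^\beta$ and $\sum_{k=n+1}^m k \sim [(1+\epsilon)^2-1]n^2/2$, the right-hand inequality gives, as $n\to\infty$,
$$q_n \;\geq\; \frac{T_m-T_n}{\sum_{k=n+1}^m k} \;=\; \frac{2C\bigl[(1+\epsilon)^\beta-1\bigr]}{(1+\epsilon)^2-1}\, n^{\beta-2}\,(1+o(1)).$$
Letting $\epsilon\to 0^+$ afterwards, the prefactor tends to $C\beta$, so $\liminf_n q_n n^{2-\beta} \geq C\beta$. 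For the matching upper bound, take $m=\lceil(1-\epsilon)n\rceil < n$ and use $q_k \geq q_n$ for $k\leq n$ to obtain
$$q_n \;\leq\; \frac{T_n-T_m}{\sum_{k=m+1}^n k} \;=\; \frac{2C\bigl[1-(1-\epsilon)^\beta\bigr]}{1-(1-\epsilon)^2}\, n^{\beta-2}\,(1+o(1)),$$
and the prefactor again tends to $C\beta$ as $\epsilon\to 0^+$. Combining the two, $q_n \sim C\beta n^{\beta-2}$.

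There is no substantive obstacle: the hypothesis on $T_n$ is exactly what is needed, and monotonicity converts the integrated asymptotic into a pointwise one. The only point requiring attention is the order of limits---one must send $n\to\infty$ first, with $\epsilon$ fixed, so that the $o(1)$ error in $T_n\sim Cn^\beta$ is negligible compared to the increment $T_m - T_n$ over a window of length $\epsilon n$; only afterwards is $\epsilon\to 0^+$ taken.
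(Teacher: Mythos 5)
Your argument is correct and is essentially the same squeeze used in the paper: both apply the monotonicity of $q_k$ on a window $[(1-\epsilon)n,n]$ (or $[n,(1+\epsilon)n]$) to trap $q_n$ between increments of the partial sum $T_n\sim Cn^\beta$, then let $n\to\infty$ followed by $\epsilon\to 0^+$. The paper's version is marginally cruder in how it lower-bounds $\sum k$ over the window (replacing it by $\varepsilon(1-\varepsilon)n^2$ rather than the exact quadratic), but that is a cosmetic difference and the limiting prefactor $C\beta$ is identical.
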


\begin{proof}
Fix $\varepsilon >0$.  Writing $S_n =
\sum_{k=0}^{n-1} kq_k$, we have
  \begin{equation*}
  \varepsilon n (1-\varepsilon)n q_n \leq \sum_{k=(1-\varepsilon)n}^{n-1} kq_n
  \leq \sum_{k=(1-\varepsilon)n}^{n-1} kq_k
  =S_n - S_{(1-\varepsilon)n} = C (n^{\beta}- (1-\varepsilon)^{\beta}n^{\beta}+o(n^{\beta})).
  \end{equation*}
Therefore,
  \begin{equation*}
  q_n \leq C n^{\beta-2} \frac{1-(1-\varepsilon)^{\beta}+o(1)}{(1-\varepsilon)\varepsilon}.
  \end{equation*}
Letting $\varepsilon$ tend to $0$, we obtain $\limsup q_n/
n^{\beta-2} \leq C\beta$. Using the interval $k\in [n,
(1+\varepsilon)n]$, we control the inferior limit in the same way,
and so we obtain $q_n \sim C\beta n^{\beta-2}$.
\end{proof}

Finally, consider the general case $x,y\in \Gamma$. Since  we have
already proved the formula \eqref{eq:lltGeneral} in the special case
$x=y$, we may assume that $x\not =y$, and by homogeneity, $x=1$. By
Corollary~\ref{corollary:monotonicity}, there is a non-increasing
sequence $q_{k} (1,y)$ and an exponentially decaying sequence $r_{k}
(1,y)$ such that $R^{k}p^{k} (1,1)+R^{k}p^{k} (1,y)=q_{k} (1,y)+r_{k}
(1,y)$. Since the formula \eqref{eq:lltGeneral} holds for $x=y=1$, to
prove it for $x=1\not =y$ it will suffice to show that
\begin{equation}\label{eq:xy}
	q_{k} (1,y)\sim (C'_{1,y} +C'_{1,1})k^{\beta -2}
\end{equation}
for some constant $C'_{1,y}$. (Note: By the Harnack inequality, the
sequences $p^{k} (1,1)$ and $p^{k} (1,y)$ are comparable, so if this
holds then $C'_{1,y}$ must be positive.)

By hypothesis \eqref{eq:derivativeAsymptotics},
\[
	\sum_{k} s^{k} kR^{k-1}(p^{k} (1,1)+p^{k} (1,y))
	\sim \frac{C_{1,1}+C_{1,y}}{(1-s)^{\beta }}
\]
as $s \uparrow 1$, and so, as in the special case considered earlier,
the generating function
$\sum_{k}kq_{k} (1,y) s^{k}$ satisfies the hypotheses of Karamata's
theorem. Thus,
\[
	\sum_{k=1}^n k q_k(1,y) \sim C n^\beta,
\]
and so the relation \eqref{eq:xy} follows from
Lemma~\ref{lemma:simpleTauberian}. \qed

\bibliographystyle{plain}
\bibliography{mainbib}

\end{document}